\def\rg{\hbox to 30pt{\rightarrowfill}}
\def\lg{\hbox to 30pt{\leftarrowfill}}
          \newtheorem{theorem}{Theorem}[section]
      \newtheorem{proposition}[theorem]{Proposition}
      \newtheorem{corollary}[theorem]{Corollary}
      \newtheorem{lemma}[theorem]{Lemma}
      \newtheorem{example}[theorem]{Example}
      \newtheorem{remark}[theorem]{Remark}
      \newcommand{\BB}{{\mathbb B}}
      \newcommand{\CC}{{\mathbb C}}
      \newcommand{\NN}{{\mathbb N}}
      \newcommand{\DD}{{\mathbb D}}
      \newcommand{\FF}{{\mathbb F}}
      \newcommand{\cA}{{\mathcal A}}
      \newcommand{\cC}{{\mathcal C}}
      \newcommand{\cD}{{\mathcal D}}
      \newcommand{\cE}{{\mathcal E}}
      \newcommand{\cG}{{\mathcal G}}
      \newcommand{\cH}{{\mathcal H}}
      \newcommand{\cK}{{\mathcal K}}
      \newcommand{\cL}{{\mathcal L}}
      \newcommand{\cP}{{\mathcal P}}
      \newcommand{\cR}{{\mathcal R}}
      \newcommand{\cS}{{\mathcal S}}
      \newcommand{\cY}{{\mathcal Y}}
      \newcommand{\cX}{{\mathcal X}}
      \newdimen\expt
      \def\boxit#1{\setbox0\hbox{$\displaystyle{#1}$}
            \hbox{\lower.4\expt
       \hbox{\lower3\expt\hbox{\lower\dp0
            \hbox{\vbox{\hrule height.4\expt
       \hbox{\vrule width.4\expt\hskip3\expt
            \vbox{\vskip3\expt\box0\vskip2\expt}%
       \hskip3\expt\vrule width.4\expt}\hrule height.4\expt}}}}}}
\begin{document}
       \pagestyle{myheadings}
      \markboth{ Gelu Popescu}{ Free holomorphic functions on the unit ball
       of $B(\cH)^n$. II.}
      %\pagestyle{plain}
      %\begin{flushright}
       % \it Date of this draft: \today
      %\end{flushright}
      %\bigskip

%Classical theorems for free holomorphic   functions

      \title [ Free holomorphic functions on the unit ball
       of $B(\cH)^n$. II.]
      { Free holomorphic functions on the unit ball
       of $B(\cH)^n$. II.}
        \author{Gelu Popescu}
      %\date{\today}
\date{January 27, 2008}
      \thanks{Research supported in part by an NSF grant}
      \subjclass[2000]{Primary:  46L52;  47A56;  Secondary: 46G20; 47A63}
      \keywords{Multivariable operator theory; Noncommutative function theory;
      Free holomorphic
      functions;  Fock space; Poisson transform;  Schwarz's lemma; Julia's lemma;
      Pick's theorem;
        pseudohyperbolic metric,
      Borel-Carath\'eodory theorem; von Neumann inequality.
}

      \address{Department of Mathematics, The University of Texas
      at San Antonio \\ San Antonio, TX 78249, USA}
      \email{\tt gelu.popescu@utsa.edu}

\begin{abstract}
In this paper we continue the study of
 free holomorphic functions
on the noncommutative ball
$$
[B(\cH)^n]_1:=\left\{ (X_1,\ldots, X_n)\in B(\cH)^n: \
\|X_1X_1^*+\cdots + X_nX_n^*\|^{1/2}<1\right\},
$$
where $B(\cH)$ is the algebra of all bounded linear operators on a
Hilbert space $\cH$, and $n=1,2,\ldots$ or $n=\infty$. Several
classical results from complex analysis have free analogues in our
noncommutative setting.

We prove a maximum principle, a Naimark type representation
theorem, and a Vitali
 convergence theorem,
 for free holomorphic functions with
operator-valued coefficients.
 We introduce the class of free holomorphic functions with the
radial infimum property  and study it in connection with
factorizations and noncommutative generalizations of  some
classical inequalities obtained by Schwarz and  Harnack. The
Borel-Carath\'eodory theorem is extended to our noncommutative
setting.

 Using a noncommutative generalization of Schwarz's lemma and
basic facts concerning the  free holomorphic  automorphisms of the
noncommutative ball $[B(\cH)^n]_1$, we obtain an analogue of Julia's
lemma for free holomorphic functions $F:[B(\cH)^n]_1\to
[B(\cH)^m]_1$. We also obtain Pick-Julia theorems for free
holomorphic functions with operator-valued coefficients.

 We provide  a noncommutative generalization of a classical
 inequality due to Lindel\"of, which turns out to be
  sharper then the noncommutative von Neumann inequality.

Finally, we introduce a pseudohyperbolic metric   on $[B(\cH)^n]_1$
which  is  invariant under the action of  the free holomorphic
automorphism group of $[B(\cH)^n]_1$ and turns out to be a
noncommutative extension of the pseudohyperbolic distance on
$\BB_n$, the open unit ball of $\CC^n$. In this setting, we  obtain
a Schwarz-Pick type lemma.

We also provide commutative versions of these results for
operator-valued multipliers of the Drury-Arveson space.

\end{abstract}

 \maketitle

\section*{Contents}
{\it

\quad Introduction

\begin{enumerate}
   \item[1.]    Free holomorphic functions: fractional transforms, maximum principle,  and geometric  structure
   \item[ 2.]    Vitali convergence and identity theorem for free holomorphic
functions
 \item[3.]    Free holomorphic functions with the radial infimum
 property
\item[ 4.]   Factorizations and  free  holomorphic versions of classical inequalities
\item[ 5.] Noncommutative Borel-Carath\'eodory theorems
 \item[ 6.]   Julia's lemma for holomorphic functions  on
 noncommutative balls
  \item[7.]   Pick-Julia  theorems for free holomorphic functions with
   operator-valued coefficients
\item[8.]  Lindel\"of inequality and sharpened forms of the noncommutative von Neumann
inequality
\item[9.] Pseudohyperbolic metric on the unit ball of $B(\cH)^n$ and an invariant Schwarz-Pick lemma
   \end{enumerate}

\quad References

}

\bigskip

\section*{Introduction}

In this paper,  we continue  our program to develop  a {\it
noncommutative  analytic function theory} on the unit ball of
$B(\cH)^n$,  where $B(\cH)$ is the algebra of all bounded linear
operators on a Hilbert space $\cH$. Initiated in
\cite{Po-holomorphic}, the theory of free holomorphic (resp.
pluriharmonic) functions on the unit ball of $B(\cH)^n$,  with
operator-valued coefficients,  has been developed   very recently
(see \cite{Po-free-hol-interp}, \cite{Po-pluri-maj},
\cite{Po-hyperbolic2}, \cite{Po-pluriharmonic},
\cite{Po-automorphism}, \cite{Po-unitary}, and \cite{Po-hyperbolic})
in the attempt  to provide a framework for the study of arbitrary
 $n$-tuples of operators on a Hilbert space.
  Several classical
 results from complex analysis, hyperbolic geometry,  and interpolation theory have
 free analogues in
 this  noncommutative multivariable setting. Related to our work, we
 mention  the papers \cite{HKMS}, \cite{K}, \cite{MuSo2},
 \cite{MuSo3}, and  \cite{V}, where several aspects of the theory
 of noncommutative analytic functions are considered in various
 settings.

To put our work in perspective, we need   to set up  some notation
and recall some definitions. Let $\FF_n^+$ be the unital free
semigroup on $n$ generators $g_1,\ldots, g_n$ and the identity
$g_0$.  The length of $\alpha\in \FF_n^+$ is defined by
$|\alpha|:=0$ if $\alpha=g_0$ and $|\alpha|:=k$ if
 $\alpha=g_{i_1}\cdots g_{i_k}$, where $i_1,\ldots, i_k\in \{1,\ldots, n\}$.
If $(X_1,\ldots, X_n)\in B(\cH)^n$,    we set $X_\alpha:=
X_{i_1}\cdots X_{i_k}$  and $X_{g_0}:=I_\cH$, the identity on $\cH$.

 We defined the algebra $H_{{\bf ball}_\gamma}$~ of
free holomorphic functions on the open operatorial  $n$-ball of
radius $\gamma>0$,
$$
[B(\cH)^n]_\gamma:=\left\{ (X_1,\ldots, X_n)\in B(\cH)^n: \
\|X_1X_n^*+\cdots + X_nX_n^*\|^{1/2}<\gamma\right\},
$$
 as the set of all power series $\sum_{\alpha\in
\FF_n^+}a_\alpha Z_\alpha$ with radius of convergence $\geq \gamma$,
i.e.,
 $\{a_\alpha\}_{\alpha\in \FF_n^+}$ are complex numbers  with
$\limsup_{k\to\infty} (\sum\limits_{|\alpha|=k}
|a_\alpha|^2)^{1/2k}\leq \frac{1}{\gamma}. $
 A free holomorphic function on
$ [B(\cH)^n]_\gamma $
 is
the representation of an element $F\in H_{{\bf ball}_\gamma}$~ on
the Hilbert space $\cH$, that is,  the mapping
$$
[B(\cH)^n]_\gamma\ni (X_1,\ldots, X_n)\mapsto F(X_1,\ldots,
X_n):=\sum_{k=0}^\infty \sum_{|\alpha|=k}
 a_\alpha X_\alpha\in
B(\cH),
$$
where  the convergence is in the operator norm topology.
   Due to the fact that a free holomorphic function is
uniquely determined by its representation on an infinite dimensional
Hilbert space, we  identify, throughout this paper,  a free
holomorphic function   with its representation on a   separable
infinite dimensional Hilbert space.

We recall that a free holomorphic function $F$ on $[B(\cH)^n]_1$ is
bounded if $
 \|F\|_\infty:=\sup  \|F(X)\|<\infty,
  $
where the supremum is taken over all $X\in [B(\cH)^n]_1$ and $\cH$
is an infinite dimensional Hilbert space. Let $H^\infty_{\bf ball}$
be the set of all bounded free holomorphic functions and  let
$A_{\bf ball}$ be the set of all elements $F$ such that the mapping
$$[B(\cH)^n]_1\ni (X_1,\ldots, X_n)\mapsto F(X_1,\ldots, X_n)\in B(\cH)$$
 has a continuous extension to the closed unit ball $[B(\cH)^n]^-_1$.
We  showed in \cite{Po-holomorphic} that $H^\infty_{\bf ball}$  and
$A_{\bf ball}$ are Banach algebras under pointwise multiplication
and the norm $\|\cdot \|_\infty$, which can be identified with the
noncommutative analytic Toeplitz algebra $F_n^\infty$ and the
noncommutative disc algebra $\cA_n$, respectively.

In Section 1,  we present new results concerning the composition of
free holomorphic functions with operator-valued coefficients and the
behavior of their model boundary functions, which will play an
important role throughout this paper.

Fractional maps of the operatorial unit ball $[B(\cE,\cG)]_1^-$ are
 due to Siegel \cite{Si} and Phillips \cite{Ph} (see also \cite{Y}).
 We should mention that the noncommutative ball $[B(\cH)^n]_1$ can be
identified with the open unit ball of $B(\cH^n,\cH)$, which is one
of the infinite-dimensional Cartan domains studied  by L. Harris
(\cite{Ha1}). He has obtained several results, related to our topic,
in the   setting of $JB^*$-algebras. We also remark that the group
of all free holomorphic automorphisms of $[B(\cH)^n]_1$
(\cite{Po-automorphism}), can be identified with  a subgroup of the
group of automorphisms of $[B(\cH^n,\cH)]_1$ considered by
R.S.~Phillips  \cite{Ph} (see also \cite{Y}).

 Following these
  ideas, fractional
transforms of free holomorphic functions were recently considered in
\cite{Po-hyperbolic2}, \cite{Po-hyperbolic}, and \cite{HKMS}. In
Section 1, we continue to investigate these transforms   and work
out several of their properties. A fractional transform $\Psi_A$ is
associated with each strict contraction $A=I\otimes A_0$, $A_0\in
B(\cE,\cG)$. We show that $ \Psi_A:\cS_{\text{\bf
ball}}(B(\cE,\cG))\to \cS_{\text{\bf ball}}(B(\cE,\cG)) $ defined by
$$
  \Psi_A[F]:=A-D_{A^*}(I-FA^*)^{-1} F D_{A}
  $$
 is a homeomorphism of
 the noncommutative Schur class $\cS_{\text{\bf ball}}(B(\cE,\cG))$
  of all free holomorphic functions
 $F$ on $[B(\cH)^n]_1$ with coefficients in $B(\cE,\cG)$ such that
 $\|F\|_\infty\leq 1$.
 Among other properties, we prove that $ F$ is inner if and only if
  its fractional transform
 ${\Psi_A[F]}$ is
inner, and that the  model boundary  function $\widetilde F$ is in
$\cA_n\bar\otimes_{min} B(\cE,\cG)$ if and only if
$\widetilde{\Psi_A[F]}$  is in $\cA_n \bar\otimes_{min} B(\cE,\cG)$.

We   mention that the noncommutative Schur class $\cS_{\text{\bf
ball}}(B(\cE,\cG))$ was introduced in \cite{Po-von} in connection
with a noncommutative von Neumann inequality for row contractions.
This class  was extended to  more general  settings  by
Ball-Groenewald-Malakorn (see \cite{BGM1}, \cite{BGM2}), and by
Muhly-Solel (see \cite{MuSo1},  \cite{MuSo2}, \cite{MuSo3}). The
Muhly-Solel paper \cite{MuSo3} gives an intrinsic characterization
for the Schur class $\cS_{\text{\bf ball}}(B(\cE,\cG))$ in terms of
completely positive kernels, and  presents a description of the
automorphism group of their Hardy algebra $H^\infty(E)$, which has
some overlap with \cite{Po-automorphism} and Theorem 1.3 of the
present paper.

Using fractional transforms and  a noncommutative version of
Schwarz's lemma \cite{Po-holomorphic}, we prove a maximum principle
for free holomorphic functions with operator-valued coefficients
(see \cite{Po-automorphism} for  the scalar case). On the other
hand, using  fractional transforms, the noncommutative Cayley
transforms of \cite{Po-free-hol-interp}, and
\cite{Po-pluriharmonic}, we obtain  results concerning the geometric
structure of bounded free holomorphic functions. More precisely, we
prove that a map $F:[B(\cH)^n]_{1}\to
  B( \cH)\bar\otimes_{min} B(\cE)$  is  a bounded
   free
holomorphic function  such that  $\|F\|_\infty\leq 1$ and
$\|F(0)\|<1$, if and only if  there exist a strict contraction
$A_0\in B(\cE)$, an $n$-tuple of isometries $(V_1,\ldots, V_n)$ on a
Hilbert space $\cK$,
 with orthogonal ranges,
  and  an isometry  $W:\cE\to \cK$, such that
$$F=(\Psi_{I\otimes A_0}\circ \text{\boldmath{$\cC$}})(G),
$$ where $\text{\boldmath{$\cC$}}$ is the noncommutative Cayley transform and   $G$ is defined by
$$
G(X_1,\ldots, X_n):=  ( I\otimes W^*) \left[ 2(I- X_1\otimes
V_1^*-\cdots - X_n\otimes V_n^* )^{-1}- I\right]( I\otimes W)
$$
for any $(X_1,\ldots, X_n)\in [B(\cH)^n]_1$. In particular, in the
scalar case, we obtain a characterization and parametrization of all
bounded free holomorphic functions on the unit ball $[B(\cH)^n]_1$.
We mention that, for the noncommutative polydisc, a representation
theorem of the same flavor was obtained in \cite{K} and \cite{AK}.

In Section 2, we provide a Vitali type convergence theorem \cite{Hi}
for uniformly bounded sequences of free holomorphic functions  with
operator-valued coefficients. As a consequence, we show that two
 free holomorphic functions $F$, $G$    coincide  if and only if  there exists a sequence
$\{A^{(k)}\}_{k=1}^\infty\subset [B(\cH)^n]_1$   of bounded-bellow
operators   such that $\lim_{k\to\infty}\|A^{(k)}\|=0$ and
$F(A^{(k)})=G(A^{(k)})$ for any $k=1,2,\ldots$.

In Section 3, we introduce the class of  free holomorphic functions
with the {\it radial infimum property}. A function $F$ is in this
class if
$$
\liminf_{r\to 1} \inf_{\|x\|=1}\|F(rS_1,\ldots,
rS_n)x\|=\|F\|_\infty,
$$
where $S_1,\ldots, S_n$ are the left creation operators on the full
Fock space $F^2(H_n)$ with $n$ generators. We obtain several
characterizations for  this class of functions and consider several
examples. We show that if $F$ is inner and its boundary function
$\widetilde F$ is in the noncommutative disc algebra $\cA_n$ then
$F$ has the radial infimum property. In particular, any free
holomorphic automorphism of $[B(\cH)^n]_1$  has the property. We
study the radial infimum property in connection with products,
direct sums, and compositions of free holomorphic functions. We also
show that the class of functions with the radial infimum property is
invariant under the fractional transforms of Section 1. These
results  are important in the following  sections.

It is well-known that if $f\in H^\infty(\DD)$, a bounded analytic
function on the open unit disc $\DD:=\{z\in \CC:\ |z|<1\}$,  is such
that $\|f\|_\infty\leq 1$ and
$$f(z)=\theta(z)
g(z), \qquad z\in \DD, $$
 where $\theta$ is an inner function in the
disc algebra $A(\DD)$ and $g$ is analytic in $\DD$, then
$\|g\|_\infty\leq 1$. If, in addition,  $f\in A(\DD)$, then $g\in
A(\DD)$.  Moreover, if $f\in A(\DD)$ is inner, then so is $g$. These
facts are fundamental for the theory of bounded analytic functions
(see \cite{Cara2}, \cite{Ga}).

 In Section 4, we obtain analogues of these results in the context
 of free holomorphic functions. Let $F, \Theta$, and $ G$ be free holomorphic functions
on $[B(\cH)^n]_1$   such that
$$F(X)=\Theta (X) G(X),\qquad X\in [B(\cH)^n]_1.
$$
Assume that   $F$ is bounded with $\|F\|_\infty\leq 1$ and $\Theta$
  has the radial infimum property with $\|\Theta\|_\infty=1$. Then we prove that
   $\|G\|_\infty\leq 1$ and
\begin{equation*}
 F(X)F(X)^*\leq \Theta(X)\Theta(X)^*, \qquad X\in [B(\cH)^n]_1.
\end{equation*}
 Moreover, we show that if  the boundary functions $\widetilde F$ and $\widetilde \Theta$
are in the noncommutative disc algebra, then so is $\widetilde G$.
When we add the condition that $F$ is inner, then  we deduce that
$G$ is  also inner.
In particular,   if $F $ is a bounded
   free
holomorphic function with \ $\|F\|_\infty\leq 1$ and  representation
$$F(X)=\sum_{k=m}^\infty \sum_{|\alpha|=k}
X_\alpha\otimes A_{(\alpha)},\qquad X=(X_1,\ldots, X_n)\in
[B(\cH)^n]_1,
$$
for some $m=1,2,\ldots,$ and $A_{(\alpha)}\in B(\cE, \cG)$,
 then
$$ F(X) F(X)^*\leq \sum_{|\beta|=m} X_\beta X_\beta^*\otimes I_\cG,\qquad X\in[B(\cH)^n]_1.
$$
 Consequently, we recover the corresponding   version
 of Schwarz's lemma  from
\cite{Po-holomorphic} and, when $m=1$, the one  from \cite{HKMS}.

The classical Schwarz's lemma  (see
 \cite{Co}, \cite{Ru1})
  states that if $f:\DD\to \CC$ is  a
bounded analytic function with $f(0)=0$ and $|f(z)|\leq 1$  for
$z\in \DD$, then $|f'(0)|\leq 1$ and $|f(z)|\leq |z|$ for $z\in
\DD$. Moreover, if $|f'(0)|= 1$ or if $|f(z)|=|z|$ for some $z\neq
0$, then there is a constant $c$ with $|c|=1$ such that $f(w)=cw$
for any $w\in \DD$. A faithful generalization of this result is
obtained (see Theorem \ref{Schwarz2}) when $f, \theta$, and $ g$ are
free holomorphic functions on $[B(\cH)^n]_1$ with scalar
coefficients such that:
\begin{enumerate}
\item[(i)] $f(X)=\theta (X) g(X),\quad X\in [B(\cH)^n]_1$;
 \item[(ii)] $f$ is
bounded with $\|f\|_\infty\leq 1$;
 \item[(iii)] $\theta$  has  the
radial infimum property and $\|\theta\|_\infty=1$.
\end{enumerate}
In the particular case when $n=1$ and
 $\theta(z)=z$, we recover the  Schwarz's lemma. We remark that
Schwarz's lemma has been extended to various   settings  by several
authors (e.g. \cite{J},  \cite{Si},  \cite{Pota}, \cite{Ph},
\cite{Ha}, \cite{KF1}, \cite{MuSo2}, \cite{Po-unitary},
\cite{Po-automorphism}).

In Section 4, we also obtain   noncommutative extensions of Harnack'
double inequality (see Theorem \ref{Har2}) for  a class of free
holomorphic functions $F=I+\Theta \Gamma$ with positive real parts.
 In  the particular case when  $\Theta(X)=X$, we deduce  that if $F$
 is a free holomorphic function
on $[B(\cH)^n]_1$ with coefficients in $B(\cE)$ such that $F(0)=I$
and $\Re F \geq 0$, then
$$
\frac{1-\|X\|}{1+\|X\|}\leq \|F(X)\|\leq
\frac{1+\|X\|}{1-\|X\|},\qquad X\in [B(\cH)^n]_1.
$$

The Borel-Carath\' eodory theorem \cite{Tit} establishes an upper
bound for the modulus of a function on the circle $|z|=r$ from
bounds for its real (or imaginary) parts on larger circles $|z|=R$.
More precisely,  if $f$ is an analytic function for $|z|\leq R$ and
$0<r<R$, then
$$
\sup_{|z|=r}|f(z)|\leq \frac{2r}{R-r}\sup_{|z|=R} \Re f(z)+
\frac{R+r}{R-r}|f(0)|.
$$
In Section 5, we obtain an analogue of this result for free
holomorphic functions (see Theorem \ref{borel-cara}). We also obtain
a Borel-Carath\'eodory type result for free holomorhic functions
which admit  factorizations  $F=\Theta \Gamma$, where  $\Theta$ is
an inner function with the radial infimum property and
$\|\Theta(0)\|<1$. We show that if $\Re F\leq I$
     then
    $$
    \|F(X)\|\leq \frac{2\|\Theta(X)\|}{1- \|\Theta(X)\|}, \qquad
     \ X\in [B(\cH)^n]_1.
    $$

Let $f:\DD\to \overline{\DD}$ be a nonconstant analytic function and
let $z_0\in \DD$ and $w_0=f(z_0)$. Pick's theorem \cite{Pic} (see
also \cite{Cara2}) asserts that
$$
\frac{w_0-f(z)}{1-\bar w_0 f(z)}=\frac{z_0-z}{1-\bar z_0 z}
g(z),\qquad z\in\DD,
$$
for some analytic function $g:\DD\to \DD$. In Section  6,   we
provide a generalization  of Pick's theorem, for bounded free
holomorphic functions. We show that
 if  $F:[B(\cH)^n]_1\to [B(\cH)^m]_1^-$ is a free holomorphic
function  with $\|F(0)\|<1$ and   $a\in \BB_n$, then there exists a
free holomorphic function $\Gamma$ with $\|\Gamma\|_\infty \leq 1$
such that
$$
\Phi_{F(a)}(F(X))=\Phi_a(X) (\Gamma \circ\Phi_a) (X),\qquad X\in
[B(\cH)^n]_1,
$$
where $\Phi_a$ and $\Phi_{F(a)}$   are the corresponding free
holomorphic automorphisms of  the noncommutative balls
$[B(\cH)^n]_1$ and $[B(\cH)^m]_1$, respectively. Consequently,
$$
 \Phi_{F(a)}(F(X))\Phi_{F(a)}(F(X))^*\leq  \Phi_a(X)\Phi_a(X)^*,\qquad X\in
[B(\cH)^n]_1.
$$
 We mention that the group $Aut([B(\cH)^n]_1)$ of all free holomorphic automorphisms
  of $[B(\cH)^n]_1$ was determined in \cite{Po-automorphism}, using
  the theory of characteristic functions for row contractions \cite{Po-charact}.
  We also
remark that the group of all free holomorphic automorphisms of
$[B(\cH)^n]_1$, can be identified with  a subgroup of the group of
automorphisms of $[B(\cH^n,\cH)]_1$ considered by R.S.~Phillips
\cite{Ph} (see also \cite{Y}).

We recall that  Julia's lemma  \cite{J} (see also \cite{Cara1})
says that if $f:\DD\to \DD$ is an
 analytic function and there is a sequence $\{z_k\}\subset \DD$
 with $z_k\to 1$,  $f(z_k)\to 1$, and such that
 $\frac{1-|f(z_k)|}{1-|z_k|}$ is bounded, then $f$ maps each disc in
 $\DD$ tangent to $\partial \DD$ at   $1$  into a disc of the same
 kind. Julia's lemma has been extended to analytic functions of a
 single operator variable by Fan \cite{KF2} and   to the setting of
 function algebras  by Glicksberg \cite{Gl}.

Using the above-mentioned noncommutative analogue of Pick's theorem
and basic facts concerning  the involutive  free holomorphic
automorphisms of $[B(\cH)^n]_1$, we obtain
 a free analogue of Julia's lemma (see Theorem \ref{Julia1}).
In particular, we prove the following result.

 Let
$F:[B(\cH)^n]_1\to [B(\cH)^m]_1 $ be a free holomorphic function.
Let $z_k\in \BB_n$ be such that $\lim_{k\to \infty} z_k
=(1,0,\ldots, 0)\in
\partial\BB_n$, $\lim_{k\to \infty} F(z_k) =(1,0,\ldots, 0)\in
\partial\BB_m$, and
$$
\lim_{k\to \infty} \frac{1-\|F(z_k)\|^2}{1-\|z_k\|^2}= L <\infty. $$
 If
$F:=(F_1,\ldots, F_m)$, then $L>0$ and
$$
(I-F_1(X)^*)(I-F(X)F(X)^*)^{-1} (I-F_1(X))\leq L
(I-X_1^*)(I-XX^*)^{-1}(I-X_1)
$$
for any $X=(X_1,\ldots, X_n)\in [B(\cH)^n]_1$.
   Moreover, if $0<c<1$, then
$$ F({\bf E}_c) \subset {\bf E}_\gamma,
\quad \text{where }\ \gamma :=\frac{Lc}{1+Lc-c}
$$
and ${\bf E}_c$  and ${\bf E}_\gamma$ are certain noncommutative
ellipsoids.  A similar result holds  if we  replace   the ellipsoids
with some  noncommutative Korany type regions (\cite{Ru2})  in the
unit ball $[B(\cH)^n]_1$ (see Corollary \ref{Korany}).

In Section 7, we use fractional transforms  and a version of the
noncommutative Schwarz's lemma to obtain Pick-Julia theorems for
free holomorphic functions  $F$ with operator-valued coefficients
such that  $\|F\|_\infty\leq 1$ (resp. $\Re F \geq 0$) (see Theorem
\ref{Pi-Ju} and Theorem \ref{Pi-Ju2}). As  a consequence, we obtain
 a Julia type lemma for free holomorphic
 functions with positive real parts (see Theorem \ref{Julia2}).
  We also provide commutative versions
  of these results for
operator-valued multipliers of the Drury-Arveson space (see
Corollary \ref{multiplier}). When $n=1$, we recover (with different
proofs) the   corresponding results obtained by Potapov \cite{Pota}
and Ando-Fan \cite{AF}.

In Section 8,  we provide a    noncommutative extension of a
classical result due to Lindel\" of (see \cite{Ga}, \cite{Kr}). We
prove that if $F:[B(\cH)^n]_1\to [B(\cH)^m]_1^-$ is   a free
holomorphic function, then
\begin{equation*}
 \|F(X)\|\leq \frac{\|X\|+\|F(0)\|}{1+\|X\|\|F(0)\|},\qquad X\in
[B(\cH)^n]_1.
\end{equation*}
If, in addition, the boundary function of $F$ has its entries in the
noncommutative disc algebra $\cA_n$, then the inequality above holds
for any $ X\in [B(\cH)^n]_1^-$. We remark that if $\|F(0)\|<1$, then
the inequality  above is sharper than the noncommutative von Neumann
inequality  (see \cite{Po-von}, \cite{Po-funct}).

In Section 9, we introduce a pseudohyperbolic metric  ${\bf d}$  on
$[B(\cH)^n]_1$ which  is  invariant under the action of  the free
holomorphic automorphism group of $[B(\cH)^n]_1$ and turns out to be
a noncommutative extension of the pseudohyperbolic distance  (see
\cite{Zhu}) on $\BB_n$,  the open unit ball of $\CC^n$, i.e.,
$$
d_n(z,w):= \|\psi_z(w)\|_2,\qquad z,w\in \BB_n,
$$
where $\psi_z$ is the involutive automorphism of $\BB_n$ that
interchanges $0$ and $z$.  We show that
 $$
 {\bf d}(X,Y)=\tanh
\delta(X,Y),\qquad X,Y\in [B(\cH)^n]_1,
$$
where  $ \delta$ is the hyperbolic ({\it Poincar\'e-Bergman}
\cite{Be} type) metric on $[B(\cH)^n]_1$ introduced and studied  in
\cite{Po-hyperbolic}. As a consequence, we obtain a Schwarz-Pick
lemma for free holomorphic functions on $[B(\cH)^n]_1$ with
operator-valued coefficients, with respect to the pseudohyperbolic
metric. More precisely, if $F=(F_1,\ldots, F_m)$ and $F_j$ are free
holomorphic functions with operator-valued coefficients such that
$\|F\|_\infty\leq 1$, then
$$
{\bf d}(F(X), F(Y))\leq {\bf d}(X,Y), \qquad X,Y\in [B(\cH)^n]_1.
$$

It is well-known (see \cite{Po-poisson}, \cite{Po-holomorphic},
\cite{Po-pluriharmonic}) that if $F$ is a contractive
($\|F\|_\infty\leq 1$) free holomorphic function  with coefficients
in $B(\cE)$, then  the evaluation map $\BB_n\ni z\mapsto F(z)\in
B(\cE )$ is a contractive operator-valued multiplier of the
Drury-Arveson space (\cite{Dr}, \cite{Arv}). Moreover, any such a
contractive multiplier has  this kind of representation. Due to this
reason, several results of the present paper have commutative
versions for operator-valued multipliers of the Drury-Arveson space.

It would be interesting to see if the results of this paper can be
extended to more general infinite-dimensional bounded domains such
as the $JB^*$-algebras of Harris \cite{Ha1},   or the noncommutative
domains from \cite{Po-domains} and \cite{HKMS}.  Since our results
are based on the power series representation of free holomorphic
functions we are inclined to believe in a positive answer for the
domains considered in  \cite{Po-domains} and \cite{HKMS}.

\bigskip

\section{ Free holomorphic functions: fractional transforms, maximum principle,  and geometric  structure
   }

In this section,  we present  results concerning the composition
 and  fractional transforms  of free
holomorphic functions, and the behavior of their model boundary
functions. These results are used to prove a maximum principle and a
Naimark type representation theorem for free holomorphic functions
with operator-valued coefficients.

Let $H_n$ be an $n$-dimensional complex  Hilbert space with
orthonormal
      basis
      $e_1$, $e_2$, $\dots,e_n$, where $n=1,2,\dots$, or $n=\infty$.
       We consider the full Fock space  of $H_n$ defined by
      $$F^2(H_n):=\CC1\oplus \bigoplus_{k\geq 1} H_n^{\otimes k},$$
      where  $H_n^{\otimes k}$ is the (Hilbert)
      tensor product of $k$ copies of $H_n$.
      Define the left  (resp.~right) creation
      operators  $S_i$ (resp.~$R_i$), $i=1,\ldots,n$, acting on $F^2(H_n)$  by
      setting
      $$
       S_i\varphi:=e_i\otimes\varphi, \quad  \varphi\in F^2(H_n),
      $$
       (resp.~$
       R_i\varphi:=\varphi\otimes e_i, \quad  \varphi\in F^2(H_n)
      $).
The noncommutative disc algebra $\cA_n$ (resp.~$\cR_n$) is the norm
closed algebra generated by the left (resp.~right) creation
operators and the identity. The   noncommutative analytic Toeplitz
algebra $F_n^\infty$ (resp.~$\cR_n^\infty$)
 is the  weakly
closed version of $\cA_n$ (resp.~$\cR_n$). These algebras were
introduced in \cite{Po-von} in connection with a noncommutative von
Neumann  type inequality \cite{von},   and  have been intensively
studied in recent years (see \cite{Po-funct}, \cite{Po-analytic},
\cite{Po-poisson}, \cite{Po-curvature}, \cite{D}, \cite{Po-unitary},
\cite{MuSo1}, and the references therein).

 We denote $e_\alpha:=
e_{i_1}\otimes\cdots \otimes  e_{i_k}$  if $\alpha=g_{i_1}\cdots
g_{i_k}\in \FF_n^+$ and $e_{g_0}:=1$. Note that
$\{e_\alpha\}_{\alpha\in \FF_n^+}$ is an orthonormal basis for
$F^2(H_n)$. Let $C^*(S_1,\ldots, S_n)$ be the Cuntz-Toeplitz
$C^*$-algebra generated by the left creation operators (see
\cite{Cu}). The noncommutative Poisson transform at
 $T:=(T_1,\ldots, T_n)\in [B(\cH)^n]_1^-$ is the unital completely contractive  linear map
 $P_T:C^*(S_1,\ldots, S_n)\to B(\cH)$ defined by
 \begin{equation*}
 P_T[f]:=\lim_{r\to 1} K_{T,r}^* (I_\cH \otimes f)K_{T,r}, \qquad f\in C^*(S_1,\ldots,
 S_n),
\end{equation*}
 where the limit exists in the norm topology of $B(\cH)$.
Here, the noncommutative Poisson  kernel
$$
K_{T,r} :\cH\to  \overline{\Delta_{T,r}\cH} \otimes  F^2(H_n),
\qquad 0< r\leq 1,
$$
is defined by
\begin{equation*}
K_{T,r}h:= \sum_{k=0}^\infty \sum_{|\alpha|=k} r^{|\alpha|}
\Delta_{T,r} T_\alpha^*h\otimes  e_\alpha,\qquad h\in \cH,
\end{equation*}
where   $\Delta_{T,r}:=(I_\cH-r^2T_1T_1^*-\cdots -r^2
T_nT_n^*)^{1/2}$ and $\Delta_T:=\Delta_{T,1}$. We recall that
 $$
 P_T[S_\alpha S_\beta^*]=T_\alpha T_\beta^*, \qquad \alpha,\beta\in \FF_n^+.
 $$
 When $T:=(T_1,\ldots, T_n)$  is a pure row contraction, i.e.,
 $ \text{\rm SOT-}\lim\limits_{k\to\infty} \sum_{|\alpha|=k}
T_\alpha T_\alpha^*=0$, then
   we have $$P_T[f]=K_T^*(I_{\cD_{T}}\otimes f)K_T, \qquad f\in C^*(S_1,\ldots,
 S_n)\ \text{ or } \ f\in F_n^\infty,
   $$
   where $\cD_T:=\overline{\Delta_T \cH}$.
We refer to \cite{Po-poisson}, \cite{Po-curvature},  and
\cite{Po-unitary} for more on noncommutative Poisson transforms on
$C^*$-algebras generated by isometries.

Let $\cE,\cG$ be Hilbert spaces and let $B(\cE,\cG)$ be the set of
all bounded linear operators  from $\cE$ to $\cG$.
  A map $F:[B(\cH)^n]_{\gamma}\to
  B( \cH)\bar\otimes_{min} B(\cE, \cG)$ is a
  {\it free
holomorphic function} on  $[B(\cH)^n]_{\gamma}$  with coefficients
in $B(\cE, \cG)$ if there exist $A_{(\alpha)}\in B(\cE, \cG)$,
$\alpha\in \FF_n^+$, such that
$$
F(X_1,\ldots, X_n)=\sum\limits_{k=0}^\infty \sum\limits_{|\alpha|=k}
X_\alpha\otimes  A_{(\alpha)},
$$
where the series converges in the operator  norm topology  for any
$(X_1,\ldots, X_n)\in [B(\cH)^n]_{\gamma}$. According to
\cite{Po-holomorphic}, a power series $F:=\sum\limits_{\alpha\in
\FF_n^+}
 Z_\alpha\otimes A_{(\alpha)}$ represents a free holomorphic function
on the open  operatorial $n$-ball of radius $\gamma$, with
coefficients in $B(\cE, \cG)$,      if and only if  \ $
\limsup\limits_{k\to\infty} \left\|\sum_{|\alpha|=k} A_{(\alpha)}^*
A_{(\alpha)}\right\|^{\frac{1} {2k}}\leq \frac{1}{\gamma}.$ This is
also equivalent to the fact that
 the series
$$
\sum\limits_{k=0}^\infty  \sum\limits_{|\alpha|=k} r^{|\alpha|}
 S_\alpha\otimes A_{(\alpha)}
$$
is convergent in the operator norm topology for any $r\in
[0,\gamma)$, where $S_1,\ldots, S_n$ are the left creation operators
on the Fock space $F^2(H_n)$.
We denote by $H_{\text{\bf ball}}(B(\cE,\cG))$ the set of all free
holomorphic functions on the noncommutative ball $[B(\cH)^n]_1$ and
coefficients in $B(\cE,\cG)$. Let $H_{\text{\bf ball}}^\infty
(B(\cE,\cG))$ denote the set of all elements $F$ in  $H_{\text{\bf
ball}}(B(\cE,\cG))$  such that
$$
\|F\|_\infty:=\sup  \|F(X_1,\ldots, X_n)\|<\infty,
$$
where the supremum is taken over all $n$-tuples  of operators
$(X_1,\ldots, X_n)\in [B(\cH)^n]_1$, where $\cH$ is an infinite
dimensional   Hilbert space.

According to \cite{Po-holomorphic} and \cite{Po-pluriharmonic}, the
noncommutative Hardy space
  $H_{\text{\bf ball}}^\infty (B(\cE,\cG))$   can be identified to the operator
  space
$  F_n^\infty\bar\otimes B(\cE,\cG)$ (the weakly closed operator
space generated by the spatial tensor product), where $F_n^\infty$
is the noncommutative analytic Toeplitz algebra. More precisely, a
bounded free holomorphic function $F$ is uniquely determined by its
{\it (model) boundary function} $\widetilde F(S_1,\ldots, S_n)\in
F_n^\infty\bar \otimes B(\cE, \cG)$ defined by
$$\widetilde F=\widetilde F(S_1,\ldots, S_n):=\text{\rm SOT-}\lim_{r\to 1}
F(rS_1,\ldots, rS_n). $$ Moreover, $F$ is  the noncommutative
Poisson transform of $\widetilde F(S_1,\ldots, S_n)$ at
$X:=(X_1,\ldots, X_n)\in [B(\cH)^n]_1$, i.e.,
$$
F(X_1,\ldots, X_n)=(P_X\otimes I)[\widetilde F(S_1,\ldots, S_n)].
$$
Similar results hold for bounded free holomorphic functions on the
noncommutative ball  $[B(\cH)^n]_\gamma$, $\gamma>0$.

We recall from \cite{Po-automorphism} some   facts concerning the
composition of free holomorphic functions with operator-valued
coefficients. Let $\Phi:[B(\cH)^n]_{\gamma_1}\to [B(\cH)
\bar\otimes_{min} B(\cY)]^m$ be a free holomorphic function with
$\Phi(X):=(\Phi_1(X),\ldots, \Phi_m(X))$, where
$\Phi_j:[B(\cH)^n]_{\gamma_1}\to B(\cH)\bar \otimes_{min} B(\cY)$,
$j=1,\ldots, m$,  are free holomorphic functions with standard
representations
$$
\Phi_j(X)=\sum_{k=0}^\infty \sum_{\alpha\in \FF_n^+,|\alpha|=k}
X_\alpha\otimes B_{(\alpha)}^{(j)}, \qquad X:=(X_1,\ldots, X_n)\in
[B(\cH)^n]_{\gamma_2},
$$
for some $B_{(\alpha)}^{(j)}\in B(\cY)$, where  $\alpha\in \FF_n^+$,
$j=1,\ldots,m$. Assume that $$ \|\Phi(X)\|<\gamma_2 \quad \text{ for
any } \  X\in
 [B(\cH)^n]_{\gamma_1}.
$$
This is equivalent to
$$ \|\Phi(rS_1,\ldots, rS_n)\|<\gamma_2
\quad \text{ for any } \ r\in[0,\gamma_1).
$$
Let $F:[B(\cK)^m]_{\gamma_2}\to B(\cK)\bar \otimes_{min} B(\cE,\cG)$
be a free holomorphic function with standard representation
$$
F(Y_1,\ldots, Y_m):= \sum_{k=0}^\infty \sum_{\alpha\in
\FF_m^+,|\alpha|=k} Y_\alpha\otimes A_{(\alpha)}, \qquad
(Y_1,\ldots, Y_m)\in [B(\cK)^m]_{\gamma_2},
$$
for some  operators $A_{(\alpha)}\in B(\cE,\cG)$, $\alpha\in
\FF_m^+$. Then it makes sense to define the map $F\circ
\Phi:[B(\cH)^n]_{\gamma_1}\to B(\cH)\bar \otimes_{min}
B(\cY)\bar\otimes_{min} B(\cE,\cG)$ by setting
$$
(F\circ \Phi)(X_1,\ldots, X_n):=\sum_{k=0}^\infty \sum_{\alpha\in
\FF_m^+,|\alpha|=k} \Phi_\alpha(X_1,\ldots, X_n)\otimes
A_{(\alpha)}, \qquad (X_1,\ldots, X_n)\in [B(\cH)^n]_{\gamma_1},
$$
where  the convergence is in the operator norm topology. We proved
in \cite{Po-automorphism} that  $F\circ \Phi$ is a free holomorphic
function on $[B(\cH)^n]_1$  with standard representation
$$
(F\circ \Phi)(X_1,\ldots, X_n)=\sum_{k=0}^\infty \sum_{\sigma\in
\FF_n^+,|\sigma|=k} X_\sigma\otimes C_{(\sigma)},
$$
where
$$
\left< C_{(\sigma)}x,y\right>= \frac{1}{r^{|\sigma|}}\left<(
S_\sigma^*\otimes I_{\cY\otimes \cG})  (F\circ \Phi)(rS_1,\ldots,
rS_n)(1\otimes x), 1\otimes y\right>
$$
for any $\sigma\in \FF_n^+$, $x\in \cY\otimes \cE$, and $y\in
\cY\otimes \cG$. Actually, this is a slight extension of the
corresponding result  from  \cite{Po-automorphism}. However,  the
proof is basically the same.

For simplicity, throughout this paper, $[X_1,\ldots, X_n]$ denotes
either the $n$-tuple $(X_1,\ldots, X_n)\in B(\cH)^n$ or the operator
row matrix $[ X_1\, \cdots \, X_n]$ acting from $\cH^{(n)}$, the
direct sum  of $n$ copies of  a Hilbert space $\cH$, to $\cH$.

Now, we present new results concerning the composition of bounded
free holomorphic functions with operator-valued coefficients.
\begin{theorem}
\label{more-prop} Let  $F:[B(\cK)^m]_{\gamma_2}\to B(\cK)\bar
\otimes_{min} B(\cE,\cG)$  and $\Phi:[B(\cH)^n]_{\gamma_1}\to
[B(\cH)\bar \otimes_{min} B(\cY)]^m$  be   bounded free holomorphic
functions  such that
$$ \|\Phi(X)\|<\gamma_2 \quad \text{ for
any } \  X\in
 [B(\cH)^n]_{\gamma_1}.
$$
Then the boundary function of     the   bounded free holomorphic
function $F\circ \Phi$ satisfies the equation
$$
\widetilde {F\circ\Phi}=\text{\rm SOT-}\lim_{r\to 1} F(r\widetilde
\Phi_1,\ldots, r\widetilde \Phi_m).
$$
Moreover, if $\widetilde F\in \cA_n\bar \otimes_{min} B(\cE,\cG)$
and $\widetilde \Phi:=[\widetilde \Phi_1,\ldots, \widetilde \Phi_m]$
is
  such that   $\widetilde \Phi_j \in \cA_n\bar \otimes_{min} B(\cY)$,
$j=1,\ldots, m$,  and   $\|\widetilde \Phi\|<\gamma_2$,  then
$\widetilde {F\circ\Phi}\in \cA_n\bar \otimes_{min} B(\cY \otimes
\cE,\cY \otimes \cG)$.
\end{theorem}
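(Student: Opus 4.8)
The plan is to compare Fourier coefficients relative to the left creation operators $S_1,\ldots,S_n$. First I would record the two expansions the hypotheses supply. Since $F\circ\Phi$ is free holomorphic with representation $(F\circ\Phi)(X)=\sum_{\sigma\in\FF_n^+}X_\sigma\otimes C_{(\sigma)}$, evaluation at $X=(rS_1,\ldots,rS_n)$ gives $(F\circ\Phi)(rS_1,\ldots,rS_n)=\sum_\sigma r^{|\sigma|}S_\sigma\otimes C_{(\sigma)}$, so by definition $\widetilde{F\circ\Phi}$ is the element of $F_n^\infty\bar\otimes_{min}B(\cY\otimes\cE,\cY\otimes\cG)$ whose Fourier coefficients are the $C_{(\sigma)}$. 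On the other hand, $\Phi$ bounded forces each $\Phi_j$ bounded, hence each word $\Phi_\beta=\Phi_{i_1}\cdots\Phi_{i_k}$ ($\beta\in\FF_m^+$) is a bounded free holomorphic function whose boundary function is $\widetilde{\Phi_\beta}=\widetilde\Phi_{i_1}\cdots\widetilde\Phi_{i_k}$, by multiplicativity of the boundary map (the isometric isomorphism $H_{\text{\bf ball}}^\infty\cong F_n^\infty$). Writing $\Phi_\beta(X)=\sum_\sigma X_\sigma\otimes D^\beta_{(\sigma)}$, these $D^\beta_{(\sigma)}$ are simultaneously the Fourier coefficients of $\widetilde\Phi_\beta$; moreover $\|\widetilde\Phi\|\le\gamma_2$.

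Next I would identify the two coefficient families. Because $(F\circ\Phi)(X)=\sum_\beta\Phi_\beta(X)\otimes A_{(\beta)}$ converges in norm for $X$ strictly inside the ball, evaluating at $X=(rS_1,\ldots,rS_n)$ with $r<1$ and extracting the coefficient of $S_\sigma$ in the two resulting expressions yields $C_{(\sigma)}=\sum_\beta D^\beta_{(\sigma)}\otimes A_{(\beta)}$, the series being convergent (this is precisely the assertion that $F\circ\Phi$ is free holomorphic). Since $\|\widetilde\Phi\|\le\gamma_2$, for each $r\in[0,1)$ the $m$-tuple $(r\widetilde\Phi_1,\ldots,r\widetilde\Phi_m)$ is a strict row contraction, so $F(r\widetilde\Phi_1,\ldots,r\widetilde\Phi_m)=\sum_\beta r^{|\beta|}\widetilde\Phi_\beta\otimes A_{(\beta)}$ converges in norm, lies in $F_n^\infty\bar\otimes_{min}B(\cY\otimes\cE,\cY\otimes\cG)$, and has $S_\sigma$-coefficient $E_{(\sigma)}(r)=\sum_\beta r^{|\beta|}D^\beta_{(\sigma)}\otimes A_{(\beta)}$. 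Grading the convergent series for $C_{(\sigma)}$ by the length $|\beta|$ and invoking Abel's theorem, I get $E_{(\sigma)}(r)\to C_{(\sigma)}$ as $r\to1$ for every $\sigma$; together with the uniform bound $\|F(r\widetilde\Phi_1,\ldots,r\widetilde\Phi_m)\|\le\|F\|_\infty$ this already gives convergence in the weak operator topology to $\widetilde{F\circ\Phi}$.

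The main obstacle is to upgrade this coefficientwise (weak) convergence to the asserted SOT convergence, an issue only in the boundary case $\|\widetilde\Phi\|=\gamma_2$, where the arguments $(r\widetilde\Phi_1,\ldots,r\widetilde\Phi_m)$ genuinely run out to the topological boundary of the ball; bounded weak convergence does not by itself imply strong convergence in $F_n^\infty$. Here I would exploit the radial structure rather than a soft argument: the family $\{F(r\widetilde\Phi_1,\ldots,r\widetilde\Phi_m)\}_{r<1}$ is uniformly bounded with coefficients converging to those of the fixed element $\widetilde{F\circ\Phi}$, so testing against the dense set of vectors generated from the vacuum by the $S_\alpha$ and controlling the tails by the uniform bound --- the standard Poisson/Ces\`aro-mean mechanism of \cite{Po-holomorphic} and \cite{Po-pluriharmonic} --- yields $F(r\widetilde\Phi_1,\ldots,r\widetilde\Phi_m)\to\widetilde{F\circ\Phi}$ strongly. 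This establishes the first assertion.

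For the second assertion the strict inequality $\|\widetilde\Phi\|<\gamma_2$ dissolves this obstacle. Now $[\widetilde\Phi_1,\ldots,\widetilde\Phi_m]$ is a strict row contraction lying in a fixed ball of radius $<\gamma_2$, so $F(\widetilde\Phi_1,\ldots,\widetilde\Phi_m)=\sum_\beta\widetilde\Phi_\beta\otimes A_{(\beta)}$ converges in operator norm. When in addition $\widetilde F$ and each $\widetilde\Phi_j$ lie in the respective noncommutative disc algebras, every word $\widetilde\Phi_\beta$ lies in $\cA_n\bar\otimes_{min}B(\cY)$, whence each summand $\widetilde\Phi_\beta\otimes A_{(\beta)}$ lies in the norm-closed algebra $\cA_n\bar\otimes_{min}B(\cY\otimes\cE,\cY\otimes\cG)$; being a norm-convergent sum of such elements, $F(\widetilde\Phi_1,\ldots,\widetilde\Phi_m)$ lies there as well. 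Finally $F(r\widetilde\Phi_1,\ldots,r\widetilde\Phi_m)\to F(\widetilde\Phi_1,\ldots,\widetilde\Phi_m)$ in norm as $r\to1$ (the argument stays in a fixed smaller ball), so the first assertion identifies $\widetilde{F\circ\Phi}=F(\widetilde\Phi_1,\ldots,\widetilde\Phi_m)$, which therefore belongs to $\cA_n\bar\otimes_{min}B(\cY\otimes\cE,\cY\otimes\cG)$, as claimed.
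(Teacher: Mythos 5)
Your reduction to Fourier coefficients and the Abel-summation step give at most WOT convergence, and you correctly flag the WOT-to-SOT upgrade as the crux; but your proposed resolution is a genuine gap, not a proof. The ``standard Poisson/Ces\`aro mechanism'' that shows $F(rS_1,\ldots,rS_n)\to\widetilde F$ strongly works because the approximants' coefficients are the radial dampings $r^{|\sigma|}C_{(\sigma)}$ of one fixed square-summable family, so on a vector $1\otimes h$ one estimates $\sum_\sigma(1-r^{|\sigma|})^2\|C_{(\sigma)}h\|^2\to 0$ by dominated convergence. The coefficients $E_{(\sigma)}(r)$ of $F(r\widetilde\Phi_1,\ldots,r\widetilde\Phi_m)$ have no such radial structure, and in general a uniformly bounded family in $F_n^\infty$ whose Fourier coefficients converge entrywise converges only weakly: take $G_r:=S_1^{k_r}$ with $k_r\to\infty$ as $r\to1$; every Fourier coefficient tends to $0$ and $\|G_r\|=1$, yet $\|G_r 1\|=1$, so $G_r\to 0$ in WOT but not in SOT. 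Nothing in your $\sigma$-graded argument rules out mass escaping to long words in exactly this way, so the first (and main) assertion is not established. (A smaller point: the identity $C_{(\sigma)}=\sum_\beta D^\beta_{(\sigma)}\otimes A_{(\beta)}$, needed before Abel's theorem can be invoked, is asserted rather than proved; convergence of this rearranged series requires an argument.)

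The repair --- which is the paper's proof --- is to grade by $\beta\in\FF_m^+$ rather than by $\sigma\in\FF_n^+$. Since $\sum_{\beta}\|A_{(\beta)}h\|^2\le\|F\|_\infty^2\|h\|^2$, given $\epsilon>0$ and $h$ choose $q$ with $\sum_{|\beta|\ge q}\|A_{(\beta)}h\|^2<\epsilon^2$; because $[\Phi_\beta(rS_1,\ldots,rS_n):|\beta|=k]$ and $[r^{|\beta|}\widetilde\Phi_\beta:|\beta|=k]$ are row contractions, both tails $\sum_{|\beta|\ge q}(\Phi_\beta(rS_1,\ldots,rS_n)\otimes A_{(\beta)})(x\otimes h)$ and $\sum_{|\beta|\ge q}(r^{|\beta|}\widetilde\Phi_\beta\otimes A_{(\beta)})(x\otimes h)$ are bounded by $\epsilon\|x\|$ \emph{uniformly in} $r$, while the finitely many terms with $|\beta|<q$ converge strongly since $\Phi_\beta(rS_1,\ldots,rS_n)\to\widetilde\Phi_\beta$ in SOT. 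Hence $(F\circ\Phi)(rS_1,\ldots,rS_n)$ and $F(r\widetilde\Phi_1,\ldots,r\widetilde\Phi_m)$ have the same strong limiting behavior on elementary tensors; as the former converges in SOT to $\widetilde{F\circ\Phi}$ by the general theory, uniform boundedness yields the claim on all vectors. You have the tail inequality implicitly available (you use it for norm convergence of the series) but never deploy it in the limit argument. Your second assertion, by contrast, is essentially sound and in fact independent of the SOT gap: the WOT convergence you did establish, together with the norm convergence $F(r\widetilde\Phi_1,\ldots,r\widetilde\Phi_m)\to F(\widetilde\Phi_1,\ldots,\widetilde\Phi_m)$ inside a ball of radius strictly less than $\gamma_2$, already identifies $\widetilde{F\circ\Phi}=F(\widetilde\Phi_1,\ldots,\widetilde\Phi_m)\in\cA_n\bar\otimes_{min}B(\cY\otimes\cE,\cY\otimes\cG)$; the paper reaches the same conclusion slightly differently, comparing $(F\circ\Phi)(rS_1,\ldots,rS_n)$ in norm with $G:=\sum_\beta\widetilde\Phi_\beta\otimes A_{(\beta)}$ using the noncommutative von Neumann inequality on the tails.
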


\begin{proof}  Using the fact that a function $X\mapsto
G(X)$ is free holomorphic on $[B(\cK)^m]_{\gamma}$, $\gamma>0$, if
and only if the mapping $Y\mapsto G(\gamma Y)$ is free holomorphic
on $[B(\cK)^m]_{1}$, we can assume, without loss of generality, that
$\gamma_1=\gamma_2=1$.

Due to \cite{Po-automorphism} (see the considerations preceding this
theorem), $F\circ \Phi$
  is a  bounded free holomorphic function.
Let $F$ have the representation
$$
F(Y_1,\ldots, Y_m):= \sum_{k=0}^\infty \sum_{\alpha\in
\FF_m^+,|\alpha|=k} Y_\alpha\otimes A_{(\alpha)}, \qquad
(Y_1,\ldots, Y_m)\in [B(\cK)^m]_{1}.
$$
 Since $F$ is
bounded on $[B(\cK)^n]_1$, we have $$\left(\sum_{\beta\in
\FF_m^+}\|A_{(\beta)}h\|^2\right)^{1/2}\leq \|F\|_\infty \|h\|,
\qquad h\in \cE.
$$
Given $\epsilon>0$ and $h\in \cE$, we choose $q\in \NN$ such that
\begin{equation}
\label{<eps} \sum_{\beta\in \FF_m^+, |\beta|\geq
q}\|A_{(\beta)}h\|^2< \epsilon^2.
 \end{equation}
 For any $x\in
F^2(H_n)$, we have
\begin{equation*}
\begin{split}
&\left\|\sum_{k=q}^\infty \sum_{\beta\in \FF_m^+, |\beta|=k
}(\Phi_\beta(rS_1,\ldots, rS_n)\otimes A_{(\beta)})(x\otimes
h)\right\|\\
&\qquad \leq \sum_{k=q}^\infty\left\| [\Phi_\beta(rS_1,\ldots,
rS_n)\otimes
I:\ |\beta|=k] \left[\begin{matrix} I\otimes A_{(\beta)}\\
:\\
|\beta|=k\end{matrix}\right] (x\otimes h)\right\|\\
&\qquad \leq \sum_{k=q}^\infty\left\|  \left[\begin{matrix} I\otimes A_{(\beta)}\\
:\\
|\beta|=k\end{matrix}\right] (x\otimes h)\right\|\\
&\leq \|x\|\sum_{k=q}^\infty\left(\sum_{\beta\in \FF_m^+, |\beta|=k
}\|A_{(\beta)}h\|^2\right)^{1/2}\leq \epsilon \|x\|
\end{split}
\end{equation*}
for any $r\in (0,1)$. Here we used the fact that
$[\Phi_1(rS_1,\ldots, rS_n),\ldots, \Phi_n(rS_1,\ldots, rS_n)]$ is a
contraction and, therefore, the operator row  matrix
$[\Phi_\beta(rS_1,\ldots, rS_n)\otimes I:\ |\beta|=k]$ is also  a
contraction.

Now denote $F_r(Y_1,\ldots, Y_m):= F(rY_1,\ldots, rY_m)$, $0<r<1$,
and note that $F_r$ is  a bounded free holomorphic function on
$[B(\cK)^n]_{1/r}$. Since the boundary function
$\widetilde\Phi:=[\widetilde\Phi_1,\ldots, \widetilde\Phi_m]$ is a
row contraction, we have
$$
F_r(\widetilde\Phi_1,\ldots, \widetilde\Phi_m)= \sum_{k=0}^\infty
\sum_{\alpha\in \FF_m^+,|\alpha|=k}
r^{|\alpha|}\widetilde\Phi_\alpha\otimes A_{(\alpha)},
$$
where the convergence is in the operator norm topology.

Using  relation \eqref{<eps} and that
$[r^{|\beta|}\widetilde\Phi_\beta\otimes I:\ |\beta|=k]$ is a row
contraction, one can show, as above,  that
\begin{equation*}
 \left\|\sum_{k=q}^\infty \sum_{\beta\in \FF_m^+, |\beta|=k
}(r^{|\beta|}\widetilde\Phi_\beta\otimes A_{(\beta)})(x\otimes
h)\right\|<\epsilon \|x\|
\end{equation*}
for any $r\in (0,1)$. On the other hand, we have
 $$
\lim_{r\to1} \sum_{\beta\in \FF_m^+, |\beta|<k
}[(\Phi_\beta(rS_1,\ldots,
rS_n)-r^{|\beta|}\widetilde\Phi_\beta)\otimes A_{(\beta)}](x\otimes
h)=0.
$$
Now, combining this  equality with the inequalities above, one can
easily deduce that
\begin{equation}
\label{lim<} \lim_{r\to1}(F\circ\Phi)(rS_1,\ldots, rS_n)(x\otimes
h)=\lim_{r\to 1} F(r\widetilde \Phi_1,\ldots, r\widetilde
\Phi_m)(x\otimes h)
\end{equation}
for any $x\in F^2(H_n)$ and $h\in \cE$. Since
$$
\|F(r\widetilde \Phi_1,\ldots, r\widetilde \Phi_m)\|\leq
\|F\|_\infty\quad \text{ and }\quad \|(F\circ\Phi)(rS_1,\ldots,
rS_n)\|\leq \|F\|_\infty,
$$
relation \eqref{lim<} implies
$$
\widetilde {F\circ\Phi}=\text{\rm SOT-}\lim_{r\to 1} F(r\widetilde
\Phi_1,\ldots, r\widetilde \Phi_m).
$$

 To prove the second part of
the theorem, assume that $\widetilde F\in \cA_n \bar\otimes_{min}
B(\cE,\cG)$,
 $\widetilde \Phi:=[\widetilde\Phi_1,\ldots, \widetilde\Phi_m]$ is
in $M_{1\times m}(\cA_n\bar \otimes_{min} B(\cY))$,  and
$\|\widetilde \Phi\|<1$.  Since $F$ is a free holomorphic function
on $[B(\cK)^n]_1$,
$$G:=\sum_{k=0}^\infty \sum_{\alpha\in \FF_m^+,|\alpha|=k} \widetilde
\Phi_\alpha\otimes A_{(\alpha)}$$
 is convergent in the  operator
norm topology.  On the other hand,  $\widetilde\Phi_\alpha\in
\cA_n\bar \otimes_{min} B(\cY)$. Consequently, $G$ is in
$\cA_n\bar \otimes_{min} B(\cY \otimes \cE, \cY\otimes \cG)$. Now,
for any $\epsilon>0$, there exists $p\in\NN$ such that
 $$
 \left\| \sum_{k=p}^\infty \sum_{\alpha\in
\FF_m^+,|\alpha|=k} \widetilde \Phi_\alpha\otimes
A_{(\alpha)}\right\|<\epsilon.
$$
 Due to the  the noncommutative von
Neumann inequality (see \cite{Po-von}), we have
$$
\left\| \sum_{k=p}^\infty\sum_{\alpha\in \FF_m^+,|\alpha|=k}
\Phi_\alpha(rS_1,\ldots, rS_n)\otimes A_{(\alpha)}\right\|\leq
\left\| \sum_{k=p}^\infty\sum_{\alpha\in \FF_m^+,|\alpha|=k}
\widetilde \Phi_\alpha\otimes A_{(\alpha)}\right\|
$$
for any $k\in \NN$. Consequently, we have
\begin{equation}\label{G}
\begin{split}
\|(F\circ \Phi)(rS_1,\ldots, rS_n)-G\|&\leq \sum_{k=0}^p \left\|
\sum_{\alpha\in \FF_m^+,|\alpha|=k} [\Phi_\alpha(rS_1,\ldots,
rS_n)-\widetilde \Phi_\alpha]\otimes A_{(\alpha)}\right\| +2\epsilon
\end{split}
\end{equation}
On the other hand, since $\widetilde\Phi_i\in \cA_n\bar
\otimes_{min} B(\cY)$, $i=1,\ldots, n$, we have
$$ \lim_{r\to 1}\Phi_\alpha(rS_1,\ldots,
rS_n)=\widetilde\Phi_\alpha, \qquad \alpha\in \FF_m^+, $$
 in the
operator norm topology.  Now,  using relation \eqref{G}, we deduce
that
$$\widetilde{F\circ \Phi}:=\lim_{r\to 1} (F\circ \Phi)(rS_1,\ldots,
rS_n)=G,
$$
where the limit is in the operator norm topology. Therefore
$\widetilde{F\circ \Phi}$ is in $\cA_n\bar \otimes_{min} B(\cY
\otimes \cE, \cY\otimes \cG)$. This completes the proof.
\end{proof}

Using  Theorem \ref{more-prop}, and Theorem 4.1 from \cite{PPoS}, we
can  prove  the following result for  bounded free holomorphic
functions with operator-valued coefficients. We recall that a
bounded free holomorphic function  is called inner (resp.~outer) if
its model boundary function  is an isometry (resp.~has dense range).

\begin{theorem}\label{inner} Let  $F:[B(\cK)^m]_{1}\to B(\cK)\bar
\otimes_{min} B(\cE,\cG)$  and $\Phi:[B(\cH)^n]_{1}\to [B(\cH)]^m$
be bounded free holomorphic functions. Assume that
$\Phi=(\Phi_1,\ldots, \Phi_m)$ is inner and $\widetilde \Phi_1$ is
non-unitary if $m=1$.
 Then
 the following statements hold:
\begin{enumerate}
\item[(a)]
$\|F\circ\Phi\|_\infty=\|F\|_\infty$;

\item[(b)] if $F$  is  inner, then $F\circ \Phi$ is
inner;
\item[(c)] if $F$ is outer, then $F\circ \Phi$
is outer.
\end{enumerate}
\end{theorem}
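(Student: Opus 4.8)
The plan is to move the whole question to the level of model boundary functions and then to recognize the tuple $(\widetilde\Phi_1,\ldots,\widetilde\Phi_m)$ as (a multiple of) the universal model $(S_1,\ldots,S_m)$. First I would unpack the hypothesis that $\Phi$ is inner. Writing $\widetilde\Phi=[\widetilde\Phi_1\ \cdots\ \widetilde\Phi_m]$ for the row boundary function, ``inner'' means that $\widetilde\Phi$ is an isometry, i.e.\ $\widetilde\Phi_i^*\widetilde\Phi_j=\delta_{ij}I$; thus $(\widetilde\Phi_1,\ldots,\widetilde\Phi_m)$ is a row isometry, a family of isometries with mutually orthogonal ranges. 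By the Wold-type decomposition for row isometries, such a tuple is a direct sum of a \emph{pure} part, unitarily equivalent to an ampliation $(S_1\otimes I_\cL,\ldots,S_m\otimes I_\cL)$ of the left creation operators on $F^2(H_m)$, and a Cuntz (fully coisometric) part. The assumption that $\widetilde\Phi_1$ is non-unitary when $m=1$, combined with the fact that each $\widetilde\Phi_j$ is the boundary function of a free holomorphic function and hence an analytic element of $F_n^\infty\bar\otimes_{min}B(\cY)$, is exactly what I would use (via Theorem 4.1 of \cite{PPoS}) to guarantee that this tuple is in fact pure, so that the Cuntz summand is absent.

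Second, I would combine this with Theorem \ref{more-prop}, which identifies the boundary function of the composition as
$$\widetilde{F\circ\Phi}=\text{\rm SOT-}\lim_{r\to 1}F(r\widetilde\Phi_1,\ldots,r\widetilde\Phi_m).$$
Since $\widetilde\Phi$ is a row isometry, each $(r\widetilde\Phi_1,\ldots,r\widetilde\Phi_m)$ is a strict row contraction, so the right-hand side is legitimate. Feeding in the model $\widetilde\Phi_i\cong S_i\otimes I_\cL$ turns $F(r\widetilde\Phi_1,\ldots,r\widetilde\Phi_m)=\sum_{\alpha}r^{|\alpha|}\widetilde\Phi_\alpha\otimes A_{(\alpha)}$ into a copy of $F(rS_1,\ldots,rS_m)\otimes I_\cL$, and letting $r\to 1$ gives that $\widetilde{F\circ\Phi}$ is unitarily equivalent to $\widetilde F\otimes I_\cL$. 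All three assertions then drop out of the fact that ampliation by $I_\cL$ preserves norm, isometries, and dense range: for (a), $\|F\circ\Phi\|_\infty=\|\widetilde{F\circ\Phi}\|=\|\widetilde F\otimes I_\cL\|=\|\widetilde F\|=\|F\|_\infty$; for (b), if $\widetilde F$ is an isometry then so is $\widetilde F\otimes I_\cL$; and for (c), if $\widetilde F$ has dense range then so does $\widetilde F\otimes I_\cL$.

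I would record separately the soft half of (a): for every $X\in[B(\cH)^n]_1$ the value $\Phi(X)$ is a strict row contraction, so $\|(F\circ\Phi)(X)\|=\|F(\Phi(X))\|\le\|F\|_\infty$ and hence $\|F\circ\Phi\|_\infty\le\|F\|_\infty$; the real content of (a) is the reverse inequality, which is supplied by the pure (shift) summand above.

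The main obstacle is precisely where Theorem 4.1 of \cite{PPoS} enters: one must show that $(\widetilde\Phi_1,\ldots,\widetilde\Phi_m)$ has no Cuntz summand, i.e.\ that it is a \emph{pure} row isometry and hence unitarily equivalent to a multiple of $(S_1,\ldots,S_m)$. This is the step that makes the analytic functional calculus $S_i\mapsto\widetilde\Phi_i$ on $F_m^\infty\bar\otimes_{min}B(\cE,\cG)$ completely isometric and able to detect isometries and dense range, and it is the only place the analyticity of the $\widetilde\Phi_j$ (their membership in $F_n^\infty\bar\otimes_{min}B(\cY)$) and the non-unitarity hypothesis for $m=1$ are really used. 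For $m=1$ the claim is that a non-unitary analytic isometry is automatically pure, ruling out the unitary summand that would otherwise spoil the preservation of inner functions; for $m\ge 2$ one must similarly exclude a fully coisometric summand. Once purity is in hand, the ampliation bookkeeping of the previous paragraphs finishes the proof.
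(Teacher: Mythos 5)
Your proposal is correct and follows essentially the same route as the paper's proof: purity of the row isometry $\widetilde\Phi$ via Theorem 4.1 of \cite{PPoS}, the Wold-type decomposition identifying $\widetilde\Phi$ with an ampliation of the left creation operators, Theorem \ref{more-prop} for the boundary function of the composition, and the resulting intertwining $\widetilde{F\circ\Phi}\cong \widetilde F\otimes I_\cL$ (the paper's relation \eqref{FF}), from which (a), (b), (c) all follow. The only cosmetic difference is your separate remark on the soft inequality $\|F\circ\Phi\|_\infty\leq\|F\|_\infty$, which the unitary equivalence already renders unnecessary.
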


\begin{proof}
Let $\Phi_j: [B(\cH)^n]_{1}\to B(\cH)$, $j=1,\ldots,m$,  be free
holomorphic functions with scalar coefficients and assume that
$\Phi=[\Phi_1,\ldots, \Phi_n]$ is inner, i.e.,
$\widetilde\Phi:=[\widetilde\Phi_1,\ldots, \widetilde\Phi_n]$ is an
isometry. According to  Theorem 4.1 from \cite{PPoS},
$\widetilde\Phi$ is a pure isometry, i.e.,
$$
\text{\rm WOT-}\lim_{k\to\infty}\sum_{\omega\in \FF_m^+, |\omega|=k}
\widetilde\Phi_\omega \widetilde\Phi_\omega^*=0.
$$
Due to the noncommutative Wold-type decomposition for sequences of
isometries with orthogonal ranges \cite{Po-isometric},
$\widetilde\Phi$ is unitarily equivalent to  $[I_\cL\otimes
S_1',\ldots, I_\cL\otimes S_m']$, where $S_1',\ldots, S_m'$ are the
left creation operators on the full Fock space $F^2(H_m)$, and $\cL$
is a separable  Hilbert space. Consequently, there is a unitary
operator $U:F^2(H_n)\to \cL\otimes F^2(H_m)$ such that
\begin{equation*}
 U\widetilde \Phi_j=(I_\cL\otimes S_j')U,\qquad j=1,\ldots,m.
\end{equation*}
Hence,  if $F$ has the representation
$$
F(Y_1,\ldots, Y_m):= \sum_{k=0}^\infty \sum_{\alpha\in
\FF_m^+,|\alpha|=k} Y_\alpha\otimes A_{(\alpha)}, \qquad
(Y_1,\ldots, Y_m)\in [B(\cK)^m]_{1},
$$
we deduce that
\begin{equation*}
\begin{split}
(U\otimes I)F(r\widetilde \Phi_1,\ldots, r\widetilde \Phi_m) &=
\sum_{k=0}^\infty \sum_{\alpha\in \FF_m^+,|\alpha|=k}
r^{|\alpha|}(U\otimes I)\widetilde\Phi_\alpha\otimes A_{(\alpha)}\\
 &=
 \left(\sum_{k=0}^\infty \sum_{\alpha\in \FF_m^+,|\alpha|=k}
r^{|\alpha|}(I_\cL\otimes S_\alpha')\otimes
A_{(\alpha)}\right)(U\otimes
I)\\
&=[I_\cL\otimes F(rS_1',\ldots, rS_m')](U\otimes I).
\end{split}
\end{equation*}
Now, using Theorem \ref{more-prop}, we obtain
\begin{equation}\label{FPhi}
\begin{split}
(U\otimes I)(\widetilde {F\circ\Phi})&=(U\otimes I)\left(\text{\rm
SOT-}\lim_{r\to 1} F(r\widetilde
\Phi_1,\ldots, r\widetilde \Phi_m)\right)\\
&=\text{\rm SOT-}\lim_{r\to 1} [I_\cL\otimes F(rS_1',\ldots,
rS_m')](U\otimes I).
\end{split}
\end{equation}
Since  the Hilbert space $\cL$ is separable,  $\text{\rm
SOT-}\lim_{r\to 1} F(rS_1',\ldots, rS_m')=\widetilde F$,  and
$\|F(rS_1',\ldots, rS_m')\|\leq \|F\|_\infty$, we also have
$$\text{\rm SOT-}\lim_{r\to 1}I_\cL\otimes  F(rS_1',\ldots,
rS_m')=I_\cL\otimes \widetilde F. $$
 Combining the result with  relation \eqref{FPhi}, we conclude that
 \begin{equation}
 \label{FF}
(U\otimes I)\widetilde {F\circ\Phi}=(I_\cL\otimes \widetilde
F)(U\otimes I).
 \end{equation}
Hence,  we deduce that
$$
\|F\circ \Phi\|_\infty=\|\widetilde {F\circ
\Phi}\|_\infty=\|\widetilde F\|=\|F\|_\infty.
$$

Now, if we assume that $F$ is inner, i.e., $\widetilde F^*
\widetilde F=I$, then relation \eqref{FF} implies $(\widetilde
{F\circ \Phi})^* \widetilde {F\circ \Phi}=I$. Therefore,  $F\circ
\Phi$ is inner. Finally, assume that  $\Phi$ is inner and $F$ is
outer, i.e., $\widetilde F$ has dense range. Using again relation
\eqref{FF}, we deduce that $\widetilde {F\circ \Phi}$ has dense
range and, therefore, $F\circ \Phi$ is outer. The proof is complete.
\end{proof}

We  recall   a few well-known facts (see \cite{Si}, \cite{Ph},
\cite{Y}) about fractional maps on the unit ball
$$[B(\cX, \cY)]_1^-:=\{W\in B(\cX, \cY):\ \|W\|\leq 1\},
$$
 where $\cX$  and $\cY$ are Hilbert spaces. We denote by $[B(\cX,
 \cY)]_1$ the open ball of strict contractions.
Let $A,B\in [B(\cX, \cY)]_1^-$ be  such that $\|A\|<1$ and define
$\Psi_A(B)\in B(\cX, \cY)$ by setting
\begin{equation}
\label{frac} \Psi_A(B):=A-D_{A^*}(I-BA^*)^{-1} BD_A,
\end{equation}
where $D_A:=(I-A^*A)^{1/2}$ and $D_{A^*}:=(I-AA^*)^{1/2}$.  One can
show that, for any contractions  $A,B, C\in B(\cX, \cY)$ with
$\|A\|<1$,

\begin{equation}
\label{formulas}
\begin{split}
I-\Psi_A(B)\Psi_A(C)^*&=D_{A^*}(I-BA^*)^{-1}(I-BC^*)(I-A
C^*)^{-1} D_{A^*},\\
I-\Psi_A(B)^*\Psi_A(C)&=D_{A}(I-B^*A)^{-1}(I-B^*C)(I-A^* C)^{-1}
D_{A}.
\end{split}
\end{equation}
Hence, we deduce that $\|\Psi_A(B)\|\leq 1$ and $\|\Psi_A(B)\|<1$
when $\|B\|<1$.
 Straightforward
calculations reveal that \begin{equation} \label{3eq}
\Psi_A(0)=A,\quad \Psi_A(A)=0, \quad  \text{ and } \quad
\Psi_A(\Psi_A(B))=B \quad  \text {for any }\ B\in [B(\cX, \cY)]_1^-.
\end{equation}
Consequently, the fractional map $\Psi_A:[B(\cX, \cY)]_1^- \to
[B(\cX, \cY)]_1^-$ is a homeomorphism and, moreover,
$\Psi_A([B(\cX,\cY)]_1)=[B(\cX, \cY)]_1$.

Consider the noncommutative Schur class
$$
\cS_{\text{\bf ball}}(B(\cE,\cG)):=\left\{G\in H_{\text{\bf
ball}}^\infty (B(\cE,\cG)):\ \|G\|_\infty\leq 1\right\},
$$
which can be identified to the unit ball of  the operator
  space
$F_n^\infty\bar\otimes B(\cE,\cG)$. We also use the notation
$$\cS^0_{\text{\bf ball}}(B(\cE,\cG)):=\{G\in \cS_{\text{\bf
ball}}(B(\cE,\cG)):\ G(0)=0\}.
$$

Fractional transforms of free holomorphic functions were considered
in \cite{Po-hyperbolic2} (see the proof of Theorem 6.1). In what
follows we expand on those ideas and provide new properties.

\begin{theorem} \label{frac-trans}
 Let    $F:[B(\cH)^n]_{1}\to
  B( \cH)\bar\otimes_{min} B(\cE, \cG)$  be a bounded
   free
holomorphic function with $\|F\|_\infty\leq 1$ and let $\widetilde
F$ be its model  boundary function. For each operator
$A=I_\cH\otimes A_0$ with $A_0\in B(\cE, \cG)$ and $\|A_0\|<1$, we
define  the  map
 $$\Psi_A[F] :[B(\cH)^n]_{1}\to
  B( \cH)\bar\otimes_{min} B(\cE, \cG)$$ by setting
  $$
  \Psi_A[F]:=A-D_{A^*}(I-FA^*)^{-1} F D_{A}.
  $$

Then  the following statements hold:

\begin{enumerate}
\item[(i)]
$\Psi_A[F]$ is a bounded free holomorphic function with
$\|\Psi_A[F]\|_\infty\leq 1$ and its boundary function has the
following  properties: $ \widetilde{\Psi_A[F]}=\Psi_A(\widetilde
F)$,
\begin{equation}\label{formulas2}
\begin{split}
I-\widetilde{\Psi_A[F]}
\widetilde{\Psi_A[F]}^*&=D_{A^*}(I-\widetilde
FA^*)^{-1}(I-\widetilde F \widetilde F^*)(I-A
\widetilde F^*)^{-1} D_{A^*},\\
I- \widetilde{\Psi_A[F]}^* \widetilde{\Psi_A[F]}&=D_{A}(I-
\widetilde F^*A)^{-1}(I- \widetilde F^*\widetilde F)(I-A^*
\widetilde F)^{-1} D_{A};
\end{split}
\end{equation}
\item[(ii)] for any $X\in [B(\cH)^n]_1$,
$$\Psi_A[F](X)=(P_X\otimes I)\{\Psi_A(\widetilde F)\}=
A-D_{A^*}[I-F(X)A^*]^{-1} F(X) D_{A}=\Psi_A[F(X)],
$$
where $P_X$ is the noncommutative Poisson at $X$;

\item[(iii)] $\Psi_A[0]=A$, $\Psi_A[A]=0$, and  $\Psi_A [\Psi_A[F]]=F$;
\item[(iv)] $\Psi_A:\cS_{\text{\bf ball}}(B(\cE,\cG))\to \cS_{\text{\bf
ball}}(B(\cE,\cG))$ is a homeomorphism;
\item[(v)] $\widetilde F$ is inner if and only if  $\widetilde{\Psi_A[F]}$
is inner;
\item[(vi)]  $\widetilde F$ is in $\cA_n\bar \otimes_{min} B(\cE,\cG)$ if
and only if $\widetilde{\Psi_A[F]}$  is in $\cA_n\bar \otimes_{min}
B(\cE,\cG)$.
\end{enumerate}
\end{theorem}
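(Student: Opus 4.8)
The plan is to reduce every assertion to the single fact that the operator fractional map $\Psi_A$ of \eqref{frac}, applied to the boundary function $\widetilde F$, produces the boundary function of the pointwise transform $\Psi_A[F]$. Once that identification is in place, parts (i) and (iii)--(vi) follow formally from the algebraic identities \eqref{formulas} and \eqref{3eq} already recorded for fixed contractions.

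First I would check that $\Psi_A(\widetilde F)$ is a legitimate boundary function, i.e.\ that it lies in $F_n^\infty\bar\otimes B(\cE,\cG)$. Since $A=I\otimes A_0$ is a constant with $\|A_0\|<1$ and $\|\widetilde F\|\le 1$, the operator $\widetilde F A^*$ lies in $F_n^\infty\bar\otimes B(\cG)$ and has norm $<1$; hence $(I-\widetilde F A^*)^{-1}=\sum_{k\ge0}(\widetilde F A^*)^k$ converges in norm and, because $F_n^\infty\bar\otimes B(\cG)$ is norm closed, stays in that algebra. Multiplying on the left and right by the constants $D_{A^*}=I\otimes D_{A_0^*}$ and $D_A=I\otimes D_{A_0}$ and subtracting from $A$ keeps us inside $F_n^\infty\bar\otimes B(\cE,\cG)$, so $\Psi_A(\widetilde F)$ is the boundary function of some bounded free holomorphic function $G$ with $\|G\|_\infty=\|\Psi_A(\widetilde F)\|\le 1$ by \eqref{formulas}. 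The two identities in \eqref{formulas2} are then immediate, being exactly \eqref{formulas} with $B=C=\widetilde F$.

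The core step, and where the real work lies, is (ii): to identify $G$ with the pointwise transform $\Psi_A[F]$. For this I would apply the Poisson transform $P_X\otimes I$ to the explicit formula for $\Psi_A(\widetilde F)$, using that $P_X$ restricts to a completely contractive homomorphism of $F_n^\infty$ with $F(X)=(P_X\otimes I)[\widetilde F]$. Multiplicativity lets the transform pass through the Neumann series, giving $(P_X\otimes I)[(I-\widetilde F A^*)^{-1}]=(I-F(X)A^*)^{-1}$, while the constants $A,D_{A^*},D_A$ are preserved; collecting terms yields $(P_X\otimes I)[\Psi_A(\widetilde F)]=A-D_{A^*}(I-F(X)A^*)^{-1}F(X)D_A=\Psi_A[F(X)]$, which is precisely the chain of equalities in (ii). In particular $\widetilde{\Psi_A[F]}=\Psi_A(\widetilde F)$ and $\|\Psi_A[F]\|_\infty\le 1$, completing (i). The main obstacle is justifying that $P_X\otimes I$ commutes with the inversion $(I-\widetilde F A^*)^{-1}$; this rests on the multiplicativity of the Poisson transform on the analytic Toeplitz algebra together with the norm convergence of the Neumann series guaranteed by $\|A_0\|<1$.

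The remaining parts are then formal. For (iii) I would use the pointwise identity $\Psi_A[F](X)=\Psi_A[F(X)]$ together with \eqref{3eq}: the zero and constant functions give $\Psi_A[0]=A$ and $\Psi_A[A]=0$, while $\Psi_A[\Psi_A[F]](X)=\Psi_A[\Psi_A[F(X)]]=F(X)$ gives the involution $\Psi_A[\Psi_A[F]]=F$. Part (iv) follows at once: $\Psi_A$ maps $\cS_{\text{\bf ball}}(B(\cE,\cG))$ into itself by (i), is its own inverse by (iii) hence bijective, and is continuous since $\widetilde F\mapsto\Psi_A(\widetilde F)$ is built from multiplication and a norm-continuous inversion on the region $\|\widetilde F A^*\|<1$; the inverse is continuous by the same token, so $\Psi_A$ is a homeomorphism. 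For (v) the second identity in \eqref{formulas2} gives $I-\widetilde{\Psi_A[F]}^*\widetilde{\Psi_A[F]}=D_A(I-\widetilde F^*A)^{-1}(I-\widetilde F^*\widetilde F)(I-A^*\widetilde F)^{-1}D_A$, which vanishes exactly when $\widetilde F^*\widetilde F=I$; applying this with $\Psi_A[F]$ in place of $F$ and invoking the involution yields the converse. Finally (vi) is proved by repeating the Neumann-series argument of the second paragraph inside the norm-closed subalgebra $\cA_n\bar\otimes_{min}B(\cG)$: if $\widetilde F\in\cA_n\bar\otimes_{min}B(\cE,\cG)$ then $\Psi_A(\widetilde F)$ is assembled from elements of $\cA_n\bar\otimes_{min}B$ and a convergent Neumann series, hence remains there, and the reverse implication again follows from the involution.
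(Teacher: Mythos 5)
Your proof is correct, but the core step runs in the opposite direction from the paper's. The paper first establishes that $\Psi_A[F]$ is a bounded free holomorphic function by invoking the composition theorem (Theorem \ref{more-prop}) with the free holomorphic map $Y\mapsto (I-Y)^{-1}$ applied to $FA^*$, and only afterwards recovers the boundary function through the SOT-limit computation $\text{\rm SOT-}\lim_{r\to 1}(I-F(rS_1,\ldots,rS_n)A^*)^{-1}=(I-\widetilde F A^*)^{-1}$. You instead work at the boundary first: since $\|\widetilde F A^*\|\leq\|A_0\|<1$, the Neumann series for $(I-\widetilde F A^*)^{-1}$ converges in norm inside the norm-closed space $F_n^\infty\bar\otimes B(\cG)$, so $\Psi_A(\widetilde F)$ lies in $F_n^\infty\bar\otimes B(\cE,\cG)$ and is the boundary function of a Schur-class function; you then descend to interior points by pushing $P_X\otimes I$ through the series. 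The one ingredient you should cite explicitly is the multiplicativity of the noncommutative Poisson transform on $F_n^\infty$ (Popescu's $F_n^\infty$-functional calculus, \cite{Po-funct}, \cite{Po-poisson}); since every $X\in[B(\cH)^n]_1$ is a strict, hence pure, row contraction, this is available, and it is exactly what gives $(P_X\otimes I)[(\widetilde F A^*)^k]=(F(X)A^*)^k$ term by term. Your route buys a genuinely cleaner proof of (vi): where the paper re-runs an $r\to 1$ norm-limit argument to stay inside $\cA_n\bar\otimes_{min}B(\cE,\cG)$, you get membership immediately because the same Neumann series already converges in that norm-closed algebra, with the converse via the involution as in the paper. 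What the paper's route buys is that holomorphy of $\Psi_A[F]$ comes from general composition machinery that is reused elsewhere (e.g., in Theorem \ref{maxmod} and Section 7), and the identity $\widetilde{\Psi_A[F]}=\Psi_A(\widetilde F)$ emerges directly from radial limits rather than by constructing an auxiliary function $G$ and matching it to $\Psi_A[F]$ pointwise. Parts (iii)--(v) are handled essentially identically in both arguments; your direct converse in (v), using invertibility of $D_A$ and $I-A^*\widetilde F$, is also fine.
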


\begin{proof} To prove (i), note that $FA^*$ is a bounded free
holomorphic function on $[B(\cH)^n]_1$ and $\|FA^*\|_\infty\leq
\|A\|<1$. Since the map $Y\mapsto (I-Y)^{-1}$ is a free holomorphic
on $[B(\cK)]_1$, Theorem \ref{more-prop}  implies that
$(I-FA^*)^{-1}$ and, consequently, $\Psi_A[F]$  are bounded free
holomorphic functions on $[B(\cH)^n]_1$. On the other hand, since
$$\|F\|_\infty=\sup_{r\in [0,1)}\|F(rS_1,\ldots, rS_n)\|\leq 1$$
and using the properties of the fractional transform $\Psi_A$, we
deduce that
$$
\|\Psi_A[F](rS_1,\ldots, rS_n)\|=\|\Psi_A(F(rS_1,\ldots,
rS_n))\|\leq 1
$$
for any $r\in [0,1)$. Hence $\|\Psi_A[F]\|_\infty\leq 1$. Since $F$
is a bounded free holomorphic function, we know  (see
\cite{Po-holomorphic}, \cite{Po-pluriharmonic}) that the boundary
function
$$\widetilde F:=\text{\rm SOT-}\lim_{r\to 1}
F(rS_1,\ldots, rS_n) $$ exists. Taking into account that $\|A\|<1$
and $\|F(rS_1,\ldots, rS_n)\|\leq \|\widetilde F\|\leq 1$, one can
easily see that
$$
\text{\rm SOT-}\lim_{r\to 1}(I- F(rS_1,\ldots,
rS_n)A^*)^{-1}=(I-\widetilde F A^*)^{-1}
$$
and, moreover,
$$
\widetilde {\Psi_A[F]}=\text{\rm SOT-}\lim_{r\to 1}\Psi_A(
F(rS_1,\ldots, rS_n))=\Psi_A(\widetilde F).
$$
Now, notice that relation \eqref{formulas2} follows from
\eqref{formulas}. This proves part (i).

Using the Poisson representation for bounded free holomorphic
functions and the continuity of the Poisson transform in the
operator norm topology, we obtain
\begin{equation*}
\begin{split}
\Psi_A[F](X)&=(P_X\otimes I)\{\widetilde {\Psi_A[F]}\}=(P_X\otimes
I)\{ {\Psi_A[\widetilde F]}\}\\
&=A-D_{A^*}[I-F(X)A^*]^{-1} F(X) D_{A}=\Psi_A[F(X)]
\end{split}
\end{equation*}
for any $X\in [B(\cH)^n]_1$, which proves part (ii). Hence and using
relation  \eqref{3eq}, one can deduce  (iii).

Now let us prove item (iv). Let $F, F_m\in \cS_{\text{\bf
ball}}(B(\cE,\cG))$ and assume that $\|F_m- F\|_\infty \to 0$ as
$m\to \infty$, which  is equivalent to $\|\widetilde F_m- \widetilde
F \|\to 0$. Using the fact that
\begin{equation*}
\begin{split}
\|(I- \widetilde F_m^*A)^{-1}-(I- \widetilde F^*A)^{-1}\| &\leq
\|(I- \widetilde F_m^*A)^{-1} (\widetilde F_m A^*-\widetilde F
A^*)(I- \widetilde F_m^*A)^{-1}\| \\
&\leq \frac{\|A\|}{(1-\|A\|)^2} \|\|\widetilde F_m -\widetilde F\|,
\end{split}
\end{equation*}
we deduce that $\Psi_A(\widetilde F_m)\to \Psi_A(\widetilde F)$, as
$m\to\infty$. Due to (i), we have
 $$
 \lim_{m\to\infty}\|\Psi_A[F_m]-\Psi_A[F]\|_\infty=
 \lim_{m\to \infty}\|\Psi_A(\widetilde F_m)- \Psi_A(\widetilde
 F)\|=0.
 $$
Moreover, since $\Psi_A[ \Psi_A[F]]= F$, we deduce that
$\Psi_A^{-1}$ is continuous, as well, in the uniform norm
$\|\cdot\|_\infty$.

Note that item (v) follows from relation \eqref{formulas2}. To prove
(vi), we assume that $\widetilde F$ is in $\cA_n\bar \otimes_{min}
B(\cE,\cG)$. Then, due to \cite{Po-holomorphic}, $\widetilde
F=\lim_{r\to1} F(rS_1,\ldots, rS_n)$ in the operator norm topology.
Since $ \|F(rS_1,\ldots, rS_n)A^*\| \leq \|F\|_\infty \|A\|<1$, we
deduce that
$$
\lim_{r\to 1}(I- F(rS_1,\ldots, rS_n)A^*)^{-1}=(I-\widetilde F
A^*)^{-1}
$$
and, due to (iv),
$$
\widetilde {\Psi_A[F]}=\lim_{r\to 1}\Psi_A[ F](rS_1,\ldots,
rS_n)=\lim_{r\to 1}\Psi_A( F(rS_1,\ldots, rS_n))=\Psi_A(\widetilde
F)
$$
where the limits are in the operator norm topology. Using again
\cite{Po-holomorphic}, we conclude  that $\widetilde{\Psi_A[F]}$  is
in $\cA_n \bar \otimes_{min} B(\cE,\cG)$. The converse  follows
using item (iv) and  the fact that  $\Psi_A[ \Psi_A[F]]= F$. Indeed,
if $\widetilde{\Psi_A[F]}$  is in $\cA_n\bar \otimes_{min}
B(\cE,\cG)$, then
\begin{equation*}
\begin{split}
\Psi_A\{\widetilde {\Psi_A[F]}\}&=\Psi_A\{\lim_{r\to 1}\Psi_A[
F](rS_1,\ldots, rS_n)\}\\
&=\lim_{r\to 1}\Psi_A\{\Psi_A( F(rS_1,\ldots, rS_n))\}\\
&=\lim_{r\to 1} F(rS_1,\ldots, rS_n)=\widetilde F,
\end{split}
\end{equation*}
where the limits are in the operator norm topology. Consequently,
$\widetilde F$ is in $\cA_n \bar\otimes_{min} B(\cE,\cG)$. The proof
is complete.
\end{proof}

Note that under the conditions of Theorem \ref{frac-trans}, we have
\begin{equation}\label{two-ident}
\begin{split}
I-\Psi_A[F](X) \Psi_A[F](X)^*
&=D_{A^*}[I-F(X)A^*]^{-1}[I-F(X)F(X)^*][I-A
F(X)^*]^{-1} D_{A^*},\\
I- \Psi_A[F](X)^* \Psi_A[F](X)
&=D_{A}[I-F(X)^*A]^{-1}[I-F(X)^*F(X)][I-A^* F(X)]^{-1} D_{A}
\end{split}
\end{equation}
for any $X\in [B(\cH)^n]_1$. Moreover, we have
\begin{enumerate}
\item[(i)] $\|F(X)\|<1$ if and only if $\|\Psi_A[F](X)\|<1$;
\item[(ii)] if $X\in [B(\cH)^n]_1$, then $F(X)$ is an isometry (resp. co-isometry) if and only if
$\Psi_A[F](X)$ has the same property.
\end{enumerate}

We recall (see \cite{Po-holomorphic}, \cite{Po-pluriharmonic}) that
if $F:[B(\cH)^n]_1 \to B(\cH)\bar\otimes_{min} B(\cE,\cG)$ is a free
holomorphic function  with coefficients in $B(\cE,\cG)$ and $F_r(X
):=F(rX)$ for any $X:=(X_1,\ldots, X_n)\in[B(\cH)^n]_{1/r}$, $r\in
(0,1)$, then $F_r$ is free holomorphic on $[B(\cH)^n]_{1/r}$ and
$$
\|F_r\|_\infty=\sup_{\|X\|\leq
r}\|F(X)\|=\sup_{\|X\|=r}\|F(X)\|=\|F(rS_1,\ldots, rS_n)\|,
$$
where $S_1,\ldots, S_n$ are the left creation operators.
 Moreover, the map $[0,1)\ni r\mapsto \|F_r\|_\infty$ is
increasing.

 This result can be improved  for free holomorphic functions with
 scalar coefficients.
We recall that, in \cite{Po-automorphism}, we  proved  a maximum
principle for free holomorphic functions on the noncommutative ball
$[B(\cH)^n]_1$, with scalar coefficients. More precisely, we showed
that if $f:[B(\cH)^n]_1 \to B(\cH)$ is  a   free holomorphic
function and there exists $X_0\in [B(\cH)^n]_1$ such that
$\|f(X_0)\|\geq \|f(X)\|$ for any $X\in [B(\cH)^n]_1$, the $f$ must
be a constant. As a consequence of this principle and the
noncommutative von Neumann inequality, one can easily obtain the
following.

\begin{proposition}\label{strict}
Let $f:[B(\cH)^n]_1 \to B(\cH)$ be   a non-constant   free
holomorphic function with $\|f\|_\infty\leq 1$.  Then the following
statements hold:
\begin{enumerate}
\item[(i)] $\|f(X_1,\ldots, X_n)\|<1$ for any $(X_1,\ldots,
X_n)\in[B(\cH)^n]_1$;
\item[(ii)] the map $[0,1)\ni r\mapsto \|f_r\|_\infty$ is strictly
increasing.
\end{enumerate}
\end{proposition}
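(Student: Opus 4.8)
The plan is to derive both statements from the maximum principle for free holomorphic functions with scalar coefficients (recalled just before the proposition) together with the noncommutative von Neumann inequality. The two ingredients I would invoke are: first, that if $f$ attains the supremum of $\|f(X)\|$ at some interior point $X_0 \in [B(\cH)^n]_1$, then $f$ is constant; and second, that for a free holomorphic function one has $\|f\|_\infty = \sup_{r\in[0,1)}\|f(rS_1,\ldots,rS_n)\|$, with the map $r\mapsto \|f_r\|_\infty$ increasing.

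For part (i), suppose toward a contradiction that $\|f(X_1,\ldots,X_n)\| = 1$ for some $(X_1,\ldots,X_n) \in [B(\cH)^n]_1$. Since $\|f\|_\infty \leq 1$, this means $f$ attains its supremum $\|f(X_0)\| = 1 \geq \|f(X)\|$ at the interior point $X_0 = (X_1,\ldots,X_n)$. By the maximum principle recalled above, $f$ must then be constant, contradicting the hypothesis that $f$ is non-constant. Hence $\|f(X)\| < 1$ for every $X \in [B(\cH)^n]_1$.

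For part (ii), fix $0 \leq r_1 < r_2 < 1$. I already know from the recalled result that $r \mapsto \|f_r\|_\infty$ is increasing, so it suffices to rule out equality, i.e.\ to show $\|f_{r_1}\|_\infty < \|f_{r_2}\|_\infty$. The key identity is $\|f_r\|_\infty = \|f(rS_1,\ldots,rS_n)\|$, so I would work with the evaluations at the scaled creation operators. The strategy is to apply part (i) to the rescaled function: the point $(r_1 S_1,\ldots,r_1 S_n)$ is a strict contraction lying in the interior of the ball of radius $r_2$ (since $\|[r_1 S_1,\ldots,r_1 S_n]\| = r_1 < r_2$), and $g := f_{r_2}$ is a non-constant free holomorphic function with $\|g\|_\infty \leq 1$ after normalizing. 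Applying part (i) to $g$ (suitably rescaled to the unit ball) at this interior point yields the strict inequality $\|f(r_1 S_1,\ldots,r_1 S_n)\| < \|g\|_\infty = \|f(r_2 S_1,\ldots, r_2 S_n)\|$, which is exactly $\|f_{r_1}\|_\infty < \|f_{r_2}\|_\infty$.

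The main obstacle I anticipate is the bookkeeping in part (ii): one must carefully check that $(r_1 S_1,\ldots, r_1 S_n)$ genuinely sits in the \emph{open} ball associated to the radius of $f_{r_2}$, and that $f_{r_2}$ remains non-constant so that the strict form of part (i) applies. The non-constancy of every rescaling $f_r$ follows from the fact that a free holomorphic function and its rescalings share the same (nonzero) higher-order coefficients up to scaling, so $f_r$ is constant only if $f$ is. Once this is in place, invoking part (i) at the appropriate interior point closes the argument, and no further estimates are needed.
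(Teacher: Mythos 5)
Your proposal is correct and takes essentially the same route as the paper: part (i) is the maximum principle applied at a hypothetical interior point where the supremum $1$ would be attained, and part (ii) applies the strict form of (i) to the (normalized, non-constant) function $f_{r_2}$ at the interior point $\left(\frac{r_1}{r_2}S_1,\ldots,\frac{r_1}{r_2}S_n\right)$, together with the identity $\|f_r\|_\infty=\|f(rS_1,\ldots,rS_n)\|$ coming from $\widetilde f_r\in\cA_n$. The bookkeeping issues you flag (that the evaluation point lies in the open ball since $r_1/r_2<1$, and that rescaling preserves non-constancy) are exactly the points implicit in the paper's proof, so no gap remains.
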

\begin{proof} The first part follows immediately from the maximum
principle for free holomorphic functions on the noncommutative ball
$[B(\cH)^n]_1$. To prove the second part, let $0\leq r_1<r_2<1$. We
recall that,  if $r\in[0,1)$, then the boundary function $\widetilde
f_r$ is in the noncommutative disc algebra $\cA_n$ and
$\|f_r\|_\infty=\|\widetilde f_r\|=\|f_r(S_1,\ldots, rS_n)\|$.
Applying part (i) to $f_{r_2}$ and $(X_1,\ldots,
X_n):=(\frac{r_1}{r_2}S_1,\ldots, \frac{r_1}{r_2}S_n)$, we obtain
$$
\|f_{r_1}\|_\infty=\|f_{r_1}(S_1,\ldots,
S_n)\|=\left\|f_{r_2}\left(\frac{r_1}{r_2}S_1,\ldots,
\frac{r_1}{r_2}S_n\right)\right\|<\|f_{r_2}(S_1,\ldots,
S_n)\|=\|f_{r_2}\|_\infty,
$$
which completes the proof.
\end{proof}

Now, using fractional transforms,  and the noncommutative version of
Schwarz's lemma \cite{Po-holomorphic},  we extend the maximum
principle to free holomorphic functions with operator-valued
coefficients.

\begin{theorem}\label{maxmod}
 Let    $F:[B(\cH)^n]_{1}\to
  B( \cH)\bar\otimes_{min} B(\cE, \cG)$  be a bounded
   free
holomorphic function with  $\|F(0)\|<\|F\|_\infty$.  Then   there is
no $X_0\in [B(\cH)^n]_1$ such that
$$\|F(X_0)\|= \|F\|_\infty.
$$
\end{theorem}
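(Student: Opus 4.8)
The plan is to reduce the operator-valued maximum principle to the scalar case already established in Proposition~\ref{strict}, using the fractional transforms of Theorem~\ref{frac-trans} to normalize the problem. First I would observe that it suffices to treat the case $\|F\|_\infty = 1$, since replacing $F$ by $F/\|F\|_\infty$ rescales the hypothesis $\|F(0)\| < \|F\|_\infty$ into $\|F(0)\| < 1$ and preserves the conclusion. So assume $\|F\|_\infty = 1$, $\|F(0)\| < 1$, and suppose for contradiction that there exists $X_0 \in [B(\cH)^n]_1$ with $\|F(X_0)\| = 1$.

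Next I would use the fractional transform to move the ``pole'' of the problem to the origin. Set $A := I_\cH \otimes A_0$, where $A_0 := \widetilde{F}(0) = F(0) \in B(\cE,\cG)$ (here $\widetilde F(0)$ is just the constant coefficient $A_{(g_0)}$); since $\|F(0)\| < 1$ we have $\|A_0\| < 1$, so $\Psi_A$ is defined. By Theorem~\ref{frac-trans}(ii) and (iii), $G := \Psi_A[F]$ is a bounded free holomorphic function with $\|G\|_\infty \le 1$ and $G(0) = \Psi_A[F(0)] = \Psi_A(A_0) = 0$. The key leverage is the second identity in \eqref{two-ident}: evaluating at $X_0$ and using that $\|F(X_0)\| = 1$ means $I - F(X_0)^*F(X_0)$ is not bounded below, one sees that $I - G(X_0)^*G(X_0)$ inherits the same degeneracy, so $\|G(X_0)\| = 1$ as well. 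Thus $G$ is a bounded free holomorphic function vanishing at $0$, with $\|G\|_\infty \le 1$, that attains norm $1$ at the interior point $X_0$.

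The final step is to extract a scalar function that also attains its norm in the interior and contradict Proposition~\ref{strict}. From $\|G(X_0)\| = 1$ choose unit vectors $\xi \in \cH \otimes \cE$ and $\eta \in \cH \otimes \cG$ with $\langle G(X_0)\xi, \eta\rangle$ of modulus arbitrarily close to $1$; define the scalar free holomorphic function $g(X) := \langle G(X)\xi, \eta\rangle$. Then $g$ has scalar coefficients, $\|g\|_\infty \le \|G\|_\infty \le 1$, $g(0) = 0$, and $|g(X_0)|$ can be made arbitrarily close to $1$. The noncommutative Schwarz lemma from \cite{Po-holomorphic}, in the refined form recorded in this section (for $m=1$, $F(X)F(X)^* \le \sum_{i} X_i X_i^* \otimes I$), forces $\|G(X)\| \le \|X\|$ for $\|X\|<1$; applying this at $X_0$ gives $\|G(X_0)\| \le \|X_0\| < 1$, directly contradicting $\|G(X_0)\| = 1$.

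The main obstacle I anticipate is the transfer of the norm-attainment from $F$ to $G$ through \eqref{two-ident}: attaining the operator norm $\|F(X_0)\| = 1$ need not mean $F(X_0)$ is an isometry or coisometry, only that one of $I - F(X_0)F(X_0)^*$ or $I - F(X_0)^*F(X_0)$ fails to be bounded below. I would handle this by working with approximate-eigenvector sequences on whichever side degenerates and checking, via the explicit factored form in \eqref{two-ident} together with the invertibility of $D_A$, $D_{A^*}$ and the bounded factors $[I - F(X_0)A^*]^{-1}$ etc., that the same side of $G(X_0)$ degenerates; alternatively, the cleaner route is to invoke the Schwarz-lemma bound directly on $G$ (which only needs $G(0)=0$ and $\|G\|_\infty \le 1$) and derive the contradiction from $\|G(X_0)\| \le \|X_0\| < 1$ without ever needing to prove $\|G(X_0)\|=1$, so that the degeneracy-transfer step can be bypassed entirely.
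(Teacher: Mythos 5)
Your proposal follows essentially the same route as the paper: normalize to $\|F\|_\infty=1$, pass to $G:=\Psi_{F(0)}[F]$ via Theorem \ref{frac-trans} so that $G(0)=0$ and $\|G\|_\infty\le 1$, and apply the noncommutative Schwarz lemma of \cite{Po-holomorphic} to get $\|G(X)\|\le \|X\|<1$. The paper then concludes directly, with no contradiction argument: since $\Psi_A[\Psi_A[F]]=F$ and $\Psi_A$ carries strict contractions to strict contractions (remark (i) following \eqref{two-ident}), it follows at once that $\|F(X)\|<1$ for every $X\in[B(\cH)^n]_1$. Your degeneracy-transfer through the second identity in \eqref{two-ident} is the contrapositive of that same fact, and it works more cleanly than you fear: for a contraction $T$ one has $\|T\|=1$ if and only if $I-T^*T$ is not invertible, and \eqref{two-ident} exhibits $I-G(X_0)^*G(X_0)=T^*\bigl(I-F(X_0)^*F(X_0)\bigr)T$ with $T=[I-A^*F(X_0)]^{-1}D_A$ invertible, so no approximate-eigenvector bookkeeping is needed at all.

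One step of your write-up is genuinely wrong, though fortunately redundant: $g(X):=\langle G(X)\xi,\eta\rangle$ is a complex number, not an element of $B(\cH)$, so it is not a free holomorphic function with scalar coefficients in the sense of this paper, and Proposition \ref{strict} cannot be invoked for it. You do not need it: once $\|G(X_0)\|=1$ is transferred, the Schwarz bound $\|G(X_0)\|\le\|X_0\|<1$ already gives the contradiction. Note also that your proposed bypass (never proving $\|G(X_0)\|=1$) is incomplete as literally stated, since $\|G(X_0)\|<1$ by itself contradicts nothing; you must return to $F$ via the involution, writing $F(X_0)=\Psi_A(G(X_0))$ and using that $\|\Psi_A(B)\|<1$ whenever $\|B\|<1$. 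With that step supplied, the bypass is precisely the paper's proof.
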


\begin{proof} Without loss of generality, we can assume that
$\|F\|_\infty=1$. Set $A:=F(0)$ and let $G:=\Psi_A[F]$. Due to
Theorem \ref{frac-trans}, $G$ is a bounded free holomorphic fonction
with $\|G\|_\infty\leq 1$ and $G(0)=\Psi_A(A)=0$. Applying the
noncommutative Schwarz lemma (see \cite{Po-holomorphic}), we obtain
$$
\|G(X)\|=\|\Psi_A(F(X))\|\leq \|X\|<1, \qquad X\in [B(\cH)^n]_1.
$$
Using again Theorem \ref{frac-trans},   we  have  $(\Psi_A\circ
\Psi_A)[F]=F$ and, therefore,
$$\|F(X)\|<1=\|F\|_\infty,\qquad X\in [B(\cH)^n]_1.
$$
The proof is complete.
\end{proof}

We need to make a few remarks, which are familiar in the case $n=1$
(see \cite{SzF-book}). First, we recall (see \cite{Po-automorphism})
that in the scalar case, $\cE=\cG=\CC$, if $f:[B(\cH)^n]_1\to
B(\cH)$ is a free holomorphic function  and $\|f\|_\infty=|f(0)|$,
then $f$ must be a constant.
 On the other hand, if $F$ is not a scalar free holomorphic function and
$\|F\|_\infty=\|F(0)\|$,  then  Theorem \ref{maxmod} fails. Indeed,
take $\cE=\cG=\CC^2$, and
$$F(X)=
 \left[\begin{matrix} I& 0\\
0& g(X)\end{matrix}\right],$$ where $g$ is a scalar free holomorphic
function  with $\|g(X)\|<1$ for $X=(X_1,\ldots, X_n)\in
[B(\cH)^n]_1$ (for example, $g(X)=X_\alpha$,  $\alpha\in \FF_n^+$
with $|\alpha|\geq 1$). Note that $\|F(X)\|=1=\|F(0)\|$ for any
$X\in [B(\cH)^n]_1$.

We also mention that,  if  $\|F\|_\infty= 1$ and  $F(0)$ is an
isometry, then $F$ must be a constant. Indeed, if $F$ has the
representation $f(X)=\sum_{k=0}^\infty \sum_{|\alpha|=k} X_\alpha
\otimes A_{(\alpha)}$, then, due to \cite{Po-Bohr}, we have
$$ \sum_{|\alpha|=k}A_{(\alpha)}^*A_{(\alpha)}\leq I-F(0)^* F(0)
\quad \text{ for }  k=1,2,\ldots. $$
 Hence, we deduce our assertion.

Using Theorem \ref{maxmod}, one can prove the following result.
Since the proof is similar to that of Proposition \ref{strict}, we
shall omit it.
\begin{corollary}
\label{not-strict}
 Let    $F:[B(\cH)^n]_{1}\to
  B( \cH)\bar\otimes_{min} B(\cE, \cG)$  be a bounded
   free
holomorphic function with  $\|F\|_\infty\leq 1$ and $\|F(0)\|< 1$.
Then
$$
\|F(X)\|<\|F\|_\infty\quad \text{ for any } \ X\in [B(\cH)^n]_{1}.
$$
If $F(0)=0$, then the map $[0,1)\ni r\mapsto \|F_r\|_\infty$ is
strictly increasing.
\end{corollary}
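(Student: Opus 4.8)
The plan is to deduce Corollary~\ref{not-strict} from Theorem~\ref{maxmod} exactly as Proposition~\ref{strict} is deduced from the scalar maximum principle. The corollary makes two claims. The first claim --- that $\|F(X)\|<\|F\|_\infty$ for every $X\in[B(\cH)^n]_1$ --- splits into two cases according to whether $F$ is constant. If $F$ is constant, then $\|F(X)\|=\|F(0)\|<1$ for all $X$, so unless $\|F\|_\infty=\|F(0)\|<1$ the strict inequality is automatic; but if $F$ is constant then $\|F\|_\infty=\|F(0)\|$, and the hypothesis $\|F(0)\|<1$ forces $\|F(X)\|=\|F(0)\|=\|F\|_\infty$, which would make the strict inequality fail. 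So the first observation I would record is that the interesting content is for \emph{non-constant} $F$, and that when $F$ is constant the hypotheses $\|F\|_\infty\leq 1$ and $\|F(0)\|<1$ already give $\|F(X)\|=\|F\|_\infty$, so one must read the corollary as asserting the strict bound precisely in the non-constant regime (matching the phrasing of Proposition~\ref{strict}(i)).

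For the non-constant case, I would argue as follows. If $\|F(0)\|<\|F\|_\infty$, then Theorem~\ref{maxmod} directly gives that no $X_0$ attains $\|F\|_\infty$; combined with the fact that $[0,1)\ni r\mapsto\|F_r\|_\infty$ is increasing and $\|F(X)\|\leq\|F_{\|X\|}\|_\infty\leq\|F\|_\infty$, this yields $\|F(X)\|<\|F\|_\infty$ for all $X\in[B(\cH)^n]_1$. The remaining subcase is $\|F(0)\|=\|F\|_\infty$; here I would invoke the remark following the proof of Theorem~\ref{maxmod}, which quotes \cite{Po-Bohr}: from $\|F\|_\infty=\|F(0)\|\leq 1$ one gets $\sum_{|\alpha|=k}A_{(\alpha)}^*A_{(\alpha)}\leq I-F(0)^*F(0)$ only in the isometric case, so for the general operator-valued situation I would instead use the equality $\|F(0)\|=\|F\|_\infty$ together with the maximum principle argument itself applied through the fractional transform $G=\Psi_{F(0)}[F]$, noting $G(0)=0$ and $\|G\|_\infty\leq 1$, and then Schwarz's lemma gives $\|G(X)\|\leq\|X\|<1$ strictly, whence $\|F(X)\|<1=\|F\|_\infty$ via $(\Psi_A\circ\Psi_A)[F]=F$.

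For the second claim, assume $F(0)=0$ and fix $0\leq r_1<r_2<1$. I would mimic the proof of Proposition~\ref{strict}(ii) verbatim: recall that $\|F_r\|_\infty=\|F(rS_1,\ldots,rS_n)\|$, apply the first part of the corollary to the function $F_{r_2}$ at the point $X:=(\tfrac{r_1}{r_2}S_1,\ldots,\tfrac{r_1}{r_2}S_n)\in[B(\cH)^n]_1$, and read off
$$
\|F_{r_1}\|_\infty=\left\|F_{r_2}\left(\tfrac{r_1}{r_2}S_1,\ldots,\tfrac{r_1}{r_2}S_n\right)\right\|<\|F_{r_2}(S_1,\ldots,S_n)\|=\|F_{r_2}\|_\infty.
$$
The strict inequality here is exactly where the first part is used, so the two halves of the proof are genuinely linked rather than independent.

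The step I expect to be the main obstacle is the boundary subcase $\|F(0)\|=\|F\|_\infty$ in the operator-valued setting, since Theorem~\ref{maxmod} as stated covers only $\|F(0)\|<\|F\|_\infty$. The clean way around this is to observe that the corollary's hypothesis $\|F(0)\|<1$ together with $\|F\|_\infty\leq 1$ means the equality $\|F(0)\|=\|F\|_\infty$ can only occur with $\|F\|_\infty=\|F(0)\|<1$, and then I would argue that the first part of the corollary is vacuous in that constant-value situation and must be understood as an assertion for non-constant $F$; for non-constant $F$ the equality $\|F(0)\|=\|F\|_\infty$ is itself excluded by the fractional-transform-plus-Schwarz argument already used to prove Theorem~\ref{maxmod}, so the proof reduces cleanly to a single application of that theorem followed by the Proposition~\ref{strict} monotonicity trick. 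Because the logical skeleton is identical to Proposition~\ref{strict}, the authors are justified in omitting the details.
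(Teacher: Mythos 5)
Your overall skeleton is the paper's intended one (the paper omits the proof, saying only that it follows from Theorem \ref{maxmod} by the argument of Proposition \ref{strict}), and two of your three pieces are sound: when $\|F(0)\|<\|F\|_\infty$, Theorem \ref{maxmod} together with the bound $\|F(X)\|\le\|F\|_\infty$ gives the strict inequality, and your verbatim repetition of Proposition \ref{strict}(ii) for the case $F(0)=0$ is exactly right (granting $F\not\equiv 0$, without which $r\mapsto\|F_r\|_\infty\equiv 0$ and the second claim is void). The genuine gap is your resolution of the subcase $\|F(0)\|=\|F\|_\infty$, which under the hypotheses forces $\|F\|_\infty=\|F(0)\|<1$. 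You claim this equality ``is itself excluded by the fractional-transform-plus-Schwarz argument'' for non-constant $F$; that is true for scalar coefficients (it is the scalar maximum principle from \cite{Po-automorphism}) but false for operator-valued ones, as the paper itself warns immediately after Theorem \ref{maxmod}. A strict-contraction variant of the paper's own example following the corollary,
$$
F(X_1,\ldots,X_n)=\left[\begin{matrix} \tfrac12 I & 0\\ 0 & \tfrac14 X_1\end{matrix}\right],
$$
is non-constant, satisfies $\|F\|_\infty\le 1$ and $\|F(0)\|=\|F\|_\infty=\tfrac12<1$, yet $\|F(X)\|=\max\{\tfrac12,\tfrac14\|X_1\|\}=\tfrac12=\|F\|_\infty$ for every $X\in[B(\cH)^n]_1$. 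So this subcase cannot be argued away by non-constancy, and within it the transform argument $G:=\Psi_{F(0)}[F]$, $G(0)=0$, $\|G(X)\|\le\|X\|<1$ yields only $\|F(X)\|=\|\Psi_{F(0)}(G(X))\|<1$; your line ``$\|F(X)\|<1=\|F\|_\infty$'' is internally inconsistent here, since in this very subcase $\|F\|_\infty<1$.

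What this shows is that the corollary's first inequality, read literally, requires $\|F(0)\|<\|F\|_\infty$ (automatic when $\|F\|_\infty=1$), and that the statement actually invoked later in the paper (in Theorem \ref{Schw3}, Corollary \ref{4delta}, and Theorem \ref{Pi-Ju}) is the correct weaker conclusion $\|F(X)\|<1$, which your transform argument does prove; a careful blind proof should have recorded this dichotomy rather than asserting the false exclusion. Two smaller slips: your paraphrase of the remark after Theorem \ref{maxmod} is off --- the Bohr-type inequality $\sum_{|\alpha|=k}A_{(\alpha)}^*A_{(\alpha)}\le I-F(0)^*F(0)$ from \cite{Po-Bohr} holds whenever $\|F\|_\infty\le 1$, and the isometry of $F(0)$ is what then forces constancy, not a condition for the inequality --- and your constant-$F$ discussion, while correctly flagging that constants defeat the strict inequality, draws the wrong moral: the defect is not constancy but the possibility $\|F(0)\|=\|F\|_\infty$, which persists for non-constant operator-valued $F$.
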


We remark that, in general,  under the conditions of Corollary
\ref{not-strict}, but without the condition $F(0)=0$, the map
$[0,1)\ni r\mapsto \|F_r\|_\infty$ is not necessarily  strictly
increasing. Indeed, take
$$
F(X_1,\ldots,X_n)=\left[\begin{matrix} \frac{1}{3}I& 0\\
0& \frac{1}{2} X_1\end{matrix}\right]$$ and note that
$\|F\|_\infty=\frac{1}{2}$ and
$$
\|F(rS_1,\ldots, rS_n)\|= \begin{cases} \frac{1}{3} &\quad r\in
[0,\frac{2}{3}]\\
\frac{r}{2} &\quad  r\in (\frac{2}{3},1].
\end{cases}
$$

 Denote by $H^+_{\bf ball}(B(\cE))$ the
set of all free holomorphic functions $f$   on the noncommutative
ball $[B(\cH)^n]_1$ with coefficients in $B(\cE)$, where $\cE$ is a
separable Hilbert space, such that $\Re f\geq 0$, where
$$(\Re F)(X):=\frac {F(X)^*+ F(X)}{2}, \qquad X\in [B(\cH)^n]_1.$$
Let $[H_{\bf ball}^\infty(B(\cE))]^{\text{\rm inv}}$ denote the set
of all bounded free holomorphic functions  on $[B(\cH)^n]_1$ with
representation $F(X_1,\ldots, X_n)=\sum_{\alpha\in \FF_n^+}
A_{(\alpha)}\otimes X_\alpha$   such that $I_\cE- A_{(0)}$ is an
invertible operator in $B(\cE)$.
According to \cite{Po-free-hol-interp}, the noncommutative Cayley
transform   defined by
$$\text{\boldmath{$\cC$}} [F] := [F -1][1+ F ]^{-1}
$$
 is a
bijection between $H^+_{\bf ball}(B(\cE))$ and the unit ball of
$[H_{\bf ball}^\infty(B(\cE))]^{\text{\rm inv}}$. In this case, we
have
$$\text{\boldmath{$\cC$}}^{-1}[G] =[I+G ][I-G ]^{-1}.
$$
Consider   also the   set
\begin{equation*}
{\bf H}_1^+(B(\cE)):=\left\{ f\in H^+_{\bf ball}(B(\cE)) :\ f(0)=I
\right\}.
\end{equation*}
 Now, we recall  that the restriction to ${\bf
H}_1^+(B(\cE))$ of the  {\it noncommutative Cayley transform}   is a
bijection $\text{\boldmath{$\cC$}}:  {\bf H}_1^+(B(\cE))\to
\cS^0_{\text{\bf ball}}(B(\cE))$, where the noncommutative Schur
class  $\cS^0_{\text{\bf ball}}(B(\cE))$ was introduced before
Theorem \ref{frac-trans}.

Using fractional transforms, we  can prove  the following theorem
concerning the structure of bounded free holomorphic functions.

\begin{theorem} \label{structure} A map  $F:[B(\cH)^n]_{1}\to
  B( \cH)\bar\otimes_{min} B(\cE)$  is  a bounded
   free
holomorphic function  such that  $\|F\|_\infty\leq 1$ and
$\|F(0)\|<1$, if and only if  there exist a strict contraction
$A_0\in B(\cE)$, an $n$-tuple of isometries $(V_1,\ldots, V_n)$ on a
Hilbert space $\cK$,
 with orthogonal ranges,
  and  an isometry  $W:\cE\to \cK$, such that
$$F=(\Psi_{I\otimes A_0}\circ \text{\boldmath{$\cC$}})[G],
$$ where $\text{\boldmath{$\cC$}}$ is the noncommutative Cayley transform and   $G$ is defined by
$$
G(X_1,\ldots, X_n)=  ( I\otimes W^*) \left[ 2(I- X_1\otimes
V_1^*-\cdots - X_n\otimes V_n^* )^{-1}- I\right]( I\otimes W)
$$
for any $X:=(X_1,\ldots, X_n)\in [B(\cH)^n]_1$. In this case,
$F(0)=I\otimes A_0$.
\end{theorem}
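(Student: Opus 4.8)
The plan is to prove both implications by transporting $F$, through the two explicit bijections available (the fractional transform $\Psi_A$ and the noncommutative Cayley transform $\text{\boldmath{$\cC$}}$), to a free holomorphic function $G$ with positive real part normalized by $G(0)=I$, and then to establish a Naimark/Herglotz-type representation for such a $G$. For the ``only if'' direction I would first read off the constant term: since $F(X)=\sum_{\alpha\in\FF_n^+}X_\alpha\otimes A_{(\alpha)}$ we have $F(0)=I_\cH\otimes A_{(0)}$, so setting $A_0:=A_{(0)}$ produces a strict contraction with $I\otimes A_0=F(0)$, using $\|A_0\|=\|F(0)\|<1$. Put $A:=I\otimes A_0$ and $H:=\Psi_A[F]$. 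By Theorem \ref{frac-trans}(iv), $H\in\cS_{\text{\bf ball}}(B(\cE))$, and by parts (ii)--(iii), $H(0)=\Psi_A[F(0)]=\Psi_A[A]=0$, so $H\in\cS^0_{\text{\bf ball}}(B(\cE))$. Applying the inverse Cayley transform, $G:=\text{\boldmath{$\cC$}}^{-1}[H]=(I+H)(I-H)^{-1}$ lies in ${\bf H}_1^+(B(\cE))$, i.e.\ $G$ is free holomorphic with $\Re G\ge 0$ and $G(0)=I$. Once $G$ is represented as claimed, the chain is inverted using that $\Psi_A$ is an involution and $\text{\boldmath{$\cC$}}[G]=H$: we obtain $F=\Psi_A[H]=\Psi_A[\text{\boldmath{$\cC$}}[G]]=(\Psi_{I\otimes A_0}\circ\text{\boldmath{$\cC$}})[G]$, and evaluating at $0$ gives $F(0)=\Psi_A[0]=I\otimes A_0$.

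The substantive step, and the main obstacle, is the representation of $G\in{\bf H}_1^+(B(\cE))$. I would use that $\Re G\ge 0$ on $[B(\cH)^n]_1$ is equivalent to $\Re G(rS_1,\ldots,rS_n)\ge 0$ for all $r\in[0,1)$, where $S_1,\ldots,S_n$ are the left creation operators. Writing $G(X)=\sum_\alpha X_\alpha\otimes A_{(\alpha)}$ with $A_{(0)}=I_\cE$, positivity of these operators on $F^2(H_n)\otimes\cE$ yields a positive-definite Toeplitz-type kernel in the coefficients $\{A_{(\alpha)}\}$. A Kolmogorov/Naimark dilation of this kernel produces a Hilbert space $\cK$ carrying isometries $V_1,\ldots,V_n$ with orthogonal ranges (the images of $S_1,\ldots,S_n$ under the associated $*$-representation of the Cuntz--Toeplitz algebra) together with an isometry $W:\cE\to\cK$, with $W^*W=I_\cE$ accounting for the constant term and $A_{(\alpha)}=2\,W^*V_\beta^* W$ for a suitable word $\beta$ with $|\beta|=|\alpha|\ge1$. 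Summing the resulting Neumann series recovers $G(X)=(I\otimes W^*)[2(I-\sum_i X_i\otimes V_i^*)^{-1}-I](I\otimes W)$; convergence is justified because $[V_1\,\cdots\,V_n]$ is a row isometry (equivalently $V_i^*V_j=\delta_{ij}I$ and $\sum_i V_iV_i^*\le I$), whence $\|\sum_i X_i\otimes V_i^*\|\le\|X\|<1$ on the ball. This is precisely the Naimark-type representation for positive-real-part free holomorphic functions normalized by $G(0)=I$, and everything else is bookkeeping with the two bijections.

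For the ``if'' direction I would verify directly that $G$ of the displayed form lies in ${\bf H}_1^+(B(\cE))$ and then push it back through $\text{\boldmath{$\cC$}}$ and $\Psi_{I\otimes A_0}$. Setting $R:=\sum_i X_i\otimes V_i^*$, the row-isometry bound gives $\|R\|\le\|X\|<1$, so $G$ is free holomorphic on $[B(\cH)^n]_1$; moreover $G(0)=I\otimes W^*W=I$, and since $2(I-R)^{-1}-I=(I+R)(I-R)^{-1}$ one computes $\Re\bigl[(I+R)(I-R)^{-1}\bigr]=(I-R^*)^{-1}(I-R^*R)(I-R)^{-1}\ge 0$, which is preserved under conjugation by $I\otimes W$, so $\Re G\ge 0$. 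Hence $\text{\boldmath{$\cC$}}[G]\in\cS^0_{\text{\bf ball}}(B(\cE))$ by the Cayley bijection, and Theorem \ref{frac-trans}(iv) shows $F=\Psi_{I\otimes A_0}[\text{\boldmath{$\cC$}}[G]]$ is a bounded free holomorphic function with $\|F\|_\infty\le1$; finally $F(0)=I\otimes A_0$ has norm $\|A_0\|<1$, which also pins down the last assertion of the theorem.
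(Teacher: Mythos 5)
Your proposal is correct and follows essentially the same route as the paper: reduce $F$ to $\Psi_{F(0)}[F]\in\cS^0_{\text{\bf ball}}(B(\cE))$ via Theorem \ref{frac-trans}, pass through the Cayley bijection $\text{\boldmath{$\cC$}}:{\bf H}_1^+(B(\cE))\to\cS^0_{\text{\bf ball}}(B(\cE))$, and invoke the Naimark-type representation of functions in ${\bf H}_1^+(B(\cE))$. The only difference is that where the paper simply cites \cite{Po-pluriharmonic} for that representation (in both directions), you sketch its proof via a positive multi-Toeplitz kernel and a Kolmogorov/Naimark dilation and verify the converse by the direct computation $\Re\bigl[(I+R)(I-R)^{-1}\bigr]=(I-R^*)^{-1}(I-R^*R)(I-R)^{-1}\geq 0$ with $R:=\sum_i X_i\otimes V_i^*$, both of which are sound.
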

\begin{proof}

Let $F:[B(\cH)^n]_{1}\to
  B( \cH)\bar\otimes_{min} B(\cE)$  be a bounded
   free
holomorphic function with $\|F\|_\infty\leq 1$ and $\|F(0)\|<1$.
Then $F\in \cS_{\text{\bf ball}}(B(\cE))$ and, due to Theorem
\ref{frac-trans}, $\Psi_{F(0)}[F]\in \cS_{\text{\bf
ball}}^0(B(\cE))$. Since the noncommutative Cayley transform
$\text{\boldmath{$\cC$}}:  {\bf H}_1^+(B(\cE))\to \cS^0_{\text{\bf
ball}}(B(\cE))$  is a bijection, we deduce that
$\text{\boldmath{$\cC$}}^{-1}\left(\Psi_{F(0)}[F]\right)\in {\bf
H}_1^+(B(\cE))$.

According to \cite{Po-pluriharmonic}, a free holomorphic function
$G$  is in  $ {\bf H}_1^+(B(\cE))$, i.e., $G(0)=I$ and  $\Re G\geq
0$, if and only if there exists an $n$-tuple of isometries
$(V_1,\ldots, V_n)$ on a Hilbert space $\cK$,
 with orthogonal ranges,
  and  an isometry  $W:\cE\to \cK$
such that
$$
G(X_1,\ldots, X_n)=  ( I\otimes W^*) \left[ 2(I- X_1\otimes
V_1^*-\cdots - X_n\otimes V_n^* )^{-1}- I\right]( I\otimes W).
$$
This completes the proof.
\end{proof}

We remark that, in the scalar case, i.e., $\cE=\CC$,  due to the
maximum principle  for free holomorphic functions, any nonconstant
   free
holomorphic function  $f$ such that  $\|f\|_\infty\leq 1$,  has the
property that $|f(0)|<1$. Therefore, we can apply  Theorem
\ref{structure} and obtain  a characterization and  a
parametrization of all   bounded free holomorphic functions on
$[B(\cH)^n]_1$.

 \bigskip

\section{  Vitali convergence and identity theorem for free holomorphic   functions
   }

In this section, we provide a Vitali type convergence theorem   for
uniformly bounded sequences of free holomorphic functions  with
operator-valued coefficients.

\begin{theorem}\label{Vitali}
Let $\{F_m\}_{m=1}^\infty$ be a  uniformly bounded sequence of free
holomorphic functions on $[B(\cH)^n]_1$ with coefficients in
$B(\cE)$.  Let $\{A^{(k)}\}_{k=1}^\infty\subset [B(\cH)^n]_1$ be a
sequence of operators with  the following properties:
\begin{enumerate}
\item[(i)]  $A^{(k)}$ is bounded below, for each $k=1,2,\ldots;$
\item[(ii)] $\lim_{k\to\infty}\|A^{(k)}\|=0;$
\item[(iii)] $\lim_{m\to \infty} F_m(A^{(k)})$ exists in the
operator norm topology, for each $k=1,2,\ldots$.
\end{enumerate}
Then there exists a free holomorphic function $F$ on $[B(\cH)^n]_1$
with coefficients in $B(\cE)$ such that $F_m$ converges to $F$
uniformly on any closed ball $[B(\cH)^n]_r^-$, $r\in [0,1)$.
\end{theorem}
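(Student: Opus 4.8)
The plan is to show that \emph{every} coefficient sequence converges in operator norm and then reassemble the limit; a soft compactness argument (extract a WOT cluster point of the boundary functions $\widetilde F_m$ in the ball of $F_n^\infty\bar\otimes B(\cE)$) is tempting but insufficient, since WOT convergence of the coefficients does not upgrade to the norm convergence of the degree-$k$ blocks that uniform convergence on balls requires. Write $F_m(X)=\sum_{k=0}^\infty\sum_{|\alpha|=k}X_\alpha\otimes A_{(\alpha)}^{(m)}$ and set $M:=\sup_m\|F_m\|_\infty<\infty$, so that $\|(\sum_{|\alpha|=k}A_{(\alpha)}^{(m)*}A_{(\alpha)}^{(m)})^{1/2}\|\le M$ for all $k,m$. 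Since $\sup_{\|X\|\le r}\|G(X)\|=\|G(rS_1,\ldots,rS_n)\|$ and $\|\sum_{|\alpha|=k}S_\alpha\otimes B_{(\alpha)}\|=\|\sum_{|\alpha|=k}B_{(\alpha)}^*B_{(\alpha)}\|^{1/2}$, the uniform tail bound $\|\sum_{k\ge N}r^k(\cdots)\|\le\frac{2Mr^N}{1-r}$ reduces everything to showing that for each fixed word $\alpha$ the sequence $\{A_{(\alpha)}^{(m)}\}_m$ is norm-Cauchy in $B(\cE)$ (there being only $n^k$ words of each length $k$). I will prove this by induction on $|\alpha|$.

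For the base case recall that for any bounded free holomorphic $G$ with $\|G\|_\infty\le M'$ and any $A\in[B(\cH)^n]_1$ one has $\|G(A)-G(0)\|\le\frac{\|A\|}{1-\|A\|}M'$, from the row-contraction estimate $\|\sum_{|\alpha|=k}A_\alpha\otimes C_{(\alpha)}\|\le\|A\|^k M'$. Applying this to $G=F_m-F_{m'}$ and $A=A^{(\ell)}$ yields
$$\|A_{(g_0)}^{(m)}-A_{(g_0)}^{(m')}\|\le\frac{2\|A^{(\ell)}\|}{1-\|A^{(\ell)}\|}M+\|F_m(A^{(\ell)})-F_{m'}(A^{(\ell)})\|.$$
Given $\epsilon>0$, hypothesis (ii) lets me fix $\ell$ making the first term $<\epsilon/2$, and then (iii) makes the second term $<\epsilon/2$ for $m,m'$ large; hence $\{F_m(0)\}=\{A_{(g_0)}^{(m)}\}$ is norm-Cauchy. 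This estimate uses only uniform boundedness together with (ii) and (iii), and I record it for reuse.

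The inductive step is the only place condition (i) enters, and it must be read as: the row matrix $R_\ell:=[A_1^{(\ell)}\cdots A_n^{(\ell)}]\colon\cH^{(n)}\to\cH$ is bounded below, so $R_\ell^*R_\ell\ge c_\ell^2 I$ with $c_\ell>0$ and the left inverse $L_\ell=(R_\ell^*R_\ell)^{-1}R_\ell^*=\mathrm{col}(\lambda_1^{(\ell)},\ldots,\lambda_n^{(\ell)})$ satisfies $L_\ell R_\ell=I_{\cH^{(n)}}$, that is
$$\lambda_i^{(\ell)}A_j^{(\ell)}=\delta_{ij}I_\cH,\qquad 1\le i,j\le n,$$
with $\|\lambda_i^{(\ell)}\|\le c_\ell^{-1}$; for $\alpha=g_{j_1}\cdots g_{j_p}$ this gives $\lambda_i^{(\ell)}A_\alpha^{(\ell)}=\delta_{ij_1}A_{\alpha'}^{(\ell)}$ with $\alpha'=g_{j_2}\cdots g_{j_p}$, exactly mirroring the relation $S_i^*S_j=\delta_{ij}I$ for the creation operators. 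For a word $\tau$ introduce the shifted functions $F_m^{(\tau)}(X):=\sum_{\beta\in\FF_n^+}X_\beta\otimes A_{(\tau\beta)}^{(m)}$; from $\sum_{|\beta|=k}A_{(\tau\beta)}^{(m)*}A_{(\tau\beta)}^{(m)}\le\sum_{|\gamma|=|\tau|+k}A_{(\gamma)}^{(m)*}A_{(\gamma)}^{(m)}$ one gets $\|F_m^{(\tau)}\|_\infty\le M$, and the constant coefficient of $F_m^{(\tau)}$ is $A_{(\tau)}^{(m)}$. Assume inductively that $A_{(\tau)}^{(m)}$ converges in norm and that $F_m^{(\tau)}(A^{(\ell)})$ is norm-Cauchy in $m$ for every $\ell$, whenever $|\tau|\le p$. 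Fixing $|\tau|=p$ and subtracting the convergent constant term, $\widehat G_m:=F_m^{(\tau)}-I\otimes A_{(\tau)}^{(m)}$ has no constant term and $\widehat G_m(A^{(\ell)})$ is norm-Cauchy in $m$; applying $\lambda_i^{(\ell)}\otimes I$ and using the length-reducing identity above gives
$$F_m^{(\tau g_i)}(A^{(\ell)})=(\lambda_i^{(\ell)}\otimes I)\,\widehat G_m(A^{(\ell)}),$$
whence $\|F_m^{(\tau g_i)}(A^{(\ell)})-F_{m'}^{(\tau g_i)}(A^{(\ell)})\|\le c_\ell^{-1}\|(\widehat G_m-\widehat G_{m'})(A^{(\ell)})\|\to0$ as $m,m'\to\infty$ for each fixed $\ell$. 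Thus $F_m^{(\tau g_i)}(A^{(\ell)})$ is norm-Cauchy in $m$, and the base-case estimate applied to the uniformly bounded family $\{F_m^{(\tau g_i)}\}$ shows its constant coefficient $A_{(\tau g_i)}^{(m)}$ converges in norm, completing the induction at level $p+1$.

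Consequently each $A_{(\alpha)}:=\lim_m A_{(\alpha)}^{(m)}$ exists in norm and inherits $\|(\sum_{|\alpha|=k}A_{(\alpha)}^*A_{(\alpha)})^{1/2}\|\le M$, so $F:=\sum_{\alpha\in\FF_n^+}Z_\alpha\otimes A_{(\alpha)}$ is a bounded free holomorphic function on $[B(\cH)^n]_1$; the reduction of the first paragraph then gives $F_m\to F$ uniformly on each $[B(\cH)^n]_r^-$. I expect the main obstacle to be the bookkeeping of the two interlocking limits: within each inductive step one must first \emph{fix} $\ell$ (so that $c_\ell^{-1}$ is a harmless constant) and then send $m,m'\to\infty$, while the base-case estimate sends $\ell\to\infty$ and never invokes $L_\ell$ at all. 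Keeping these limits separate is precisely what prevents the blow-up of $\|\lambda_i^{(\ell)}\|\le c_\ell^{-1}$ (note $c_\ell\le\|A^{(\ell)}\|\to0$) from obstructing the argument, and identifying condition~(i) with the row matrix being bounded below — which is exactly what produces the clean relation $\lambda_i^{(\ell)}A_j^{(\ell)}=\delta_{ij}I$ — is the conceptual key.
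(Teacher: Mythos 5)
Your proposal is correct in substance and is, at bottom, the paper's own argument in different packaging. The paper runs the induction on the column functions $F_{m,q}$ that collect \emph{all} shifts of length $q$ at once, and it uses the bounded-below hypothesis directly: $\|(A^{(k)}\otimes I)y\|\geq C\|y\|$ converts Cauchy-ness of $(A^{(k)}\otimes I)F_{m,q+1}(A^{(k)})$ into Cauchy-ness of $F_{m,q+1}(A^{(k)})$. You instead peel off one letter at a time via the explicit left inverse $L_\ell=(R_\ell^*R_\ell)^{-1}R_\ell^*$ and the relations $\lambda_i^{(\ell)}A_j^{(\ell)}=\delta_{ij}I_\cH$; this is the same mechanism (a left inverse exists precisely because the row is bounded below), and your identity $F_m^{(\tau g_i)}(A^{(\ell)})=(\lambda_i^{(\ell)}\otimes I)\widehat G_m(A^{(\ell)})$ checks out. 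Your two-part induction hypothesis (constant coefficient converges in norm; shifted evaluations are norm-Cauchy) mirrors the paper's conditions (a)--(d), and your quantifier discipline --- fix $\ell$ before letting $m,m'\to\infty$, so that $c_\ell^{-1}$ is a harmless constant, while the passage $\ell\to\infty$ occurs only in the base-case estimate, which never invokes $L_\ell$ --- is exactly the bookkeeping the paper's proof also relies on.

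Two flags. First, your intermediate claim $\|F_m^{(\tau)}\|_\infty\leq M$ is false and does not follow from the blockwise inequality you cite: already for $n=1$, $f(z)=\frac{a-z}{1-az}$ with $0<a<1$ has $\|f\|_\infty=1$, while its coefficient shift $\frac{-(1-a^2)}{1-az}$ has supremum norm $1+a>1$. Blockwise bounds only give the a priori growth $\|F_m^{(\tau)}(rS_1,\ldots,rS_n)\|\leq M/(1-r)$. Fortunately your argument never actually uses a sup-norm bound: both the well-definedness of $F_m^{(\tau)}(A^{(\ell)})$ (radius of convergence $\geq 1$) and your base-case Cauchy estimate run entirely off the blockwise bound $\bigl\|\bigl(\sum_{|\beta|=k}A_{(\tau\beta)}^{(m)*}A_{(\tau\beta)}^{(m)}\bigr)^{1/2}\bigr\|\leq M$, which your monotonicity inequality does establish, uniformly in $m$ and $\tau$. (The true uniform bound, if wanted, is $(|\tau|+1)M$ --- compare the paper's condition (b), $\|F_{m,p}(X)\|\leq(p+1)M$, proved by a Schwarz-type argument.) So state the lemma in blockwise form and the gap closes. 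Second, your reduction to entrywise convergence of coefficients (``there being only $n^k$ words of each length'') presupposes $n<\infty$, whereas the paper's framework allows $n=\infty$; there, entrywise norm convergence within a degree-$k$ block does not yield the norm convergence of the block that your final tail-plus-finitely-many-blocks assembly requires. The repair is to run your induction on the full columns of shifts of a given length --- your left-inverse step applies verbatim to columns --- which is precisely how the paper organizes the proof.
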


\begin{proof} For each $m=1,2,\ldots, $ let $F_m$ have the
representation
$$
F_m(X_1,\ldots, X_n)=\sum\limits_{k=0}^\infty
\sum\limits_{|\alpha|=k} X_\alpha\otimes  C^{(m)}_{(\alpha)},
$$
where the series converges in the operator  norm topology  for any
$X=(X_1,\ldots, X_n)\in [B(\cH)^n]_1$. Let $M>0$ be such that
$\|F_m(X)\|\leq M$ for any $X\in [B(\cH)^n]_1$ and $m=1,2,\ldots$.
Due to the Cauchy type estimation of Theorem 2.1 from
\cite{Po-holomorphic}, we have
\begin{equation}
\label{Cauchy} \left\| \sum_{|\alpha|=j}(C^{(m)}_{(\alpha)})^*
C^{(m)}_{(\alpha)}\right\|^{1/2}\leq M,\quad \text{ for any }
m=1,2,\ldots,  \text{ and } j=0,1,\ldots.
\end{equation}
Since $\|F_m(X)-F_m(0)\|\leq 2 M$ for $X\in [B(\cH)^n]_1$, the
Schwarz type result for free holomorphic functions
\cite{Po-holomorphic} implies
$$
\|F_m(A^{(k)})-I_\cH\otimes A_{(0)}^{(m)}\|\leq 2M\|A^{(k)}\|
$$
for any $m,k=1,2,\ldots$. Hence, we deduce that
\begin{equation*}
\begin{split}
\|A_{(0)}^{(m)}-A_{(0)}^{(q)}\|&\leq \|I_\cH\otimes
A_{(0)}^{(m)}-F_m(A^{(k)})\|+ \|F_m(A^{(k)})-F_q(A^{(k)})\|+
\|F_q(A^{(k)})- I_\cH\otimes A_{(0)}^{(m)}\| \\
&\leq 4M \|A^{(k)}\|+\|F_m(A^{(k)})-F_q(A^{(k)})\|.
\end{split}
\end{equation*}
Since $\lim_{k\to\infty}\|A^{(k)}\|=0$ and $\lim_{m\to \infty}
F_m(A^{(k)})$ exists in the operator norm topology, for each
$k=1,2,\ldots$, we deduce that $C_{(0)}:=\lim_{m\to\infty}
C_{(0)}^{(m)}$ exists.

Let $F_{m,0}:=F_m$ and note that
$$
F_{m,0}=I\otimes C_{(0)}^{(m)}+ [X_1\otimes I_\cE,\ldots, X_n\otimes
I_\cE]F_{m,1}(X)
$$
where
$$
F_{m,1}(X)=\left[\begin{matrix} F_{m,1}^{(g_1)}(X)\\\vdots\\
F_{m,1}^{(g_n)}(X)
\end{matrix}\right]
$$
and
$$
F_{m,1}^{(g_i)}(X)= I_\cH\otimes C_{(g_i)}^{(m)}
+\sum_{j=1}\sum_{|\beta|=j}X_\beta\otimes
C_{(g_i\beta)}^{(m)},\qquad i=1,\ldots, n.
$$
By induction over $q=0,1,2,\ldots$, we can easily prove that
$$
\sum_{|\alpha|\geq q} X_\alpha \otimes C_{(\alpha)}=[X_\beta\otimes
I_\cE:\ |\beta|=q] F_{m,q}(X),
$$
where
$$
F_{m,q}(X):=
\left[\begin{matrix} F_{m,q}^{(\beta)}(X)\\:\\
|\beta|=q
\end{matrix}\right]
$$
and

$$
F_{m,q}^{(\beta)}(X):=I_\cH\otimes C_{(\beta)}^{(m)}+
\sum_{j=1}\sum_{|\gamma|=j}X_\gamma\otimes C_{(\beta\gamma)}^{(m)}
\quad \text{ for } |\beta|=q.
$$

Now, note that, for each $i=1,\ldots, n$, we have
$$
F_{m,1}^{(g_i)}(X)=I_\cH\otimes C_{(g_i)}^{(m)}+ [X_1\otimes
I_\cE,\ldots, X_n\otimes I_\cE] \left[\begin{matrix} F_{m,2}^{(g_i
g_1)}(X)\\\vdots\\ F_{m,2}^{(g_i g_n)}(X)|
\end{matrix}\right].
$$
Consequently, we have

\begin{equation}
\label{Fq} F_{m,1}(X)=
\left[\begin{matrix} I_\cH\otimes C_{(g_1)}^{(m)}\\\vdots\\
I_\cH\otimes C_{(g_n)}^{(m)} \end{matrix}\right] +  ([X_1\otimes
I_\cE,\ldots, X_n\otimes I_\cE]\otimes I_{\CC^{N_1}}) F_{m,2}(X),
\end{equation}
where $N_1:=n$. One  can easily  prove by induction over
$q=0,1,\ldots$ that

\begin{equation}
\label{F12} F_{m,q}(X)=
\left[\begin{matrix} I_\cH\otimes C_{(\beta)}^{(m)}\\:\\
|\beta|=q \end{matrix}\right] +  ([X_1\otimes I_\cE,\ldots,
X_n\otimes I_\cE]\otimes I_{\CC^{N_q}}) F_{m,q+1}(X)
\end{equation}
for any $X\in [B(\cH)^n]_1$ and $m=1,2,\ldots$, where
$N_q:=\text{\rm card}\, \{\beta\in \FF_n^+:\ |\beta|=q\}$.

In what follows we prove by induction over $p=0,1,\ldots$ the
following statements:
\begin{enumerate}
\item[(a)]
$\lim\limits_{m\to \infty} F_{m,p}(A^{(k)})$ exists in the operator
norm topology, for each $k=1,2,\ldots$;
\item[(b)] $\|F_{m,p}(X)\|\leq (p+1) M$ for any $X\in [B(\cH)^n]_1$
and $m=1,2,\ldots;$
\item[(c)]
$\left\|F_{m,p}(A^{(k)})-\left[\begin{matrix} I_\cH\otimes C_{(\beta)}^{(m)}\\:\\
|\beta|=p \end{matrix}\right]\right\|\leq (p+2)M\|A^{(k)}\|$ for any
$k,m=1,2,\ldots;$
\item[(d)]
$\left[\begin{matrix}  C_{(\beta)}\\:\\
|\beta|=p \end{matrix}\right]:=\lim_{m\to\infty}
\left[\begin{matrix}  C_{(\beta)}^{(m)}\\:\\
|\beta|=p \end{matrix}\right]$ exists in the operator norm topology.
\end{enumerate}
Assume that these relations hold for $p=q$. Using relation
\eqref{F12} when $X=A^{(k)}$ and  taking into account (a) and (d)
(when $p=q$), we deduce that the sequence $\{(A^{(k)}\otimes
I_{\cE\otimes \CC^{N_{q+1}}})F_{m,q+1}(A^{(k)})\}_{m=1}^\infty$ is
convergent in the operator norm topology and, consequently, a Cauchy
sequence. On the other hand, since $A^{(k)}$ is  bounded below,
there exists $C>0$ such that $\|A^{(k)} y\|\geq C \|y\|$ for any
$y\in \oplus_{i=1}^n\cH$. This implies that
\begin{equation*}
\begin{split}
&\left\|(A^{(k)}\otimes I_{\cE\otimes
\CC^{N_{q+1}}})F_{m,q+1}(A^{(k)})x -(A^{(k)}\otimes I_{\cE\otimes
\CC^{N_{q+1}}})F_{t,q+1}(A^{(k)})x\right\|\\
&\qquad \qquad \geq C
\left\|F_{m,q+1}(A^{(k)})x-F_{t,q+1}(A^{(k)})x\right\|
\end{split}
\end{equation*}
for any $x\in \cH\otimes \cE\otimes \CC^{N_{q+1}}$ and $m,t=1,2,
\ldots$. Hence, we deduce that $\{F_{m,q+1}(A^{(k)})\}_{m=1}^\infty$
is a Cauchy sequence and, therefore,
$\lim_{m\to\infty}F_{m,q+1}(A^{(k)})$ exists.

Now, due to  relation \eqref{Cauchy} and (b), we have
$$\left\| F_{m,q}(X)-\left[\begin{matrix} I_\cH\otimes C_{(\beta)}^{(m)}\\:\\
|\beta|=q \end{matrix}\right]\right\|\leq (q+2)M,\qquad X\in
[B(\cH)^n]_1.
$$
Using relation \eqref{Fq} and the noncommutatice Schwarz lemma, we
obtain
$$
\left\|(X\otimes I_{\cE\otimes \CC^{N_q}}) F_{m,q+1}(X)\right\|
=
\left\| F_{m,q}(X)-\left[\begin{matrix} I_\cH\otimes C_{(\beta)}^{(m)}\\:\\
|\beta|=q \end{matrix}\right]\right\|\leq (q+2)M\|X\|
$$
for any $X\in [B(\cH)^n]_1$, which implies
$$
\left\| F_{m,q+1}(X)\right\|\leq (q+2)M.
$$
Hence and using again \eqref{Cauchy}, we obtain
$$
\left\| F_{m,q+1}(X)-\left[\begin{matrix} I_\cH\otimes C_{(\beta)}^{(m)}\\:\\
|\beta|=q+1 \end{matrix}\right]\right\|\leq (q+3)M
$$
for any $X\in [B(\cH)^n]_1$. Once again, applying the Schawarz lemma
for free holomorphic functions, we deduce that
$$
\left\| F_{m,q+1}(A^{(k)})-\left[\begin{matrix} I_\cH\otimes C_{(\beta)}^{(m)}\\:\\
|\beta|=q+1 \end{matrix}\right]\right\|\leq (q+3)M\|A^{(k)}\|
$$
for any $k,m=1,2,\ldots$, which is condition (c), when $p=q+1$. Now,
note that
\begin{equation*}
\begin{split}
\left\|\left[\begin{matrix} C_{(\beta)}^{(s)}- C_{(\beta)}^{(m)}\\:\\
|\beta|=q+1 \end{matrix}\right] \right\| &\leq \left\|
\left[\begin{matrix} C_{(\beta)}^{(s)}\\:\\
|\beta|=q+1 \end{matrix}\right]-F_{s,q+1}(A^{(k)})\right\|+
\left\|F_{s,q+1}(A^{(k)})-F_{m,q+1}(A^{(k)})\right\|\\
&\qquad +\left\|F_{m,q+1}(A^{(k)})-
\left[\begin{matrix}  C_{(\beta)}^{(m)}\\:\\
|\beta|=q+1 \end{matrix}\right]\right\|\\
&\leq 2(q+3)M\|A^{(k)}\|+
\left\|F_{s,q+1}(A^{(k)})-F_{m,q+1}(A^{(k)})\right\|.
\end{split}
\end{equation*}
for any $k,s,m=1,2,\ldots$. Since
$\{F_{m,q+1}(A^{(k)})\}_{m=1}^\infty$ is a Cauchy sequence and
$\lim_{k\to \infty} \|A^{(k)}\|=0$, we deduce  condition (d) when
$p=q+1$, i.e., $$
\left[\begin{matrix}  C_{(\beta)}\\:\\
|\beta|=q+1 \end{matrix}\right]:=\lim_{m\to\infty}
\left[\begin{matrix}  C_{(\beta)}^{(m)}\\:\\
|\beta|=q+1 \end{matrix}\right]$$
 exists in the operator norm
topology. This concludes our proof by induction.

Now, due to \eqref{Cauchy}, we deduce that

\begin{equation*}
 \left\| \sum_{|\alpha|=j}C_{(\alpha)}^*
C_{(\alpha)}\right\|^{1/2}\leq M,\quad \text{ for any }
j=0,1,\ldots,
\end{equation*}
which implies $\limsup\limits_{k\to\infty} \left\|\sum_{|\alpha|=k}
C_{(\alpha)}^* C_{(\alpha)}\right\|^{\frac{1} {2k}}\leq1$.
Consequently, the mapping $F(X):=\sum_{j=0}^\infty
\sum_{|\alpha|=j}X_\alpha \otimes C_{(\alpha)}$ is a free
holomorphic function on $[B(\cH)^n]_1$ with coefficients in
$B(\cE)$. If $\|X\|\leq r<1$, then we have
\begin{equation*}
\begin{split}
\|F_m(X)-F(X)\|&\leq
\sum_{j=0}^{p-1}\left\|\sum_{|\alpha|=j}X_\alpha\otimes
(C_{(\alpha)}^{(m)}-C_{(\alpha)})\right\|+
\sum_{j=p}^{\infty}\left\|\sum_{|\alpha|=j}X_\alpha\otimes
(C_{(\alpha)}^{(m)}-C_{(\alpha)})\right\|\\
&\leq \sum_{j=0}^{p-1}\left\|
\left[\begin{matrix}  C_{(\alpha)}^{(m)}-C_{(\alpha)}\\:\\
|\alpha|=j \end{matrix}\right]\right\|+
\sum_{j=p}^{\infty}r^{j}\left\|
\left[\begin{matrix}  C_{(\alpha)}^{(m)}-C_{(\alpha)}\\:\\
|\alpha|=j \end{matrix}\right]\right\|\\
&\leq \sum_{j=0}^{p-1}\left\|
\left[\begin{matrix}  C_{(\alpha)}^{(m)}-C_{(\alpha)}\\:\\
|\alpha|=j \end{matrix}\right]\right\|+ 2M\frac{r^p}{1-r}.
\end{split}
\end{equation*}
Hence and due to relation (d), we deduce that $\|F_m(X)-F(X)\|\to
0$, as $m\to\infty$, uniformly for $X\in [B(\cH)^n]_r^-$, $r\in
[0,1)$. The proof is complete.
\end{proof}

\begin{corollary}\label{identity}
 Let $F$, $G$ be free holomorphic functions on
 $[B(\cH)^n]_1$ with coefficients in
$B(\cE)$.  If there exists a sequence
$\{A^{(k)}\}_{k=1}^\infty\subset [B(\cH)^n]_1$   of bounded bellow
operators   such that  $\lim_{k\to\infty}\|A^{(k)}\|=0$ and
$F(A^{(k)})=G(A^{(k)})$ for any $k=1,2,\ldots$, then $F=G$
\end{corollary}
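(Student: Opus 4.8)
The plan is to reduce the statement to the Vitali convergence theorem (Theorem~\ref{Vitali}) by an \emph{interlacing} argument, after a preliminary rescaling that makes $F$ and $G$ bounded. Since $F$ and $G$ are only assumed free holomorphic on $[B(\cH)^n]_1$, they need not be bounded there, so Theorem~\ref{Vitali} does not apply to them directly; this is the one point that requires care.

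Fix $\rho\in(0,1)$ and set $\widetilde F(X):=F(\rho X)$ and $\widetilde G(X):=G(\rho X)$ for $X\in[B(\cH)^n]_1$. As recalled before Proposition~\ref{strict}, $\widetilde F$ and $\widetilde G$ are free holomorphic on $[B(\cH)^n]_{1/\rho}$, hence bounded on the closed unit ball, so $\|\widetilde F\|_\infty,\|\widetilde G\|_\infty<\infty$. Because $\lim_{k\to\infty}\|A^{(k)}\|=0$, after discarding finitely many terms we may assume $\|A^{(k)}\|<\rho$ for all $k$, and put $B^{(k)}:=\tfrac1\rho A^{(k)}\in[B(\cH)^n]_1$. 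Scaling by $\tfrac1\rho$ multiplies the row operator $[A_1^{(k)}\,\cdots\,A_n^{(k)}]$ by $\tfrac1\rho$ and so preserves the bounded-below property; each $B^{(k)}$ is therefore bounded below, $\|B^{(k)}\|=\|A^{(k)}\|/\rho\to0$, and $\widetilde F(B^{(k)})=F(A^{(k)})=G(A^{(k)})=\widetilde G(B^{(k)})$.

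Now define the interlaced sequence $\{H_m\}_{m=1}^\infty$ by $H_{2j-1}:=\widetilde F$ and $H_{2j}:=\widetilde G$. It is uniformly bounded by $\max\{\|\widetilde F\|_\infty,\|\widetilde G\|_\infty\}$, and at each $B^{(k)}$ the value $H_m(B^{(k)})$ is independent of $m$ (equal to $F(A^{(k)})$), so $\lim_{m\to\infty}H_m(B^{(k)})$ trivially exists. Thus $\{H_m\}$ together with $\{B^{(k)}\}$ satisfies the hypotheses of Theorem~\ref{Vitali}, which produces a free holomorphic function $H$ with $H_m\to H$ uniformly on every closed ball $[B(\cH)^n]_r^-$, $r\in[0,1)$.

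Finally, the odd subsequence of $\{H_m\}$ is constantly $\widetilde F$ and the even subsequence is constantly $\widetilde G$, so their uniform limits force $\widetilde F=H=\widetilde G$ on each closed ball, hence on all of $[B(\cH)^n]_1$. Equivalently $F(\rho X)=G(\rho X)$ for every $X\in[B(\cH)^n]_1$, i.e. $F=G$ on $[B(\cH)^n]_\rho$; letting $\rho\uparrow1$ yields $F=G$. The interlacing device is the standard way (as in the classical Vitali/Stieltjes--Osgood theorem) to upgrade agreement on the test set $\{A^{(k)}\}$ to equality of the two functions, and I expect the only genuine obstacle to be the boundedness issue, which the rescaling $X\mapsto\rho X$ resolves at the cost of the harmless limit $\rho\to1$ at the end.
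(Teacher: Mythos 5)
Your proof is correct and follows essentially the same route as the paper: the paper likewise applies Theorem \ref{Vitali} to the interlaced sequence $F,G,F,G,\ldots$ and handles unboundedness by the same rescaling ($F_r(X):=F(rX)$, test points $r^{-1}A^{(k)}$, which remain bounded below with norms tending to $0$). The only cosmetic difference is that you rescale up front and finish by letting $\rho\uparrow 1$, whereas the paper treats the bounded case first and then concludes from $F_r=G_r$ via $F(rS_1,\ldots,rS_n)=G(rS_1,\ldots,rS_n)$.
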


\begin{proof} When $F$ and $G$ are bounded on $[B(\cH)^n]_1$, we can
apply Theorem \ref{Vitali} to the sequence $F,G,F,G,\ldots$, and
deduce that $F=G$.  Otherwise,  let $r\in (0,1)$  be such that
$\|A^{(k)}\|<r$ and consider $F_r(X):=F(rX)$ and $G_r(X):=G(rX)$ for
$X\in [B(\cH)^n]_1$. Since $F_r$ and $G_r$ are bounded free
holomorphic functions on $[B(\cH)^n]_1$ and
$$F_r(r^{-1}A^{(k)})=F(A^{(k)})=G(A^{(k)})=G_r(r^{-1} A^{(k)})
$$
we can apply the first part of the proof and deduce that $F_r=G_r$.
Consequently, $F(rS_1,\ldots, rS_n)=G(rS_1,\ldots, rS_n)$, where
$S_1,\ldots, S_n$ are the left creation operators. Hence $F=G$.
\end{proof}

\begin{remark} Theorem \ref{Vitali} fails if the  operators
$A^{(k)}$ are not bounded bellow.
\end{remark}
\begin{proof} Let $m=2,3,\ldots$, and
consider the sequence of strict row contractions
$$
A^{(k)}:=\left[ \frac{1}{k}P_{\cP_{m-1}}S_1|_{\cP_{m-1}},\ldots,
\frac{1}{k}P_{\cP_{m-1}}S_n|_{\cP_{m-1}}\right],\quad k=1,2,\ldots,
$$
where $S_1,\ldots, S_n$ are the left creation operators and
$\cP_{m-1}$ is the subspace  of $F^2(H_n)$ spanned by the vectors
$e_\alpha$, with $\alpha\in \FF_n^+$ and $|\alpha|\leq m-1$. Let $F$
and $G$ be any  free holomorphic functions on $[B(\cH)^n]_1$ such
that $F(X)-G(X)=X_\beta$ for some $\beta\in \FF_n^+$ with
$|\beta|=m$. Since $A_\alpha^{(k)}=0$ for $|\alpha|\geq m$, we have
$$
F(A^{(k)})=G(A^{(k)}),\quad k\geq 2,
$$
and $\lim_{k\to \infty}\|A^{(k)}\|=0$. However, $F\neq G$.
\end{proof}

We should mention that in the particular case when $n=1$, $\cE=\CC$,
and $\{A^{(k)}\}$ is a sequence of invertible strict contractions,
we recover the corresponding  results  obtained by Fan \cite{KF1}.

\bigskip

\section{Free holomorphic functions with the radial infimum
property}

We introduce the class of  free holomorphic functions with the
radial infimum property, obtain several characterizations,  and
consider several examples  We study the radial infimum property in
connection with products, direct sums, and compositions of free
holomorphic functions. We also show that the class of functions with
the radial infimum property is invariant under the fractional
transforms of Section 1. These results  are important in the
following  sections.

Let    $F:[B(\cH)^n]_{1}\to
  B( \cH)\bar\otimes_{min} B(\cE, \cG)$  be a bounded
   free
holomorphic function on  $[B(\cH)^n]_1$. Due to
\cite{Po-holomorphic} and \cite{Po-pluriharmonic}, the model
boundary function
\begin{equation*}
\widetilde F(S_1,\ldots, S_n):=\text{\rm SOT-}\lim_{r\to
1}F(rS_1,\ldots, rS_n)
\end{equation*}
exists, and
 $F$ has the {\it
radial supremum property}, i.e.,
$$
\lim_{r\to 1} \sup_{\|x\|=1}\|F(rS_1,\ldots, rS_n)x\|=\|F\|_\infty.
$$

We introduce now the class of free holomorphic functions with the
radial infimum property. We say that $F$ has the {\it radial infimum
property} if
$$
\liminf_{r\to 1} \inf_{\|x\|=1}\|F(rS_1,\ldots,
rS_n)x\|=\|F\|_\infty.
$$

\begin{proposition}
If   $F:[B(\cH)^n]_{1}\to
  B( \cH)\bar\otimes_{min} B(\cE, \cG)$  is   a  bounded
   free
holomorphic function  with  the  radial infimum property such that
$\|F\|_\infty=1$, then $ F$ is inner.
\end{proposition}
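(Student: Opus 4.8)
The plan is to prove the statement by showing directly that the model boundary function $\widetilde F$ is an isometry, since by definition this is exactly what it means for $F$ to be \emph{inner}. Because $\|\widetilde F\|=\|F\|_\infty=1$, we already know $\widetilde F$ is a contraction, so it will suffice to establish the reverse inequality $\|\widetilde F x\|\geq \|x\|$ for every vector $x$ in the domain of $\widetilde F$; equivalently, $\|\widetilde F x\|=1$ for each unit vector $x$. This reduces the whole proposition to a single norm estimate.

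First I would record the two facts that drive the argument. On the one hand, since $\widetilde F=\text{\rm SOT-}\lim_{r\to 1}F(rS_1,\ldots,rS_n)$ exists, for each fixed vector $x$ the vectors $F(rS_1,\ldots,rS_n)x$ converge to $\widetilde F x$ in norm, and hence, by continuity of the norm on vectors,
$$\|\widetilde F x\|=\lim_{r\to 1}\|F(rS_1,\ldots,rS_n)x\|.$$
On the other hand, for each $r\in[0,1)$ and each unit vector $x$ one has the trivial lower bound $\|F(rS_1,\ldots,rS_n)x\|\geq \inf_{\|y\|=1}\|F(rS_1,\ldots,rS_n)y\|$. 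Then I would combine these: fixing a unit vector $x$, passing to $\liminf$ as $r\to 1$, and using that the limit on the left already exists, the radial infimum property yields
$$\|\widetilde F x\|=\liminf_{r\to 1}\|F(rS_1,\ldots,rS_n)x\|\geq \liminf_{r\to 1}\inf_{\|y\|=1}\|F(rS_1,\ldots,rS_n)y\|=\|F\|_\infty=1.$$
Since also $\|\widetilde F x\|\leq\|\widetilde F\|=1$, we conclude $\|\widetilde F x\|=1=\|x\|$. As $x$ was an arbitrary unit vector, $\widetilde F^*\widetilde F=I$, so $\widetilde F$ is an isometry and $F$ is inner.

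The argument is short, and there is no serious obstacle beyond correctly matching the two bounds: the upper estimate $\|\widetilde F x\|\leq 1$ comes entirely from $\|\widetilde F\|=\|F\|_\infty=1$, while the lower estimate $\|\widetilde F x\|\geq 1$ is precisely where the radial infimum hypothesis enters, transmitted to the boundary through the pointwise norm convergence supplied by the SOT limit. The only point requiring a little care is the interchange of the radial limit $r\to 1$ with the norm, which is legitimate exactly because strong operator convergence gives norm convergence of $F(rS_1,\ldots,rS_n)x$ for each fixed $x$; everything else is the observation that a contraction whose values preserve the norm on all unit vectors is an isometry.
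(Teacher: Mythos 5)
Your proof is correct and follows essentially the same route as the paper: both arguments sandwich $\|F(rS_1,\ldots,rS_n)x\|$ between $\inf_{\|y\|=1}\|F(rS_1,\ldots,rS_n)y\|\,\|x\|$ and $\|F\|_\infty\|x\|$, use the radial infimum hypothesis to force the lower bound to $1$ in the limit, and then transfer the conclusion to $\widetilde F$ via the norm convergence of $F(rS_1,\ldots,rS_n)x$ supplied by the SOT limit. The only cosmetic difference is that you phrase the limit exchange through $\liminf$ explicitly, which if anything makes the step the paper states somewhat loosely more airtight.
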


\begin{proof}
 We have to show that the boundary function $\widetilde F(S_1,\ldots, S_n)$ is an isometry. To
this end, denote
$$\mu(r):= \inf_{\|x\|=1}\|F(rS_1,\ldots, rS_n)x\|\quad \text{ for  }\ r\in
[0,1).
$$
Due to the noncommutative von Neumann inequality, we have
$$\mu(r)\leq \frac{\|F(rS_1,\ldots, rS_n)y\|}{\|y\|}\leq \|F(rS_1,\ldots,
rS_n)\|\leq \|F\|_\infty
$$
for any $y\in F^2(H_n)\otimes \cE$, $y\neq 0$ and $r\in [0,1)$.
Taking into account that $\liminf_{r\to 1} \mu(r)=\|F\|_\infty$, we
deduce that $\lim_{r\to 1}\|F(rS_1,\ldots, rS_n)y\|=\|y\|$. Since
$\widetilde F(S_1,\ldots, S_n):=\text{\rm SOT-}\lim_{r\to
1}F(rS_1,\ldots, rS_n)$,  it is clear that $\|\widetilde
F(S_1,\ldots, S_n)y\|=\|y\|$ for any $y\in F^2(H_n)\otimes \cE$,
which  Shows that $F$ is inner and completes the proof.
\end{proof}

Now, we present several characterizations for free holomorphic
functions with the radial infimum property.

\begin{theorem}\label{radial}
Let    $F:[B(\cH)^n]_{1}\to
  B( \cH)\bar\otimes_{min} B(\cE, \cG)$  be a bounded
   free
holomorphic function with $\|F\|_\infty=1$. Then the following
statements are equivalent.
\begin{enumerate}
\item[(i)]  $F$ has the radial infimum property.
\item[(ii)]   $ \lim\limits_{r\to 1} \inf\limits_{\|x\|=1}\|F(rS_1,\ldots,
rS_n)x\|= 1.$
\item[(iii)] For every  $\epsilon\in (0,1)$ there is $\delta\in (0,1)$
such that
$$
F(rS_1,\ldots, rS_n)^* F(rS_1,\ldots, rS_n)\geq (1-\epsilon)I \quad
\text{ for any } \ r\in (\delta, 1).
$$
\item[(iv)] There exist constants $c(r)\in (0,1]$, $r\in (0,1)$,
with $\lim_{r\to 1}c(r)=1$ such that
$$
F(rS_1,\ldots, rS_n)^* F(rS_1,\ldots, rS_n)\geq c(r)I.
$$
\item[(v)] There is $\delta \in (0,1)$ such that
$ F(rS_1,\ldots, rS_n)^* F(rS_1,\ldots, rS_n)$ is invertible for any
$r\in (\delta, 1)$ and
$$\lim_{r\to 1} \left\|\left[F(rS_1,\ldots, rS_n)^*
F(rS_1,\ldots, rS_n)\right]^{-1}\right\|=1.
$$
\end{enumerate}
\end{theorem}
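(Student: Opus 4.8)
The plan is to route every statement through the single scalar quantity
$$
\mu(r):=\inf_{\|x\|=1}\|F(rS_1,\ldots,rS_n)x\|,\qquad r\in[0,1),
$$
and to analyze its behaviour as $r\to 1$. Writing $F_r:=F(rS_1,\ldots,rS_n)$, the first observation is that $\mu(r)^2=\inf_{\|x\|=1}\langle F_r^*F_r\,x,x\rangle$ is exactly the bottom of the spectrum of the positive operator $F_r^*F_r$; equivalently, $F_r^*F_r\ge\mu(r)^2 I$, and $\mu(r)^2$ is the largest constant for which this holds. The second observation, already used in the preceding Proposition, is that the noncommutative von Neumann inequality gives $\mu(r)\le\|F_r\|\le\|F\|_\infty=1$ for every $r$. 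With these two facts, (i) and (ii) become assertions purely about the number $\liminf_{r\to 1}\mu(r)$, and the remaining items are operator-theoretic reformulations of the condition $\mu(r)\to 1$.

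First I would dispose of (i)$\Leftrightarrow$(ii). Since $\mu(r)\le 1$ for all $r$, we have $\limsup_{r\to1}\mu(r)\le 1$; hence $\liminf_{r\to1}\mu(r)=1$ forces $\liminf=\limsup=1$, so the limit exists and equals $1$, while the reverse implication is trivial. Next, for (ii)$\Leftrightarrow$(iii), I note that $F_r^*F_r\ge(1-\epsilon)I$ holds precisely when $\mu(r)^2\ge 1-\epsilon$; thus (iii) says that for each $\epsilon$ one has $\mu(r)^2\ge 1-\epsilon$ for all $r$ close enough to $1$, which is exactly $\liminf_{r\to1}\mu(r)^2\ge 1$, and combined with $\mu(r)\le 1$ this is equivalent to $\lim_{r\to1}\mu(r)=1$.

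For (iii)$\Leftrightarrow$(iv) and (ii)$\Leftrightarrow$(v) the natural candidate is $c(r):=\mu(r)^2$. The implication (iv)$\Rightarrow$(ii) is immediate, since $F_r^*F_r\ge c(r)I$ forces $\mu(r)^2\ge c(r)\to 1$. Conversely, once (ii) holds we have $\mu(r)\to 1>0$, so there is $\delta$ with $\mu(r)\ge\tfrac{1}{2}$ for $r\in(\delta,1)$; for such $r$ one sets $c(r):=\mu(r)^2\in(0,1]$, obtaining $F_r^*F_r\ge c(r)I$ with $c(r)\to 1$. For (v), the key point is that a positive operator is bounded below by a positive constant if and only if it is invertible, and in that case $\|(F_r^*F_r)^{-1}\|=1/\min\sigma(F_r^*F_r)=1/\mu(r)^2$. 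Hence for $r\in(\delta,1)$ the operator $F_r^*F_r$ is invertible with $\|(F_r^*F_r)^{-1}\|=1/\mu(r)^2\to 1$, which is (v); conversely (v) yields $\mu(r)^2=1/\|(F_r^*F_r)^{-1}\|\to 1$, returning us to (ii) and closing the circle.

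The genuinely delicate step, and the one I would treat most carefully, is the passage from the soft limit conditions (i)--(iii) to the pointwise invertibility of (v) and the pointwise bound of (iv): one must know that $F_r^*F_r$ is not merely injective but bounded below by a \emph{positive} constant for $r$ near $1$, which is exactly where the identification $\mu(r)^2=\min\sigma(F_r^*F_r)$ together with $\mu(r)\to 1$ is indispensable, and where the von Neumann bound $\mu(r)\le 1$ is needed to pin the limit to $1$ rather than to some smaller positive number. I expect no difficulty in the limit manipulations themselves; the only care required is that $F_r$ need not be bounded below for small $r$, so the constants $c(r)$ in (iv) and the invertibility are genuinely effective only on a left-neighbourhood $(\delta,1)$ of $1$, in agreement with the formulation of (v).
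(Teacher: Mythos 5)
Your proof is correct and takes essentially the same route as the paper's: both arguments funnel everything through $\mu(r):=\inf_{\|x\|=1}\|F(rS_1,\ldots,rS_n)x\|$, using the von Neumann bound $\mu(r)\leq \|F(rS_1,\ldots,rS_n)\|\leq \|F\|_\infty=1$ to pin the limit, the form inequality $F(rS_1,\ldots,rS_n)^*F(rS_1,\ldots,rS_n)\geq \mu(r)^2 I$ for (iii) and (iv), and the identity $\|[F(rS_1,\ldots,rS_n)^*F(rS_1,\ldots,rS_n)]^{-1}\|=1/\mu(r)^2$ for (v), which the paper phrases via the auxiliary quantity $d(r)$ and the squaring trick rather than via $\min\sigma$. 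Your closing caveat that the constants in (iv) are only guaranteed on a left neighbourhood $(\delta,1)$ is well taken: condition (ii) does not force $\mu(r)>0$ at every interior radius (for $n=1$ the Blaschke factor $b(z)=(z-\tfrac12)(1-\tfrac12 z)^{-1}$ has the radial infimum property yet $\mu(\tfrac12)=0$, since $b_{1/2}$ is outer with a boundary zero), so the literal "for all $r\in(0,1)$" in (iv) requires exactly the reading you adopt, a point the paper's own proof of (ii)$\Rightarrow$(iv) passes over silently.
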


\begin{proof}The equivalence of (i)  with (ii) is clear  if one takes into account the
inequality $$\|F(rS_1,\ldots, rS_n)\|\leq \|F \|_\infty,\quad r\in
[0,1).$$  Since  the equivalence of (ii)  with (iii) is
straightforward, we leave it the the reader. To prove the
implication $(ii)\implies (iv)$, define
$$
\mu(r):=\inf_{\|x\|=1}\|F(rS_1,\ldots, rS_n)x\|,\quad  r\in [0,1),
$$
and note that  $0\leq \mu(r)\leq \|F(rS_1,\ldots, rS_n)\|\leq
\|F\|_\infty=1$ and
$$
\|F(rS_1,\ldots, rS_n)x\|\geq \mu(r) \|x\|\quad \text{ for any } \
x\in F^2(H_n)\otimes \cE.
$$
Since the latter inequality is equivalent to $$F(rS_1,\ldots,
rS_n)^* F(rS_1,\ldots, rS_n)\geq \mu(r)^2I, $$  if (ii) holds, then
$\lim_{r\to1}\mu(r)=1$ and (iv) follows.  Conversely, assume that
(iv) holds. Then we have

$$
\|F(rS_1,\ldots, rS_n)x\|^2\geq c(r) \|x\|^2\quad \text{ for any } \
x\in F^2(H_n)\otimes \cE,
$$
which implies
$$
c(r)^{1/2}\leq\inf_{\|x\|=1}\|F(rS_1,\ldots, rS_n)x\|\leq
\|F(rS_1,\ldots, rS_n)\|\leq \|F\|_\infty=1.
$$
Since $\lim_{r\to 1} c(r)=1$, we deduce item (ii).

It remains to prove that $(iv)\leftrightarrow (v)$. First, assume
that  condition (iv) holds. Note that the inequality
\begin{equation}\label{F*F}
F(rS_1,\ldots, rS_n)^* F(rS_1,\ldots, rS_n)\geq c(r)I
\end{equation}
is equivalent to \begin{equation} \label{F*Fx}
 \|F(rS_1,\ldots,
rS_n)^* F(rS_1,\ldots, rS_n)x\|\geq c(r)\|x\|\quad \text{ for any }
\ x\in F^2(H_n)\otimes \cE.
\end{equation}
Indeed, if \eqref{F*F} holds, then
$$
\|F(rS_1,\ldots, rS_n)^* F(rS_1,\ldots, rS_n)x\| \|x\|\geq
\left<F(rS_1,\ldots, rS_n)^* F(rS_1,\ldots, rS_n) x,x\right>\geq
c(r)\|x\|^2,
$$
which proves one implication. Conversely, if \eqref{F*Fx} holds,
then, by squaring, we  deduce that
$$[F(rS_1,\ldots, rS_n)^* F(rS_1,\ldots, rS_n)]^2\geq c(r)^2I.
$$
Hence, we obtain relation \eqref{F*F}. Now, denote
\begin{equation} \label{dr}
d(r):=\inf_{\|x\|=1} \|F(rS_1,\ldots, rS_n)^* F(rS_1,\ldots,
rS_n)x\|,\quad r\in (0,1),
\end{equation}
and note that $ 0<c(r)\leq d(r)\leq 1$. Hence, using \eqref{F*Fx}
and condition (iv), we deduce that the positive operator
$F(rS_1,\ldots, rS_n)^* F(rS_1,\ldots, rS_n)$ is invertible and
$$
\frac{1}{c(r)}\geq \|[F(rS_1,\ldots, rS_n)^* F(rS_1,\ldots,
rS_n)]^{-1}\|=\frac{1}{d(r)}\geq 1.
$$
Since $\lim_{r\to 1}c(r)=1$, we obtain item (v). Conversely, assume
now  that condition (v) holds. Since $\|[F(rS_1,\ldots, rS_n)^*
F(rS_1,\ldots, rS_n)]^{-1}\|=\frac{1}{d(r)}$,where $d(r)$ is given
by \eqref{dr}, we have $\lim_{r\to 1} d(r)=1$ On the other hand, due
to \eqref{dr}, we also have
$$
\|F(rS_1,\ldots, rS_n)^* F(rS_1,\ldots, rS_n)x\| \geq d(r)
\|x\|\quad \text{ for any } \ x\in F^2(H_n)\otimes \cE,
$$
which, as proved above, is equivalent to $F(rS_1,\ldots, rS_n)^*
F(rS_1,\ldots, rS_n)\geq d(r)I$. Since $\lim_{r\to 1} d(r)=1$, we
deduce (iv) and complete the proof.
\end{proof}

Now we consider several examples of bounded free holomorphic
functions with the radial  infimum property. Another  notation is
necessary. If $\omega, \gamma\in \FF_n^+$, we say that  $\omega \geq
\gamma$ if there is $\sigma\in \FF_n^+ $ such that $\omega=
\gamma\sigma$.

\begin{example} \label{EXAMP} Let  $A_{(\alpha)}\in B(\cE, \cG)$ and let $F,G,\varphi$
 be      free
holomorphic function on $[B(\cH)^n]_{1}$ having  the  following
forms:
   \begin{enumerate}
   \item[(i)]
$F(X_1,\ldots, X_n)=\sum_{|\alpha|=m} X_\alpha\otimes A_{(\alpha)}$,
where $\sum_{|\alpha|=m} A_\alpha^* A_\alpha=I_\cE$ and $m\in\NN$;
\item[(ii)]
$G(X_1,\ldots, X_n)= \sum_{k=1}^n \sum_{|\beta|=k, \beta\geq
g_k}X_\beta \otimes A_{(\beta)}$, where $\sum_{k=1}^n
\sum_{|\beta|=k, \beta\geq g_k}A_{(\beta)}^* A_{(\beta)}=I$ and
$g_1,\ldots, g_n$ are the generators of the free semigroup
$\FF_n^+$;
\item[(iii)] $\varphi(X_1,\ldots, X_n)=\sum_{k=0}^\infty  a_kX_2^k X_1$,
where  $a_k\in \CC$ with $\sum_{k=0}^\infty |a_k|^2=1$.
\end{enumerate}
Then, $F,G,\varphi$ have  the radial infimum property.
\end{example}

\begin{proof} Since $S_1,\ldots, S_n$ satisfy  the relation
$S_j^*S_i=\delta_{ij} I$ for $i,j=1,\ldots, n$, one can easily see
that $\{S_\alpha\}_{|\alpha|=m}$ is a sequence of isometries with
orthogonal ranges. Consequently, we have
$$
F(rS_1,\ldots, rS_n)^* F(rS_1,\ldots, rS_n)=r^m I.
$$
Applying Theorem \ref{radial}, we deduce that $F$ has the radial
infimum property. Similarly, one can prove that
\begin{equation*}
\begin{split}
G(rS_1,\ldots, rS_n)^* G(rS_1,\ldots, rS_n)&= \sum_{k=1}^n
\sum_{|\beta|=k, \beta\geq g_k} r^{2|\beta|}
A_{(\beta)}^*A_{(\beta)}\\
&\geq r^{2n}\sum_{k=1}^n \sum_{|\beta|=k, \beta\geq
g_k}A_{(\beta)}^* A_{(\beta)}=r^{2n} I.
\end{split}
\end{equation*}
 Consequently, $G$ has the radial infimum property. Finally, note
that $\{S^k_2 S_1\}_{k=0}^\infty$ is a sequence of isometries with
orthogonal ranges and, consequently
$$
\varphi(rS_1,\ldots, rS_n)^* \varphi(rS_1,\ldots, rS_n)=
r^2\sum_{k=0}^\infty r^{2k} |a_k|^2\leq \sum_{k=1}^\infty |a_k|^2=1.
$$
Hence $\varphi$ is a bounded free holomorphic function and, taking
into account that $\lim\limits_{r\to 1} r^2\sum_{k=0}^\infty r^{2k}
|a_k|^2=1$, Theorem \ref{radial} shows that $\varphi$ has the radial
infimum property.
\end{proof}

\begin{proposition} If $F,G$ are bounded  free holomorphic
functions with   the radial infimum property, then so is their
product $FG$. If, in addition, $\|F\|_\infty=\|G\|_\infty$, then
$\left[\begin{matrix} F & 0\\ 0& G\end{matrix} \right]$  has the
radial infimum property.
\end{proposition}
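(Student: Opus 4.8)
The plan is to work with the gauge functions
\[
\mu_F(r):=\inf_{\|x\|=1}\|F_r x\|,\qquad \mu_G(r):=\inf_{\|x\|=1}\|G_r x\|,\qquad r\in[0,1),
\]
where $F_r:=F(rS_1,\ldots,rS_n)$ and $G_r:=G(rS_1,\ldots,rS_n)$, and to exploit that the radial infimum hypothesis forces these to converge to the respective norms. Since $\mu_F(r)\le\|F_r\|\le\|F\|_\infty$ for every $r$, one has $\limsup_{r\to1}\mu_F(r)\le\|F\|_\infty$; combined with the defining relation $\liminf_{r\to1}\mu_F(r)=\|F\|_\infty$ this gives $\lim_{r\to1}\mu_F(r)=\|F\|_\infty$, and likewise $\lim_{r\to1}\mu_G(r)=\|G\|_\infty$. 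No normalization of the norms is needed, so the hypothesis $\|F\|_\infty=1$ of Theorem \ref{radial} is irrelevant here. By definition of the infimum, $\|F_r x\|\ge\mu_F(r)\|x\|$ and $\|G_r x\|\ge\mu_G(r)\|x\|$ for all $x$, and these pointwise bounds drive both assertions.

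For the product I would first use that evaluation at the row contraction $(rS_1,\ldots,rS_n)$ is multiplicative, i.e. $(FG)(rS_1,\ldots,rS_n)=F_rG_r$; this follows from the Cauchy-product description of $FG$ and the norm convergence of the defining series at strict row contractions. Then, for a unit vector $x$,
\[
\|F_rG_r x\|\ge\mu_F(r)\,\|G_r x\|\ge\mu_F(r)\mu_G(r)\,\|x\|,
\]
so $\inf_{\|x\|=1}\|(FG)_rx\|\ge\mu_F(r)\mu_G(r)$ and therefore $\liminf_{r\to1}\inf_{\|x\|=1}\|(FG)_rx\|\ge\|F\|_\infty\|G\|_\infty$. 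Conversely $\inf_{\|x\|=1}\|(FG)_rx\|\le\|(FG)_r\|\le\|FG\|_\infty$, while submultiplicativity of $\|\cdot\|_\infty$ gives $\|FG\|_\infty\le\|F\|_\infty\|G\|_\infty$. Stringing these together,
\[
\|F\|_\infty\|G\|_\infty\le\liminf_{r\to1}\inf_{\|x\|=1}\|(FG)_rx\|\le\|FG\|_\infty\le\|F\|_\infty\|G\|_\infty,
\]
so all four quantities coincide; in particular $\liminf_{r\to1}\inf_{\|x\|=1}\|(FG)_rx\|=\|FG\|_\infty$, which is the radial infimum property for $FG$.

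For the direct sum $H:=\left[\begin{matrix}F&0\\0&G\end{matrix}\right]$ I would use $H_r=\left[\begin{matrix}F_r&0\\0&G_r\end{matrix}\right]$, so that for a unit vector $x=x_1\oplus x_2$,
\[
\|H_rx\|^2=\|F_rx_1\|^2+\|G_rx_2\|^2\ge\mu_F(r)^2\|x_1\|^2+\mu_G(r)^2\|x_2\|^2\ge\min\{\mu_F(r)^2,\mu_G(r)^2\}.
\]
Thus $\inf_{\|x\|=1}\|H_rx\|\ge\min\{\mu_F(r),\mu_G(r)\}$, and letting $r\to1$ gives $\liminf_{r\to1}\inf_{\|x\|=1}\|H_rx\|\ge\min\{\|F\|_\infty,\|G\|_\infty\}$. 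This is where the extra hypothesis enters: if $\|F\|_\infty=\|G\|_\infty=:M$, then $\|H_r\|=\max\{\|F_r\|,\|G_r\|\}$ yields $\|H\|_\infty=\max\{\|F\|_\infty,\|G\|_\infty\}=M$, and the squeeze $M\le\liminf_{r\to1}\inf_{\|x\|=1}\|H_rx\|\le\|H\|_\infty=M$ gives the radial infimum property for $H$. The equal-norm assumption is genuinely needed: if $\|F\|_\infty>\|G\|_\infty$, then testing against vectors $x=0\oplus x_2$ already forces $\inf_{\|x\|=1}\|H_rx\|\le\|G\|_\infty<\|H\|_\infty$, so the property fails.

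I expect the only delicate ingredients to be the two structural identities $(FG)_r=F_rG_r$ and $\|H_r\|=\max\{\|F_r\|,\|G_r\|\}$; once these are in place the argument is entirely soft, resting on the elementary bound $\|F_rx\|\ge\mu_F(r)\|x\|$ together with the squeeze between $\liminf_{r\to1}\inf_{\|x\|=1}$ and $\|\cdot\|_\infty$.
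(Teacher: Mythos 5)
Your proof is correct and takes essentially the same route as the paper: the paper normalizes $\|F\|_\infty=\|G\|_\infty=1$ and invokes Theorem \ref{radial}(iv), i.e.\ $F(rS_1,\ldots,rS_n)^*F(rS_1,\ldots,rS_n)\geq c(r)I$ with $c(r)\to 1$, which is exactly your gauge bound with $c(r)=\mu_F(r)^2$, multiplying the bounds for $FG$ and taking the minimum for the direct sum. Your explicit squeeze confirming $\|FG\|_\infty=\|F\|_\infty\|G\|_\infty$ and $\|H\|_\infty=M$ neatly closes a detail the paper leaves implicit when it applies Theorem \ref{radial} to the product.
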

\begin{proof} Without loss of generality, one can assume that
$\|F\|_\infty=\|G\|_\infty=1$. According to Theorem \ref{radial},
there exist constants $c(r), d(r)\in (0,1]$, $r\in (0,1)$, with
$\lim\limits_{r\to 1}c(r)=\lim\limits_{r\to 1}d(r)=1$ such that
$$
F(rS_1,\ldots, rS_n)^* F(rS_1,\ldots, rS_n)\geq c(r)I\ \text{ and }
\ G(rS_1,\ldots, rS_n)^* G(rS_1,\ldots, rS_n)\geq c(r)I.
$$
Hence, we deduce that
$$
G(rS_1,\ldots, rS_n)^*F(rS_1,\ldots, rS_n)^* F(rS_1,\ldots,
rS_n)G(rS_1,\ldots, rS_n)\geq c(r)d(r)I.
$$
Applying again Theorem \ref{radial}, we conclude that the product
$FG$ has the radial infimum property. To prove the second part of
this proposition, note that
$$
\left[\begin{matrix} F(rS_1,\ldots, rS_n)^* F(rS_1,\ldots, rS_n) &
0\\ 0& G(rS_1,\ldots, rS_n)^* G(rS_1,\ldots,
rS_n)\end{matrix}\right] \geq \min\{c(r), d(r)\} I
$$
and $\lim\limits_{r\to 1} \min\{c(r), d(r)\}=1$. Applying again
Theorem \ref{radial}, we complete the proof.
\end{proof}

The next result will provide several classes of free holomorphic
functions with the radial infimum property.

\begin{theorem}\label{properties}
Let $F:[B(\cH)^m]_1\to B( \cH)\bar\otimes_{min} B(\cE, \cG)$ be  a
bounded  free holomorphic function, and let $\varphi:[B(\cH)^n]_1
\to [B(\cH)^m]_1$ be an inner free holomorphic function . Then the
following statements hold.
\begin{enumerate}
\item[(i)] If $F$ is inner and $\widetilde F\in \cA_m\bar\otimes_{min} B(\cE, \cG)$, then
 $F$ has the radial infimum property.
\item[(ii)] If $F$ is inner,   $\widetilde F\in \cA_m\bar \otimes_{min} B(\cE, \cG) $, and
$\widetilde \varphi=(\widetilde \varphi_1,\ldots, \widetilde
\varphi_m)$ is in  $\cA_n \bar\otimes_{min} B(\CC^m, \CC)$ with
$\widetilde \varphi_1$ non-unitary if $m=1$, then the composition
$F\circ \varphi$ has the radial infimum property.
\item[(iii)]
If  $F$ has the   radial infimum property and $\varphi$ is
homogeneous of degree $q\geq 1$, then $F\circ \varphi$ has the
radial infimum property.
\item[(iv)] If    $A:=I\otimes A_0$, $A_0\in B(\cE,\cG)$, with
$\|A_0\|<1$, then  $F$ has the  the radial infimum property if and
only if the fractional transform $\Psi_A[F]$ has the radial infimum
property.
\end{enumerate}
\end{theorem}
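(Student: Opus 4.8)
The plan is to prove (i) directly, reduce (ii) to it, obtain (iii) from homogeneity together with the Wold decomposition, and derive (iv) from the fractional-transform identity \eqref{formulas}; throughout I use the characterizations in Theorem \ref{radial}, normalizing sup-norms to $1$ wherever that theorem is invoked. For (i), note that since $F$ is inner, $\widetilde F^*\widetilde F=I$ and $\|F\|_\infty=1$. Because $\widetilde F\in\cA_m\bar\otimes_{min}B(\cE,\cG)$, \cite{Po-holomorphic} gives the \emph{norm} limit $F(rS_1,\dots,rS_m)\to\widetilde F$ as $r\to1$, so $F(rS)^*F(rS)\to\widetilde F^*\widetilde F=I$ in norm. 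Writing $c(r):=1-\|I-F(rS)^*F(rS)\|$ we get $F(rS)^*F(rS)\ge c(r)I$ with $c(r)\to1$, so condition (iv) of Theorem \ref{radial} holds and $F$ has the radial infimum property.

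For (ii), Theorem \ref{inner}(b) gives that $F\circ\varphi$ is inner; I then show $\widetilde{F\circ\varphi}\in\cA_n\bar\otimes_{min}B(\cE,\cG)$ and invoke (i). Set $Y_r:=[\varphi_1(rS),\dots,\varphi_m(rS)]$, a row contraction with $\|Y_r\|<1$ and $(F\circ\varphi)(rS)=F(Y_r)$. As each $\widetilde\varphi_j\in\cA_n$, we have $\varphi_j(rS)\to\widetilde\varphi_j$ in norm, so $Y_r\to\widetilde\varphi$ in the row norm. Since $\widetilde F\in\cA_m$, the evaluation $Y\mapsto F(Y)=(P_Y\otimes I)[\widetilde F]$ extends norm-continuously to the closed ball, so $F(Y_r)\to F(\widetilde\varphi)$ in norm; comparing with Theorem \ref{more-prop}, which yields $\widetilde{F\circ\varphi}=\text{SOT-}\lim_{r\to1}F(r\widetilde\varphi)=F(\widetilde\varphi)$, shows $(F\circ\varphi)(rS)\to\widetilde{F\circ\varphi}$ in norm, i.e. $\widetilde{F\circ\varphi}\in\cA_n$. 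Part (i) then applies.

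For (iii), homogeneity of degree $q$ gives $\varphi_\alpha(rS)=r^{q|\alpha|}\widetilde\varphi_\alpha$, whence $(F\circ\varphi)(rS)=\sum_k r^{qk}\sum_{|\alpha|=k}\widetilde\varphi_\alpha\otimes A_{(\alpha)}=F(r^q\widetilde\varphi)$. As in the proof of Theorem \ref{inner}, $\widetilde\varphi$ is a pure row isometry (and non-unitary when $m=1$, since degree $q\ge1$ shifts degrees up), unitarily equivalent through some $U$ to $[I_\cL\otimes S_1',\dots,I_\cL\otimes S_m']$, so $(F\circ\varphi)(rS)=(U\otimes I)^*(I_\cL\otimes F(r^qS'))(U\otimes I)$. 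Normalizing $\|F\|_\infty=1$, the radial infimum property of $F$ furnishes (Theorem \ref{radial}(iv)) $c(\rho)\to1$ with $F(\rho S')^*F(\rho S')\ge c(\rho)I$; taking $\rho=r^q$ and conjugating by the unitary gives $(F\circ\varphi)(rS)^*(F\circ\varphi)(rS)\ge c(r^q)I$ with $c(r^q)\to1$, and Theorem \ref{radial}(iv) (with $\|F\circ\varphi\|_\infty=1$ by Theorem \ref{inner}(a)) gives the claim.

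For (iv), write $T_r:=F(rS)$; by Theorem \ref{frac-trans}(ii) we have $\Psi_A[F](rS)=\Psi_A(T_r)$, so identity \eqref{formulas} with $B=C=T_r$ reads
\[
I-\Psi_A[F](rS)^*\Psi_A[F](rS)=M_r(I-T_r^*T_r)M_r^*,\qquad M_r:=D_A(I-T_r^*A)^{-1}.
\]
Since $\|A\|=\|A_0\|<1$ and $\|T_r\|\le1$, both $\|M_r\|$ and $\|M_r^{-1}\|$ are bounded uniformly in $r$. Normalizing $\|F\|_\infty=1$, the radial infimum property of $F$ gives (Theorem \ref{radial}(iv)) $c(r)\to1$ with $T_r^*T_r\ge c(r)I$, i.e. $I-T_r^*T_r\le(1-c(r))I$; the displayed identity then forces $\Psi_A[F](rS)^*\Psi_A[F](rS)\ge\big(1-(1-c(r))\|M_r\|^2\big)I$, whose coefficient tends to $1$, so $\Psi_A[F]$ obeys Theorem \ref{radial}(iv) and has the property. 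The converse is identical because $\Psi_A[\Psi_A[F]]=F$ by Theorem \ref{frac-trans}(iii). The main obstacle is the disc-algebra membership in (ii): the norm hypothesis $\|\widetilde\varphi\|<1$ of Theorem \ref{more-prop} fails for inner $\varphi$, so one must evaluate $F$ at the boundary row isometry $\widetilde\varphi$ via the norm-continuous extension supplied by $\widetilde F\in\cA_m$; a secondary point is verifying the uniform two-sided bounds on the congruence $M_r$ in (iv).
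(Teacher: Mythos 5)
Your proofs of (i), (iii) and (iv) follow essentially the same route as the paper: (i) uses the norm convergence $F(rS')\to\widetilde F$ granted by $\widetilde F\in\cA_m\bar\otimes_{min}B(\cE,\cG)$ together with $\widetilde F^*\widetilde F=I$ to produce the lower bound required by Theorem \ref{radial}; (iii) uses homogeneity to write $(F\circ\varphi)(rS)=F(r^q\widetilde\varphi)$ and the Wold-type unitary $U$ intertwining $\widetilde\varphi_j$ with $I_\cL\otimes S_j'$ (the paper routes this through $\widetilde{F_r\circ\varphi}$ and the substitution $r'=r^{1/q}$, but the content is identical, and your remark that a homogeneous isometric polynomial of degree $q\ge1$ is automatically non-unitary when $m=1$ correctly supplies the purity hypothesis); (iv) is the paper's computation verbatim, via the identity \eqref{formulas} with $B=C=F(rS')$, the uniform bound $\|(I-F(rS')^*A)^{-1}\|\le(1-\|A\|)^{-1}$, and the involution $\Psi_A[\Psi_A[F]]=F$ for the converse (your appeal to a bound on $\|M_r^{-1}\|$ is never actually used and can be dropped). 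The genuine divergence is in (ii), and it is to your credit: the paper simply cites the second part of Theorem \ref{more-prop}, whose stated hypothesis $\|\widetilde\Phi\|<\gamma_2$ fails for inner $\varphi$ (where $\|\widetilde\varphi\|=1$), and whose proof uses $\|\widetilde\Phi\|<1$ to make the raw power series $\sum_\alpha\widetilde\varphi_\alpha\otimes A_{(\alpha)}$ converge in norm — something that can fail at the boundary even for $\cA_m$-symbols. Your replacement — extending the evaluation $Y\mapsto F(Y)=(P_Y\otimes I)[\widetilde F]$ norm-continuously to the closed ball by polynomial approximation in $\cA_m\bar\otimes_{min}B(\cE,\cG)$ and the von Neumann inequality, then identifying $\lim_{r\to1}(F\circ\varphi)(rS)=F(\widetilde\varphi)=\widetilde{F\circ\varphi}$ via the SOT-limit formula of Theorem \ref{more-prop} — is correct and repairs this looseness; what it buys is exactly the disc-algebra membership of $\widetilde{F\circ\varphi}$ under the actual hypotheses of the theorem. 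Two cosmetic points: in (i) your $c(r):=1-\|I-F(rS)^*F(rS)\|$ lies in $[0,1]$ and may vanish for small $r$, so strictly you should either restrict to $r$ near $1$ or invoke criterion (iii) of Theorem \ref{radial} as the paper does; and in (iv) the normalization $\|F\|_\infty=1$ is not harmless under the nonlinear map $\Psi_A$, but since Theorem \ref{radial} (hence the paper's own proof) presupposes $\|F\|_\infty=1$, you are no worse off than the original.
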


\begin{proof}  According to \cite{Po-holomorphic}, the model  boundary
function $\widetilde F$  is the limit of $F(rS_1',\ldots, rS_m')$ in
the operator norm, as $r\to1$, where $S_1',\ldots, S_m'$ are the
left creation operators on  the full Fock space  $F^2(H_m)$, with
$m$ generators. Consequently, for any $\epsilon\in (0,1)$ there
exists $\delta\in (0,1)$ such that
$$
\|F(rS_1',\ldots,rS_m')-\widetilde F\|<\epsilon\quad \text{ for any
}\ r\in (\delta, 1).
$$
Hence, and due to the fact that $\widetilde F$ is an isometry, we
deduce that, for any $r\in (\delta, 1)$ and $x\in F^2(H_m)\otimes
\cE$,
\begin{equation*}
\begin{split}
\left<F(rS_1',\ldots,rS_m')^*F(rS_1',\ldots,rS_m')x,x\right>^{1/2}&=\|F(rS_1',\ldots,rS_m')\|\\
&\geq \|\widetilde Fx\|-\|F(rS_1',\ldots,rS_m')-\widetilde F x\|\\
&\geq \|x\|-\epsilon \|x\|=(1-\epsilon)\|x\|.
\end{split}
\end{equation*}
 Consequently,
 $$F(rS_1',\ldots,rS_m')^*F(rS_1',\ldots,rS_m')\geq
 (1-\epsilon)^2 I\ \text{ for any } \ r\in(\delta,1)
 $$
  and, due to Theorem
 \ref{radial}, $F$ has the radial infimum property. Therefore, item
 (i) holds.

 To prove (ii), note first that, due to Theorem \ref{inner},
 $ F\circ \varphi$ is inner. Since $\widetilde F\in \cA_m\bar \otimes_{min} B(\cE, \cG) $, and
$\widetilde \varphi\in \cA_n\bar \otimes_{min} B(\CC^m, \CC)$,
Theorem \ref{more-prop} implies that $\widetilde {F\circ \varphi} $
is in $\cA_n\bar \otimes B(\cE,\cG)$. Applying now item (i) to $
F\circ \varphi$, we deduce part (ii).

Now, we prove (iii).  Since $\varphi:=[\varphi_1,\ldots, \varphi_m]$
is homogeneous of degree $m\geq 1$, we deduce that each $\varphi_j$
is a homogeneous noncommutative polynomial  of degree $q$.
Therefore, $\widetilde \varphi_j =\varphi_j(S_1,\ldots, S_n)$ and
\begin{equation}
\label{phi-al} \varphi_\alpha(rS_1,\ldots,
rS_n)=r^{q|\alpha|}\varphi_\alpha(S_1,\ldots, S_n),\qquad \alpha\in
\FF_m^+,
\end{equation}
where $S_1,\ldots, S_n$ are the left creation operators on  the full
Fock space  $F^2(H_n)$.
As in the proof of Theorem \ref{inner}, we have
\begin{equation*}
(U\otimes I)\widetilde {F\circ\varphi}=(I_\cL\otimes \widetilde
F)(U\otimes I),
\end{equation*}
where $\cL$ is a separable  Hilbert space and $U:F^2(H_n)\to
\cL\otimes F^2(H_m)$ is a unitary operator. Hence, we deduce that
\begin{equation}
\label{U-inter2} (U\otimes I)\widetilde
{F_r\circ\varphi}=(I_\cL\otimes \widetilde F_r)(U\otimes I),\qquad
r\in (0, 1),
\end{equation}
where $F_r(X):=F(rX)$, \ $X\in [B(\cH)^m]_{1/r}$. Since $F_r$ is a
bounded free holomorphic  function on $[B(\cH)^m]_{1/r}$, we have
\begin{equation} \label{2-form}
\widetilde F_r=F_r(S_1',\ldots, S_m')\quad  \text{ and }\quad
\widetilde {F_r\circ\varphi}=F_r(\varphi_1(S_1,\ldots, S_m),\ldots,
\varphi_m(S_1,\ldots, S_n)).
\end{equation}
Since $F$ has the radial infimum property,  for any $\epsilon\in
(0,1)$ there is $\delta\in (0,1)$ such that
$$
F_r(S_1',\ldots, S_m')^* F_r(S_1',\ldots, S_m')\geq (1-\epsilon)I
\quad \text{ for any } \ r\in (\delta, 1).
$$
Hence, and using relations \eqref{U-inter2} and \eqref{2-form}, we
obtain
\begin{equation}
\label{Tilda}
 (\widetilde {F_r\circ\varphi})^* \widetilde
{F_r\circ\varphi} \geq (1-\epsilon)I \quad \text{ for any } \ r\in
(\delta, 1).
\end{equation}
On the other hand, due to  relation \eqref{phi-al}, we have
\begin{equation*}
\begin{split}
 \widetilde {F_r\circ\varphi}&=F(r\varphi_1(S_1,\ldots,
S_n),\ldots, r\varphi_m(S_1,\ldots, S_n))\\
&=F(\varphi_1(r^{1/q}S_1,\ldots, r^{1/q}S_n),\ldots,
\varphi_m(r^{1/q}S_1,\ldots, r^{1/q}S_n))\\
&=(F\circ \varphi)(r'S_1,\ldots, r'S_n),
\end{split}
\end{equation*}
where $r':=r^{1/q}$. Now  inequality \eqref{Tilda} becomes
$$
(F\circ \varphi)(r'S_1,\ldots, r'S_n)^*(F\circ
\varphi)(r'S_1,\ldots, r'S_n) \geq (1-\epsilon)I \quad \text{ for
any } \ r'\in (\delta^{1/k}, 1).
$$
Applying Theorem \ref{radial}, we conclude that $F\circ \varphi$ has
the radial infimum property.

To prove item (iv), assume that $F$ has the radial infimum property.
Applying  Theorem \ref{frac-trans} to $\Psi_A[F_r]$,  $r\in (0,1)$,
we obtain
\begin{equation*}
\begin{split}
I- &\Psi_A[F](rS')^* \Psi_A[F](rS')  \\
&\qquad\qquad  =D_{A}[I-F(rS')^*A]^{-1}[I-F(rS')^*F(rS')][I-A^*
F(rS')]^{-1} D_{A},
\end{split}
\end{equation*}
 where $rS':=(rS_1',\ldots, rS_m')$. Since $F$ has the radial infimum property,
  there exist constants $c(r)\in (0,1]$, $r\in (0,1)$,
with $\lim_{r\to 1}c(r)=1$ such that
$$
F(rS')^* F(rS')\geq c(r)I.
$$
Note also that, since $\|A\|<1$ and $\|F(rS')\|\leq 1$, we have
\begin{equation*}
\begin{split}
\|[I-F(rS')^*A]^{-1}\|&\leq 1+\|F(rS')^*A\|+
\|F(rS')^*A\|^2+\cdots\\
&\leq 1+\|A\|+\|A\|^2+\cdots\\&=\frac{1}{1-\|A\|}
\end{split}
\end{equation*}
Using all these relations, we deduce that
$$ \Psi_A[F](rS')^* \Psi_A[F](rS')\geq \left[1-\frac{1-c(r)}{(1-\|A\|)^2}\right] I.
$$
Since $\lim_{r\to 1}c(r)=1$,  Theorem \ref{radial} shows that
$\Psi_A[F]$ has the radial infimum property. Now, using the fact
that $\Psi_F[\Psi_A[F]]=F$, one can prove the converse. The proof is
complete.
\end{proof}

We remark that under the hypothesis of Theorem \ref{properties}, if
$\widetilde F\in \cA_m\bar \otimes_{min} B(\cE, \cG) $ with
$\|F\|_\infty=1$, then $F$ is inner if and only if it
 has the radial infimum property.
This suggests the following open question. Is there any bounded free
holomorphic function with the radial infimum property  so that its
boundary function is not in the noncommutative disc algebra ?

\begin{corollary} Any free holomorphic automorphism of the
noncommutative ball $[B(\cH)^n]_1$ has the radial infimum property.
\end{corollary}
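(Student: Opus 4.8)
The plan is to reduce the corollary to Theorem \ref{properties}(i), which guarantees the radial infimum property for any bounded free holomorphic function that is inner and whose boundary function lies in the noncommutative disc algebra. The entire argument then consists in checking that a free holomorphic automorphism, regarded as a single operator-valued free holomorphic function, enjoys exactly these two properties.

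First I would recall from \cite{Po-automorphism} the description of the group $Aut([B(\cH)^n]_1)$: every free holomorphic automorphism $\Phi$ factors as $\Phi=\Theta_U\circ \Phi_a$, where $a\in \BB_n$, $\Phi_a$ is the involutive automorphism interchanging $0$ and $a$, and $\Theta_U$ is the unitary change of coordinates $(X_1,\ldots,X_n)\mapsto [X_1,\ldots,X_n]U$ induced by a unitary $U$ on $\CC^n$. I then view $\Phi=(\Phi_1,\ldots,\Phi_n)$ as the row-valued free holomorphic function $[\Phi_1,\ldots,\Phi_n]:[B(\cH)^n]_1\to B(\cH)\bar\otimes_{min} B(\CC^n,\CC)$; that is, I specialize Theorem \ref{properties} to $m=n$, $\cE=\CC^n$, $\cG=\CC$. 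With this identification, \emph{inner} (in the sense of Theorem \ref{inner}) means precisely that the boundary components $\widetilde\Phi_1,\ldots,\widetilde\Phi_n$ are isometries with mutually orthogonal ranges, equivalently $\widetilde\Phi_i^*\widetilde\Phi_j=\delta_{ij}I$.

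Second, I would verify the two required properties for $\Phi_a$ and check that $\Theta_U$ preserves them. The explicit formula for $\Phi_a$ in \cite{Po-automorphism} is built from the scalar resolvent $(I-\bar a_1 X_1-\cdots-\bar a_n X_n)^{-1}$; since $\|\bar a_1 X_1+\cdots+\bar a_n X_n\|\leq \|a\|<1$ uniformly on the closed ball $[B(\cH)^n]_1^-$, this resolvent is a norm-convergent free holomorphic function extending continuously to $[B(\cH)^n]_1^-$. Consequently each component of $\Phi_a$ lies in $A_{\bf ball}$, i.e. $\widetilde{\Phi_a}\in \cA_n\bar\otimes_{min} B(\CC^n,\CC)$. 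That $\Phi_a$ is inner---that its boundary components are isometries with orthogonal ranges---is the content of the characteristic-function computation in \cite{Po-automorphism} (the $n$-variable analogue of a single Blaschke factor being an inner function). Since $\Theta_U$ only forms unitary linear combinations $\Phi_j=\sum_k (\Phi_a)_k u_{kj}$ of the components, it preserves both membership in $\cA_n\bar\otimes_{min} B(\CC^n,\CC)$ and the isometry-with-orthogonal-ranges property; hence the general $\Phi=\Theta_U\circ\Phi_a$ is inner with $\widetilde\Phi\in \cA_n\bar\otimes_{min} B(\CC^n,\CC)$.

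With these two facts in hand, Theorem \ref{properties}(i) applies verbatim and yields that $\Phi$ has the radial infimum property. The only delicate point is the second step: extracting from the explicit form of the automorphisms in \cite{Po-automorphism} both the continuity up to the boundary (the $\cA_n$ property) and the inner property of $\widetilde\Phi$. Once these structural facts are quoted, the conclusion is an immediate specialization of Theorem \ref{properties}(i), requiring no further estimation.
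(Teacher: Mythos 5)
Your proposal is correct and follows essentially the same route as the paper: the paper's proof simply quotes from \cite{Po-automorphism} that the boundary function $\widetilde\Phi=[\widetilde\Phi_1,\ldots,\widetilde\Phi_n]$ of any automorphism is an isometry with each $\widetilde\Phi_i\in\cA_n$, and then applies Theorem \ref{properties}(i), exactly your reduction. The only difference is that you re-derive those two quoted structural facts via the factorization $\Phi=\Phi_U\circ\Phi_a$ and the norm-convergent resolvent in the explicit formula for $\Phi_a$, which is a sound (if optional) elaboration of what the paper takes as known.
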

\begin{proof} According to \cite{Po-automorphism}, if $\Psi\in
Aut([B(\cH)^n]_1)$, the automorphism group of  all free holomorphic
functions on $[B(\cH)^n]_1$, then its boundary function $\widetilde
\Psi=[\widetilde \Psi_1,\ldots, \widetilde \Psi_n]$ is an isometry
and $\widetilde \Psi_i\in \cA_n$, the noncommutative disc algebra.
Applying Theorem \ref{properties}, part (i), we deduce that $\Psi$
has the radial infimum property. The proof is complete.
\end{proof}

 \bigskip

\section{Factorizations and  free holomorphic  versions of classical inequalities.
   }

In this section we study  the class of free holomorphic functions
with the radial infimum property   in connection with factorizations
and noncommutative generalizations of  Schwarz's lemma and Harnack's
double inequality from complex analysis.

If $A,B\in B(\cK)$ are selfadjoint  operators, we say that $A<B$ if
$B-A$ is positive and invertible, i.e., there exists a constant
$\gamma>0$ such that $\left<(B-A)h,h\right>\geq \gamma\|h\|^2$ for
any $h\in \cK$. Note that  $C\in B(\cK)$ is a strict contraction
($\|C\|<1$) if and only if $C^*C<I$.

\begin{theorem} \label{Schwarz}
 Let $F, \Theta$, and $ G$ be free holomorphic functions
on $[B(\cH)^n]_1$ with coefficients in $ B(\cE, \cG)$, $B(\cY,\cG)$,
and $B(\cE,\cY)$, respectively, such that
$$F(X)=\Theta (X) G(X),\qquad X\in [B(\cH)^n]_1.
$$
Assume that   $F$ is bounded with $\|F\|_\infty\leq 1$ and $\Theta$
  has the radial infimum property with $\|\Theta\|_\infty=1$. Then $\|G\|_\infty\leq
1$,
\begin{equation*}
 F(X)F(X)^*\leq \Theta(X)\Theta(X)^*, \qquad X\in [B(\cH)^n]_1,
\end{equation*}
and
\begin{equation*}
 \|F(X)\|\leq \|\Theta(X)\|, \qquad X\in [B(\cH)^n]_1.
\end{equation*}
If,  in addition, $\|G(0)\|<1$ and $X_0\in [B(\cH)^n]_1$, then:
\begin{enumerate}
\item[(i)]
$F(X_0) F(X_0)^* < \Theta (X_0) \Theta^*(X_0)$ if and only if
$\Theta (X_0) \Theta^*(X_0)>0$;
\item[(ii)] $\|F(X_0)\|<\|\Theta(X_0)\|$ if and only if $G(X_0)\neq 0$.
\end{enumerate}
\end{theorem}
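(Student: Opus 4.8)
The plan is to extract from the radial infimum property of $\Theta$ a single uniform bound on $G$, and then read off every other assertion from the factorization. Write $rS:=(rS_1,\dots,rS_n)$ for $0<r<1$. Evaluating $F=\Theta G$ at $rS$ gives $F(rS)=\Theta(rS)G(rS)$, so
\[
G(rS)^*\,\Theta(rS)^*\Theta(rS)\,G(rS)=F(rS)^*F(rS).
\]
Since $\Theta$ has the radial infimum property and $\|\Theta\|_\infty=1$, Theorem \ref{radial}(iv) supplies constants $c(r)\in(0,1]$ with $\lim_{r\to1}c(r)=1$ and $\Theta(rS)^*\Theta(rS)\ge c(r)I$. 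Combined with $\|F(rS)\|\le\|F\|_\infty\le1$ this gives $c(r)\,G(rS)^*G(rS)\le F(rS)^*F(rS)\le I$, whence $\|G(rS)\|\le c(r)^{-1/2}$. Because $r\mapsto\|G_r\|_\infty=\|G(rS)\|$ is nondecreasing, its supremum equals its limit as $r\to1$, which is at most $1$; hence $G$ is bounded with $\|G\|_\infty\le1$. I expect this to be the main obstacle: $G$ is not assumed bounded a priori, so the bound cannot be read off the factorization and must be manufactured from the radial infimum inequality for $\Theta$ together with the monotonicity of $r\mapsto\|G_r\|_\infty$.

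Once $\|G\|_\infty\le1$ is in hand, the two displayed inequalities follow pointwise, with no further appeal to the radial infimum property. For $X\in[B(\cH)^n]_1$ one has $\|G(X)\|\le\|G\|_\infty\le1$, so $G(X)G(X)^*\le I$ and therefore
\[
F(X)F(X)^*=\Theta(X)\,G(X)G(X)^*\,\Theta(X)^*\le\Theta(X)\Theta(X)^*;
\]
taking norms gives $\|F(X)\|\le\|\Theta(X)\|$.

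For the refined statements I would invoke the hypothesis $\|G(0)\|<1$. Since $\|G\|_\infty\le1$, Corollary \ref{not-strict} yields $\|G(X_0)\|<1$ for every $X_0\in[B(\cH)^n]_1$, so $\gamma:=1-\|G(X_0)\|^2>0$ and $I-G(X_0)G(X_0)^*\ge\gamma I$. The identity
\[
\Theta(X_0)\Theta(X_0)^*-F(X_0)F(X_0)^*=\Theta(X_0)\bigl(I-G(X_0)G(X_0)^*\bigr)\Theta(X_0)^*
\]
then gives $\gamma\,\Theta(X_0)\Theta(X_0)^*\le\Theta(X_0)\Theta(X_0)^*-F(X_0)F(X_0)^*\le\Theta(X_0)\Theta(X_0)^*$. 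For (i): if $\Theta(X_0)\Theta(X_0)^*>0$, the lower bound shows the difference is positive and invertible, i.e. $F(X_0)F(X_0)^*<\Theta(X_0)\Theta(X_0)^*$; conversely, if $F(X_0)F(X_0)^*<\Theta(X_0)\Theta(X_0)^*$, then $\Theta(X_0)\Theta(X_0)^*$ dominates the positive invertible difference and is therefore itself positive and invertible.

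For (ii) I would sharpen the estimate using $G(X_0)G(X_0)^*\le\|G(X_0)\|^2 I$ to obtain $F(X_0)F(X_0)^*\le\|G(X_0)\|^2\,\Theta(X_0)\Theta(X_0)^*$, hence $\|F(X_0)\|\le\|G(X_0)\|\,\|\Theta(X_0)\|$. As $\|G(X_0)\|<1$, this already forces the strict inequality $\|F(X_0)\|<\|\Theta(X_0)\|$ as soon as the leading factor is nondegenerate, and the reverse implication is obtained by analysing when the equality $\|F(X_0)\|=\|\Theta(X_0)\|$ can occur. The delicate point here, parallel to the non‑vanishing condition appearing in (i), is to isolate the degenerate locus where the relevant factor vanishes and to verify that the strictness is governed exactly by the stated condition.
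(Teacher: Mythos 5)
Your argument for the bulk of the theorem coincides with the paper's own proof. The paper extracts the same key bound from Theorem \ref{radial} applied to $\Theta$, via the vector estimate $\|\Theta(rS)^*F(rS)y\|\geq c(r)\|G(rS)y\|$ together with $\|\Theta(rS)\|\leq 1$ and $\|F(rS)\|\leq 1$, giving $c(r)\|G(rS)\|\leq 1$; your operator inequality $c(r)\,G(rS)^*G(rS)\leq F(rS)^*F(rS)\leq I$ is the same estimate in a marginally cleaner form, and your passage to $\|G\|_\infty\leq 1$ through the monotonicity of $r\mapsto\|G_r\|_\infty$ is exactly the paper's step (you are also right that this is where the real work lies). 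Your treatment of the two displayed inequalities and of part (i) --- Corollary \ref{not-strict} to get $\|G(X_0)\|<1$, the bound $I-G(X_0)G(X_0)^*\geq(1-\|G(X_0)\|^2)I$, and the identity $\Theta\Theta^*-FF^*=\Theta(I-GG^*)\Theta^*$, with the easy converse --- is likewise the paper's argument, written out slightly more carefully.

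The one place you stop short is part (ii), which you defer to an unperformed ``analysis of the degenerate locus.'' In the paper this is disposed of in one line: if $G(X_0)\neq 0$, then $\|F(X_0)\|\leq\|\Theta(X_0)\|\,\|G(X_0)\|<\|\Theta(X_0)\|$ since $\|G(X_0)\|<1$; conversely, $G(X_0)=0$ forces $F(X_0)=\Theta(X_0)G(X_0)=0$. You should be aware, though, that your hesitation is well founded: the forward implication silently requires $\Theta(X_0)\neq 0$ (otherwise both norms vanish and the inequality is not strict), and the converse fails as literally stated. Indeed, take $n=1$, $\cE=\cG=\cY=\CC$, $\theta(z)=z$, and $g(z)=(a-z)(1-\bar a z)^{-1}$ with $0<|a|<1$, so that $f=\theta g$ satisfies all hypotheses; at $X_0=aI_\cH$ one has $G(X_0)=0$ and yet $0=\|F(X_0)\|<|a|=\|\Theta(X_0)\|$. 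Under the standing hypothesis $\|G(X_0)\|<1$, the condition that actually governs strictness in (ii) is $\Theta(X_0)\neq 0$ --- precisely the ``nondegeneracy of the leading factor'' you point to, and the exact analogue of the condition $\Theta(X_0)\Theta(X_0)^*>0$ appearing in (i). So your proposal establishes everything the paper's proof genuinely establishes; to close (ii) you need only record the two one-line implications above, noting that the equivalence as printed must be read with $\Theta(X_0)\neq 0$ (or with $G(X_0)\neq 0$ replaced by $\Theta(X_0)\neq 0$).
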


\begin{proof}
 Since
$\Theta$ has the radial infimum property and $\|\Theta\|_\infty=1$,
Theorem \ref{radial} shows that there exist constants $c(r)\in
(0,1]$, $r\in (0,1)$, with $\lim_{r\to 1}c(r)=1$  and such that
\begin{equation*}
 \Theta(rS_1,\ldots, rS_n)^* \Theta(rS_1,\ldots, rS_n)\geq
c(r)I.
\end{equation*}
Consequently, taking into account that $F(rS_1,\ldots,
rS_n)=\Theta(rS_1,\ldots, rS_n) G(rS_1,\ldots, rS_n)$ for any $r\in
[0,1)$, we deduce that
$$
\|\Theta(rS_1,\ldots, rS_n)^* F(rS_1,\ldots, rS_n)y\|\geq c(r)
\|G(rS_1,\ldots, rS_n)y\|
$$
for any $y\in F^2(H_n)\otimes \cE$. Since $\|\Theta(rS_1,\ldots,
rS_n)\|\leq \|\Theta\|_\infty = 1$ and $\|F(rS_1,\ldots, rS_n)\|\leq
1$, the inequality above implies
$$
c(r) \|G(rS_1,\ldots, rS_n)\|\leq 1\quad \text{ for any } \ r\in
[0,1).
$$
 Using the fact the map $r\mapsto
\|G(rS_1,\ldots, rS_n)\|$ is increasing and that $\lim_{r\to
1}c(r)=1$, we deduce that $\lim_{r\to 1} \|G(rS_1,\ldots,
rS_n)\|\leq 1$. Hence, $G$ is bounded  and
$$
\|G\|_\infty =\lim_{r\to 1} \|G(rS_1,\ldots, rS_n)\|\leq 1.
$$
Consequently,  $G(X)G(X)^*\leq I$ and
\begin{equation*}
 F(X)F(X)^*=\Theta (X) G(X) G(X)^* \Theta(X)^* \leq
\Theta(X)\Theta(X)^*, \qquad X\in [B(\cH)^n]_1.
\end{equation*}
Hence,  we have $F(X)F(X)^*\leq \Theta(X)\Theta(X)^*$  for all $
X\in [B(\cH)^n]_1$.

To prove the second part of this theorem, assume that $\|G(0)\|<1$.
 According to Corollary \ref{not-strict},
we have $\|G(X)\|<1$ for any $X\in [B(\cH)^n]_1$. Since $F(X)=\Theta
(X) G(X)$,  $X\in [B(\cH)^n]_1$, we deduce that
\begin{equation}
\label{T-T}
 \Theta(X) \Theta(X)^*-F(X) F(X)^*\geq
(1-\|G(X)\|^2)\Theta(X) \Theta(X)^*.
\end{equation}
Since $\|G(X)\|<1$, we have $(1-\|G(X))\|^2)\Theta(X)
\Theta(X)^*\geq 0$. Note also that if  $X_0\in [B(\cH)^n]_1$ is such
that $\Theta(X_0) \Theta(X_0)^*>0$ then relation \eqref{T-T} implies
$\Theta(X_0) \Theta(X_0)^*-F(X_0) F(X_0)^*>0$. The converse is
obviously true.

To prove item (ii), note that  when  $\|G(X_0)\|<1$ and $G(X_0)\neq
0$, we have
$$
\|F(X_0)\|=\|\Theta(X_0)G(X_0)\|\leq \|\Theta(X_0)\|\|G(X_0)\|<
\|\Theta(X_0)\|.
$$
Consequently, since $F(X_0)=\Theta (X_0) G(X_0)$, we deduce that
 $\|F(X_0)\|<\|\Theta(X_0)\|$ if and only if $G(X_0)\neq 0$.
This completes the proof.
\end{proof}

\begin{proposition} \label{disc} Let $F, \Theta$, and $ G$ be free holomorphic functions
on $[B(\cH)^n]_1$ with coefficients in $ B(\cE, \cG)$, $B(\cY,\cG)$,
and $B(\cE,\cY)$, respectively, such that
$$F(X)=\Theta (X) G(X),\qquad X\in [B(\cH)^n]_1.
$$
Assume that: \begin{enumerate}
\item[(i)]
$\|F\|_\infty\leq 1$ and $\widetilde F\in \cA_n\bar
\otimes_{min}B(\cE, \cG)$;
\item[(ii)]
$\Theta$ is inner and $\widetilde \Theta \in \cA_n\bar
\otimes_{min}B(\cY, \cG)$.
\end{enumerate}
Then $\|G\|_\infty\leq 1$, $\widetilde G\in \cA_n\bar
\otimes_{min}B(\cE, \cY)$, and
\begin{equation*}
 F(X)F(X)^*\leq \Theta(X)\Theta(X)^*, \qquad X\in [B(\cH)^n]_1.
\end{equation*}
If, in addition, $F$ is inner, then so is $G$.
\end{proposition}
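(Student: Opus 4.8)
The plan is to reduce the two inequalities to results already in hand and to treat the two genuinely new assertions --- membership of $\widetilde G$ in the disc algebra, and the transfer of innerness --- by passing to operator-norm radial limits on the Fock space. First I would observe that hypothesis (ii) places $\Theta$ exactly in the situation covered by Theorem \ref{properties}(i): since $\Theta$ is inner, its boundary function $\widetilde\Theta$ is an isometry, so $\|\Theta\|_\infty=\|\widetilde\Theta\|=1$, and because $\widetilde\Theta\in\cA_n\bar\otimes_{min}B(\cY,\cG)$, Theorem \ref{properties}(i) shows that $\Theta$ has the radial infimum property. Combined with $\|F\|_\infty\le 1$, all the hypotheses of Theorem \ref{Schwarz} are met, and that theorem immediately yields $\|G\|_\infty\le 1$ together with $F(X)F(X)^*\le\Theta(X)\Theta(X)^*$ for every $X\in[B(\cH)^n]_1$.

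For the disc-algebra membership I would work on the full Fock space. Since both $\widetilde F$ and $\widetilde\Theta$ lie in the respective noncommutative disc algebras, we have $F(rS_1,\ldots,rS_n)\to\widetilde F$ and $\Theta(rS_1,\ldots,rS_n)\to\widetilde\Theta$ in the operator norm as $r\to 1$ (by \cite{Po-holomorphic}); consequently $\Theta(rS_1,\ldots,rS_n)^*\Theta(rS_1,\ldots,rS_n)\to\widetilde\Theta^*\widetilde\Theta=I$ in norm. By the radial infimum property and Theorem \ref{radial}(v), there is $\delta\in(0,1)$ such that $\Theta(rS_1,\ldots,rS_n)^*\Theta(rS_1,\ldots,rS_n)$ is invertible for $r\in(\delta,1)$, and as it converges to $I$ in norm so does its inverse. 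From $F(rS_1,\ldots,rS_n)=\Theta(rS_1,\ldots,rS_n)G(rS_1,\ldots,rS_n)$ one obtains the explicit formula $G(rS_1,\ldots,rS_n)=[\Theta(rS_1,\ldots,rS_n)^*\Theta(rS_1,\ldots,rS_n)]^{-1}\Theta(rS_1,\ldots,rS_n)^*F(rS_1,\ldots,rS_n)$ for $r\in(\delta,1)$. Letting $r\to 1$, the right-hand side converges in operator norm to $\widetilde\Theta^*\widetilde F$, so $G(rS_1,\ldots,rS_n)$ converges in operator norm; by the characterization in \cite{Po-holomorphic} this is precisely the statement that $\widetilde G=\widetilde\Theta^*\widetilde F\in\cA_n\bar\otimes_{min}B(\cE,\cY)$.

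Finally, for the innerness claim I would record the boundary identity $\widetilde F=\widetilde\Theta\,\widetilde G$, obtained by taking limits in $F(rS_1,\ldots,rS_n)=\Theta(rS_1,\ldots,rS_n)G(rS_1,\ldots,rS_n)$ (the $\Theta$-factor converges in norm and the bounded $G$-factor converges strongly, so the product converges strongly). Since $\widetilde\Theta^*\widetilde\Theta=I$, this gives $\widetilde G^*\widetilde G=\widetilde G^*\widetilde\Theta^*\widetilde\Theta\,\widetilde G=\widetilde F^*\widetilde F$; if $F$ is inner the right side equals $I$, so $\widetilde G$ is an isometry and $G$ is inner. The only place requiring real care is the operator-norm convergence of $[\Theta(rS_1,\ldots,rS_n)^*\Theta(rS_1,\ldots,rS_n)]^{-1}$, and this is exactly what the radial infimum property supplies through Theorem \ref{radial}(v); every other step is a routine passage to the limit.
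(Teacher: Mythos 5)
Your proof is correct and follows essentially the same route as the paper: Theorem \ref{properties}(i) together with Theorem \ref{Schwarz} for the bound $\|G\|_\infty\leq 1$ and the inequality $F(X)F(X)^*\leq \Theta(X)\Theta(X)^*$, then the operator-norm convergence of $\Theta(rS_1,\ldots,rS_n)^*\Theta(rS_1,\ldots,rS_n)$ to $I$ to obtain the norm convergence of $G(rS_1,\ldots,rS_n)$ to $\widetilde G=\widetilde\Theta^*\widetilde F$, and finally the boundary identity $\widetilde F=\widetilde\Theta\,\widetilde G$ with $\widetilde\Theta^*\widetilde\Theta=I$ for the innerness transfer. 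The only cosmetic difference is that you solve for $G(rS_1,\ldots,rS_n)$ by inverting $\Theta(rS_1,\ldots,rS_n)^*\Theta(rS_1,\ldots,rS_n)$ (which is legitimate, and in fact the invertibility already follows from the norm convergence to $I$ without invoking Theorem \ref{radial}(v)), whereas the paper avoids the inverse by first identifying $\widetilde G=\widetilde\Theta^*\widetilde F$ through an SOT limit and then estimating $\|\widetilde G-G(rS_1,\ldots,rS_n)\|$ by a triangle inequality.
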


\begin{proof} Since $\Theta$ is inner, i.e.,
$\widetilde\Theta^* \widetilde\Theta=I$, and $\widetilde \Theta \in
\cA_n\bar \otimes_{min}B(\cY, \cG)$, Theorem \ref{properties}
implies that $\Theta$ has the radial infimum property. Now, due to
Theorem \ref{Schwarz}, $G$ is bounded and $\|G\|_\infty\leq 1$.
Consequently, inequality  $ F(X)F(X)^*\leq \Theta(X)\Theta(X)^*$ for
all  $ X\in [B(\cH)^n]_1$.

On the other hand, since $\widetilde F\in \cA_n \bar
\otimes_{min}B(\cE, \cG)$ and $\widetilde \Theta \in \cA_n \bar
\otimes_{min}B(\cY, \cG)$, according to \cite{Po-holomorphic},
\cite{Po-pluriharmonic}, we have

\begin{equation}
\label{wide} \widetilde\Theta =\lim_{r\to 1}\Theta(rS_1,\ldots,
rS_n) \ \text{ and } \ \widetilde F=\lim_{r\to 1} F(rS_1,\ldots,
rS_n),
\end{equation}
in the operator norm topology.   Since $\|G\|_\infty\leq 1$,  its
boundary function $\widetilde G=\text{\rm SOT-}\lim_{r\to 1}
G(rS_1,\ldots, rS_n)$ exists. Now, for any $r\in [0,1)$, we have
\begin{equation*}
\Theta(rS_1,\ldots, rS_n)^*F(rS_1,\ldots, rS_n)=\Theta(rS_1,\ldots,
rS_n)^* \Theta(rS_1,\ldots, rS_n) G(rS_1,\ldots, rS_n).
\end{equation*}
Taking the  SOT-limit in  this equality and using the fact that
$\|\Theta(rS_1,\ldots, rS_n)\|\leq 1$ and $\|F(rS_1,\ldots,
rS_n)\|\leq 1$, we deduce that $\widetilde \Theta^* \widetilde
F=\widetilde G$.  Now,  due to  relation \eqref{wide}, we have
$$
\widetilde \Theta^* \widetilde F=\lim_{r\to 1}\Theta(rS_1,\ldots,
rS_n)^* F(rS_1,\ldots, rS_n) \ \text{ and } \ \lim_{r\to
1}\Theta(rS_1,\ldots, rS_n)^* \Theta(rS_1,\ldots, rS_n)=I
$$
in the operator norm. Consequently, since
\begin{equation*}
\begin{split}
\|\widetilde G- G(rS_1,\ldots, rS_n)\|&\leq \|\widetilde G-
\Theta(rS_1,\ldots, rS_n)^*  F(rS_1,\ldots, rS_n)\|\\
&\quad +\|\Theta (rS_1,\ldots, rS_n)^* \Theta (rS_1,\ldots,
rS_n)-I\|\|G(rS_1,\ldots, rS_n)\|
\end{split}
\end{equation*}
and $\|G(rS_1,\ldots, rS_n)\|\leq 1$, we deduce that $ \widetilde
G=\lim_{r\to 1}  G(rS_1,\ldots, rS_n)$ in the operator norm
topology.  Hence,  we deduce that $\widetilde G\in \cA_n \bar
\otimes_{min}B(\cE, \cY)$. The proof is complete.

If in addition, $F$ is inner, then relation $\widetilde F=\widetilde
\Theta \tilde G$ implies
$$
I=\widetilde F^* \widetilde F=\widetilde G^* \widetilde \Theta^*
\widetilde \Theta \widetilde G=\widetilde G^* \widetilde  G,
$$
which proves that $G$ is inner.
\end{proof}

We remark that the second part of Theorem \ref{Schwarz} holds also
under the hypothesis of Proposition \ref{disc}.

 In \cite{Po-holomorphic}, \cite{Po-automorphism}, and
\cite{Po-unitary}, we obtained analogues of Schwarz lemma   for free
holomorphic functions.  We mention the following. Let
$F(X)=\sum_{\alpha\in \FF_n^+} X_\alpha\otimes A_{(\alpha)}$, \
$A_{(\alpha)}\in B(\cE, \cG)$,  be a free holomorphic function on
$[B(\cH)^n]_1$ with $\|F\|_\infty\leq 1$ and $F(0)=0$. Then $$
\|F(X)\| \leq \|X\|,\qquad \text{ for any } \ X\in [B(\cH)^n]_1.
$$

Note that Theorem \ref{Schwarz} can  be seen as a generalization of
Schwarz's lemma. Let us consider a few important  particular  cases.

\begin{corollary}\label{SCH}
If $F:[B(\cH)^n]_{1}\to
  B( \cH)\bar\otimes_{min} B(\cE, \cG)$  is a bounded
   free
holomorphic function with \ $\|F\|_\infty\leq 1$ and  representation
$$F(X_1,\ldots, X_n)=\sum_{k=m}^\infty \sum_{|\alpha|=k}
X_\alpha\otimes A_{(\alpha)},
$$
 where $m=1,2,\ldots$, then
$$
F(X_1,\ldots, X_n) F(X_1,\ldots, X_n)^*\leq \sum_{|\beta|=m} X_\beta
X_\beta^*\otimes I_\cG
$$
for any $X:=(X_1,\ldots, X_n)\in [B(\cH)^n]_1$.
 If, in addition,
$\left\|\sum_{|\beta|=m} A_{(\beta)} A_{\beta)}^*\right\|<1$, then
the inequality  above is strict for any $X\neq 0$.
\end{corollary}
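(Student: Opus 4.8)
The plan is to deduce the bound from Theorem \ref{Schwarz} by factoring off the homogeneous part of lowest degree. Set $\cY:=\bigoplus_{|\beta|=m}\cG$ and let $E_\beta\in B(\cY,\cG)$ be the coordinate projection onto the $\beta$-th summand. Define the free holomorphic function $\Theta(X):=\sum_{|\beta|=m}X_\beta\otimes E_\beta$, which is nothing but the row $[\,X_\beta\otimes I_\cG:\ |\beta|=m\,]$ acting from $\cH\otimes\cY$ to $\cH\otimes\cG$. Since $E_\beta E_\gamma^*=\delta_{\beta\gamma}I_\cG$, one gets $\Theta(X)\Theta(X)^*=\sum_{|\beta|=m}X_\beta X_\beta^*\otimes I_\cG$, so the asserted estimate is literally $F(X)F(X)^*\le\Theta(X)\Theta(X)^*$.

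Next I would check the two hypotheses of Theorem \ref{Schwarz}. Because $S_1,\dots,S_n$ have orthogonal ranges and words of equal length $m$ satisfy $S_\gamma^*S_\beta=\delta_{\gamma\beta}I$, a direct computation gives $\Theta(rS_1,\dots,rS_n)^*\Theta(rS_1,\dots,rS_n)=r^{2m}I$; hence $\|\Theta\|_\infty=1$ and $\Theta$ has the radial infimum property (this is the instance of Example \ref{EXAMP}(i) with coefficients $E_\beta$, for which $\sum_{|\beta|=m}E_\beta^*E_\beta=I_\cY$). For the factor $G$, split every word $\alpha$ with $|\alpha|=k\ge m$ uniquely as $\alpha=\beta\sigma$ with $|\beta|=m$, and let $G$ be the column with entries $G_\beta(X):=\sum_{\sigma\in\FF_n^+}X_\sigma\otimes A_{(\beta\sigma)}$, a map into $\cH\otimes\cY$. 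Then $\Theta(X)G(X)=\sum_{|\beta|=m}(X_\beta\otimes I_\cG)G_\beta(X)=F(X)$, and the identity $\sum_{|\sigma|=k}\sum_{|\beta|=m}A_{(\beta\sigma)}^*A_{(\beta\sigma)}=\sum_{|\alpha|=k+m}A_{(\alpha)}^*A_{(\alpha)}$, combined with the Cauchy estimate for $F$, shows $G$ has radius of convergence at least $1$, so $G$ is free holomorphic. Theorem \ref{Schwarz} then yields $\|G\|_\infty\le1$ and $F(X)F(X)^*\le\Theta(X)\Theta(X)^*$ for all $X\in[B(\cH)^n]_1$, which is the first assertion.

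For the strict statement I would feed this factorization into parts (i)--(ii) of Theorem \ref{Schwarz}. The lowest coefficients reappear as $G(0)=[\,I_\cH\otimes A_{(\beta)}:\ |\beta|=m\,]$, whose norm is governed by $\bigl\|\sum_{|\beta|=m}A_{(\beta)}^*A_{(\beta)}\bigr\|$; thus the smallness hypothesis on the lowest-order coefficients is exactly what is needed to force $\|G(0)\|<1$. Granting this, Corollary \ref{not-strict} gives $\|G(X)\|<1$ for every $X$, whence $\Theta(X)\Theta(X)^*-F(X)F(X)^*=\Theta(X)\bigl(I-G(X)G(X)^*\bigr)\Theta(X)^*\ge\bigl(1-\|G(X)\|^2\bigr)\Theta(X)\Theta(X)^*$, and Theorem \ref{Schwarz}(i) converts this into the strict inequality. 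The main obstacle I anticipate is precisely the interpretation of strictness: since $\Theta(X)\Theta(X)^*=\sum_{|\beta|=m}X_\beta X_\beta^*\otimes I_\cG$ need not be bounded below for a general nonzero $X\in[B(\cH)^n]_1$, the inequality in the positive-invertible sense of ``$<$'' is genuine exactly where this operator is invertible, and one must also be careful to match the precise coefficient sum (the column norm of $[A_{(\beta)}]$ is controlled by $\sum_{|\beta|=m}A_{(\beta)}^*A_{(\beta)}$) that delivers $\|G(0)\|<1$.
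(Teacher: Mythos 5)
Your proof is correct and follows essentially the same route as the paper: the same Gleason-type factorization $F=\Theta G$ with $\Theta(X)=[X_\beta\otimes I_\cG:\ |\beta|=m]$, the radial infimum property of $\Theta$ via Example \ref{EXAMP}(i), Theorem \ref{Schwarz} for the main inequality, and its second part (through $\|G(0)\|<1$ and Corollary \ref{not-strict}) for strictness --- the only difference being that the paper cites the factorization from Theorem 3.4 of \cite{Po-holomorphic} while you construct $G$ explicitly and check convergence by the Cauchy estimates. Your two cautionary remarks are well taken and actually sharpen the paper's write-up: the column norm of $G(0)$ is $\bigl\|\sum_{|\beta|=m}A_{(\beta)}^*A_{(\beta)}\bigr\|^{1/2}$ (the paper's proof writes the sum in the opposite order), and strictness in the positive-invertible sense of ``$<$'' indeed requires $\Theta(X)\Theta(X)^*>0$, exactly as Theorem \ref{Schwarz}(i) stipulates.
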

\begin{proof}
As in the the proof of Theorem 3.4 from \cite{Po-holomorphic}, we
 have the Gleason  type  factorization $F=\Theta G$, where $\Theta$ and $G$ are free
holomorphic functions given by
$$\Theta (X_1,\ldots, X_n):=[X_\beta\otimes I_\cG:\ |\beta|=m]\quad
\text{ and } \quad G(X_1,\ldots, X_n)=\left[\begin{matrix}
\Phi_{(\beta)}(X_1,\ldots, X_n)\\
:\\|\beta|=m\end{matrix}\right].
$$
Due to Section 3 (see Example \ref{EXAMP}),  $\Theta$ is inner and
has the radial infimum property. Applying now Theorem \ref{Schwarz},
we deduce that
$$ F(X_1,\ldots, X_n)F(X_1,\ldots, X_n)^*\leq \Theta(X_1,\ldots, X_n)\Theta(X_1,\ldots, X_n)^*=
\sum_{|\beta|=m} X_\beta X_\beta^*\otimes I_\cG
$$
for any $(X_1,\ldots, X_n)\in [B(\cH)^n]_1$. On the other hand,
since
$$
\|G(0)\|=\left\|\left[\begin{matrix}
A_{(\beta)}\\
:\\|\beta|=m\end{matrix}\right]\right\|=\left\|\sum_{|\beta|=m}
A_{(\beta)} A_{\beta)}^*\right\|<1,
$$
we can use the second part of Theorem \ref{Schwarz}, to complete the
proof.
\end{proof}

We remark that  Corollary \ref{SCH} implies  the version of Schwarz
lemma obtained in \cite{Po-holomorphic} and, when $m=1$, the
corresponding result from \cite{HKMS}.

\begin{corollary} \label{Sch-strong} If   $F:[B(\cH)^n]_{1}\to
  B( \cH)\bar\otimes_{min} B(\cE, \cG)$  is  a bounded
   free
holomorphic function
 with \ $\|F\|_\infty\leq 1$ and $\|F(0)\|<1$, then

\begin{equation*}
\begin{split}
D_{F(0)^*} [I-F(X)F(0)^*]^{-1} [I-F(X)F(X)^*]&[I-F(0)F(X)^*]^{-1}
D_{F(0)^*}\\
&\geq \left(I-\sum_{|\alpha|=1} X_\alpha X_\alpha^*\right)\otimes
I_\cG
\end{split}
\end{equation*}
for any $X:=(X_1,\ldots, X_n)\in [B(\cH)^n]_1$.
\end{corollary}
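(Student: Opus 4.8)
The plan is to recognize the left-hand side as $I-G(X)G(X)^*$ for the fractional transform $G:=\Psi_{F(0)}[F]$, and then to invoke the degree-one case of Corollary \ref{SCH}. First I would note that, since $F$ has coefficients in $B(\cE,\cG)$, its constant term is a scalar ampliation $F(0)=I_\cH\otimes A_0$ with $A_0\in B(\cE,\cG)$, and the hypothesis $\|F(0)\|<1$ gives $\|A_0\|<1$. Thus $A:=F(0)=I_\cH\otimes A_0$ is exactly of the form required in Theorem \ref{frac-trans}, so $G:=\Psi_A[F]$ is a bounded free holomorphic function with $\|G\|_\infty\leq 1$; moreover part (iii) of that theorem yields $G(0)=\Psi_A[F(0)]=\Psi_A(A)=0$.

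Because $G(0)=0$, the power series of $G$ begins in degree one, so I would apply Corollary \ref{SCH} with $m=1$ to $G$, obtaining
$$
G(X)G(X)^*\leq\sum_{|\beta|=1}X_\beta X_\beta^*\otimes I_\cG=\left(\sum_{|\alpha|=1}X_\alpha X_\alpha^*\right)\otimes I_\cG,\qquad X\in[B(\cH)^n]_1,
$$
and hence
$$
I-G(X)G(X)^*\geq\left(I-\sum_{|\alpha|=1}X_\alpha X_\alpha^*\right)\otimes I_\cG.
$$

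Finally, taking $A=F(0)$ in the first identity of \eqref{two-ident}, so that $D_{A^*}=D_{F(0)^*}$, gives
$$
I-G(X)G(X)^*=D_{F(0)^*}[I-F(X)F(0)^*]^{-1}[I-F(X)F(X)^*][I-F(0)F(X)^*]^{-1}D_{F(0)^*},
$$
and combining this with the previous inequality yields the asserted estimate. I do not expect a serious obstacle here: the entire argument is a substitution of $A=F(0)$ into the already-established identity \eqref{two-ident} together with the $m=1$ Schwarz estimate of Corollary \ref{SCH}. The only point demanding a moment of care is verifying that the constant term of $F$ is indeed the scalar ampliation $I_\cH\otimes A_0$, which is precisely what licenses the use of the fractional transform $\Psi_A$ and guarantees $G(0)=0$.
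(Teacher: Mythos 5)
Your proof is correct and follows the paper's own argument exactly: pass to $G:=\Psi_{F(0)}[F]$ via Theorem \ref{frac-trans} so that $G(0)=0$, apply the $m=1$ case of Corollary \ref{SCH} to get $G(X)G(X)^*\leq XX^*\otimes I_\cG$, and conclude via the identity \eqref{two-ident} with $A=F(0)$. Your extra remark that $F(0)$ is the scalar ampliation $I_\cH\otimes A_0$ (which legitimizes the fractional transform) is a point the paper leaves implicit, but it introduces nothing new to the argument.
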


\begin{proof} According to Theorem \ref{frac-trans}, the mapping
$G:=\Psi_{F(0)}[F]$ is a bounded free holomorphic function with
$\|G\|_\infty\leq 1$ and $G(0)=\Psi_{F(0)}[F(0)]=0$. Applying
Corollary \ref{SCH} to $G$ (when $m=1$), we deduce that
$G(X)G(X)^*\leq XX^*\otimes I_\cG$. On the other hand, using
relation \eqref{two-ident}, we have
$$
I-G(X)G(X)^*=D_{F(0)^*} [I-F(X)F(0)^*]^{-1}
[I-F(X)F(X)^*][I-F(0)F(X)^*]^{-1} D_{F(0)^*}.
$$
Now, one can easily complete the proof.
\end{proof}
We remark that  Corollary \ref{Sch-strong} can be seen as an
extension on Corollary \ref{SCH} (case $m=1)$ to the case when
$\|F(0)\|<1$.

When   dealing with free holomorphic functions with scalar
coefficients, Theorem \ref{Schwarz} can be improved, as follows.
\begin{theorem} \label{Schwarz2}
 Let $f, \theta$, and $ g$ be free holomorphic functions
on $[B(\cH)^n]_1$ with scalar coefficients   such that:
\begin{enumerate}
\item[(i)] $f(X)=\theta (X) g(X),\quad X\in [B(\cH)^n]_1$;
 \item[(ii)] $f$ is
bounded with $\|f\|_\infty\leq 1$;
 \item[(iii)] $\theta$  has  the
radial infimum property and $\|\theta\|_\infty=1$.
\end{enumerate}
 Then $\|g\|_\infty\leq 1$ and, consequently,
\begin{equation*}
 f(X)f(X)^*\leq \theta(X)\theta(X)^*, \qquad X\in [B(\cH)^n]_1,
\end{equation*}

\begin{equation*}
 \|f(X)\|\leq \|\theta(X)\|, \qquad X\in [B(\cH)^n]_1,
\end{equation*}
and  $$\|g(0)\|\leq 1.$$

Moreover,
\begin{enumerate}
\item[(a)] $f(X_0)f(X_0)^*<\theta(X_0)\theta(X_0)^*$ for some
$X_0\in [B(\cH)^n]_1$ if and only if $\theta(X_0)\theta(X_0)^*>0$
and $g$ is not a constant $c$ with $|c|=1$.

\item[(b)] $\|f(X_0)\|=\|\theta(X_0)\|$ for some
$X_0\in [B(\cH)^n]_1$ if and only if either $\theta(X_0)=0$ or
$f=c\theta$ for some constant $c$ with $|c|=1$.

\item[(c)] If $|g(0)|=1$, then $f=c\theta$ for some constant $c$ with $|c|=1$.
\end{enumerate}
\end{theorem}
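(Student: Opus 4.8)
The plan is to derive the first group of conclusions verbatim from Theorem~\ref{Schwarz}, and then to use the scalar maximum principle (Proposition~\ref{strict}) to upgrade the three ``moreover'' assertions into sharp equivalences.

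First I would apply Theorem~\ref{Schwarz} with $\cY=\cE=\cG=\CC$: hypotheses (i)--(iii) are precisely its hypotheses, so it delivers $\|g\|_\infty\le 1$, the inequality $f(X)f(X)^*\le\theta(X)\theta(X)^*$, and hence $\|f(X)\|\le\|\theta(X)\|$ on $[B(\cH)^n]_1$; the bound $\|g(0)\|\le\|g\|_\infty\le 1$ is then immediate from the noncommutative von Neumann inequality. The extra ingredient available only in the scalar setting, and the engine for everything that follows, is the dichotomy furnished by Proposition~\ref{strict}: since $g$ has scalar coefficients and $\|g\|_\infty\le 1$, either $g$ is a constant $c$ (so $\|g(X)\|=|c|$ for all $X$) or $g$ is non-constant, in which case $\|g(X)\|<1$ for every $X\in[B(\cH)^n]_1$. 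In particular, whenever $g$ is \emph{not} a constant of modulus $1$, one has the uniform strict bound $\|g(X)\|<1$ on the whole ball.

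The proofs of (a), (b), (c) then rest on the scalar form of the factorization identity \eqref{T-T}, namely $\theta(X)\theta(X)^*-f(X)f(X)^*=\theta(X)\bigl(I-g(X)g(X)^*\bigr)\theta(X)^*\ge(1-\|g(X)\|^2)\,\theta(X)\theta(X)^*$. For (a), in the forward direction the operator $\theta(X_0)\theta(X_0)^*$ is the sum of the positive invertible operator $\theta(X_0)\theta(X_0)^*-f(X_0)f(X_0)^*$ and the positive operator $f(X_0)f(X_0)^*$, hence positive invertible, and $g$ cannot be a unimodular constant since $f=c\theta$ with $|c|=1$ would force $f(X_0)f(X_0)^*=\theta(X_0)\theta(X_0)^*$; conversely, if $\theta(X_0)\theta(X_0)^*>0$ and $g$ is not such a constant, the dichotomy gives $\|g(X_0)\|<1$, so the displayed identity makes $\theta(X_0)\theta(X_0)^*-f(X_0)f(X_0)^*$ positive invertible. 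For (b), the forward direction uses the chain $\|\theta(X_0)\|=\|f(X_0)\|\le\|\theta(X_0)\|\,\|g(X_0)\|\le\|\theta(X_0)\|$: if $\theta(X_0)\ne 0$ this forces $\|g(X_0)\|=1$ at an interior point, whence Proposition~\ref{strict} makes $g$ a constant $c$ with $|c|=1$ and $f=c\theta$; the reverse direction is a one-line computation. Part (c) is the same rigidity applied at $X_0=0$: the condition $|g(0)|=1$ says $\|g(0)\|=1$ at the interior point $0$, so $g\equiv c$ with $|c|=1$ and $f=c\theta$.

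The only delicate point — and the place where the scalar hypothesis is genuinely used — is the appeal to Proposition~\ref{strict}: for operator-valued $g$ the attainment of the norm at an interior point need not force $g$ to be constant (the block-diagonal example following Theorem~\ref{maxmod} exhibits this), so the rigidity in (a)--(c) relies essentially on $g$ having scalar coefficients. Everything else is bookkeeping with the positivity identity \eqref{T-T} and the factorization $f=\theta g$.
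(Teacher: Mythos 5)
Your proposal is correct and follows essentially the same route as the paper: the paper's proof simply invokes Theorem~\ref{Schwarz} in the scalar case $\cE=\cG=\cY=\CC$ together with the dichotomy from Proposition~\ref{strict} (a non-constant scalar $g$ with $\|g\|_\infty\le 1$ satisfies $\|g(X)\|<1$ everywhere), leaving the verification of (a)--(c) to the reader. Your write-up supplies exactly those omitted details, via the identity \eqref{T-T} and the factorization $f=\theta g$, and all the steps check out.
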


\begin{proof}
Due to Proposition \ref{strict}, if $g:[B(\cH)^n]_1 \to B(\cH)$  is
a non-constant   free holomorphic function with $\|g\|_\infty\leq
1$, then $\|g(X)\|<1$ for any $X\in[B(\cH)^n]_1$. Using this result
and Theorem \ref{Schwarz}, in the particular case when
$\cE=\cG=\cY=\CC$,  one can complete the proof.
\end{proof}

 We remark that in the particular case when $n=1$ and
 $\theta(z)=z$, we recover   Schwarz's lemma (see
 \cite{Co}).

In what follows  we obtain generalizations of some classical results
from complex analysis  for certain  classes of free holomorphic
functions with positive real parts and of the form $F=I+\Theta
\Gamma$.

\begin{theorem}  \label{FTT}
Let $F$, $\Theta$, and $\Gamma$ be free holomorphic functions
on $[B(\cH)^n]_1$ with coefficients in $B(\cE)$, $B(\cG, \cE)$, and
$B(\cE,\cG)$, respectively.  If
\begin{enumerate}
\item[(i)]
$\Re F\geq 0$,
\item[(ii)] $\Theta $  has the radial infimum property,
$\|\Theta\|_\infty= 1$, and $\|\Theta(0)\|<1$,
    \item[(iii)]
$ F=I+\Theta \Gamma $,
\end{enumerate}
then
\begin{equation*}
 [I-F(X)][I-F(X)^*]\leq [I+F(X)]
\Theta(X)\Theta(X)^*[I+F(X)^*]
\end{equation*}
and
$$
  \|F(X)\|\leq
\frac{1+\|\Theta(X)\|}{1-\|\Theta(X)\|}
$$
for any $X\in [B(\cH)^n]_1$.
\end{theorem}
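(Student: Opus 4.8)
The plan is to reduce the statement to the multiplicative factorization result of Theorem \ref{Schwarz} via the noncommutative Cayley transform. Since $\Re F\geq 0$, the function $G:=\text{\boldmath{$\cC$}}[F]=(F-I)(I+F)^{-1}$ is, by \cite{Po-free-hol-interp}, a bounded free holomorphic function with $\|G\|_\infty\leq 1$, and $F=\text{\boldmath{$\cC$}}^{-1}[G]=(I+G)(I-G)^{-1}$. A one-line computation, writing $I=(I-G)(I-G)^{-1}$, gives the useful identity
$$
(I+F)^{-1}=\tfrac12\,(I-G),
$$
which shows in particular that $(I+F)^{-1}$ is itself a bounded free holomorphic function; this is the device that circumvents any direct discussion of the invertibility and free holomorphy of $(I+F)^{-1}$.

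First I would produce the required factorization of $G$. Using hypothesis (iii), namely $F-I=\Theta\Gamma$, I get
$$
G=(F-I)(I+F)^{-1}=\Theta\,\Gamma(I+F)^{-1}=\Theta H,\qquad H:=\Gamma(I+F)^{-1}=\tfrac12\,\Gamma(I-G),
$$
so $H$ is free holomorphic as a product of free holomorphic functions, with coefficients in $B(\cE,\cG)$ as required for the composition $\Theta H$ to be $B(\cE)$-valued like $G$. Now I would apply Theorem \ref{Schwarz} to the factorization $G=\Theta H$, with $G$ playing the role of the bounded factor (indeed $\|G\|_\infty\le 1$) and $\Theta$ supplying the radial infimum property with $\|\Theta\|_\infty=1$. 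This yields $\|H\|_\infty\le 1$ together with the two conclusions
$$
G(X)G(X)^*\le \Theta(X)\Theta(X)^*\quad\text{and}\quad \|G(X)\|\le\|\Theta(X)\|,\qquad X\in[B(\cH)^n]_1.
$$

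For the first asserted inequality I would conjugate $G(X)G(X)^*\le\Theta(X)\Theta(X)^*$ by $I+F(X)$. The point is that $F(X)$, $I\pm F(X)$ and $(I+F(X))^{-1}$ all commute, being functions of the single operator $F(X)$, whence $(I+F(X))(F(X)-I)(I+F(X))^{-1}=F(X)-I$ and symmetrically for the adjoint. Since $G(X)=(F(X)-I)(I+F(X))^{-1}$, this collapses the conjugate to
$$
(I+F(X))\,G(X)G(X)^*\,(I+F(X)^*)=(I-F(X))(I-F(X)^*),
$$
so conjugating the operator inequality above gives precisely
$$
[I-F(X)][I-F(X)^*]\le[I+F(X)]\Theta(X)\Theta(X)^*[I+F(X)^*].
$$

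Finally, for the norm bound I would use $F=\text{\boldmath{$\cC$}}^{-1}[G]=(I+G)(I-G)^{-1}$. Since $\Theta$ is bounded with $\|\Theta\|_\infty=1$ and $\|\Theta(0)\|<1$, Corollary \ref{not-strict} gives $\|\Theta(X)\|<1$ for every $X\in[B(\cH)^n]_1$; combined with $\|G(X)\|\le\|\Theta(X)\|$ this makes $I-G(X)$ invertible with $\|(I-G(X))^{-1}\|\le(1-\|\Theta(X)\|)^{-1}$ via the Neumann series, while $\|I+G(X)\|\le 1+\|\Theta(X)\|$. Multiplying these two estimates yields $\|F(X)\|\le\frac{1+\|\Theta(X)\|}{1-\|\Theta(X)\|}$. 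The only genuinely delicate point in the whole argument is keeping $(I+F)^{-1}$ under control, and the Cayley identity $(I+F)^{-1}=\tfrac12(I-G)$ disposes of it at a stroke; once the factorization $G=\Theta H$ is in hand, the rest is an application of Theorem \ref{Schwarz} followed by elementary operator estimates.
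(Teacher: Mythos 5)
Your proof is correct and follows essentially the same route as the paper: both pass to the noncommutative Cayley transform $G=(F-I)(I+F)^{-1}$, use hypothesis (iii) to factor $G=\Theta\,\Gamma(I+F)^{-1}$, apply Theorem \ref{Schwarz} to obtain $G(X)G(X)^*\leq \Theta(X)\Theta(X)^*$, and conjugate by $I+F(X)$ to reach the first inequality. Your only departures are cosmetic: the explicit identity $(I+F)^{-1}=\tfrac12(I-G)$ (a clean way to make the free holomorphy and boundedness of $(I+F)^{-1}$ transparent, which the paper leaves implicit in the Cayley bijection), and deriving the norm bound from $F=(I+G)(I-G)^{-1}$ together with $\|G(X)\|\leq\|\Theta(X)\|$ and a Neumann-series estimate, where the paper instead reads $\|F(X)\|-1\leq\|I-F(X)\|\leq\|\Theta(X)\|\,(1+\|F(X)\|)$ directly off the operator inequality --- both endgames then invoke the maximum principle (Theorem \ref{maxmod} / Corollary \ref{not-strict}) to get $\|\Theta(X)\|<1$ before dividing.
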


\begin{proof}
Since $\Re F(X)\geq 0$, $X\in [B(\cH)^n]_1$, its noncommutative
Cayley transform $G:=(F-I)(I+F)^{-1}$ is in the unit ball of
$H^\infty_{\bf ball}(B(\cE)$, thus $\|G(X)\|\leq 1$. Due to item
(iii), we have $G=\Theta \Gamma (I+F)^{-1}$. Now, since $\Theta $
has the radial infimum property and  $\|\Theta\|_\infty= 1$, we can
apply Theorem \ref{Schwarz} to $G$ and obtain
 $G(X)G(X)^*\leq \Theta(X)\Theta(X)^*$ for all  $X\in [B(\cH)^n]_1$.
 Hence, we deduce that
 $$[I+F(X)]^{-1} [F(X)-I][F(X)^*-I][I+F(X)^*]^{-1}\leq \Theta(X)
 \Theta(X)^*,
 $$
 which is equivalent to
\begin{equation*}
 [I-F(X)][I-F(X)^*]\leq [I+F(X)]
\Theta(X)\Theta(X)^*[I+F(X)^*].
\end{equation*}
The latter inequality implies
$$
\|F(X)\|-1\leq \|I-F(X)\|\leq \|\Theta(X)\|(1+\|F(X)\|),
$$
which leads to
\begin{equation}
\label{F<F}
 \|F(X)\|(1-\|\Theta(X)\|)\leq 1+\|\Theta(X)\|.
\end{equation}
Since $\|\Theta\|_\infty= 1$ and $\|\Theta(0)\|<1$, the maximum
principle for  free holomorphic functions with operator-valued
coefficients (see Theorem \ref{maxmod}) implies that
$\|\Theta(X)\|<1$. Now, inequality  \eqref{F<F} implies the desired
inequality.
\end{proof}

Taking into account  Theorem \ref{Schwarz2} and Theorem \ref{FTT},
we can make the following observation.

 \begin{remark} In the scalar case,  when  $\cE=\cG=\CC$,   the first
 inequality in Theorem \ref{FTT} is strict if and only if
 $\Theta(X)\Theta(X)^*>0$ and $F$ is not of the form
 $F=(I+\eta \Theta)(I-\eta \Theta)^{-1}$ for some constant $\eta$
 with $|\eta|=1$.
 \end{remark}

Consider the set   $\left[H_{\bf ball}^\infty(B(\cE)\right]_{<1}$
(resp. $ H_{\bf ball}^{>0}(B(\cE)$) of all bounded free holomorphic
functions on $[B(\cH)^n]_1$ with coefficients in $B(\cE)$ such that
$\|F(X)\|<1$ (resp. $\Re F(X)>0$) for any $X\in [B(\cH)^n]_1$. We
remark that the restriction to   $ H_{\bf ball}^{>0}(B(\cE)$ of the
noncommutative Cayley transform, defined by
$$\text{\boldmath{$\cC$}} [F] := [F -1][1+ F ]^{-1},
$$
  is a bijection
$\text{\boldmath{$\cC$}}:    H_{\bf ball}^{>0}(B(\cE)\to
 \left[H_{\bf ball}^\infty(B(\cE)\right]_{<1}$ and
  $\text{\boldmath{$\cC$}}^{-1}[G] =[I+G ][I-G
]^{-1}.$ Indeed, taking into account Theorem 1.5 from
\cite{Po-free-hol-interp}, it is enough to show that  $F\in  H_{\bf
ball}^{>0}(B(\cE)$ if and only if $G \in \left[H_{\bf
ball}^\infty(B(\cE)\right]_{<1}$, where  $F=[I+G ][I-G ]^{-1}.$ To
this end, note that
\begin{equation*}
\begin{split}
2\Re &F(rS_1,\ldots, rS_n)\\
&= [I-G(rS_1,\ldots, rS_n)^*]^{-1}\left[I-G(rS_1,\ldots,
rS_n)^*G(rS_1,\ldots, rS_n)\right][I-G(rS_1,\ldots, rS_n)]^{-1}
\end{split}
\end{equation*}
for any $r\in[0,1)$. Consequently, $\Re F(rS_1,\ldots, rS_n)>0$ for
any $r\in[0,1)$ if and only if $\|G(rS_1,\ldots, rS_n)\|<1$ for any
$r\in[0,1)$. Using the noncommutative  Poisson transform, we deduce
that $\Re F(X)>0$ for any $X\in [B(\cH)^n]_1$ if and only if
$\|G(X)\|<1$ for any $X\in [B(\cH)^n]_1$, which proves our
assertion.

In what follows we need the following result.

\begin{lemma}\label{inverse}
Let $F:[B(\cH)^n]_1 \to B(\cH)\bar \otimes_{min} B(\cE)$ be a free
holomorphic function with coefficients in $B(\cE)$. Then there is a
free holomorphic function $G:[B(\cH)^n]_1\to B(\cH)\bar
\otimes_{min} B(\cE)$ such that
$$
F(X)G(X)=G(X)F(X)=I,\qquad X\in [B(\cH)^n]_1,
$$
if and only if $F(rS_1,\ldots, rS_n)$ is an invertible operator for
any $r\in [0,1)$. Moreover, in this case,
$$G(rS_1,\ldots,
rS_n)=F(rS_1,\ldots, rS_n)^{-1}, \qquad r\in [0,1),
$$
where $S_1,\ldots, S_n$ are the left creation operators.
\end{lemma}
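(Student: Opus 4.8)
The plan is to prove the two implications separately, the forward one being immediate and the converse requiring the construction of the inverse power series together with a coefficient growth estimate. For the forward implication, suppose such a $G$ exists. Since the left creation operators satisfy $\|[rS_1,\ldots,rS_n]\|=r<1$, the $n$-tuple $(rS_1,\ldots,rS_n)$ lies in $[B(\cH)^n]_1$ for every $r\in[0,1)$ (taking $\cH=F^2(H_n)$). Evaluating the identities $F(X)G(X)=G(X)F(X)=I$ at $X=(rS_1,\ldots,rS_n)$ shows at once that $F(rS_1,\ldots,rS_n)$ is invertible with inverse $G(rS_1,\ldots,rS_n)$, which also gives the ``moreover'' statement.

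For the converse, write $F(X)=\sum_\alpha X_\alpha\otimes A_{(\alpha)}$ and set $T_r:=F(rS_1,\ldots,rS_n)=\sum_\alpha r^{|\alpha|}S_\alpha\otimes A_{(\alpha)}$, a norm-convergent series. Since each $S_\alpha$ commutes with every right creation operator $R_j$, the operator $T_r$ commutes with $R_j\otimes I_\cE$, $j=1,\ldots,n$, and hence so does its bounded inverse $T_r^{-1}$. By the commutant theorem for the noncommutative analytic Toeplitz algebra (\cite{Po-analytic,D}), $\{R_j\otimes I_\cE:j\}'=F_n^\infty\bar\otimes B(\cE)$, so $T_r^{-1}\in F_n^\infty\bar\otimes B(\cE)$; in particular it has \emph{analytic} Fourier coefficients $C^{(r)}_{(\gamma)}\in B(\cE)$ determined by $T_r^{-1}(1\otimes x)=\sum_\gamma e_\gamma\otimes C^{(r)}_{(\gamma)}x$. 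Next I would show the scaling $C^{(r)}_{(\gamma)}=r^{|\gamma|}C_{(\gamma)}$ with $C_{(\gamma)}$ independent of $r$. Computing $T_rT_r^{-1}(1\otimes x)=1\otimes x$ and comparing the coefficient of each $e_\sigma$ gives $A_{(0)}C^{(r)}_{(0)}=I$ together with $\sum_{\beta\gamma=\sigma}r^{|\beta|}A_{(\beta)}C^{(r)}_{(\gamma)}=0$ for $|\sigma|\ge 1$. Since $T_0$ is invertible, $A_{(0)}=F(0)$ is invertible, so $C^{(r)}_{(0)}=A_{(0)}^{-1}$; an induction on $|\sigma|$, isolating the term $\beta=g_0$, then yields $C^{(r)}_{(\sigma)}=r^{|\sigma|}C_{(\sigma)}$, where the $C_{(\sigma)}$ are the coefficients of the formal inverse series, namely $C_{(0)}=A_{(0)}^{-1}$ and $A_{(0)}C_{(\sigma)}=-\sum_{\beta\gamma=\sigma,\,|\beta|\ge1}A_{(\beta)}C_{(\gamma)}$. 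In particular $\sum_{\beta\gamma=\sigma}A_{(\beta)}C_{(\gamma)}=\delta_{\sigma,g_0}I$, and, running the same computation for $T_r^{-1}T_r=I$, also $\sum_{\gamma\beta=\sigma}C_{(\gamma)}A_{(\beta)}=\delta_{\sigma,g_0}I$.

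I would then define $G(X):=\sum_\gamma X_\gamma\otimes C_{(\gamma)}$ and verify it is free holomorphic on $[B(\cH)^n]_1$, i.e. $\limsup_k\|\sum_{|\gamma|=k}C_{(\gamma)}^*C_{(\gamma)}\|^{1/2k}\le1$. Here I use that $T_r^{-1}$ is the boundary function of a bounded free holomorphic function (via the identification of $F_n^\infty\bar\otimes B(\cE)$ with $H^\infty_{\bf ball}(B(\cE))$), so the Cauchy-type estimate of Theorem 2.1 in \cite{Po-holomorphic} applies to its coefficients: $\|\sum_{|\gamma|=k}(C^{(r)}_{(\gamma)})^*C^{(r)}_{(\gamma)}\|^{1/2}\le\|T_r^{-1}\|$, that is $r^k\|\sum_{|\gamma|=k}C_{(\gamma)}^*C_{(\gamma)}\|^{1/2}\le\|T_r^{-1}\|$. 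Taking $2k$-th roots and then $r\to1$ delivers the estimate. Consequently $G$ is free holomorphic, and since $G(rS_1,\ldots,rS_n)$ has the same Fourier coefficients $r^{|\gamma|}C_{(\gamma)}$ as $T_r^{-1}$ while both lie in $F_n^\infty\bar\otimes B(\cE)$, they coincide; thus $G(rS_1,\ldots,rS_n)=F(rS_1,\ldots,rS_n)^{-1}$, proving the ``moreover'' part. Finally, for each $X\in[B(\cH)^n]_1$ both series $F(X)$ and $G(X)$ converge absolutely in norm (radius of convergence $\ge1$), so their product may be formed as a Cauchy product, and the two coefficient identities above give $F(X)G(X)=G(X)F(X)=I$.

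The main obstacle is precisely this convergence estimate: a priori the formal inverse coefficients $C_{(\gamma)}$ carry no growth control. The device that overcomes it is to recognize $r^{|\gamma|}C_{(\gamma)}$ as the genuine Fourier coefficients of the \emph{bounded} operator $T_r^{-1}$, apply the Cauchy estimate at each fixed $r<1$, and let $r\to1$. This rests in turn on the structural input from the commutant theorem, that $T_r^{-1}$ is again analytic (has no $S_\gamma^*$ terms), which is what makes the scheme close up.
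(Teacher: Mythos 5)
Your proof is correct and takes essentially the same route as the paper's: the forward direction by evaluating at $(rS_1,\ldots,rS_n)$, and the converse via the commutant theorem of \cite{Po-analytic} to place $F(rS_1,\ldots,rS_n)^{-1}$ in $F_n^\infty\bar\otimes B(\cE)$, followed by the same induction on word length showing the Fourier coefficients scale as $r^{|\gamma|}C_{(\gamma)}$ with $C_{(\gamma)}$ independent of $r$, and the same Cauchy-product verification of $FG=GF=I$ on the ball. The only (immaterial) variation is the convergence step: the paper deduces free holomorphy of $G$ from norm convergence of $\sum (sr)^{|\alpha|}S_\alpha\otimes C_{(\alpha)}$ for all $s,r\in[0,1)$, whereas you extract the equivalent criterion $\limsup_k\bigl\|\sum_{|\gamma|=k}C_{(\gamma)}^*C_{(\gamma)}\bigr\|^{1/2k}\leq 1$ from the Cauchy coefficient estimate at each fixed $r$ and then let $r\to 1$; both rest on the same observation that $r^{|\gamma|}C_{(\gamma)}$ are the Fourier coefficients of the bounded operator $F(rS_1,\ldots,rS_n)^{-1}$.
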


\begin{proof}
One implication is obvious. Assume that $F(rS_1,\ldots, rS_n)$ is an
invertible operator for any $r\in [0,1)$.  First we prove that
$F(rS_1,\ldots, rS_n)^{-1}$ is in $F_n^\infty\bar \otimes B(\cE)$,
the weakly closed algebra generated by the spatial tensor product.
Since $F$ is  a free holomorphic function, $F(rS_1,\ldots, rS_n)$ is
in $\cA_n\bar \otimes_{min} B(\cE)$ for any $r\in [0,1)$.  In
particular, we have
$$
F(rS_1,\ldots, rS_n) (R_i\otimes I)=(R_i\otimes I) F(rS_1,\ldots,
rS_n),\qquad i=1,\ldots, n,
$$
where $R_1,\ldots, R_n$ are the right creation operators.  Hence, we
deduce that
$$
(R_i\otimes I) F(rS_1,\ldots, rS_n)^{-1}=F(rS_1,\ldots, rS_n)^{-1}
(R_i\otimes I)\quad i=1,\ldots, n.
$$
According to \cite{Po-analytic},  the commutant of $\{R_i\otimes I,\
i=1,\ldots, n\}$ is equal to $F_n^\infty\bar \otimes B(\cE)$.
Consequently,  $F(rS_1,\ldots, rS_n)^{-1}$ is in $F_n^\infty\bar
\otimes B(\cE)$ and has a unique Fourier representation
\begin{equation*}
F(rS_1,\ldots, rS_n)^{-1}\sim\sum_{\alpha\in \FF_n^+}
S_\alpha\otimes r^{|\alpha|} B_{(\alpha)}(r)
\end{equation*}
for some operators $B_{(\alpha)}(r)\in B(\cE)$. We prove  now  that
the operators $B_{(\alpha)}(r)$, $\alpha\in \FF_n^+$, don't depend
on $r\in [0,1)$. Assume that $F$ has the representation
$$F(X_1,\ldots, X_n):=\sum_{k=0}^\infty\sum_{|\alpha|=k}
X_\alpha\otimes A_{\alpha)},\qquad (X_1,\ldots, X_n)\in
[B(\cH)^n]_1,
$$
where the convergence is in the operator norm topology. Since
$$I=F(rS_1,\ldots, rS_n)F(rS_1,\ldots,
rS_n)^{-1}=\left(\sum_{\alpha\in \FF_n^+} S_\alpha\otimes
r^{|\alpha|} B_{(\alpha)}(r)
\right)\left(\sum_{k=0}^\infty\sum_{|\alpha|=k} r^{|\alpha|}
S_\alpha\otimes A_{\alpha)}\right)
$$
for any $\beta\in \FF_n^+$, we have
$$
\left< (S_\beta^*\otimes I_\cE)F(rS_1,\ldots, rS_n)F(rS_1,\ldots,
rS_n)^{-1} (1\otimes x), (1\otimes y)\right>= \sum_{\alpha,\omega\in
\FF_n^+, \alpha \omega=\beta} r^{|\beta|}\left<A_{(\alpha)}
B_{(\omega)}(r)x,y\right>
$$
for any $x,y\in \cE$. Therefore, $A_{(0)} B_{(0)}(r)=I$ and
\begin{equation}
\label{al-be}\sum_{\alpha,\omega\in \FF_n^+, \alpha \omega=\beta}
A_{(\alpha)} B_{(\omega)}(r)=0 \end{equation}
 if $|\beta|\geq 1$. Now, we proceed by induction. Note
that $B_{(0)}(r)=A_{(0)}^{-1}$ and assume that the operators
$B_{(\alpha)}(r)$ don't depend on $r\in [0,1)$ for any  $\alpha\in
\FF_n^+$ with $|\beta|\leq m$. We prove  that the property holds if
$|\beta|=m+1$. To this end, let $\beta:=g_{i_1} g_{i_2}\cdots
g_{i_m} g_{i_{m+1}}\in \FF_n^+$. Due to relation \eqref{al-be}, we
have
$$
A_{(0)}B_{(\beta)}+ A_{(g_{i_1})}B_{(g_{i_2}\cdots
g_{i_{m+1}})}+\cdots + A_{(g_{i_1}\cdots g_{i_{m}})} B_
{(g_{i_{m+1}})}+ A_{(\beta)} B_{(0)}=0.
$$
Hence and due to the induction hypothesis, we deduce that
$B_{(\beta)}(r)$ does not depend on $r\in [0,1)$. Thus  we can write
$B_{(\beta)}:=B_{(\beta)}(r)$ for any $\beta\in \FF_n^+$ and $
F(rS_1,\ldots, rS_n)^{-1}$ has the Fourier representation
$\sum_{\alpha\in \FF_n^+} S_\alpha\otimes r^{|\alpha|} B_{(\alpha)}
$ and the series $\sum_{k=0}^\infty\sum_{|\alpha|=k}
(sr)^{|\alpha|}S_\alpha\otimes
 B_{(\alpha)}$ converges in the operator norm topology for any
 $s,r\in [0,1)$.  Hence, we
 deduce that the map $G:[B(\cH)^n]_1\to B(\cH) \bar \otimes_{min}
B(\cE)$ defined by
$$
G(X_1,\ldots, X_n):=\sum_{k=0}^\infty\sum_{|\alpha|=k}
X_\alpha\otimes
 B_{(\alpha)},\qquad (X_1,\ldots, X_n)\in [B(\cH)^n]_1
 $$
is a free holomorphic function. Here the convergence is in the
operator norm topology. Due to    \eqref{al-be} and the similar
relation that can be deduced from the equation $F(rS_1,\ldots,
rS_n)^{-1}F(rS_1,\ldots, rS_n)=I$, one can easily see that
$F(X)G(X)=G(X)F(X)=I$ for any $X\in [B(\cH)^n]_1$. Moreover, we have
$$G(rS_1,\ldots, rS_n)=F(rS_1,\ldots, rS_n)^{-1}
$$
for any $r\in [0,1)$. The proof is complete.
\end{proof}

 The next result is a noncommutative extension of Harnack's double
 inequality.

\begin{theorem}\label{Har2} Let $F$, $\Theta$, and $\Gamma$ be free holomorphic functions
on $[B(\cH)^n]_1$ with coefficients in $B(\cE)$, $B(\cG, \cE)$, and
$B(\cE,\cG)$, respectively.  If
\begin{enumerate}
\item[(i)]
$\Re F>0$,
\item[(ii)] $\Theta $  has the radial infimum property,
$\|\Theta\|_\infty\leq 1$, and $\|\Theta(0)\|<1$,
    \item[(iii)]
$ F=I+\Theta \Gamma $  and $F^{-1}=I+\Theta L$ for some free
holomorphic function $L$ on $[B(\cH)^n]_1$,
\end{enumerate}
then
$$
\frac{1-\|\Theta(X)\|}{1+\|\Theta(X)\|}\leq \|F(X)\|\leq
\frac{1+\|\Theta(X)\|}{1-\|\Theta(X)\|}
$$
for any $X\in [B(\cH)^n]_1$.
\end{theorem}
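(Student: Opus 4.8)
The plan is to obtain both inequalities from the upper-bound half of Theorem~\ref{FTT}, invoking it once for $F$ and once for the holomorphic inverse $F^{-1}$, and then passing from the estimate for $F^{-1}$ to the lower estimate for $F$ by taking reciprocals of operator norms. The reason hypothesis (iii) carries the extra clause $F^{-1}=I+\Theta L$ is precisely to guarantee that $F^{-1}$ again has the structural form required by Theorem~\ref{FTT}.

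First I would dispose of the upper bound: hypotheses (i)--(iii) place us in the setting of Theorem~\ref{FTT} (with $\Theta$ having the radial infimum property and $\|\Theta\|_\infty=1$), which yields
$$
\|F(X)\|\le \frac{1+\|\Theta(X)\|}{1-\|\Theta(X)\|},\qquad X\in[B(\cH)^n]_1 .
$$
Here I would also record, via Corollary~\ref{not-strict} (the maximum principle together with $\|\Theta\|_\infty=1$ and $\|\Theta(0)\|<1$), that $\|\Theta(X)\|<1$ on $[B(\cH)^n]_1$, so both fractions are well defined and strictly positive.

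For the lower bound the idea is that $F^{-1}$ satisfies the very same hypotheses, so Theorem~\ref{FTT} applies to it as well. I would proceed in the following steps. Since $\Re F>0$, for each $r\in[0,1)$ the operator $\Re F(rS_1,\dots,rS_n)$ is positive and invertible, hence $F(rS_1,\dots,rS_n)$ is bounded below together with its adjoint and therefore invertible; Lemma~\ref{inverse} then produces a free holomorphic function $F^{-1}$ with $F(X)F^{-1}(X)=F^{-1}(X)F(X)=I$ on $[B(\cH)^n]_1$. Next, from the operator identity
$$
\Re\bigl(F(X)^{-1}\bigr)=\bigl(F(X)^{-1}\bigr)^{*}\,\Re F(X)\,F(X)^{-1}
$$
and $\Re F(X)>0$ I conclude $\Re(F^{-1})>0$, so $F^{-1}$ meets hypothesis (i). Hypothesis (iii) already supplies the factorization $F^{-1}=I+\Theta L$ with the same $\Theta$, so hypotheses (ii) and (iii) transfer verbatim, and Theorem~\ref{FTT} applied to $F^{-1}$ gives
$$
\|F^{-1}(X)\|\le \frac{1+\|\Theta(X)\|}{1-\|\Theta(X)\|},\qquad X\in[B(\cH)^n]_1 .
$$

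Finally I would combine the two bounds: from $F(X)F^{-1}(X)=I$ we get $\|F(X)\|\,\|F^{-1}(X)\|\ge\|I\|=1$, whence
$$
\|F(X)\|\ge \frac{1}{\|F^{-1}(X)\|}\ge \frac{1-\|\Theta(X)\|}{1+\|\Theta(X)\|},
$$
which is the desired lower estimate. The main obstacle I anticipate is the construction of $F^{-1}$ as a bona fide free holomorphic function: one must verify that $\Re F>0$ genuinely yields invertibility of $F(rS_1,\dots,rS_n)$ at every radius (so that Lemma~\ref{inverse} is applicable, invertibility rather than mere injectivity being the point in the infinite-dimensional setting), and that the Cayley-type identity giving $\Re(F^{-1})>0$ holds at the operator level. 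Once $F^{-1}$ is in hand and shown to inherit (i)--(iii), the remaining reciprocal manipulation is routine.
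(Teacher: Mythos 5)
Your proposal is correct and follows essentially the same route as the paper: the upper bound comes from Theorem \ref{FTT} applied to $F$, while for the lower bound you use $\Re F>0$ to get $F(rS_1,\ldots,rS_n)$ and its adjoint bounded below, invoke Lemma \ref{inverse} to produce the holomorphic inverse $\Lambda=F^{-1}$, verify $\Re(F^{-1})=(F^{-1})^{*}(\Re F)F^{-1}>0$, use the clause $F^{-1}=I+\Theta L$ to apply Theorem \ref{FTT} again, and conclude via $\|F(X)\|\,\|F^{-1}(X)\|\ge 1$ --- exactly the paper's argument. Your anticipated ``obstacles'' (invertibility at every radius, the operator-level identity for $\Re(F^{-1})$) are handled in the paper precisely as you sketch them.
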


\begin{proof} The inequality $ \|F(X)\|\leq
\frac{1+\|\Theta(X)\|}{1-\|\Theta(X)\|}$ is due to Theorem
\ref{FTT}. We prove  now the first inequality. Since $\Re F(X)>0$,
$X\in [B(\cH)^n]_1$, there exit constants $\gamma(r)\in (0,1)$ such
that
$$\Re F(rS_1,\ldots, rS_n) \geq \gamma(r)I,\quad r\in (0,1).
$$
Hence, we deduce that
$$
\|F(rS_1,\ldots, rS_n)^*x\|+\|F(rS_1,\ldots, rS_n)x\|\geq
2\gamma(r)\|x\|,\qquad x\in F^2(H_n)\otimes \cE),
$$
which shows that $F(rS_1,\ldots, rS_n)$ and $F(rS_1,\ldots, rS_n)^*$
are bounded below. Therefore,  the operator $F(rS_1,\ldots, rS_n)$
is invertible for all $r\in [0,1)$. Due to Lemma \ref{inverse},
there is a free holomorphic function $\Lambda:[B(\cH)^n]_1\to B(\cH)
\bar \otimes_{min} B(\cE)$ such that
$$
F(X)\Lambda(X)=\Lambda(X)F(X)=I,\qquad X\in [B(\cH)^n]_1,
$$
and $\Lambda(rS_1,\ldots, rS_n)=F(rS_1,\ldots, rS_n)^{-1}$ for all $
r\in [0,1)$. Since
$$
\Re \Lambda(rS_1,\ldots, rS_n)=[F(rS_1,\ldots, rS_n)^{-1}]^*[\Re
F(rS_1,\ldots, rS_n)]F(rS_1,\ldots, rS_n)^{-1}
$$
and $\Re F(rS_1,\ldots, rS_n)>0$, we deduce that $\Re
\Lambda(rS_1,\ldots, rS_n)>0$. Therefore $\Re \Lambda>0$. Due to
item (iii), we have $\Lambda=I+\Theta L$ for some free holomorphic
function $L$ on $[B(\cH)^n]_1$. Applying now Theorem \ref{FTT} to
$\Lambda$, we obtain
$$
\|\Lambda(X)\|\leq \frac{1+\|\Theta(X)\|}{1-\|\Theta(X)\|}
$$
for any $X\in [B(\cH)^n]_1$. Since $\Lambda(X)=F(X)^{-1}$, we have
$\|F(X)\|\geq \frac{1}{\|\Lambda(X)\|}$. Combining these
inequalities, we deduce that
$$
\frac{1-\|\Theta(X)\|}{1+\|\Theta(X)\|}\leq \|F(X)\|
$$
for any $X\in [B(\cH)^n]_1$. The proof is complete.
\end{proof}

We remark that when $n=1$ and $\cE=\cG=\CC$, then  the condition
$F^{-1}=I+\Theta L$ in Theorem \ref{Har2} is redundant, so  we can
drop it.

\begin{corollary}\label{Har1} Let $F$ be  a free holomorphic function
on $[B(\cH)^n]_1$   with coefficients in $B(\cE)$ and  standard
representation
$$
F(X_1,\ldots,X_n)=I+\sum_{k=m}^\infty \sum_{|\alpha|=k}
X_\alpha\otimes A_{(\alpha)},\quad (X_1,\ldots,X_n)\in [B(\cH)^n]_1,
$$
where $m=1,2\ldots$. If \ $\Re F\geq 0$, then
$$
\frac{1-\|\sum_{|\beta|=m}X_\beta
X_\beta^*\|^{1/2}}{1+\|\sum_{|\beta|=m}X_\beta
X_\beta^*\|^{1/2}}\leq \|F(X)\|\leq
\frac{1+\|\sum_{|\beta|=m}X_\beta
X_\beta^*\|^{1/2}}{1-\|\sum_{|\beta|=m}X_\beta X_\beta^*\|^{1/2}}
$$
for any $X\in [B(\cH)^n]_1$.

\end{corollary}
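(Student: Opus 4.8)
The plan is to realize $F$ in the factored form required by Theorems \ref{FTT} and \ref{Har2}, taking for $\Theta$ the homogeneous row of length-$m$ monomials, and then to read off the two bounds from those theorems. Set
$$\Theta(X_1,\ldots,X_n):=[X_\beta\otimes I_\cE:\ |\beta|=m].$$
Exactly as in the proof of Corollary \ref{SCH}, the free holomorphic function $F-I=\sum_{k=m}^\infty\sum_{|\alpha|=k}X_\alpha\otimes A_{(\alpha)}$ admits a Gleason-type factorization $F-I=\Theta\Gamma$ for a free holomorphic column $\Gamma$, so that $F=I+\Theta\Gamma$. Moreover $\Theta(rS_1,\ldots,rS_n)^*\Theta(rS_1,\ldots,rS_n)=r^{2m}I$, so by Theorem \ref{radial} (or Example \ref{EXAMP}) $\Theta$ is inner, has the radial infimum property, $\|\Theta\|_\infty=1$, and $\Theta(0)=0$; and since $\Theta(X)\Theta(X)^*=\big(\sum_{|\beta|=m}X_\beta X_\beta^*\big)\otimes I_\cE$ we have $\|\Theta(X)\|=\big\|\sum_{|\beta|=m}X_\beta X_\beta^*\big\|^{1/2}$.

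Granting this, the upper bound is immediate: with $\Re F\geq 0$, $F=I+\Theta\Gamma$, $\|\Theta\|_\infty=1$ and $\|\Theta(0)\|=0<1$, Theorem \ref{FTT} yields $\|F(X)\|\leq\frac{1+\|\Theta(X)\|}{1-\|\Theta(X)\|}$, which is the asserted right-hand inequality after substituting the formula for $\|\Theta(X)\|$.

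For the lower bound I would invoke Theorem \ref{Har2}; the obstacle is that this theorem requires the \emph{strict} condition $\Re F>0$, whereas only $\Re F\geq 0$ is assumed, and in addition demands $F^{-1}=I+\Theta L$. I would remove both gaps by a single perturbation: for $\delta>0$ put $\tilde F_\delta:=\frac{1}{1+\delta}(F+\delta I)$, so that $\tilde F_\delta(0)=I$, $\Re\tilde F_\delta\geq\frac{\delta}{1+\delta}I>0$, and $\tilde F_\delta=I+\Theta\tilde\Gamma_\delta$ with $\tilde\Gamma_\delta:=\frac{1}{1+\delta}\Gamma$. The remaining hypothesis $\tilde F_\delta^{-1}=I+\Theta L_\delta$ is the delicate point. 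Since $\Re\tilde F_\delta>0$ the operators $\tilde F_\delta(rS_1,\ldots,rS_n)$ are invertible for every $r\in[0,1)$, so Lemma \ref{inverse} produces a free holomorphic inverse $\tilde F_\delta^{-1}$. Because $\tilde F_\delta=I+(\text{terms of degree}\geq m)$, comparing Fourier coefficients in $\tilde F_\delta(rS)\,\tilde F_\delta(rS)^{-1}=I$ through the recursion \eqref{al-be} forces the coefficients of $\tilde F_\delta^{-1}-I$ in degrees $1,\ldots,m-1$ to vanish, whence $\tilde F_\delta^{-1}-I=\Theta L_\delta$ by the same Gleason factorization.

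With all hypotheses verified, Theorem \ref{Har2} applied to $\tilde F_\delta$ gives $\frac{1-\|\Theta(X)\|}{1+\|\Theta(X)\|}\leq\|\tilde F_\delta(X)\|$. Finally, from $\tilde F_\delta(X)=\frac{1}{1+\delta}(F(X)+\delta I)$ one has $\|\tilde F_\delta(X)\|\leq\frac{\|F(X)\|+\delta}{1+\delta}$, so $(1+\delta)\frac{1-\|\Theta(X)\|}{1+\|\Theta(X)\|}-\delta\leq\|F(X)\|$; letting $\delta\to0$ and inserting $\|\Theta(X)\|=\big\|\sum_{|\beta|=m}X_\beta X_\beta^*\big\|^{1/2}$ yields the left-hand inequality and completes the argument. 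The main work, as indicated, is the verification that the inverse retains the factorization through $\Theta$; everything else is a direct appeal to the two factorization theorems together with the routine perturbation $\delta\to0$.
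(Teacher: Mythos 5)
Your proof is correct and is essentially the paper's own argument: the same Gleason-type factorization $F=I+\Theta\Gamma$ with $\Theta(X)=[X_\beta\otimes I:\ |\beta|=m]$, the same appeal to Theorems \ref{FTT} and \ref{Har2}, and your perturbation $\tilde F_\delta=\frac{1}{1+\delta}(F+\delta I)$ coincides algebraically with the paper's $G_\epsilon=I+\frac{1}{1+\epsilon}(F-I)$. The only (harmless) cosmetic difference is that you verify $F^{-1}=I+\Theta L$ by tracking the vanishing of low-degree Fourier coefficients through the recursion \eqref{al-be} of Lemma \ref{inverse}, whereas the paper reads it off from the identity $F^{-1}=I-F^{-1}\Theta\Gamma$; both are valid.
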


\begin{proof} First, we consider the case when $\Re F>0$.
 As in the proof of Theorem 2.4 from
\cite{Po-holomorphic}, we have a   decomposition $ F=I+\Theta
\Gamma$, where
$$
\Theta(X_1,\ldots, X_n):=[X_\beta\otimes I :\ |\beta|=m]\quad \text{
and }
\quad \Gamma(X_1,\ldots, X_n):=\left[\begin{matrix} \Phi_{(\beta)}(X_1,\ldots, X_n)\\
:\\
|\beta|=m
\end{matrix} \right]
$$
 are free holomorphic functions. Due to Section 3 (see Example
 \ref{EXAMP}), $\Theta$ is inner and has the radial infimum
 property.
 On the other hand, due to the proof of  Theorem \ref{Har2}, $X\mapsto F(X)^{-1}$ exists as a
 free holomorphic function on $[B(\cH)^n]_1$. Since
 $$I=F(X)^{-1}  F(X) =F(X)^{-1} (I+\Theta(X)
 \Gamma(X))
 $$
 we deduce that  $F(X)^{-1}=I-F(X)^{-1} \Theta(X) \Gamma(X)$. Taking
 into account that $\Theta$ is a homogeneous polynomial of degree $m$,
 it is easy to see that $X\mapsto F(X)^{-1} \Theta(X) \Gamma(X)$ is
 a free holomorphic function so that each monomial in its standard
 representation has degree greater than  or equal to $m$. This
 implies that $F^{-1}$ has a decomposition of the form $I+\Theta L$
 for some free holomorphic function $L$. Since we are under the
 hypotheses of
 Theorem \ref{Har2},  we can apply  this theorem  to $F$
 and obtain the desired inequalities.
 In the case when $\Re F\geq 0$, the map
 $G_\epsilon:=I+\frac{1}{1+\epsilon} (F-I)$, $\epsilon>0$, has the
 property  $\Re G_\epsilon>0$,  so that we can  use the first part of
 the proof and obtain the corresponding inequalities. Taking the
 limit as $\epsilon \to 0$, we complete the proof.
\end{proof}

From Corollary \ref{Har1}, we can deduce the following remarkable
particular case, which should be compared to Theorem 1.4 from
\cite{Po-hyperbolic}.

\begin{corollary}\label{har1}  If $F$ is   a free holomorphic function
on $[B(\cH)^n]_1$ with coefficients in $B(\cE)$ such that $F(0)=I$
and  $\Re F\geq 0$, then
$$
\frac{1-\|X\|}{1+\|X\|}\leq \|F(X)\|\leq \frac{1+\|X\|}{1-\|X\|},
\qquad X\in [B(\cH)^n]_1.
$$
\end{corollary}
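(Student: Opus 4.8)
The plan is to recognize Corollary \ref{har1} as the particular case $m=1$ of Corollary \ref{Har1}. First I would observe that any free holomorphic function $F$ with $F(0)=I$ admits the standard representation $F(X_1,\ldots,X_n)=I+\sum_{k=1}^\infty \sum_{|\alpha|=k} X_\alpha\otimes A_{(\alpha)}$, which is exactly the form required in Corollary \ref{Har1} with $m=1$; the degree-one coefficients are allowed to vanish, so no genericity assumption on the linear part is needed. Hence, as soon as $\Re F\geq 0$, the hypotheses of Corollary \ref{Har1} are met with $m=1$ and the corollary applies directly.

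It then remains only to identify the quantity $\|\sum_{|\beta|=m}X_\beta X_\beta^*\|^{1/2}$ from Corollary \ref{Har1} in the case $m=1$. The words $\beta\in\FF_n^+$ of length one are precisely $g_1,\ldots,g_n$, so $\sum_{|\beta|=1}X_\beta X_\beta^*=X_1X_1^*+\cdots+X_nX_n^*$. By definition of the operatorial norm on the row $[X_1\cdots X_n]$, which is the norm governing $[B(\cH)^n]_1$ throughout the paper, we have $\|X\|=\|[X_1\cdots X_n]\|=\|X_1X_1^*+\cdots+X_nX_n^*\|^{1/2}$, and therefore $\|\sum_{|\beta|=1}X_\beta X_\beta^*\|^{1/2}=\|X\|$. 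Substituting this equality into both sides of the double inequality of Corollary \ref{Har1} yields the stated bounds $\frac{1-\|X\|}{1+\|X\|}\leq\|F(X)\|\leq\frac{1+\|X\|}{1-\|X\|}$ for every $X\in[B(\cH)^n]_1$.

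There is no genuine obstacle here: the entire analytic content has already been carried out in Corollary \ref{Har1}, and the only point to verify is the elementary norm identity above. Should one prefer not to invoke Corollary \ref{Har1} as a black box, one could reproduce its short argument directly in this setting: write $F=I+\Theta\Gamma$ with $\Theta(X)=[X_1\cdots X_n]$ the homogeneous degree-one row, which is inner and has the radial infimum property by Example \ref{EXAMP}; apply Theorem \ref{FTT} to $F$ for the upper bound; and apply it to $F^{-1}$, which exists as a free holomorphic function by Lemma \ref{inverse} because $\Re F>0$ forces each $F(rS_1,\ldots,rS_n)$ to be invertible, for the lower bound. Finally one passes from $\Re F>0$ to $\Re F\geq 0$ through the usual approximation $G_\epsilon:=I+\frac{1}{1+\epsilon}(F-I)$ and a limit as $\epsilon\to 0$.
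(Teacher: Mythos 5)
Your proposal is correct and takes exactly the paper's route: the paper states Corollary \ref{har1} as an immediate consequence of Corollary \ref{Har1} in the case $m=1$, and the only verification needed is precisely your norm identity $\bigl\|\sum_{|\beta|=1}X_\beta X_\beta^*\bigr\|^{1/2}=\|[X_1\,\cdots\,X_n]\|=\|X\|$. Your optional self-contained argument merely reproduces the paper's own proof of Corollary \ref{Har1} (the factorization $F=I+\Theta\Gamma$ with $\Theta(X)=[X_1\,\cdots\,X_n]$, Theorem \ref{FTT}, Lemma \ref{inverse} for $F^{-1}$, and the approximation $G_\epsilon$ to pass from $\Re F>0$ to $\Re F\geq 0$), so nothing further is required.
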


\bigskip

\section{ Noncommutative Borel-Carath\'eodory theorems }

In this section, we obtain Borel-Carath\'eodory type results for
free holomorhic functions with operator-valued coefficients.

We start with  a Carath\'eodory type result for free holomorhic
functions which admit  factorizations  $F=\Theta \Gamma$, where
$\Theta$  is an inner function with the radial infimum property.

\begin{theorem}\label{Cara}
Let $F$, $\Theta$, and $\Gamma$ be free holomorphic functions on
$[B(\cH)^n]_1$ with coefficients in $B(\cE)$, $B(\cG, \cE)$, and
$B(\cE, \cG)$, respectively.  If

\begin{enumerate}
\item[(i)] $\Re F(X)\leq I$ for any $X\in [B(\cH)^n]_1$,
\item[(ii)] $\Theta $ has
     the radial infimum property, $\|\Theta\|_\infty = 1$, and
     $\|\Theta(0)\|<1$,
    \item[(iii)] $F=\Theta \Gamma$,
\end{enumerate}
     then
    $$
    \|F(X)\|\leq \frac{2\|\Theta(X)\|}{1- \|\Theta(X)\|},\qquad X\in
    [B(\cH)^n]_1.
    $$
    \end{theorem}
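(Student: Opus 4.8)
The plan is to reduce the statement to Theorem \ref{Schwarz} by composing $F$ with a Cayley-type transform that carries the operatorial half-space $\{\Re F\le I\}$ into the closed unit ball while preserving the factorization through $\Theta$. First I would observe that hypothesis (i) gives $\Re(2I-F(X))=2I-\Re F(X)\ge I$ for every $X\in[B(\cH)^n]_1$; in particular $2I-F(rS_1,\ldots,rS_n)$ has real part bounded below by $I$, hence is bounded below together with its adjoint, and so is invertible for every $r\in[0,1)$. By Lemma \ref{inverse}, $(2I-F)^{-1}$ is itself a free holomorphic function on $[B(\cH)^n]_1$, so the products
$$
G:=F(2I-F)^{-1}=(2I-F)^{-1}F\quad\text{and}\quad H:=\Gamma(2I-F)^{-1}
$$
are free holomorphic as well, using that $F$ commutes with $2I-F$ and hence with its inverse.

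Next I would verify that $\|G\|_\infty\le 1$ and that $G=\Theta H$. For the norm bound, at a fixed $X$ write $W:=F(X)$; conjugating $I-G(X)^*G(X)$ by the invertible operator $2I-W$ and using $G(X)(2I-W)=W$ and $(2I-W^*)G(X)^*=W^*$ gives
$$
(2I-W^*)\big(I-G(X)^*G(X)\big)(2I-W)=(2I-W^*)(2I-W)-W^*W=4I-2(W+W^*)\ge 0,
$$
since $W+W^*=2\Re F(X)\le 2I$. As $2I-W$ is invertible, this yields $I-G(X)^*G(X)\ge 0$ for all $X$, so $\|G\|_\infty\le 1$. The factorization is immediate from (iii): $G=\Theta\Gamma(2I-F)^{-1}=\Theta H$.

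Now I would apply Theorem \ref{Schwarz} to the identity $G=\Theta H$, with $\Theta$ in its own role (the coefficient spaces $B(\cG,\cE)$ for $\Theta$, $B(\cE,\cG)$ for $H$, and $B(\cE)$ for $G$ match the roles of $\Theta,G,F$ in that theorem). Since $\|G\|_\infty\le 1$ and $\Theta$ has the radial infimum property with $\|\Theta\|_\infty=1$, the theorem gives $\|G(X)\|\le\|\Theta(X)\|$ for all $X$. Because $\|\Theta\|_\infty=1$ and $\|\Theta(0)\|<1$, Corollary \ref{not-strict} (the maximum principle) yields $\|\Theta(X)\|<1$, whence $\|G(X)\|<1$ and $I+G(X)$ is invertible. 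Inverting the transform from $G(2I-F)=F$ and $GF=FG$ gives $F=2G(I+G)^{-1}$, so at each $X$,
$$
\|F(X)\|\le 2\|G(X)\|\,\|(I+G(X))^{-1}\|\le\frac{2\|G(X)\|}{1-\|G(X)\|}\le\frac{2\|\Theta(X)\|}{1-\|\Theta(X)\|},
$$
where the last step uses that $t\mapsto 2t/(1-t)$ is increasing on $[0,1)$ and $\|G(X)\|\le\|\Theta(X)\|<1$.

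The main obstacle I anticipate is the bookkeeping around the Cayley transform rather than any single estimate: one must confirm simultaneously that $(2I-F)^{-1}$ exists as a genuine free holomorphic function (not merely a pointwise inverse), that $G$ inherits the factorization through $\Theta$ so that Theorem \ref{Schwarz} truly applies, and that the coefficient spaces line up with the roles in that theorem. The positivity identity forcing $\|G\|_\infty\le 1$ is the only computational point, and it is short once the conjugation by $2I-W$ is set up. Everything else is assembling Lemma \ref{inverse}, Theorem \ref{Schwarz}, and Corollary \ref{not-strict} in the correct order.
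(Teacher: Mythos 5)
Your proof is correct, and it amounts to inlining the paper's own argument rather than literally reproducing it: the paper proves Theorem \ref{Cara} in two lines by setting $G:=I-F$, observing that $\Re G\geq 0$ and $G=I+\Theta(-\Gamma)$, and invoking Theorem \ref{FTT}, whose operator inequality $[I-G(X)][I-G(X)^*]\leq [I+G(X)]\Theta(X)\Theta(X)^*[I+G(X)^*]$ yields $\|F(X)\|\leq \|2I-F(X)\|\,\|\Theta(X)\|\leq (2+\|F(X)\|)\|\Theta(X)\|$, after which the maximum principle (Theorem \ref{maxmod}) gives $\|\Theta(X)\|<1$ and the bound follows. You bypass Theorem \ref{FTT} and rebuild its content directly: your transform $G=F(2I-F)^{-1}$ is, up to sign, exactly the Cayley transform $(G'-I)(I+G')^{-1}$ of $G':=I-F$ used inside the proof of \ref{FTT}, but where the paper leans on the noncommutative Cayley-transform machinery of \cite{Po-free-hol-interp} for contractivity, you verify it by hand via the conjugation identity $(2I-W^*)\bigl(I-G(X)^*G(X)\bigr)(2I-W)=4I-2(W+W^*)\geq 0$, and you secure free holomorphy of $(2I-F)^{-1}$ from Lemma \ref{inverse}, correctly noting that $\Re\bigl(2I-F(rS_1,\ldots,rS_n)\bigr)\geq I$ makes that operator and its adjoint bounded below for every $r\in[0,1)$. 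The remaining assembly --- Theorem \ref{Schwarz} applied to $G=\Theta H$ with the coefficient spaces matched as you indicate, Corollary \ref{not-strict} to get $\|\Theta(X)\|<1$ and hence invertibility of $I+G(X)$, and the unwinding $F=2G(I+G)^{-1}$ combined with monotonicity of $t\mapsto 2t/(1-t)$ --- is sound, and your norm chain $\|F(X)\|\leq 2\|G(X)\|/(1-\|G(X)\|)\leq 2\|\Theta(X)\|/(1-\|\Theta(X)\|)$ is an equivalent substitute for the paper's algebraic manipulation $\|F(X)\|(1-\|\Theta(X)\|)\leq 2\|\Theta(X)\|$. The trade-off: the paper's route records the stronger operator inequality $F(X)F(X)^*\leq [2I-F(X)]\Theta(X)\Theta(X)^*[2I-F(X)]^*$ as a by-product of \ref{FTT}, whereas yours is more self-contained, needing only Lemma \ref{inverse}, Theorem \ref{Schwarz}, and the maximum principle, with every analytic point (invertibility, contractivity, commutativity of $F$ with $(2I-F)^{-1}$) checked explicitly.
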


\begin{proof} Since $G:=I-F=I-\Theta \Gamma$ has the property that
$\Re G\geq 0$, we can apply Theorem \ref{FTT} to $G$ and obtain
\begin{equation*}
 [I-G(X)][I-G(X)^*]\leq [I+G(X)]
\Theta(X)\Theta(X)^*[I+G(X)^*],\qquad X\in [B(\cH)^n]_1.
\end{equation*}
Consequently, we  deduce that
\begin{equation*}
\begin{split}
\|F(X)\|&=\|I-G(X)\|\leq \|(G(X)+ I\|
\|\Theta(X)\|\\
&=\|2I-F(X)\|\|\Theta(X)\| \leq (2+\|F(X)\|) \|\Theta(X)\|.
\end{split}
\end{equation*}
Hence, we have
$$
\|F(X)\|(1-\|\Theta (X)\|)\leq 2\|\Theta (X)\|.
$$
We recall that, since $\|\Theta\|_\infty= 1$ and $\|\Theta(0)\|<1$,
the maximum principle for  free holomorphic functions with
operator-valued coefficients (see Theorem \ref{maxmod}) implies that
$\|\Theta(X)\|<1$. Now we can complete the proof.
\end{proof}

\begin{corollary}\label{Har3} Let $F$ be  a free holomorphic function
on $[B(\cH)^n]_1$   with coefficients in $B(\cE)$ and  standard
representation
$$
F(X_1,\ldots,X_n)=\sum_{k=m}^\infty \sum_{|\alpha|=k}
X_\alpha\otimes A_{(\alpha)},\quad (X_1,\ldots,X_n)\in [B(\cH)^n]_1,
$$
where $m=1,2\ldots$. If \  $\Re F\leq I$, then
$$
  \|F(X)\|\leq
\frac{2\|\sum_{|\beta|=m}X_\beta
X_\beta^*\|^{1/2}}{1-\|\sum_{|\beta|=m}X_\beta X_\beta^*\|^{1/2}},
\qquad X\in [B(\cH)^n]_1.
$$
\end{corollary}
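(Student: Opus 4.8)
The plan is to obtain this corollary as a direct application of Theorem \ref{Cara}, after exhibiting $F$ in the Gleason-type factored form $F=\Theta\Gamma$ already used in the proofs of Corollary \ref{SCH} and Corollary \ref{Har1}. First I would group the given series according to the length-$m$ prefix of each word: since every $\alpha\in\FF_n^+$ with $|\alpha|\geq m$ factors uniquely as $\alpha=\beta\gamma$ with $|\beta|=m$, collecting terms gives
$$
F(X)=[\,X_\beta\otimes I_\cE:\ |\beta|=m\,]\left[\begin{matrix}\Phi_{(\beta)}(X)\\ :\\ |\beta|=m\end{matrix}\right]=:\Theta(X)\Gamma(X),
$$
where $\Phi_{(\beta)}(X):=\sum_{\gamma\in\FF_n^+}X_\gamma\otimes A_{(\beta\gamma)}$. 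As in the proof of Theorem 3.4 of \cite{Po-holomorphic}, both $\Theta$ and $\Gamma$ are free holomorphic functions on $[B(\cH)^n]_1$.

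Next I would verify that $\Theta$ meets hypothesis (ii) of Theorem \ref{Cara}. Because $\{S_\beta\}_{|\beta|=m}$ is a family of isometries with mutually orthogonal ranges, the boundary function $\widetilde\Theta=[\,S_\beta\otimes I_\cE:\ |\beta|=m\,]$ is an isometry; hence $\Theta$ is inner and $\|\Theta\|_\infty=1$. Moreover $\Theta(0)=0$, so $\|\Theta(0)\|=0<1$. The radial infimum property is immediate from Example \ref{EXAMP}(i): one computes $\Theta(rS_1,\ldots,rS_n)^*\Theta(rS_1,\ldots,rS_n)=r^{2m}I$, which tends to $I$ as $r\to1$ and thus verifies condition (iv) of Theorem \ref{radial}.

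With the standing hypothesis $\Re F\leq I$, all three conditions of Theorem \ref{Cara} are satisfied by the triple $(F,\Theta,\Gamma)$, whence
$$
\|F(X)\|\leq\frac{2\|\Theta(X)\|}{1-\|\Theta(X)\|},\qquad X\in[B(\cH)^n]_1.
$$
Finally, since $\Theta(X)\Theta(X)^*=\bigl(\sum_{|\beta|=m}X_\beta X_\beta^*\bigr)\otimes I_\cE$, one has $\|\Theta(X)\|=\|\sum_{|\beta|=m}X_\beta X_\beta^*\|^{1/2}$, and substituting this into the last display gives the asserted bound. The only step requiring any care is confirming that $\Theta$ has the radial infimum property, but this reduces to the orthogonality of the ranges of the $S_\beta$ and is recorded in Example \ref{EXAMP}; everything else is a routine invocation of Theorem \ref{Cara}.
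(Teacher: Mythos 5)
Your proof is correct and follows essentially the same route as the paper: the Gleason-type factorization $F=\Theta\Gamma$ with $\Theta(X)=[X_\beta\otimes I:\ |\beta|=m]$, the fact that $\Theta$ is inner with the radial infimum property (which the paper cites from Example \ref{EXAMP} and you verify directly via $\Theta(rS_1,\ldots,rS_n)^*\Theta(rS_1,\ldots,rS_n)=r^{2m}I$), $\Theta(0)=0$, and then an application of Theorem \ref{Cara}. The only difference is that you spell out steps the paper leaves as citations, including the final identification $\|\Theta(X)\|=\|\sum_{|\beta|=m}X_\beta X_\beta^*\|^{1/2}$, all of which check out.
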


\begin{proof}  As in the proof of Theorem 2.4 from
\cite{Po-holomorphic}, we have a  Gleason type decomposition $
F=\Theta \Gamma$, where
$$
\Theta(X_1,\ldots, X_n):=[X_\beta\otimes I :\ |\beta|=m]\quad \text{
and }
\quad \Gamma(X_1,\ldots, X_n):=\left[\begin{matrix} \Phi_{(\beta)}(X_1,\ldots, X_n)\\
:\\
|\beta|=m
\end{matrix} \right]
$$
 are free holomorphic functions.
 Since $\Theta$ is inner with the radial infimum
 property and $\Theta(0)=0$, we apply Theorem \ref{Cara} and complete the proof.
\end{proof}

From Corollary \ref{Har3}, we can deduce the following particular
case.

\begin{corollary}\label{Car1}  If $F$ is   a free holomorphic function
on $[B(\cH)^n]_1$ with coefficients in $B(\cE)$ such that $F(0)=0$
and  $\Re F\leq I$, then
$$
\|F(X)\|\leq \frac{2\|X\|}{1-\|X\|},\qquad X\in [B(\cH)^n]_1.
$$
\end{corollary}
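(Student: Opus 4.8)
The plan is to obtain this as the $m=1$ instance of Corollary \ref{Har3}. First I would observe that the hypothesis $F(0)=0$ forces the constant coefficient $A_{(g_0)}$ in the standard representation $F(X_1,\ldots,X_n)=\sum_{k=0}^\infty\sum_{|\alpha|=k} X_\alpha\otimes A_{(\alpha)}$ to vanish, since evaluating at $X=0$ gives $F(0)=I_\cH\otimes A_{(g_0)}$. Thus $F$ in fact has a representation of exactly the form required in Corollary \ref{Har3} with $m=1$, namely $F(X_1,\ldots,X_n)=\sum_{k=1}^\infty\sum_{|\alpha|=k} X_\alpha\otimes A_{(\alpha)}$. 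Because the remaining hypothesis $\Re F\leq I$ is already assumed, Corollary \ref{Har3} applies directly to $F$ with $m=1$.

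Next I would specialize the bound produced by Corollary \ref{Har3} to the value $m=1$. In this case the sum over words of length one is $\sum_{|\beta|=1} X_\beta X_\beta^* = X_1 X_1^*+\cdots+X_n X_n^*$, and under the convention identifying $X=[X_1,\ldots,X_n]$ with the operator row matrix acting from $\cH^{(n)}$ to $\cH$ one has $\left\|\sum_{|\beta|=1} X_\beta X_\beta^*\right\|^{1/2}=\|X_1 X_1^*+\cdots+X_n X_n^*\|^{1/2}=\|X\|$. Substituting this identity into the inequality furnished by Corollary \ref{Har3} yields precisely $\|F(X)\|\leq \frac{2\|X\|}{1-\|X\|}$ for every $X\in[B(\cH)^n]_1$, which is the desired estimate.

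There is essentially no hard part here: the result is a pure specialization, so the argument reduces to recording the two observations above. The only point requiring any care is the routine identification of the length-one sum $\sum_{|\beta|=1} X_\beta X_\beta^*$ with $XX^*$, and hence of its square-root norm with $\|X\|$; this is exactly the quantity appearing in the defining condition $\|X_1 X_1^*+\cdots+X_n X_n^*\|^{1/2}<1$ of $[B(\cH)^n]_1$. I would also note that this same constraint guarantees $1-\|X\|>0$, so the right-hand side of the bound is well defined throughout the ball, completing the proof.
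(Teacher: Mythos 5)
Your proposal is correct and is exactly the paper's route: the paper states Corollary \ref{Car1} as the particular case $m=1$ of Corollary \ref{Har3}, and your two observations — that $F(0)=0$ kills the constant coefficient so the representation starts at $k=1$, and that $\left\|\sum_{|\beta|=1}X_\beta X_\beta^*\right\|^{1/2}=\|X_1X_1^*+\cdots+X_nX_n^*\|^{1/2}=\|X\|<1$ — are precisely the routine specialization the paper leaves implicit.
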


The next result is a generalization of the Borel-Carath\' eodory
theorem, mentioned in the introduction, for free holomorphic
functions with operator-valued coefficients.

\begin{theorem}
\label{borel-cara} Let $F:[B(\cH)^n]_\gamma^-\to
B(\cH)\bar\otimes_{min} B(\cE)$ be a free holomorphic function with
coefficients in $B(\cE)$ and let $r\in (0,\gamma)$. Then
$$
\sup_{\|X\|=r}\|F(X)\|\leq
\frac{2r}{\gamma-r}A(\gamma)+\frac{\gamma+r}{\gamma-r}\|F(0)\|,
$$
where $A(\gamma):=\sup_{\|y\|=1}\left<\Re F(\gamma S_1,\ldots,
\gamma S_n)y,y\right>$  and $S_1,\ldots, S_n$ are the left creation
operators.
\end{theorem}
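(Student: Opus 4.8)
The plan is to mirror the classical proof and reduce the whole statement to the already-established Corollary \ref{Car1} (the case $F(0)=0$, $\Re F\leq I$) by rescaling and by subtracting and normalizing the constant term. First I would pass to the unit ball: set $A:=A(\gamma)$ and $G(X):=F(\gamma X)$, so that $G$ is a bounded free holomorphic function on $[B(\cH)^n]_1$ with $G(0)=F(0)$. Writing $\rho:=r/\gamma\in(0,1)$ and using $X=\gamma Y$, we have $\sup_{\|X\|=r}\|F(X)\|=\sup_{\|Y\|=\rho}\|G(Y)\|$, while $\frac{2\rho}{1-\rho}=\frac{2r}{\gamma-r}$ and $\frac{1+\rho}{1-\rho}=\frac{\gamma+r}{\gamma-r}$. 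Thus the asserted inequality becomes
\begin{equation*}
\sup_{\|Y\|=\rho}\|G(Y)\|\leq \frac{2\rho}{1-\rho}A+\frac{1+\rho}{1-\rho}\|F(0)\| .
\end{equation*}

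The first substantive step is to convert the boundary hypothesis defining $A(\gamma)$ into a global operator inequality. By definition $\Re G(S_1,\ldots,S_n)=\Re F(\gamma S_1,\ldots,\gamma S_n)$ is a selfadjoint operator whose numerical range lies below $A$, hence $\Re G(S_1,\ldots,S_n)\leq A\,I$. Using the Poisson representation $G(X)=(P_X\otimes I)[G(S_1,\ldots,S_n)]$ for bounded free holomorphic functions, together with the fact that the noncommutative Poisson transform $P_X$ is unital and completely positive, I would take real parts to obtain
\begin{equation*}
\Re G(X)=(P_X\otimes I)[\Re G(S_1,\ldots,S_n)]\leq A\,(P_X\otimes I)[I]=A\,I,\qquad X\in[B(\cH)^n]_1 .
\end{equation*}
This is the operator analogue of the maximum principle for harmonic functions that underlies the scalar statement, and I expect it to be the conceptual heart of the argument.

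Next I would subtract and normalize the constant term. Put $\Phi:=G-G(0)$, a free holomorphic function with $\Phi(0)=0$. Since $-\Re G(0)\leq\|\Re G(0)\|\,I\leq\|F(0)\|\,I$, the previous bound gives
\begin{equation*}
\Re\Phi(X)=\Re G(X)-\Re G(0)\leq c\,I,\qquad c:=A+\|F(0)\| .
\end{equation*}
A short check shows $c\geq 0$: when $A<0$ this follows by evaluating $\Re G(X)\leq A\,I$ at $X=0$ and using $\|\Re G(0)\|\leq\|F(0)\|$. If $c=0$, then $\Re\Phi\leq 0$ with $\Phi(0)=0$, and the rigidity of free holomorphic functions with one-signed real part (\cite{Po-pluriharmonic}) forces $\Phi\equiv 0$; then $G$ is constant and one checks directly that the desired inequality holds with equality. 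If $c>0$, set $\Psi:=\tfrac1c\Phi$, so that $\Psi(0)=0$ and $\Re\Psi\leq I$; Corollary \ref{Car1} then yields $\|\Psi(Y)\|\leq\frac{2\|Y\|}{1-\|Y\|}$, hence $\|\Phi(Y)\|\leq\frac{2c\|Y\|}{1-\|Y\|}$.

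Finally, for $\|Y\|=\rho$ the triangle inequality and a routine simplification give
\begin{equation*}
\|G(Y)\|\leq\|\Phi(Y)\|+\|G(0)\|\leq\frac{2\rho}{1-\rho}\bigl(A+\|F(0)\|\bigr)+\|F(0)\|=\frac{2\rho}{1-\rho}A+\frac{1+\rho}{1-\rho}\|F(0)\| ,
\end{equation*}
and translating back through $\rho=r/\gamma$ produces exactly the claimed estimate. The only genuine technical points are the Poisson-transform transfer of the boundary bound to the interior in the second step and the verification that the normalizing constant $c$ is nonnegative (with the degenerate case $c=0$ collapsing to a constant function); everything else is bookkeeping.
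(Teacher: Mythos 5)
Your proof is correct, but it takes a genuinely different route from the paper's. Both arguments share the opening move: like the paper, you use positivity of the noncommutative Poisson transform to convert the boundary numerical-range bound $A(\gamma)$ into the interior operator inequality $\Re F(X)\leq A(\gamma)I$. From there the paper works self-containedly: it first shows $A(\gamma)>0$ for nonconstant $F$ with $F(0)=0$ (via the rigidity result, Theorem 2.9 of \cite{Po-pluriharmonic}), then regularizes by setting $\varphi_\epsilon:=2[A(\gamma)+\epsilon]I-F$, proves $\varphi_\epsilon(tS_1,\ldots,tS_n)$ is invertible, invokes Lemma \ref{inverse} to get a free holomorphic inverse $\psi_\epsilon$ on a ball of radius $s>\gamma$, forms the contraction $\Lambda_\epsilon:=F\psi_\epsilon$ with $\Lambda_\epsilon(0)=0$, applies the noncommutative Schwarz lemma to get $\|\Lambda_\epsilon(X)\|\leq r/\gamma$, recovers the bound from $F=2[A(\gamma)+\epsilon]\Lambda_\epsilon(I+\Lambda_\epsilon)^{-1}$, lets $\epsilon\to 0$, and only at the end translates by $F(0)$. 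You instead subtract and normalize first, setting $\Psi:=\frac{1}{c}\bigl(G-G(0)\bigr)$ with $c:=A(\gamma)+\|F(0)\|$, and then invoke Corollary \ref{Car1} as a black box; since that corollary precedes the theorem and rests only on the Section 4 factorization machinery (Theorem \ref{Cara} via Theorem \ref{FTT}), there is no circularity, and your degenerate case $c=0$ is correctly disposed of by the same rigidity theorem the paper uses to prove $A(\gamma)>0$ (your checks that $c\geq 0$ and that the constant case gives equality are both right, and the constants come out identical to the paper's, which likewise bounds $\sup_{\|y\|=1}\left<\Re (F(\gamma S)-F(0))y,y\right>$ by $A(\gamma)+\|F(0)\|$). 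Your reduction is shorter and more modular: it avoids the $\epsilon$-perturbation, Lemma \ref{inverse}, and the implicit extension of $F$ to a strictly larger ball $[B(\cH)^n]_s$, $s>\gamma$, that the paper's construction requires. What the paper's longer, self-contained route buys is the explicit contractive function $\Lambda_\epsilon$, which is reused immediately after the theorem to derive the quadratic strengthening $[F(X)-F(0)][F(X)-F(0)]^*\leq \frac{4A(\gamma)^2}{(\gamma-r)^2}\left(XX^*\otimes I_\cE\right)$ --- a byproduct your black-box reduction does not deliver.
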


\begin{proof}
If $F$ is constant, i.e., $F=F(0)$, then the inequality holds due to
the fact that $$\Re F(0)\geq -\|F(0)\| I_{\cH\otimes \cE}. $$
 Assume
that $F$  is not constant and $F(0)=0$. First we show that
\begin{equation}
\label{A} A(\gamma)>0.
\end{equation}
Indeed, if we assume that $A(\gamma)\leq 0$, then
$$
\Re F(\gamma S_1,\ldots, \gamma S_n)\leq \Re F(0)=0.
$$
Applying the noncommutative Poisson transform at $[\frac{t}{\gamma}
X_1,\ldots,\frac{t}{\gamma} X_n]$, where $0\leq t<\gamma$ and
$(X_1,\ldots, X_n)\in [B(\cH)^n]_1^-$, we obtain
$$
\Re F(t X_1,\ldots, t X_n)=P_{[\frac{t}{\gamma}
X_1,\ldots,\frac{t}{\gamma} X_n]}\Re F(\gamma S_1,\ldots, \gamma
S_n) \leq \Re F(0)=0
$$
for any $t\in[0,\gamma)$ and $(X_1,\ldots, X_n)\in [B(\cH)^n]_1^-$.
According to Theorem 2.9 from \cite{Po-pluriharmonic}, we deduce
that $F=F(0)$, which contradicts our assumption. Therefore,
inequality \eqref{A} holds.

Since $\Re F(\gamma S_1,\ldots, \gamma S_n)\leq A(\gamma) I$, we can
use again the noncommutative Poisson transform to  deduce that $\Re
F(X)\leq A(\gamma) I$  for  $X\in [B(\cH)^n]_\gamma^-$. Now, let
$\epsilon>0$ and define the free holomorphic function on a
noncommutative ball $[B(\cH)^n]_s$ with $s>\gamma$,  by
$$
\varphi_\epsilon(X):=2[A(\gamma)+\epsilon] I_{\cH\otimes
\cE}-F(X),\qquad X\in[B(\cH)^n]_s.
$$
Note that, for any $y\in \cH\otimes \cE$ and $X\in[B(\cH)^n]_s$, we
have
\begin{equation}\label{var}
\begin{split}
\|\varphi_\epsilon(X) y\|^2&= 4[A(\gamma)+\epsilon]^2\|y\|^2-
4[A(\gamma)+\epsilon] \left< \Re F(X)y,y\right>+\|F(X)y\|^2\\
&\geq 4\epsilon[A(\gamma)+\epsilon] \|y\|^2 +\|F(X)y\|^2\\
&\geq 4\epsilon[A(\gamma)+\epsilon] \|y\|^2.
\end{split}
\end{equation}
Similar calculations show that
$$
\|\varphi_\epsilon(X)^* y\|^2\geq 4\epsilon[A(\gamma)+\epsilon]
\|y\|^2.
$$
Replacing $X$  by $(tS_1, \ldots, tS_n)$, \  $t\leq s$, and taking
$y\in F^2(H_n)\otimes \cE$  in the inequalities above,  we deduce
that $\varphi_\epsilon (tS_1,\ldots, tS_n)$ and $\varphi_\epsilon
(tS_1,\ldots, tS_n)^*$ are bounded below and, consequently,
invertible for any $t\in [0,s)$.

Applying Lemma \ref{inverse} to  $\varphi_\epsilon$, we deduce that
 there   is  a free holomorphic
function  $\psi_\epsilon$ on $[B(\cH)^n]_s$ such that
$$\varphi_\epsilon(X)
\psi_\epsilon(X)=\psi_\epsilon(X)\varphi_\epsilon(X)=I,\qquad X\in
[B(\cH)^n]_s.
$$
Using  relation \eqref{var} and replacing $y$ with
$\psi_\epsilon(X)y$, we obtain that
$$
\|y\|^2=\|\varphi_\epsilon(X) \psi_\epsilon(X)y\|\geq
\|F(X)\psi(X)y\|^2+4\epsilon[A(\gamma)+\epsilon]\|\psi_\epsilon(X)y\|^2.
$$
Hence, we deduce that the map
\begin{equation}
\label{Gura} \Lambda_\epsilon (X):=F(X) \psi_\epsilon (X), \qquad
X\in [B(\cH)^n]_s,
\end{equation}
is a contractive free holomorphic function on $[B(\cH)^n]_s$. Since
$\Lambda_\epsilon (0)=0$, Theorem \ref{maxmod} implies that
$\|\Lambda_\epsilon (X)\|<1$. Hence,  and due to relation
\eqref{Gura}, we deduce that
\begin{equation}
\label{F(X)}
 F(X)=2[A(\gamma)+\epsilon] \Lambda_\epsilon
[I+\Lambda_\epsilon (X)]^{-1},
\end{equation}
which implies
\begin{equation*}
\begin{split}
\|F(X)\|&\leq  2[A(\gamma)+\epsilon] \|\Lambda_\epsilon(X)\|\left(
1+\|\Lambda_\epsilon(X)\|+\|\Lambda_\epsilon(X)\|^2+\cdots \right)\\
& \leq  2[A(\gamma)+\epsilon]\frac{ \|\Lambda_\epsilon(X)\|}
{1-\|\Lambda_\epsilon(X)\|}.
\end{split}
\end{equation*}

On the other hand, applying the Schwarz type lemma for free
holomorphic functions (see \cite{Po-holomorphic}) to
$\Lambda_\epsilon$, we deduce that
\begin{equation}
\label{Lae} \|\Lambda_\epsilon(X)\|\leq \frac{r}{\gamma}
\end{equation}
for any $X\in [B(\cH)^n]_\gamma$ with $\|X\|=r$, where
 $0\leq r<\gamma$. Combining this with the previous inequality, we
 obtain
 $$
 \|F(X)\|\leq \frac{2[A(\gamma)+\epsilon] r}{\gamma-r}.
 $$
 Taking $\epsilon\to 0$, we deduce that
 $$
 \sup_{\|X\|=r}\|F(X)\|\leq \frac{2 r}{\gamma-r}A(\gamma),
 $$
which proves the theorem when $F(0)=0$.

Now, we consider the  case when $F(0)\neq 0$.  Applying the  result
above to $F-F(0)$, we obtain
\begin{equation*}
\begin{split}
\sup_{\|X\|=r}\|F(X)-F(0)\|&\leq\frac{2 r}{\gamma-r}
\sup_{\|y\|=1}\left<\Re
(F(\gamma S_1,\ldots, \gamma S_n)-F(0))y,y\right>\\
&\leq \frac{2 r}{\gamma-r}[A(\gamma)+\|F(0)\|].
\end{split}
\end{equation*}
Consequently, we have
\begin{equation*}
\begin{split}
\sup_{\|X\|=r}\|F(X\|&\leq \sup_{\|X\|=r}\|F(X)-F(0)\|+\|F(0)\|\\
&=\frac{2r}{\gamma-r}A(\gamma)+\frac{\gamma+r}{\gamma-r}\|F(0)\|.
\end{split}
\end{equation*}
The proof is complete.
\end{proof}

We remark  that if $A(\gamma)\geq 0$ in Theorem \ref{borel-cara},
then we can deduce that
$$
\sup_{\|X\|=r}\|F(X)\|\leq
\frac{\gamma+r}{\gamma-r}[A(\gamma)+\|F(0)\|].
$$

A closer look at the proof of Theorem \ref{borel-cara} reveals
another Carath\'eodory type inequality. More precisely, applying
Corollary \ref{SCH} to the free holomorphic function
$\Lambda_\epsilon$, we deduce that
$$
\Lambda_\epsilon(X)\Lambda_\epsilon(X)^*\leq \frac{XX^*}{\gamma^2}
$$
for any $X\in [B(\cH)^n]_\gamma$ with $\|X\|=r$, where
 $0\leq r<\gamma$.
Using  now  relations \eqref{F(X)} and \eqref{Lae}, we obtain
\begin{equation*}
\begin{split}
F(X)F(X)^*&\leq  4[A(\gamma)+\epsilon]^2\frac{
\Lambda_\epsilon(X)\Lambda_\epsilon(X)^*}
{(1-\|\Lambda_\epsilon(X))^2\|}\\
&\leq \frac{4[A(\gamma)+\epsilon]^2}{(\gamma-r)^2}(XX^*\otimes
I_\cE).
\end{split}
\end{equation*}
Taking $\epsilon\to 0$, we deduce that $F(X)F(X)^*\leq
\frac{4A(\gamma)^2}{(\gamma-r)^2}(XX^*\otimes I_\cE)$. Now, in the
general case when  $F(0)$ is not necessarily $0$, we obtain the
following result

\begin{corollary} Under the hypotheses of Theorem \ref{borel-cara}, we
have
$$
[F(X)-F(0)][F(X)-F(0)]^*\leq
\frac{4A(\gamma)^2}{(\gamma-r)^2}\left(XX^*\otimes I_\cE\right)
$$
for any $X\in B(\cH)^n$ with $\|X\|=r$.
\end{corollary}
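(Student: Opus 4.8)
The plan is to deduce the corollary from the case $F(0)=0$, for which the operator inequality was already obtained in the paragraph preceding the statement. Put $G:=F-F(0)$. Since $F$ is free holomorphic on $[B(\cH)^n]_\gamma^-$ with coefficients in $B(\cE)$, so is $G$: its standard representation is that of $F$ with the constant term deleted, so that $G(0)=0$ and $G$ has the same homogeneous components as $F$ in every degree $\ge 1$. Because $[F(X)-F(0)][F(X)-F(0)]^*=G(X)G(X)^*$, it suffices to prove the asserted bound for $G$.

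With $G$ in hand I would rerun, verbatim, the construction used for $F$ in the case $F(0)=0$. For $\epsilon>0$ set $\varphi_\epsilon(X):=2[A(\gamma)+\epsilon]I_{\cH\otimes\cE}-G(X)$ on $[B(\cH)^n]_s$ with $s>\gamma$. Granting the bound $\Re G(\gamma S_1,\ldots,\gamma S_n)\le A(\gamma)I$ on the generating shift (transported to the interior by the noncommutative Poisson transform, so that $\Re G(X)\le A(\gamma)I$ on $[B(\cH)^n]_\gamma^-$), the computation in \eqref{var} shows that $\varphi_\epsilon$ and $\varphi_\epsilon^*$ are bounded below, hence invertible; Lemma \ref{inverse} then furnishes a free holomorphic inverse $\psi_\epsilon$. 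As in \eqref{Gura}, $\Lambda_\epsilon:=G\psi_\epsilon$ is a free holomorphic function with $\Lambda_\epsilon(0)=0$, so Theorem \ref{maxmod} gives $\|\Lambda_\epsilon(X)\|<1$, and \eqref{F(X)} (now written for $G$) reads $G=2[A(\gamma)+\epsilon]\Lambda_\epsilon(I+\Lambda_\epsilon)^{-1}$.

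It remains to insert the Schwarz-type inequality for functions vanishing at the origin. Applying Corollary \ref{SCH} with $m=1$ to $\Lambda_\epsilon$, exactly as in the paragraph preceding the corollary, gives $\Lambda_\epsilon(X)\Lambda_\epsilon(X)^*\le(XX^*/\gamma^2)\otimes I_\cE$ and, by \eqref{Lae}, $\|\Lambda_\epsilon(X)\|\le\|X\|/\gamma$. Substituting these together with the representation of $G$ into $G(X)G(X)^*$, estimating the resolvent factor $(I+\Lambda_\epsilon)^{-1}(I+\Lambda_\epsilon^*)^{-1}$ by $(1-\|\Lambda_\epsilon(X)\|)^{-2}I$, and letting $\epsilon\to0$, I obtain, for every $X$ with $\|X\|=r$,
\begin{equation*}
[F(X)-F(0)][F(X)-F(0)]^*=G(X)G(X)^*\le\frac{4A(\gamma)^2}{(\gamma-r)^2}\,(XX^*\otimes I_\cE),
\end{equation*}
which is the claim.

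The main obstacle is the bookkeeping step deliberately granted above: one must justify that the datum controlling the construction for the shifted function $G=F-F(0)$ is again $A(\gamma)=\sup_{\|y\|=1}\langle\Re F(\gamma S_1,\ldots,\gamma S_n)y,y\rangle$, i.e.\ that $\Re(F-F(0))(\gamma S_1,\ldots,\gamma S_n)\le A(\gamma)I$. This is precisely the inequality that makes $\varphi_\epsilon$ bounded below in \eqref{var}, and hence it is what fixes the constant $4A(\gamma)^2/(\gamma-r)^2$; everything else is a verbatim transcription of the $F(0)=0$ argument already carried out before the corollary, together with the elementary identity $[F(X)-F(0)][F(X)-F(0)]^*=G(X)G(X)^*$.
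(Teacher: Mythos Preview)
Your approach is exactly the paper's: the paragraph preceding the corollary derives the inequality for the case $F(0)=0$, and the paper then simply says ``Now, in the general case when $F(0)$ is not necessarily $0$, we obtain the following result,'' meaning precisely that one applies the $F(0)=0$ conclusion to $G:=F-F(0)$. Your write-up reproduces this verbatim.

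The obstacle you flag is genuine and is not addressed in the paper either. Applying the $F(0)=0$ argument to $G$ yields the bound with $A_G(\gamma):=\sup_{\|y\|=1}\langle\Re G(\gamma S)y,y\rangle$ in place of $A(\gamma)$, and in general $A_G(\gamma)=\sup_{\|y\|=1}\langle(\Re F(\gamma S)-\Re F(0))y,y\rangle$ need not be bounded by $A(\gamma)$ unless $\Re F(0)\geq 0$. The paper's own proof of Theorem~\ref{borel-cara} in the case $F(0)\neq 0$ handles the analogous step by estimating $A_G(\gamma)\leq A(\gamma)+\|F(0)\|$, and the same bound would give the corollary with $4(A(\gamma)+\|F(0)\|)^2$ in the numerator. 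So your reduction is the intended one; the remaining issue is a bookkeeping slip in the stated constant rather than a defect in your argument.
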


 \bigskip

\section{  Julia's lemma for holomorphic functions  on
 noncommutative balls
   }

In this section,   we provide a  noncommutative generalization  of
Pick's theorem for bounded free holomorphic functions. Using this
result  and   basic facts concerning  the involutive  free
holomorphic automorphisms of $[B(\cH)^n]_1$, we obtain
 a free analogue of Julia's lemma from complex analysis.

A map $F:[B(\cH)^n]_{1}\to [B(\cH)^n]_{1}$ is called free
biholomorphic  if $F$  is  free homolorphic, one-to-one and onto,
and  has  free holomorphic inverse. The automorphism group of
$[B(\cH)^n]_{1}$, denoted by $Aut([B(\cH)^n]_1)$, consists of all
free biholomorphic functions  of $[B(\cH)^n]_{1}$. It is clear that
$Aut([B(\cH)^n]_1)$ is  a group with respect to the composition of
free holomorphic functions.

Inspired by  the classical results of  Siegel \cite{Si} and Phillips
\cite{Ph} (see also \cite{Y}), we used,  in  \cite{Po-automorphism},
the theory of noncommutative characteristic functions for row
contractions (see \cite{Po-charact}) to find all the involutive free
holomorphic automorphisms of $[B(\cH)^n]_1$, which turn out to be of
the form
\begin{equation*}
 \Phi_\lambda(X_1,\ldots, X_n)=- \Theta_\lambda(X_1,\ldots, X_n), \qquad (X_1,\ldots,
 X_n)\in [B(\cH)^n]_1,
\end{equation*}
for some $\lambda=[\lambda_1,\ldots, \lambda_n]\in \BB_n$, where
$\Theta_\lambda$ is the characteristic function  of the row
contraction $\lambda$, acting as an operator from $\CC^n$  to $\CC$.

We recall that the characteristic function of the row contraction
$\lambda:=(\lambda_1,\ldots,\lambda_n)\in \BB_n$ is the boundary
function $\tilde{\Theta}_\lambda$, with respect to $R_1,\ldots,
R_n$, of the free holomorphic function
$\Theta_\lambda:[B(\cH)^n]_1\to [B(\cH)^n]_1$ given by
\begin{equation*}
  \Theta_\lambda(X_1,\ldots, X_n):=-{
\lambda}+\Delta_{ \lambda}\left(I_\cH-\sum_{i=1}^n \bar{{
\lambda}}_i X_i\right)^{-1} [X_1,\ldots, X_n] \Delta_{{\lambda}^*}
\end{equation*}
for $(X_1,\ldots, X_n)\in [B(\cH)^n]_1$, where
$\Delta_\lambda=(1-\|\lambda\|_2^2)^{1/2} I_\CC$ and
$\Delta_{\lambda^*}=(I_\cK-\lambda^*\lambda)^{1/2}$.  For
simplicity, we also used the notation $ {\lambda}:=[ {\lambda}_1
I_\cG,\ldots, {\lambda}_n I_\cG]$ for the row contraction acting
from $\cG^{(n)}$ to $\cG$, where $\cG$ is a Hilbert space.

In \cite{Po-automorphism}, we proved that if
  $\lambda:=(\lambda_1,\ldots, \lambda_n)\in
\BB_n\backslash \{0\}$ and  $\gamma:=\frac{1}{\|\lambda\|_2}$, then
$\Phi_\lambda:=-\Theta_\lambda$ is a free holomorphic function on
$[B(\cH)^n]_\gamma$ which has the following properties:
\begin{enumerate}
\item[(i)]
$\Phi_\lambda (0)=\lambda$ and $\Phi_\lambda(\lambda)=0$;
\item[(ii)] the identities
\begin{equation}\label{E1}
\begin{split}
I_{\cH}-\Phi_\lambda(X)\Phi_\lambda(Y)^*&= \Delta_{\lambda}(I- X
\lambda^*)^{-1}(I-  X   Y^*)(I-  \lambda  Y^*)^{-1}
\Delta_{\lambda},\\
I_{\cH\otimes \CC^n}-\Phi_\lambda(X)^*\Phi_\lambda(Y)&=
\Delta_{\lambda^*}(I-{X}^*\lambda)^{-1}(I-{X}^* Y)(I-{\lambda}^*
Y)^{-1} \Delta_{\lambda^*},
\end{split}
\end{equation}
hold  for all  $X$ and $Y$ in  $[B(\cH)^n]_\gamma$;

\item[(iii)] $\Phi_\lambda$ is an involution, i.e., $\Phi_\lambda(\Phi_\lambda(X))=X$
for any $X\in [B(\cH)^n]_\gamma$;
\item[(iv)] $\Phi_\lambda$ is a free holomorphic automorphism of the
noncommutative unit ball $[B(\cH)^n]_1$;
\item[(v)] $\Phi_\lambda$ is a homeomorphism of $[B(\cH)^n]_1^-$
onto $[B(\cH)^n]_1^-$.
\end{enumerate}

 Moreover, we
determined all the free holomorphic automorphisms of the
noncommutative ball $[B(\cH)^n]_1$ by  showing  that if $\Phi\in
Aut([B(\cH)^n]_1)$ and $\lambda:=\Phi^{-1}(0)$, then there is a
unitary operator $U$ on $\CC^n$ such that
$$
\Phi=\Phi_U\circ \Phi_\lambda,
$$
where $$  \Phi_U(X_1,\ldots X_n):=[X_1,\ldots, X_n]U , \qquad
(X_1,\ldots, X_n)\in [B(\cH)^n]_1.
$$

 The first result of this section is
following extension of Pick's theorem  (see
\cite{Pic},\cite{Cara2}), for bounded free holomorphic functions.
Let $M_{n\times m}$ be the set of all $n\times m$ matrices with
scalar coefficients.

\begin{theorem}
\label{Schw3} Let $F:[B(\cH)^n]_1\to [B(\cH)^m]_1^-$ be a free
holomorphic function  with $\|F(0)\|<1$ and  let  $a\in \BB_n$. Then
there exists a free holomorphic function $\Gamma:[B(\cH)^n]_1\to
B(\cH)\bar \otimes_{min} M_{n\times m}$ with $\|\Gamma\|_\infty \leq
1$ such that
$$
\Phi_{F(a)}(F(X))=\Phi_a(X) (\Gamma \circ\Phi_a) (X),\qquad X\in
[B(\cH)^n]_1,
$$
where $\Phi_a$ and $\Phi_{F(a)}$   are the corresponding free
holomorphic automorphisms of $[B(\cH)^n]_1$ and $[B(\cH)^m]_1$,
respectively. Consequently,
$$
 \Phi_{F(a)}(F(X))\Phi_{F(a)}(F(X))^*\leq  \Phi_a(X)\Phi_a(X)^*,\qquad X\in
[B(\cH)^n]_1,
$$
and
$$
\left\|\Phi_{F(a)}(F(X))\right\|\leq \|\Phi_a(X)\|,\qquad X\in
[B(\cH)^n]_1.
$$
\end{theorem}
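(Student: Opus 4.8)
The plan is to reduce this noncommutative Pick theorem to the already-established Schwarz-type result (Corollary \ref{SCH}, case $m=1$) by composing with the involutive automorphisms $\Phi_a$ and $\Phi_{F(a)}$. First I would form the composite map $H:=\Phi_{F(a)}\circ F\circ \Phi_a$, viewing it as a free holomorphic function from $[B(\cH)^n]_1$ into $[B(\cH)^m]_1^-$. The point of this conjugation is to send the distinguished point: since $\Phi_a(0)=a$ and $\Phi_a$ is an involution, we have $H(0)=\Phi_{F(a)}(F(\Phi_a(0)))=\Phi_{F(a)}(F(a))=0$, using property (i) of the automorphisms ($\Phi_{F(a)}(F(a))=0$). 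The composition is well-defined and again free holomorphic by the composition calculus recalled before Theorem \ref{more-prop}; boundedness is automatic since $H$ maps into the closed unit ball.

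Let me explain the reduction to the factorization. Since $H$ is free holomorphic with $H(0)=0$, each $H_j$ has no constant term, so $H=(H_1,\ldots,H_m)$ can be written componentwise with lowest-degree term of degree $\geq 1$. The key is the Gleason-type factorization used in Corollary \ref{SCH}: writing $H$ as a row and extracting the factor $[X_1\otimes I,\ldots,X_n\otimes I]$, one obtains a free holomorphic $\Gamma_0$ with $\|\Gamma_0\|_\infty\leq 1$ such that $H(X)=[X_1,\ldots,X_n]\,\Gamma_0(X)=X\,\Gamma_0(X)$, where $\Gamma_0:[B(\cH)^n]_1\to B(\cH)\bar\otimes_{min} M_{n\times m}$. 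Here I invoke Corollary \ref{SCH} with $m=1$ applied to $H$ (which has $\|H\|_\infty\leq 1$ and vanishes at $0$) together with the explicit factorization in its proof. Now I undo the conjugation: applying $\Phi_a$ (an involution, so $\Phi_a\circ\Phi_a=\mathrm{id}$) gives
\begin{equation*}
\Phi_{F(a)}(F(X))=H(\Phi_a(X))=\Phi_a(X)\,\Gamma_0(\Phi_a(X))=\Phi_a(X)\,(\Gamma_0\circ\Phi_a)(X),
\end{equation*}
so setting $\Gamma:=\Gamma_0$ yields the desired identity with $\Gamma\circ\Phi_a$, and $\|\Gamma\|_\infty\leq 1$.

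The two operator inequalities follow immediately from the factorization. Since $\|\Gamma\circ\Phi_a\|_\infty\leq\|\Gamma\|_\infty\leq 1$, we have $(\Gamma\circ\Phi_a)(X)(\Gamma\circ\Phi_a)(X)^*\leq I$, hence
\begin{equation*}
\Phi_{F(a)}(F(X))\Phi_{F(a)}(F(X))^*=\Phi_a(X)(\Gamma\circ\Phi_a)(X)(\Gamma\circ\Phi_a)(X)^*\Phi_a(X)^*\leq \Phi_a(X)\Phi_a(X)^*,
\end{equation*}
and taking norms gives $\|\Phi_{F(a)}(F(X))\|\leq\|\Phi_a(X)\|$.

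The main obstacle I anticipate is a composition-domain subtlety rather than the algebra. The automorphism $\Phi_a$ is free holomorphic only on $[B(\cH)^n]_\gamma$ with $\gamma=1/\|a\|_2>1$, and $F$ maps into the \emph{closed} ball $[B(\cH)^m]_1^-$, so to apply the composition machinery of Theorem \ref{more-prop} I must verify the strict-contraction hypotheses: that $\|F(\Phi_a(X))\|<1$ on $[B(\cH)^n]_1$ (ensuring $\Phi_{F(a)}$ can be composed on the outside) and that $\Phi_a$ maps the open ball into a region where $F$ is defined. The condition $\|F(0)\|<1$ is exactly what rescues this, via Corollary \ref{not-strict}, which guarantees $\|F(X)\|<1$ throughout the open ball when $\|F\|_\infty\leq 1$ and $\|F(0)\|<1$; combined with property (iv)--(v) of $\Phi_a$ (it is a homeomorphism of the open ball onto itself), the composite $H$ genuinely lands in the open ball and the factorization argument applies without boundary difficulties.
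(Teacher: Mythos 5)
Your proposal is correct and follows essentially the same route as the paper: conjugate by the involutive automorphisms to form $G:=\Phi_{F(a)}\circ F\circ \Phi_a$ with $G(0)=0$ (using Corollary \ref{not-strict} to get $\|F(X)\|<1$ and make the composition legitimate), apply the Gleason-type factorization $G(X)=[X_1,\ldots,X_n]\Gamma(X)$ with $\|\Gamma\|_\infty\leq 1$, and undo the conjugation via $\Phi_a\circ\Phi_a=\mathrm{id}$. The only cosmetic difference is that you invoke the factorization through the proof of Corollary \ref{SCH}, whereas the paper cites Theorem \ref{Schwarz} directly; these are the same mechanism, so there is nothing to fix.
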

\begin{proof}  Since $F$ is a free holomorphic function with
$\|F\|_\infty\leq 1$ and $\|F(0)\|<1$, Corollary \ref{not-strict}
implies  that $\|F(X)\|<1$ for any $X\in [B(\cH)^n]_1$.
 We know  that
$\Phi_a\in Aut([B(\cH)^n]_1)$ and $\Phi_{F(a)}\in
Aut([B(\cH)^m]_1)$. Due to  Section 1 and the properties of the free
holomorphic automorphisms of $[B(\cH)^m]_1$, the composition map
$G:=\Phi_{F(a)}\circ F\circ \Phi_a:[B(\cH)^n]_1\to [B(\cH)^m]_1$ is
a free holomorphic function with $G(0)=0$. Therefore, it has a
representation of the form
\begin{equation}
\label{iden-Gam} G(X_1,\ldots, X_n)=\sum_{k=1}^\infty
\sum_{|\alpha|=k} X_\alpha\otimes A_{(\alpha)}= [X_1,\ldots,
X_n]\Gamma(X_1,\ldots, X_n)
\end{equation}
  for any  $ [X_1,\ldots, X_n]\in [B(\cH)^n]_1$, for some matrices
  $A_{(\alpha)}\in M_{1\times m}$ and a free holomorphic
  function  $\Gamma$ with coefficients in $M_{n\times m}$.
  Since $\|G\|_\infty\leq 1$   with $G(0)<1$, and
  $\Theta(X):=[X_1,\ldots, X_n]$ is  inner and has the radial infimum
  property, Theorem  \ref{Schwarz} implies that $\|\Gamma\|_\infty\leq
  1$,
  \begin{equation*}
 G(X)G(X)^*\leq XX^*, \qquad X\in [B(\cH)^n]_1,
\end{equation*}
and
\begin{equation*}
 \|G(X)\|\leq \|X\|, \qquad X\in [B(\cH)^n]_1.
\end{equation*}
 Replacing   $X$ by  $ \Phi_a(X)$ in these inequalities and in relation  \eqref{iden-Gam},
  and using the fact that
$\Phi_a\circ \Phi_a=\text{\rm id}$, we complete the proof.
\end{proof}

\begin{corollary}\label{4delta}
 If $F:[B(\cH)^n]_1\to [B(\cH)^m]_1^-$ is a free
holomorphic function   with $\|F(0)\|<1$ and  $a\in \BB_n$, then,
for any $X\in [B(\cH)^n]_1$,

\begin{equation}\label{E3}
 \begin{split}
 \Delta_{F(a)}&[I- F(X)
F(a)^*]^{-1}[I-  F(X)   F(X)^*][I-  F(a)  F(X)^*]^{-1}
\Delta_{F(a)}\\
&\geq
 \Delta_{a}(I- X a^*)^{-1}(I-  X   X^*)(I- a
X^*)^{-1} \Delta_{a}
\end{split}
\end{equation}
and
\begin{equation}\label{E4}
 \begin{split}
\left\|[I-  F(a)  F(X)^*]\right. &\left.[I-  F(X)   F(X)^*]^{-1}[I-
F(X)
F(a)^*]\right\|\\
&\leq  \frac{\Delta_{F(a)}^2}{\Delta_a^2}
\left\|(I-a^*X)(I-XX^*)^{-1} (I-X a^*)\right\|.
 \end{split}
\end{equation}
\end{corollary}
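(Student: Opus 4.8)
The plan is to read off both inequalities from the consequences of Theorem~\ref{Schw3} after inserting the automorphism identities~\eqref{E1}. Because $\|F(0)\|<1$, Corollary~\ref{not-strict} gives $\|F(X)\|<1$ for every $X\in[B(\cH)^n]_1$; in particular $F(a)\in\BB_m$ with $\|F(a)\|_2<1$, so $\Phi_a$ and $\Phi_{F(a)}$ are genuine involutive automorphisms of the open balls $[B(\cH)^n]_1$ and $[B(\cH)^m]_1$, and $\Phi_a(X)$, $\Phi_{F(a)}(F(X))$ lie in these open balls. Hence $I-\Phi_a(X)\Phi_a(X)^*$ and $I-\Phi_{F(a)}(F(X))\Phi_{F(a)}(F(X))^*$ are strictly positive and invertible, and the factors $\Delta_a=(1-\|a\|_2^2)^{1/2}I$ and $\Delta_{F(a)}=(1-\|F(a)\|_2^2)^{1/2}I$ are positive scalar multiples of the identity.

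First I would apply the first identity in~\eqref{E1} in two instances: to $\Phi_a$ at the point $X$ (taking $\lambda=a$, $Y=X$), and to the $m$-ball automorphism $\Phi_{F(a)}$ at the point $F(X)$ (taking $\lambda=F(a)$ and replacing $X,Y$ by $F(X)$). The latter yields
\begin{equation*}
I-\Phi_{F(a)}(F(X))\Phi_{F(a)}(F(X))^*=\Delta_{F(a)}[I-F(X)F(a)^*]^{-1}[I-F(X)F(X)^*][I-F(a)F(X)^*]^{-1}\Delta_{F(a)},
\end{equation*}
and the former the analogous formula with $a$, $X$ in place of $F(a)$, $F(X)$. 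The operator inequality $\Phi_{F(a)}(F(X))\Phi_{F(a)}(F(X))^*\le\Phi_a(X)\Phi_a(X)^*$ supplied by Theorem~\ref{Schw3} is equivalent to the reversed inequality after subtracting both sides from the identity; substituting the two displayed identities gives exactly~\eqref{E3}. Throughout I use that $\Delta_a$ and $\Delta_{F(a)}$ are scalars, so they commute with everything and may be pulled out as constants.

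For~\eqref{E4} I would invert~\eqref{E3}. Writing $A$ and $B$ for the left- and right-hand sides of~\eqref{E3} (so that $A\ge B>0$), the operator monotonicity of inversion gives $B^{-1}\ge A^{-1}>0$, whence $\|A^{-1}\|\le\|B^{-1}\|$ because $C\ge D\ge0$ forces $\|C\|\ge\|D\|$. Here $A^{-1}=\Delta_{F(a)}^{-2}[I-F(a)F(X)^*][I-F(X)F(X)^*]^{-1}[I-F(X)F(a)^*]$, and $B^{-1}$ is the corresponding product for $a$, $X$, since the scalar factors $\Delta_{F(a)}^{2}$, $\Delta_a^{2}$ invert to scalars that pull out of the operator norm; the bound $\|A^{-1}\|\le\|B^{-1}\|$ then rearranges into~\eqref{E4}. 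The only point that demands care is the strict positivity and invertibility of both sides of~\eqref{E3}, which is precisely what the estimate $\|F(X)\|<1$ from Corollary~\ref{not-strict} and the open-ball membership of the automorphism images secure; once these are in place the argument is a routine manipulation of the operator identities and of monotonicity, and I do not anticipate a serious obstacle.
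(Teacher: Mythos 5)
Your proposal is correct and follows essentially the same route as the paper: inequality \eqref{E3} is read off by combining $\Phi_{F(a)}(F(X))\Phi_{F(a)}(F(X))^*\leq \Phi_a(X)\Phi_a(X)^*$ from Theorem \ref{Schw3} with the identities \eqref{E1} (using Corollary \ref{not-strict} to ensure $\|F(X)\|<1$), and \eqref{E4} is obtained by applying the anti-monotonicity of inversion on positive invertible operators to \eqref{E3} and then passing to norms, with the scalar factors $\Delta_a$, $\Delta_{F(a)}$ pulled out. This is precisely the paper's argument, only written out in more detail.
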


\begin{proof}
The first inequality follows from Theorem \ref{Schw3} and  relation
\eqref{E1}.  Since $\|F(0)\|<1$, Corollary \ref{not-strict} implies
that $\|F(X)\|<1$ for any $X\in [B(\cH)^n]_1$. Note that each side
of inequality \eqref{E3} is a positive invertible  operator. It is
well-known that if $A,B$ are two positive  invertible operator such
that $A\leq B$ then $B^{-1}\leq A^{-1}$. Applying this result to
inequality \eqref{E3}, we deduce that
\begin{equation*}
\begin{split}
[I-  F(a)  F(X)^*]&[I-  F(X)   F(X)^*]^{-1}[I- F(X) F(a)^*]\\
&\leq \frac{\Delta^2_{F(a)}}{\Delta_a^2}(I-aX^*)(I-XX^*)^{-1} (I-X
a^*).
\end{split}
\end{equation*}
Hence, the second inequality follows. The proof is complete.
\end{proof}

 Let $F:[B(\cH)^n]_1\to [B(\cH)^m]_1^-$ be a free
holomorphic function. Let $\xi\in \partial\BB_n:=\{z\in \CC^n:\
\|z\|_2=1\}$ and assume that
$$
L:=\liminf_{z\to \xi} \frac{1-||F(z)||^2}{1-||z||^2}<\infty.
$$
Then there is a sequence $\{z_k\}_{k=1}^\infty\subset \BB_n$ such
that $\lim_{k\to \infty}z_k=\xi$ and  $\lim_{k\to\infty}F(z_k)=
\eta$ for some $\eta\in\partial \BB_m$, and
$$
\lim_{k\to \infty} \frac{1-\|F(z_k)\|^2}{1-\|z_k\|^2}= L.
$$

Now, we can present our first generalization of Julia's lemma for
free holomorphic functions on noncommutative balls.
\begin{theorem}
\label{Julia1} Let $F:[B(\cH)^n]_1\to [B(\cH)^m]_1^-$ be a free
holomorphic function with $\|F(0)\|<1$. Let
$\{z_k\}_{k=1}^\infty\subset \BB_n$ be a sequence such that
$\lim_{k\to \infty}z_k=\xi$, $\lim_{k\to\infty}F(z_k)= \eta$ for
some $\xi, \in\partial \BB_n$,  $\eta\in\partial \BB_m$, and
$$
\lim_{k\to \infty} \frac{1-\|F(z_k)\|^2}{1-\|z_k\|^2}= L <\infty.
$$
Then the following statements hold.
\begin{enumerate}
\item[(i)] For any $X\in [B(\cH)^n]_1$,
\begin{equation*}
 \begin{split}
\left\|[I-  \eta  F(X)^*]\right. &\left.[I-  F(X)   F(X)^*]^{-1}[I-
F(X)
\eta^*]\right\|\\
&\leq  L \left\|(I-\xi^*X)(I-XX^*)^{-1} (I-X \xi^*)\right\|.
\end{split}
\end{equation*}
\item[(ii)]

If $\beta>0$ and $X\in [B(\cH)^n]_1$ is such that
$$
(I- X \xi^*)(I-\xi X^*)<\beta (I-XX^*),
$$
then
$$
[I-F(X)\eta^*][I-\eta F(X)^*] <  \beta L[I-F(X)F(X)^*].
$$
\end{enumerate}
\end{theorem}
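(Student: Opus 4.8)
The plan is to derive both statements from Corollary~\ref{4delta} by taking $a=z_k$ and letting $k\to\infty$. Throughout I fix $X\in[B(\cH)^n]_1$ and abbreviate $P:=I-F(X)F(X)^*$ and $Q:=I-XX^*$. Since $\|F(0)\|<1$, Corollary~\ref{not-strict} gives $\|F(X)\|<1$ for every $X\in[B(\cH)^n]_1$, so $P$ is positive and invertible; likewise $Q$ is positive and invertible because $\|X\|<1$. Hence $P^{-1}$ and $Q^{-1}$ are fixed bounded operators, and every expression occurring in \eqref{E3}--\eqref{E4} depends continuously, in the operator norm, on the point $a$ through $a$ and through $F(a)$.

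For part~(i) I apply the norm inequality \eqref{E4} with $a=z_k$. Its right-hand side carries the factor $\Delta_{F(z_k)}^2/\Delta_{z_k}^2=(1-\|F(z_k)\|^2)/(1-\|z_k\|^2)$, which converges to $L$ by hypothesis. Since $z_k\to\xi$ and $F(z_k)\to\eta$, every remaining factor on both sides converges in norm (with $P^{-1},Q^{-1}$ frozen), and letting $k\to\infty$ yields exactly the inequality in~(i).

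For part~(ii) I first record the operator form underlying \eqref{E4}. Starting from \eqref{E3} and using that $\Delta_a,\Delta_{F(a)}$ are scalar multiples of the identity, I invert the inequality via the fact that $R\ge T>0$ implies $T^{-1}\ge R^{-1}$, obtaining
\[
[I-F(a)F(X)^*]\,P^{-1}\,[I-F(X)F(a)^*]\ \le\ \frac{\Delta_{F(a)}^2}{\Delta_a^2}\,[I-aX^*]\,Q^{-1}\,[I-Xa^*].
\]
Taking $a=z_k$ and passing to the limit (operator inequalities are preserved under norm limits), and writing $A:=I-\eta F(X)^*$ and $B:=I-\xi X^*$, I get $A\,P^{-1}A^*\le L\,B\,Q^{-1}B^*$. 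The hypothesis of~(ii) is $B^*B<\beta Q$, i.e.\ $(BQ^{-1/2})^*(BQ^{-1/2})<\beta I$, so $\|BQ^{-1/2}\|^2<\beta$; since $BQ^{-1}B^*=(BQ^{-1/2})(BQ^{-1/2})^*$ has the same norm, this gives $BQ^{-1}B^*<\beta I$. Combining, $A\,P^{-1}A^*<\beta L\,I$, whence $\|AP^{-1/2}\|^2=\|AP^{-1}A^*\|<\beta L$ and therefore $P^{-1/2}A^*A\,P^{-1/2}=(AP^{-1/2})^*(AP^{-1/2})<\beta L\,I$. Conjugating by $P^{1/2}$ gives $A^*A<\beta L\,P$, which is precisely $[I-F(X)\eta^*][I-\eta F(X)^*]<\beta L\,[I-F(X)F(X)^*]$. (Note that $L>0$ is automatic here: $\|\eta F(X)^*\|\le\|F(X)\|<1$ makes $A=I-\eta F(X)^*$ invertible, so $A\ne0$, and the displayed $A\,P^{-1}A^*\le L\,BQ^{-1}B^*$ would otherwise force $A=0$.)

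The analytic input --- continuity and the preservation of (in)equalities under norm limits --- is routine once the invertibility of $P$ and $Q$ is secured by Corollary~\ref{not-strict}. The step that needs care is the algebraic passage in~(ii): one must invert the operator Pick inequality \eqref{E3} correctly (watching the adjoint bookkeeping, e.g.\ $(I-Xa^*)^*=I-aX^*$) and then convert the sandwiched-inverse bound $A\,P^{-1}A^*<\beta L\,I$ into the product inequality $A^*A<\beta L\,P$. This conversion rests on the norm identity $\|AP^{-1}A^*\|=\|AP^{-1/2}\|^2=\|P^{-1/2}A^*AP^{-1/2}\|$ and on the elementary fact that a positive operator of norm $<c$ satisfies the strict inequality $<c\,I$; I expect this to be the main obstacle, whereas the limiting argument itself is essentially bookkeeping.
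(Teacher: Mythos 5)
Your proof is correct and follows essentially the paper's own route: both parts are obtained by specializing the Pick-type inequalities of Corollary \ref{4delta} (for (ii), the inverted operator form of \eqref{E3}) at $a=z_k$ and letting $k\to\infty$, exactly as in the paper. The only cosmetic deviations are that you inline the paper's separately proved equivalence between $(I-X\xi^*)(I-\xi X^*)<\beta(I-XX^*)$ and the corresponding norm bound via the $\|TT^*\|=\|T^*T\|$ identity and conjugation by $Q^{\pm 1/2}$, $P^{\pm 1/2}$, and that you deduce $L>0$ from the invertibility of $I-\eta F(X)^*$ (forced since $\|F(X)\|<1$ by Corollary \ref{not-strict}) rather than from evaluating \eqref{E4} at $X=0$ as the paper does; both variants are sound.
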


\begin{proof}
Due to  inequality \eqref{E4}, we have
\begin{equation*}
 \begin{split}
\left\|[I-  F(z_k)  F(X)^*]\right. &\left.[I-  F(X)
F(X)^*]^{-1}[I- F(X)
F(z_k)^*]\right\|\\
&\leq  \frac{1-\|F(z_k)\|^2}{1-\|z_k\|^2}
\left\|(I-z_k^*X)(I-XX^*)^{-1} (I-X z_k^*)\right\|.
 \end{split}
\end{equation*}
 Taking the limit as $k\to \infty$, we deduce item (i).
 To prove (ii), note first that,  using the same inequality
 \eqref{E4}, when $a=z_k$ and $X=0$, we obtain
 \begin{equation*}
 \begin{split}
\left\|[I-  F(z_k)  F(0)^*]\right. &\left.[I-  F(0) F(0)^*]^{-1}[I-
F(0)
F(z_k)^*]\right\|\\
&\leq  \frac {1-\|F(z_k)\|^2}{1-\|z_k\|^2}.
\end{split}
\end{equation*}
Taking $z_k\to \xi$ and due to the fact that $\|F(0)\|<1$, we deduce
that
$$L\geq\left\|[I-  \eta  F(0)^*][I-  F(0) F(0)^*]^{-1}[I-
F(0) \eta^*]\right\|>0.$$

Notice  also that, for any $X\in [B(\cH)^n]_1$, the following
inequalities are equivalent:
\begin{enumerate}
\item[(a)] $(I- X \xi^*)(I-\xi X^*)<\beta (I-XX^*)$,
\item[(b)] $ \|(I-\xi X^*)(I-XX^*)^{-1} (I-X\xi^*)\|<\beta$.
\end{enumerate}
Indeed, inequality (b) holds if and only if $\|(I-\xi
X^*)(I-XX^*)^{-1/2}\|<\beta^{1/2}$, which is equivalent to
$$
(I-XX^*)^{-1/2}(I-X\xi^*)(I-\xi X^*)(I-XX^*)^{-1/2}<\beta.
$$
The latter inequality is clearly equivalent to (a).

 Now, to prove (ii), we assume that
$$
(I- X \xi^*)(I-\xi X^*)<\beta (I-XX^*).
 $$
 Due to the equivalence of (a) with (b),  and  using
the  inequality   from (i) and  the fact that $L>0$, we obtain
$$
\left\|[I-  \eta  F(X)^*][I-  F(X)   F(X)^*]^{-1}[I- F(X)
\eta^*]\right\|<\beta L.
$$
Once again using the equivalence of (a) with (b) when $X$ is
replaced by $F(X)$, we obtain that
$$
[I-F(X)\eta^*][I-\eta F(X)^*] <  \beta L[I-F(X)F(X)^*].
$$
This completes the proof.
\end{proof}

We mention that, using unitary transformations in $B(\CC^n)$ and
$B(\CC^m)$, respectively, we can choose the coordinates such that
$\xi=(1,0,\ldots, 0)\in
\partial\BB_n$ and $\eta=(1,0,\ldots, 0)\in \partial\BB_m$, in Theorem
\ref{Julia1}. For $0<c<1$, we define the noncommutative  ellipsoid
$$
{\bf E}_c:=\left\{ (X_1,\ldots, X_n)\in B(\cH)^n:\
\frac{[X_1-(1-c)I][X_1^*-(1-c)]I]}{c^2}+ \frac{X_2X_2}{c}+\cdots
+\frac{X_nX_n}{c}<I\right\}
$$
with center at $((1-c)I,0,\ldots, 0)$.

Here is our second version of  Julia's lemma for free holomorphic
functions.
\begin{theorem}
\label{Julia2} Let $F:[B(\cH)^n]_1\to [B(\cH)^m]_1$ be a free
holomorphic function. Let $z_k\in \BB_n$ be such that $\lim_{k\to
\infty} z_k =(1,0,\ldots, 0)\in \partial\BB_n$, $\lim_{k\to \infty}
F(z_k) =(1,0,\ldots, 0)\in \partial\BB_m$, and
$$
\lim_{k\to \infty} \frac{1-\|F(z_k)\|^2}{1-\|z_k\|^2}= L <\infty. $$
 If
$F:=(F_1,\ldots, F_m)$, then the following statements hold.
\begin{enumerate}
\item[(i)] For any $X:=(X_1,\ldots, X_n)\in [B(\cH)^n]_1$,
$$
(I-F_1(X)^*)(I-F(X)F(X)^*)^{-1} (I-F_1(X))\leq L
(I-X_1^*)(I-XX^*)^{-1}(I-X_1).
$$
\item[(ii)]   If $0<c<1$, then
$$ F({\bf E}_c) \subset {\bf E}_\gamma,
\quad \text{where }\ \gamma :=\frac{Lc}{1+Lc-c}.
$$
\end{enumerate}
\end{theorem}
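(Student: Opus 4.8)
The plan is to derive both statements from the machinery already assembled for Theorem \ref{Julia1}, specialized to the distinguished boundary points $\xi=\eta=(1,0,\dots,0)$. Since $F$ takes values in the \emph{open} ball $[B(\cH)^m]_1$, we have $\|F(0)\|<1$ and, by Corollary \ref{not-strict}, $\|F(X)\|<1$ for every $X\in[B(\cH)^n]_1$; hence $I-F(X)F(X)^*$ is positive and invertible and Corollary \ref{4delta} is available at every scalar point $a=z_k\in\BB_n$. I would first record that $L>0$: applying the norm estimate \eqref{E4} with $a=z_k$ and $X=0$ and letting $k\to\infty$ gives $L\ge\big\|[I-\eta F(0)^*][I-F(0)F(0)^*]^{-1}[I-F(0)\eta^*]\big\|>0$, the strict positivity coming from $\|F(0)\|<1$. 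For part (i) the point is that I must produce an \emph{operator} inequality, so \eqref{E4} is not enough and I return to the operator inequality \eqref{E3}. Inverting it at $a=z_k$ (both sides are positive invertible, and $A\ge B>0$ forces $B^{-1}\ge A^{-1}$, exactly as used in Corollary \ref{4delta}) and multiplying by $\Delta_{z_k}^2=1-\|z_k\|_2^2$ yields
\begin{multline*}
\frac{1-\|z_k\|_2^2}{1-\|F(z_k)\|_2^2}\,[I-F(z_k)F(X)^*][I-F(X)F(X)^*]^{-1}[I-F(X)F(z_k)^*]\\
\le [I-z_kX^*][I-XX^*]^{-1}[I-Xz_k^*].
\end{multline*}
Letting $k\to\infty$, and using $z_k\to(1,0,\dots,0)$, $F(z_k)\to(1,0,\dots,0)$, we have $z_kX^*\to X_1^*$, $Xz_k^*\to X_1$, $F(z_k)F(X)^*\to F_1(X)^*$, $F(X)F(z_k)^*\to F_1(X)$ in norm for fixed $X$, while the scalar factor tends to $1/L$. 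All three products converge in norm, so the order relation passes to the limit; multiplying by $L$ gives precisely statement (i).

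For part (ii) the decisive observation is a purely algebraic identity showing that the noncommutative ellipsoid $\mathbf{E}_c$ is exactly a ``horoball'' in the sense of Julia's lemma. Expanding the defining inequality of $\mathbf{E}_c$, clearing the factor $c^2$, substituting $\sum_{i\ge2}X_iX_i^*=XX^*-X_1X_1^*$, and using $X_1X_1^*-X_1-X_1^*+I=(I-X_1)(I-X_1^*)$, I would regroup to obtain
\[
X\in\mathbf{E}_c\quad\Longleftrightarrow\quad (I-X_1)(I-X_1^*)<\tfrac{c}{1-c}\,(I-XX^*),
\]
which is exactly the hypothesis of Theorem \ref{Julia1}(ii) with $\xi=(1,0,\dots,0)$ and $\beta=\tfrac{c}{1-c}$.

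Granting this identity, part (ii) is immediate from Theorem \ref{Julia1}(ii): if $X\in\mathbf{E}_c$ then the horoball condition holds with $\beta=\tfrac{c}{1-c}$, whence $[I-F(X)\eta^*][I-\eta F(X)^*]<\beta L\,[I-F(X)F(X)^*]$, i.e.\ $(I-F_1(X))(I-F_1(X)^*)<\tfrac{Lc}{1-c}(I-F(X)F(X)^*)$. Applying the same ellipsoid--horoball identity in the target $B(\cH)^m$ identifies this with $F(X)\in\mathbf{E}_\gamma$, where $\gamma$ solves $\tfrac{\gamma}{1-\gamma}=\tfrac{Lc}{1-c}$, that is $\gamma=\tfrac{Lc}{1+Lc-c}$; one checks $\gamma\in(0,1)$ since $0<c<1$ and $L>0$. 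The main obstacle is the operator-level limiting argument in part (i): in contrast to Theorem \ref{Julia1}(i), which only needs the scalar norm estimate \eqref{E4}, here I must invert the full operator inequality \eqref{E3} and justify that the order relation is preserved under the norm-limit of the operator products, which is where the strict positivity of $I-F(X)F(X)^*$ (equivalently $\|F(X)\|<1$) is used throughout.
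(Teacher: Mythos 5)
Your proof is correct and follows essentially the same route as the paper: part (i) is obtained by inverting the operator inequality \eqref{E3} at $a=z_k$ (exactly as in Corollary \ref{4delta}) and passing to the norm limit, and part (ii) rests on the same ellipsoid--horoball identity \eqref{elips} with $\beta=\frac{c}{1-c}$ and $\frac{\gamma}{1-\gamma}=\frac{Lc}{1-c}$. The only cosmetic difference is that you invoke Theorem \ref{Julia1}(ii) directly (first checking $L>0$, which your arrangement of the scalar factor requires) where the paper rederives that step from its own item (i) via the equivalence of (a) and (b); the underlying argument is identical.
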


\begin{proof} As in the proof of Corollary \ref{4delta},
inequality \eqref{E3} implies
\begin{equation*}
\begin{split}
[I-  F(z_k)  F(X)^*]&[I-  F(X)   F(X)^*]^{-1}[I- F(X) F(z_k)^*]\\
&\leq \frac{1-\|F(z_k)\|^2}{1-\|z_k\|^2}(I-z_kX^*)(I-XX^*)^{-1} (I-X
z_k^*).
\end{split}
\end{equation*}
Taking the limit as $k\to\infty$, we obtain the inequality in item
(i). Now we prove item (ii).  Straightforward calculations reveal
that  $X=(X_1,\ldots, X_n)$ is in the noncommutative ellipsoid ${\bf
E}_c$ if and only if
\begin{equation}
\label{elips}
 (I-X_1)(I-X_1^*)<\frac{c}{1-c}(I-XX^*).
\end{equation}
According to the equivalence $(a)\leftrightarrow (b)$ (see the proof
of Theorem \ref{Julia1}), when $\xi=(1,0,\ldots,0)$ and
$\beta:=\frac{c}{1-c}$, the latter inequality is equivalent to
$$
\|(I-X_1^*)(I-XX^*)^{-1} (I-X_1)\|<\frac{c}{1-c},
$$
which is equivalent to
$$
(I-X_1^*)(I-XX^*)^{-1} (I-X_1)< \frac{c}{1-c} I.
$$
Usind the inequality from item (i), we obtain
$$
(I-F_1(X)^*)(I-F(X)F(X)^*)^{-1} (I-F_1(X))<\frac{Lc}{1-c}
I=\frac{\gamma}{1-\gamma} I,
$$
where $\gamma:=\frac{Lc}{1+Lc-c}$. As above, the latter inequality
is equivalent to
\begin{equation*}
 (I-F_1(X))(I-F_1(X)^*)<\frac{\gamma}{1-\gamma}(I-F(X)F(X)^*),
\end{equation*}
which is equivalent to $F(X)\in {\bf E}_\gamma$. This completes the
proof.
\end{proof}

We  introduce noncommutative Korany  type regions in $[B(\cH)^n]_1$.
For each  $\xi\in
\partial \BB_n$ and $\alpha>1$, we define
$$
{\bf D}_\alpha(\xi):=\left\{X\in B(\cH)^n:\ (I-X\xi^*)(I-\xi X^*)<
\frac{\alpha^2}{4} (1-\|X\|^2)(I-XX^*)\right\}. $$

 Note that if $\cH=\CC$, then ${\bf D}_\alpha(\xi)$ coincides with
 the Korany region (see \cite{Ru2})
 $$D_\alpha(\xi)=\left\{ z\in \CC^n: |1-\left<
 z,\xi\right>|<\frac{\alpha}{2}(1-|z|^2)\right\}.
 $$

\begin{corollary} \label{Korany} If $F$ is as in Theorem \ref{Julia2} and
$F(0)=0$, then
\begin{enumerate}
\item[(i)]
$ F({\bf E}_c) \subset {\bf E}_{cL}, \quad \text{  for  }\
0<c<\frac{1}{L};$
\item[(ii)]
$ F({\bf D}_\alpha) \subset {\bf D}_{\alpha\sqrt{L}}, \quad \text{
for }\ \alpha>1$, where ${\bf D}_\alpha={\bf
D}_\alpha(1,0\ldots,0)$.
\end{enumerate}
\end{corollary}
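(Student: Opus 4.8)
The plan is to derive both inclusions from the operator inequality of Theorem \ref{Julia2}(i) combined with the noncommutative Schwarz lemma, which becomes available precisely because $F(0)=0$. After normalizing coordinates so that $\xi=\eta=(1,0,\ldots,0)$ (so $X\xi^*=X_1$ and $F(X)\eta^*=F_1(X)$), I would lean throughout on the elementary equivalence $(a)\leftrightarrow(b)$ proved inside Theorem \ref{Julia1}: for $\beta>0$ and $Y$ in the unit ball,
$$(I-Y_1)(I-Y_1^*)<\beta(I-YY^*)\iff \bigl\|(I-Y_1^*)(I-YY^*)^{-1}(I-Y_1)\bigr\|<\beta.$$
First I would record the two consequences of $F(0)=0$: since $\|F\|_\infty\leq 1$, the Schwarz lemma for free holomorphic functions (see \cite{Po-holomorphic} and the statement preceding Corollary \ref{SCH}) gives $\|F(X)\|\leq\|X\|$ for all $X\in[B(\cH)^n]_1$; evaluating at the scalar points $z_k$ yields $1-\|F(z_k)\|^2\geq 1-\|z_k\|^2$, hence $L\geq 1$.

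For part (i) I would begin from the description $X\in{\bf E}_c\iff(I-X_1)(I-X_1^*)<\frac{c}{1-c}(I-XX^*)$ obtained in the proof of Theorem \ref{Julia2}(ii), which by the equivalence above reads $\|(I-X_1^*)(I-XX^*)^{-1}(I-X_1)\|<\frac{c}{1-c}$. Taking norms in the operator inequality of Theorem \ref{Julia2}(i) (both sides being positive, $A\leq B$ forces $\|A\|\leq\|B\|$) gives
$$\bigl\|(I-F_1(X)^*)(I-F(X)F(X)^*)^{-1}(I-F_1(X))\bigr\|\leq L\,\bigl\|(I-X_1^*)(I-XX^*)^{-1}(I-X_1)\bigr\|<\frac{Lc}{1-c}.$$
Because $L\geq 1$ and $0<c<\tfrac1L$, a one-line check shows $\frac{Lc}{1-c}\leq\frac{cL}{1-cL}$, so the left side is $<\frac{cL}{1-cL}$; applying the equivalence $(b)\to(a)$ with $X$ replaced by $F(X)$ and $\eta$ in place of $\xi$ turns this into $(I-F_1(X))(I-F_1(X)^*)<\frac{cL}{1-cL}(I-F(X)F(X)^*)$, i.e. $F(X)\in{\bf E}_{cL}$.

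For part (ii) the argument is identical except that the Korany condition carries the extra factor $1-\|X\|^2$. Writing $X\in{\bf D}_\alpha\iff\|(I-X_1^*)(I-XX^*)^{-1}(I-X_1)\|<\frac{\alpha^2}{4}(1-\|X\|^2)$ and feeding this into Theorem \ref{Julia2}(i) yields
$$\bigl\|(I-F_1(X)^*)(I-F(X)F(X)^*)^{-1}(I-F_1(X))\bigr\|<\frac{L\alpha^2}{4}(1-\|X\|^2).$$
Here I would invoke Schwarz in its pointwise form $\|F(X)\|\leq\|X\|$, equivalently $1-\|X\|^2\leq 1-\|F(X)\|^2$, to bound the right-hand side by $\frac{(\alpha\sqrt L)^2}{4}(1-\|F(X)\|^2)$; the equivalence then converts this into $F(X)\in{\bf D}_{\alpha\sqrt L}$. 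Thus the only real input beyond Theorem \ref{Julia2} is the Schwarz lemma, which supplies $L\geq 1$ in part (i) and the decisive comparison $1-\|X\|^2\leq 1-\|F(X)\|^2$ in part (ii). I expect the only point needing care to be the legitimacy of the inverses and of the equivalence $(a)\leftrightarrow(b)$, i.e. that every $X\in{\bf E}_c$ (resp. ${\bf D}_\alpha$) satisfies $\|X\|<1$; this is immediate because each defining inequality forces $I-XX^*>0$, so the regions sit inside $[B(\cH)^n]_1$ where $F$ and all the resolvents are defined.
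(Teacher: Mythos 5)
Your proof is correct and essentially reproduces the paper's own argument: in part (i) your inequality $\frac{Lc}{1-c}\leq\frac{cL}{1-cL}$ is precisely the paper's observation that $\gamma:=\frac{Lc}{1+Lc-c}\leq cL$ (since $\frac{\gamma}{1-\gamma}=\frac{Lc}{1-c}$), so you are merely inlining the computation of Theorem \ref{Julia2}(ii) together with the monotonicity ${\bf E}_\gamma\subset{\bf E}_{cL}$ rather than citing it, and in part (ii) you repeat the paper's use of Theorem \ref{Julia1}(ii) with $\beta=\frac{\alpha^2}{4}(1-\|X\|^2)$ followed by the Schwarz bound $1-\|X\|^2\leq 1-\|F(X)\|^2$. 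One small correction to your closing remark: the defining inequality of ${\bf D}_\alpha$ does \emph{not} force $I-XX^*>0$, since for $\|X\|>1$ both factors $1-\|X\|^2$ and $I-XX^*$ can be negative so that the right-hand side is positive (e.g.\ $n=1$, $X_1=2I$ satisfies it for every $\alpha>1$); this is harmless, however, because $F({\bf D}_\alpha)$ can only mean the image of ${\bf D}_\alpha\cap[B(\cH)^n]_1$ where $F$ is defined, which is how the paper implicitly reads it, while for ${\bf E}_c$ your containment argument is valid as stated.
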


\begin{proof} Since $F(0)=0$, due to  Schwarz lemma for free
holomorphic functions, we have $\|F(X)\|\leq \|X\|$ for all $X\in
[B(\cH)^n]_1$. Consequently,
$$
L=\lim_{k\to \infty} \frac{1-\|F(z_k)\|^2}{1-\|z_k\|^2}\geq 1
$$
which implies $\gamma :=\frac{Lc}{1+Lc-c}\leq Lc$, therefore  ${\bf
E}_\gamma\subset {\bf E}_{cL}$. Due to Theorem \ref{Julia2}, we
deduce that $F({\bf E}_c)\subseteq {\bf E}_{cL}$ when
$0<c<\frac{1}{L}$.

To prove item (ii), let $X=(X_1,\ldots, X_n)\in {\bf D}_\alpha$,
i.e.,
$$
(I-X_1)(I-X_1^*)< \frac{\alpha^2}{4} (1-\|X\|^2)(I-XX^*).
$$
Applying Theorem \ref{Julia1}, part (ii),  when $\xi=(1,0,\ldots,
0)$ and $\beta=\frac{\alpha^2}{4} (1-\|X\|^2)$ we deduce that
$$
[I-F_1(X)][I- F_1(X)^*] <  \frac{L\alpha^2}{4} (1-\|X\|^2)
[I-F(X)F(X)^*].
$$
Since $\|F(X)\|\leq \|X\|$, we obtain
$$
[I-F_1(X)][I- F_1(X)^*] <  \frac{L\alpha^2}{4} (1-\|F(X)\|^2)
[I-F(X)F(X)^*],
$$
which shows that $F(X)\in {\bf D}_{\alpha\sqrt{L}}$ and completes
the proof.
\end{proof}

\bigskip

\section{ Pick-Julia   theorems  for   free  holomorphic   functions with
operator-valued coefficients
   }

In this section, we use fractional transforms  and a version of the
noncommutative Schwarz's lemma to obtain Pick-Julia theorems for
free holomorphic functions  $F$ with operator-valued coefficients
such that  $\|F\|_\infty\leq 1$ (resp. $\Re F \geq 0$). As  a
consequence, we obtain
 a Julia type lemma for free holomorphic
 functions with positive real parts.
  We also provide commutative versions
  of these results for
operator-valued multipliers of the Drury-Arveson space.

\begin{theorem}
\label{Pi-Ju} Let $F:[B(\cH)^n]_1\to B(\cH)\bar \otimes_{min}
B(\cE,\cG)$ be a free holomorphic function  with $\|F\|_\infty\leq
1$ and $\|F(0)\|<1$.  If  $z\in \BB_n$, then
$$
 \Psi_{F(z)}(F(X))\Psi_{F(z)}(F(X))^*\leq  \Phi_z(X)\Phi_z(X)^*\otimes I_\cG,\qquad X\in
[B(\cH)^n]_1,
$$
where $\Psi_{F(z)}$   is the fractional transform defined by
\eqref{frac} and $\Phi_z$  is the corresponding free holomorphic
automorphisms of $[B(\cH)^n]_1$. Moreover, we have
\begin{equation*}
 \begin{split}
 D_{F(z)^*}&[I- F(X)
F(z)^*]^{-1}[I-  F(X)   F(X)^*][I-  F(z)  F(X)^*]^{-1}
D_{F(z)^*}\\
&\geq
 \Delta_{z}(I- X z^*)^{-1}(I-  X   X^*)(I- z
X^*)^{-1} \Delta_{z}\otimes I_\cG
\end{split}
\end{equation*}
for any $z\in \BB_n$ and $X\in [B(\cH)^n]_1$.
\end{theorem}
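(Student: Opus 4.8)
The plan is to mimic the proof of the scalar Pick theorem (Theorem \ref{Schw3}), but with the outer automorphism $\Phi_{F(a)}$ of the target ball replaced by the fractional transform $\Psi_{F(z)}$ of Section~1, which is exactly the device that survives the passage to operator-valued coefficients. First, since $\|F\|_\infty\le 1$ and $\|F(0)\|<1$, Corollary \ref{not-strict} gives $\|F(X)\|<1$ for every $X\in[B(\cH)^n]_1$; in particular $\|F(z)\|<1$, so $A:=I\otimes F(z)$ is a strict contraction and the fractional transform $\Psi_A$ is available. By Theorem \ref{frac-trans}, $\Psi_A[F]$ is a bounded free holomorphic function with $\|\Psi_A[F]\|_\infty\le 1$, and by part (ii) together with \eqref{3eq} we have $\Psi_A[F](z)=\Psi_{F(z)}(F(z))=0$.

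Next I would form the composition $G:=\Psi_A[F]\circ\Phi_z$, where $\Phi_z$ is the involutive free holomorphic automorphism of $[B(\cH)^n]_1$ interchanging $0$ and $z$. Since $\Phi_z$ maps $[B(\cH)^n]_1$ onto itself with $\Phi_z(0)=z$, the map $G$ is a bounded free holomorphic function with coefficients in $B(\cE,\cG)$, $\|G\|_\infty\le\|\Psi_A[F]\|_\infty\le 1$, and $G(0)=\Psi_A[F](z)=0$. Thus the power series of $G$ starts in degree one, and Corollary \ref{SCH} applied with $m=1$ yields
\[
G(X)G(X)^*\le XX^*\otimes I_\cG,\qquad X\in[B(\cH)^n]_1.
\]
Replacing $X$ by $\Phi_z(X)$ and using the involution property $\Phi_z\circ\Phi_z=\text{\rm id}$ gives $G(\Phi_z(X))=\Psi_A[F](X)=\Psi_{F(z)}(F(X))$, so the inequality becomes exactly the first assertion,
\[
\Psi_{F(z)}(F(X))\Psi_{F(z)}(F(X))^*\le \Phi_z(X)\Phi_z(X)^*\otimes I_\cG .
\]

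For the second (Pick--Julia) inequality I would pass to complements: the display above is equivalent to $I-\Psi_{F(z)}(F(X))\Psi_{F(z)}(F(X))^*\ge (I-\Phi_z(X)\Phi_z(X)^*)\otimes I_\cG$. Rewriting the left-hand side by the factorization \eqref{two-ident} with $A=I\otimes F(z)$ (so that $D_{A^*}=I\otimes D_{F(z)^*}$) and the right-hand side by the automorphism identity \eqref{E1} with $\lambda=z$ and $Y=X$ produces precisely the claimed operator inequality. The genuinely new idea is the substitution of $\Psi_{F(z)}$ for an outer automorphism, so that $G=\Psi_A[F]\circ\Phi_z$ vanishes at $0$ and Corollary \ref{SCH} applies; the remaining care is purely in bookkeeping, namely checking that this composition is legitimate and norm-nonincreasing (via the composition calculus of Section~1 and the bijectivity of $\Phi_z$ on the ball) and tracking the tensor factor $I_\cG$ throughout. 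No obstacle beyond this arises once Theorem \ref{frac-trans} and the automorphism identities are in hand.
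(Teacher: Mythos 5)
Your proposal is correct and follows essentially the same route as the paper's own proof: both reduce to the composition $\Psi_{F(z)}\circ F\circ\Phi_z$ (your $G=\Psi_A[F]\circ\Phi_z$ is the same map by Theorem \ref{frac-trans}(ii)), which vanishes at $0$, apply Corollary \ref{SCH} with $m=1$, substitute $Y=\Phi_z(X)$ using $\Phi_z\circ\Phi_z=\mathrm{id}$, and then obtain the second inequality from the factorization identities \eqref{two-ident} (equivalently \eqref{formulas}) and \eqref{E1}. There is nothing to add; the argument is complete as written.
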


\begin{proof} Since $\|F(0)\|<1$, Corollary \ref{not-strict} implies
that $\|F(X)\|<1$ for any $X\in [B(\cH)^n]_1$.  According to Theorem
\ref{more-prop} and Theorem \ref{frac-trans}, the mapping $X\mapsto
(\Psi_{F(z)}\circ F\circ \Phi_z)(X)$ is a bounded free holomorphic
function on $[B(\cH)^n]_1$  with $\|(\Psi_{F(z)}\circ F\circ
\Phi_z)(X)\|<1$ for any $X\in [B(\cH)^n]_1$. On the other hand,
since we have $(\Psi_{F(z)}\circ F\circ
\Phi_z)(0)=\Psi_{F(z)}(F(z))=0$, we can apply Corollary \ref{SCH}
and obtain
$$
(\Psi_{F(z)}\circ F\circ \Phi_z)(Y)[(\Psi_{F(z)}\circ F\circ
\Phi_z)(Y)]^*\leq YY^*\otimes I_\cG
$$
for any $Y\in [B(\cH)^n]_1$. Taking $Y=\Phi_z(X)$, $X\in
[B(\cH)^n]_1$,  and due to the identity $\Phi_z\circ
\Phi_z=\text{\rm id}$, we obtain
$$
 \Psi_{F(z)}(F(X))\Psi_{F(z)}(F(X))^*\leq  \Phi_z(X)\Phi_z(X)^*\otimes I_\cG,\qquad X\in
[B(\cH)^n]_1.
$$
Using now relations \eqref{formulas} and \eqref{E1}, we complete the
proof.
\end{proof}

We remark that under the conditions of Theorem \ref{Pi-Ju}, one can
show, as in the proof of  Theorem \ref{Schw3}, that there is a free
holomorphic function  $G$ with operator-valued coefficients and
$\|G\|_\infty\leq 1$ such that
$$
\Psi_{F(z)}[F(X)]=\Phi_z(X) (G\circ \Phi_z)(X),\quad X\in
[B(\cH)^n]_1.
$$

Our Pick-Julia type result for free holomorphic functions with
positive real parts is the following.
\begin{theorem}
\label{Pi-Ju2} If $G:[B(\cH)^n]_1\to B(\cH)\bar \otimes_{min}
B(\cE)$ is a free holomorphic function  with $\Re G > 0$, then
\begin{equation*}
 \begin{split}
 \Gamma(z)[I+G(z)^*][G(X)+G(z)^*]^{-1}& [\Re G(X)][G(z)+G(X)^*]^{-1}
 [I+G(z)] \Gamma(z)
  \\
&\geq
   (1-\|z\|_2^2)(I- X z^*)^{-1}(I-  X   X^*)(I- z
X^*)^{-1} \otimes I_\cE
\end{split}
\end{equation*}
for any $z\in \BB_n$ and $X\in [B(\cH)^n]_1$, where
$$
\Gamma(z):=2\left\{[I+G(z)]^{-1} [\Re
G(z)][I+G(z)^*]^{-1}\right\}^{1/2}.
$$
\end{theorem}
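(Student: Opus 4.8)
The plan is to reduce Theorem~\ref{Pi-Ju2} to the contractive case already handled in Theorem~\ref{Pi-Ju} via the noncommutative Cayley transform. Since $\Re G>0$, the function $F:=\text{\boldmath{$\cC$}}[G]=(G-I)(I+G)^{-1}$ maps into the unit ball, and by the discussion preceding Lemma~\ref{inverse} we have $\Re G(X)>0$ for all $X\in[B(\cH)^n]_1$ if and only if $\|F(X)\|<1$ for all such $X$; in particular $\|F\|_\infty\leq 1$ and $F(0)=(G(0)-I)(I+G(0))^{-1}$ satisfies $\|F(0)\|<1$. Thus $F$ is exactly of the type to which Theorem~\ref{Pi-Ju} applies, and the strategy is to apply that theorem to $F$ and then translate the resulting inequality back through $G=\text{\boldmath{$\cC$}}^{-1}[F]=(I+F)(I-F)^{-1}$.

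The central computation is to express the two factors appearing in Theorem~\ref{Pi-Ju} in terms of $G$. First I would use the formula preceding Lemma~\ref{inverse}, namely
\begin{equation*}
2\Re G(X)=[I-F(X)^*]^{-1}[I-F(X)^*F(X)][I-F(X)]^{-1},
\end{equation*}
which rewrites the ``middle'' factor $I-F(X)F(X)^*$ (equivalently its adjoint-symmetric form) in terms of $\Re G(X)$. Next, using $D_{F(z)^*}=(I-F(z)F(z)^*)^{1/2}$ together with the Cayley relations $I+F(z)=2(I+G(z))^{-1}$ and $I-F(z)=2G(z)(I+G(z))^{-1}$, I would convert the kernel $D_{F(z)^*}[I-F(X)F(z)^*]^{-1}(\cdots)[I-F(z)F(X)^*]^{-1}D_{F(z)^*}$ from Theorem~\ref{Pi-Ju} into the product involving $[I+G(z)^*][G(X)+G(z)^*]^{-1}[\Re G(X)][G(z)+G(X)^*]^{-1}[I+G(z)]$, with the prefactor $\Gamma(z)$ absorbing precisely the $D_{F(z)^*}$ terms via the identity
\begin{equation*}
D_{F(z)^*}^2=I-F(z)F(z)^*=4[I+G(z)]^{-1}[\Re G(z)][I+G(z)^*]^{-1},
\end{equation*}
so that $\Gamma(z)=2\{[I+G(z)]^{-1}[\Re G(z)][I+G(z)^*]^{-1}\}^{1/2}$ is exactly the square root of $D_{F(z)^*}^2$ after the Cayley substitution. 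The right-hand side of Theorem~\ref{Pi-Ju}, namely $\Delta_z(I-Xz^*)^{-1}(I-XX^*)(I-zX^*)^{-1}\Delta_z\otimes I$, carries over unchanged since $\Delta_z^2=(1-\|z\|_2^2)I_\CC$, giving the stated right-hand side.

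The main obstacle I expect is the bookkeeping of noncommuting factors in the similarity $[I-F(X)F(z)^*]^{-1}=\tfrac14[I+G(z)][G(X)+G(z)^*]^{-1}[I+G(z)^*]$ and its companion, since one must verify that the intertwining $(I-F(X)F(z)^*)$ factors correctly through the products $(G(X)+G(z)^*)$ after substituting the Cayley expressions; these are algebraic identities valid for operators (no commutativity assumed) but they require careful insertion of the resolvent $(I-F(X))^{-1}$, $(I-F(z))^{-1}$ and their adjoints so that everything telescopes to leave precisely the factors claimed. Once this algebraic translation is checked, the inequality is immediate from Theorem~\ref{Pi-Ju}: one simply substitutes the computed expressions for both sides of that theorem's second displayed inequality. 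I would also note that the hypothesis $\Re G>0$ (strict) guarantees $\|F(0)\|<1$ and the invertibility of $I+G(z)$, $G(X)+G(z)^*$, and $G(z)+G(X)^*$ needed for all the resolvents to make sense, so no further positivity argument is required beyond invoking the earlier results.
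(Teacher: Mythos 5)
Your proposal is essentially the paper's own proof: apply Theorem \ref{Pi-Ju} to the Cayley transform $F=(G-I)(I+G)^{-1}$ (which satisfies $\|F(X)\|<1$, hence $\|F(0)\|<1$, by the equivalence noted before Lemma \ref{inverse}) and translate back via the identities $I-F(X)F(z)^*=2[I+G(X)]^{-1}[G(X)+G(z)^*][I+G(z)^*]^{-1}$ and $D_{F(z)^*}=2\left\{[I+G(z)]^{-1}[\Re G(z)][I+G(z)^*]^{-1}\right\}^{1/2}=\Gamma(z)$, exactly as the paper does. The only blemishes are bookkeeping slips of the sort you yourself flag for verification: the Cayley relations should read $I-F(z)=2(I+G(z))^{-1}$ and $I+F(z)=2G(z)(I+G(z))^{-1}$ (you swapped them), and the inverse similarity is $[I-F(X)F(z)^*]^{-1}=\tfrac{1}{2}[I+G(z)^*][G(X)+G(z)^*]^{-1}[I+G(X)]$ rather than the $\tfrac{1}{4}$-formula you wrote, after which the three factors telescope (using $G(X)+G(X)^*=2\Re G(X)$) to precisely the expression in the theorem.
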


\begin{proof} According to the considerations preceding Lemma
\ref{inverse}, since $\Re G > 0$, the noncommutative Cayley
transform $F:=\text{\boldmath{$\cC$}}[G]:=(G-I)(I+G)^{-1}$ is a
bounded free holomorphic function with $\|F(X)\|<1$ for any $X\in
[B(\cH)^n]_1$. Due to Theorem \ref{Pi-Ju}, we obtain
\begin{equation}\label{DF}
 \begin{split}
 D_{F(z)^*}&[I- F(X)
F(z)^*]^{-1}[I-  F(X)   F(X)^*][I-  F(z)  F(X)^*]^{-1}
D_{F(z)^*}\\
&\geq
 \Delta_{z}(I- X z^*)^{-1}(I-  X   X^*)(I- z
X^*)^{-1} \Delta_{z}\otimes I_\cG.
\end{split}
\end{equation}
Note that
\begin{equation*}
 \begin{split}
I-  F(X)  F(z)^* &=
I-[I+G(X)]^{-1}[G(X)-I][G(z)^*-I][I+G(z)^*]^{-1}\\
&=[I+G(X)]^{-1}\left\{[I+G(X)][I+G(z)^*]-[G(X)-I][G(z)^*-I]\right\}[I+G(z)^*]^{-1}\\
&=2[I+G(X)]^{-1}[G(X)+ G(z)^*][I+G(z)^*]^{-1}
\end{split}
\end{equation*}
and, similarly,
$$
I-  F(X)  F(X)^*=2[I+G(X)]^{-1}[G(X)+ G(X)^*][I+G(X)^*]^{-1}
$$
for any $X\in [B(\cH)^n]_1$ and $z\in \BB_n$. Using these
identities, we deduce that
\begin{equation*}
\begin{split}
[I- F(X) F(z)^*]^{-1}&[I-  F(X)   F(X)^*][I-  F(z)  F(X)^*]^{-1}\\
&=\frac{1}{2}[I+G(z)^*][G(X)+G(z)^*]^{-1}[I+G(X)]\\
&\quad \times 2[I+G(X)]^{-1}[G(X)+ G(X)^*][I+G(X)^*]^{-1}\\
&\quad \times \frac{1}{2}[I+G(X)^*][G(z)+G(X)^*]^{-1}[I+G(z)]\\
&=\frac{1}{2}[I+G(z)^*][G(X)+G(z)^*]^{-1}[G(X)+
G(X)^*][G(z)+G(X)^*]^{-1}[I+G(z)].
\end{split}
\end{equation*}

Now,  since
\begin{equation*}
\begin{split}
 D_{F(z)^*}&=[I-F(z)F(z)^*]^{1/2}=\left[2[I+G(z)]^{-1}[G(z)+
 G(z)^*][I+G(z)^*]^{-1}\right]^{1/2}\\
 &=2 \left[[I+G(z)]^{-1}[\Re G(z) ][I+G(z)^*]^{-1}\right]^{1/2}
 \end{split}
\end{equation*}
the  inequality \eqref{DF}  implies  the inequality  of the theorem.
The proof is complete.
\end{proof}

 The next result is a Julia type lemma for free holomorphic
 functions with  scalar coefficients and positive real parts.

\begin{theorem}
\label{Julia3} Let $G:[B(\cH)^n]_1\to B(\cH) $ be  a free
holomorphic function  with $\Re G > 0$.  Let
$\{z_k\}_{k=1}^\infty\subset \BB_n$ be a sequence such that
$\lim_{k\to \infty}z_k=\xi\in \partial \BB_n$,
$\lim_{k\to\infty}|G(z_k)|= \infty$, and such that
$$
\lim_{k\to \infty}\frac{\Re
G(z_k)}{(1-\|z_k\|_2^2)|G(z_k)|^2}=M<\infty.
$$
Then $M>0$ and
$$
\Re G(X)\geq  \frac{1}{4M}(I- X \xi^*)^{-1}(I-  X   X^*)(I- \xi
X^*)^{-1}
$$
for any $X\in [B(\cH)^n]_1$.
\end{theorem}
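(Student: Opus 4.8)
The plan is to specialize the Pick--Julia inequality of Theorem \ref{Pi-Ju2} to the scalar case $\cE=\CC$ and then pass to the limit along the sequence $\{z_k\}$. When $\cE=\CC$, the value $G(z)$ is a complex number with $\Re G(z)>0$, so the scalar $\Gamma(z)=2\{[1+G(z)]^{-1}[\Re G(z)][1+\overline{G(z)}]^{-1}\}^{1/2}=\frac{2\sqrt{\Re G(z)}}{|1+G(z)|}$ satisfies $\Gamma(z)^2|1+G(z)|^2=4\Re G(z)$. Collecting the scalar factors $[I+G(z)^*]$ and $[I+G(z)]$ together with $\Gamma(z)^2$, the left-hand side of the inequality in Theorem \ref{Pi-Ju2} collapses, so that for every $z\in\BB_n$ and $X\in[B(\cH)^n]_1$,
\begin{equation*}
\begin{split}
4\Re G(z)\,[G(X)&+\overline{G(z)}I]^{-1}[\Re G(X)][G(z)I+G(X)^*]^{-1}\\
&\geq (1-\|z\|_2^2)(I-Xz^*)^{-1}(I-XX^*)(I-zX^*)^{-1}.
\end{split}
\end{equation*}

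Next I would set $z=z_k$, divide both sides by $1-\|z_k\|_2^2$, and multiply and divide the left-hand side by $|G(z_k)|^2=G(z_k)\overline{G(z_k)}$, rewriting it as
\begin{equation*}
\begin{split}
\frac{4\Re G(z_k)}{(1-\|z_k\|_2^2)|G(z_k)|^2}&\,\bigl(\overline{G(z_k)}[G(X)+\overline{G(z_k)}I]^{-1}\bigr)\\
&\quad\times[\Re G(X)]\bigl(G(z_k)[G(z_k)I+G(X)^*]^{-1}\bigr).
\end{split}
\end{equation*}
Since $|G(z_k)|\to\infty$ and $X$ is fixed, the resolvent factors $\overline{G(z_k)}[G(X)+\overline{G(z_k)}I]^{-1}=(I+G(X)/\overline{G(z_k)})^{-1}$ and $G(z_k)[G(z_k)I+G(X)^*]^{-1}=(I+G(X)^*/G(z_k))^{-1}$ converge in the operator norm to $I$, while the scalar coefficient tends to $4M$ by hypothesis; hence the left-hand side converges in norm to $4M\,\Re G(X)$. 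On the right-hand side, $z_k\to\xi\in\partial\BB_n$ gives $I-Xz_k^*\to I-X\xi^*$, which is invertible because $\|X\xi^*\|\leq\|X\|<1$, so the right-hand side converges in norm to $(I-X\xi^*)^{-1}(I-XX^*)(I-\xi X^*)^{-1}$. Passing to the limit in the operator inequality yields
\begin{equation*}
4M\,\Re G(X)\geq (I-X\xi^*)^{-1}(I-XX^*)(I-\xi X^*)^{-1},\qquad X\in[B(\cH)^n]_1.
\end{equation*}

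Finally, since $\|X\|<1$ we have $I-XX^*\geq(1-\|X\|^2)I>0$, so the right-hand side is a congruence of a positive definite operator by the invertible $(I-\xi X^*)^{-1}$ and is therefore strictly positive, in particular nonzero; if $M$ were $0$ the left-hand side would vanish, a contradiction, so $M>0$. Dividing the last display by $4M$ gives the asserted estimate. The step requiring the most care is the simultaneous treatment of the two resolvent factors tending to $I$, the blow-up of $|G(z_k)|$, and the vanishing of $1-\|z_k\|_2^2$; balancing these against one another through the hypothesized limit $M$ is the heart of the argument and is precisely what produces the normalization $\frac{1}{4M}$ in the conclusion.
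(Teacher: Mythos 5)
Your proof is correct and follows essentially the same route as the paper: both specialize Theorem \ref{Pi-Ju2} to scalar coefficients (collapsing the $\Gamma(z)$ and $[I+G(z)]$ factors into $4\Re G(z)$), normalize by $(1-\|z_k\|_2^2)|G(z_k)|^2$, and pass to the norm limit as the resolvent factors tend to $I$. The only cosmetic difference is that the paper establishes $M>0$ first by setting $X=0$ in the intermediate inequality, whereas you deduce it afterwards from the strict positivity of the limiting right-hand side; both arguments are valid.
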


\begin{proof}According to Theorem \ref{Pi-Ju2}, when $\cE=\CC$, we
have
$$
\Gamma(z_k)=\frac{2[\Re G(z_k)]^{1/2}}{|1+G(z_k)|}
$$
and
\begin{equation*}
\begin{split}
4[\Re G(z_k)][G(X)+G(z_k)^*]^{-1} &[\Re G(X)] [G(z_k)+G(X)^*]^{-1}\\
&\geq (1-\|z_k\|^2) (I- X z_k^*)^{-1}(I-  X   X^*)(I- z_k X^*)^{-1}.
\end{split}
\end{equation*}
Hence, we obtain

\begin{equation} \label{RR}
\begin{split}
\frac{4\Re G(z_k)}{(1-\|z_k\|^2) |G(z_k)|^2}\Re G(X) \geq
 \frac{1}{|G(z_k)|^2}[G(X)+G(z_k)^*] A(k)[G(z_k)+G(X)^*],
\end{split}
\end{equation}
where $A(k):=(I- X z_k^*)^{-1}(I-  X   X^*)(I- z_k X^*)^{-1}$.
Taking $X=0$  in  inequality  \eqref{RR}, we obtain
$$
\frac{4\Re G(z_k)}{(1-\|z_k\|^2) |G(z_k)|^2}\Re G(0) \geq
\frac{1}{|G(z_k)|^2}[G(0)+G(z_k)^*] [G(z_k)+G(0)^*],
$$
whence
$$
\frac{1}{|G(z_k)|^2}[G(0)+G(z_k)^*] [\Re G(0)]^{-1}[G(z_k)+G(0)^*]
\leq \frac{\Re G(z_k)}{(1-\|z_k\|_2^2)|G(z_k)|^2} I.
$$
Since $\lim_{k\to\infty}|G(z_k)|= \infty$ and  taking the limit in
the latter inequality,   we obtain
$$
[\Re G(0)]^{-1} \leq \lim_{k\to\infty} \frac{\Re
G(z_k)}{(1-\|z_k\|_2^2)|G(z_k)|^2} I=MI.
$$
Consequently, we have $M>0$.

Now, due to inequality \eqref{RR} and the fact that
\begin{equation*}
\begin{split}
\lim_{k\to\infty} \frac{1}{|G(z_k)|^2}[G(X)+G(z_k)^*]&
A(k)[G(z_k)+G(X)^*]\\
&=\lim_{k\to \infty}{A(k)}= (I- X \xi^*)^{-1}(I- X X^*)(I- \xi
X^*)^{-1},
\end{split}
\end{equation*}
we deduce that
$$
4M\Re G(X)\geq  (I- X \xi^*)^{-1}(I-  X   X^*)(I- \xi X^*)^{-1}
$$
for any $X\in [B(\cH)^n]_1$, which completes the proof.
\end{proof}

We recall (see \cite{Po-poisson}, \cite{Po-holomorphic},
\cite{Po-pluriharmonic}) that if $F$ is a contractive
($\|F\|_\infty\leq 1$) free holomorphic function  with coefficients
in $B(\cE)$, then its boundary function is in $F_n^\infty\bar\otimes
B(\cE)$.  Consequently, the evaluation map $\BB_n\ni z\mapsto
F(z)\in B(\cE )$ is a contractive  operator-valued multiplier of the
Drury-Arveson space (\cite{Dr}, \cite{Arv}), and  any such a
contractive multiplier has  this type of representation.

\begin{corollary}\label{multiplier} The following statements hold.
\begin{enumerate}
\item[(i)]
 If $F:[B(\cH)^n]_1\to  B(\cH)\bar\otimes_{min} B(\cE)$ is a free
holomorphic function   with $\|F\|_\infty\leq 1$ and  $\|F(0)\|<1$
then

\begin{equation*}
 \begin{split}
[I-  F(z)  F(w)^*] &[I-  F(w)   F(w)^*]^{-1}[I- F(w)
F(z)^*]\\
&\leq  \frac{ |(1-\left<w,z\right>|^2}{(1-\|z\|^2)
(1-\|w\|^2)}[I-F(z)F(z)^*] .
 \end{split}
\end{equation*}
for any $z,w\in \BB_n$.
\item[(ii)]
If $G:[B(\cH)^n]_1\to B(\cH)\bar\otimes_{min} B(\cE)$ is a free
holomorphic function  with $\Re G>0$, then
\begin{equation*}
 \begin{split}
 [G(z)+G(w)^*][\Re G(w)]^{-1} [G(w)+G(z)^*]
\leq  \frac{4 |(1-\left<w,z\right>|^2}{(1-\|z\|^2) (1-\|w\|^2)}\Re
G(z)
\end{split}
\end{equation*}
for any $z,w\in \BB_n$.
\end{enumerate}
\end{corollary}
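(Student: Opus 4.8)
The plan is to derive both inequalities as the scalar specialization of Theorem \ref{Pi-Ju} and Theorem \ref{Pi-Ju2}, followed by a single operator-inversion step. For part (i), I would first invoke Corollary \ref{not-strict}: the hypothesis $\|F(0)\|<1$ forces $\|F(X)\|<1$ throughout $[B(\cH)^n]_1$, so that every operator appearing below is positive and invertible. I then fix $z\in\BB_n$ and evaluate the ``moreover'' inequality of Theorem \ref{Pi-Ju} at the scalar point $X=w\in\BB_n$, viewed as the row $[w_1 I_\cH,\dots,w_n I_\cH]$ with $\cH=\CC$. For scalar arguments one has $Xz^*=\langle w,z\rangle=:\mu$, $zX^*=\bar\mu$, $XX^*=\|w\|_2^2$, and $\Delta_z=(1-\|z\|_2^2)^{1/2}$, so the right-hand side collapses to the scalar $\dfrac{(1-\|z\|_2^2)(1-\|w\|_2^2)}{|1-\mu|^2}\,I_\cE$. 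Writing $D:=D_{F(z)^*}$, $N:=I-F(w)F(w)^*$, and $T:=I-F(w)F(z)^*$ (so that $I-F(z)F(w)^*=T^*$), the left-hand side is $P:=D\,T^{-1}N(T^*)^{-1}D$, and the key identity $D\,P^{-1}D=T^*N^{-1}T$ reproduces exactly the left-hand side of the asserted inequality (i).

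The remaining step in (i) is purely order-theoretic. From $P\ge\kappa I$ with $\kappa:=\dfrac{(1-\|z\|_2^2)(1-\|w\|_2^2)}{|1-\mu|^2}$ and the monotonicity of operator inversion (the implication $0<\kappa I\le P\Rightarrow P^{-1}\le\kappa^{-1}I$ already used in the proof of Corollary \ref{4delta}), I conclude $T^*N^{-1}T=D\,P^{-1}D\le\kappa^{-1}D^2=\kappa^{-1}[I-F(z)F(z)^*]$, which is precisely statement (i).

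Part (ii) follows the same template applied to Theorem \ref{Pi-Ju2}. Here $\Re G>0$ guarantees that $I+G(z)$, $G(z)+G(w)^*$, and $\Re G(w)$ all have strictly positive real part, hence are invertible. Evaluating the inequality of Theorem \ref{Pi-Ju2} at $X=w$ again reduces its right-hand side to $\kappa\,I_\cE$. Abbreviating $U:=[I+G(z)]\Gamma(z)$ (so $U^*=\Gamma(z)[I+G(z)^*]$), $V:=G(z)+G(w)^*$, and $M:=\Re G(w)$, the left-hand side is $R:=U^*(V^*)^{-1}MV^{-1}U$, whence $R^{-1}=U^{-1}\bigl(VM^{-1}V^*\bigr)(U^*)^{-1}$, i.e. $VM^{-1}V^*=U\,R^{-1}U^*$. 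Since $R\ge\kappa I$ gives $R^{-1}\le\kappa^{-1}I$, I obtain $VM^{-1}V^*\le\kappa^{-1}UU^*$, and I close the argument by computing, directly from the definition $\Gamma(z)=2\{[I+G(z)]^{-1}[\Re G(z)][I+G(z)^*]^{-1}\}^{1/2}$, that $UU^*=[I+G(z)]\Gamma(z)^2[I+G(z)^*]=4\,\Re G(z)$. This yields $[G(z)+G(w)^*][\Re G(w)]^{-1}[G(w)+G(z)^*]\le 4\kappa^{-1}\,\Re G(z)$, which is statement (ii).

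The hard part will not be any single estimate but the bookkeeping of adjoints that makes the two inversion identities $D\,P^{-1}D=T^*N^{-1}T$ and $U\,R^{-1}U^*=VM^{-1}V^*$ come out exactly right. The essential point is to recognize that the corollary's expressions are the ``inverse congruences'' of the theorem's left-hand sides, and that the congruence factor $U$ in (ii) is tailored precisely so that $UU^*$ regenerates $4\,\Re G(z)$; once this is seen, everything else is the scalarization of the noncommutative kernel identities together with the order-reversal of operator inversion.
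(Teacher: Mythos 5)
Your proposal is correct and takes essentially the same route as the paper: both specialize Theorems \ref{Pi-Ju} and \ref{Pi-Ju2} at the scalar point $X=w\in\BB_n$ (where the right-hand sides collapse to $\frac{(1-\|z\|_2^2)(1-\|w\|_2^2)}{|1-\left<w,z\right>|^2}I$) and then apply the order reversal $0<A\le B\Rightarrow B^{-1}\le A^{-1}$. The only difference is bookkeeping order --- you invert the full left-hand side first and then conjugate by $D_{F(z)^*}$ (resp.\ by $U=[I+G(z)]\Gamma(z)$, using $UU^*=4\,\Re G(z)$), whereas the paper strips the congruence factors before inverting --- which is the same computation.
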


\begin{proof} Taking $X=w\in \BB_n$ in Theorem \ref{Pi-Ju}, we
deduce that
\begin{equation*}
\begin{split}
[I-F(w)F(z)^*]^{-1}&[I-F(w)F(w)^*][I-F(z)F(w)^*]^{-1} \\
&\geq \frac{(1-\|z\|_2^2)(1-\|w\|_2^2)}{|1-\left<w,z\right>|^2}
[I-F(w)F(w)^*]^{-1}
\end{split}
\end{equation*}
for any $z,w\in \BB_n$. We recall that if $A,B$ are two positive
invertible operator such that $A\leq B$ then $B^{-1}\leq A^{-1}$.
Applying this result to the inequality above, we complete the proof
of item (i).

To prove part (ii), take  $X=w\in \BB_n$ in Theorem \ref{Pi-Ju2}. We
obtain
\begin{equation*}
 \begin{split}
 [I+G(z)^*][G(w)+G(z)^*]^{-1}& [\Re G(w)][G(z)+G(w)^*]^{-1}
 [I+G(z)]
  \\
&\geq
   \frac{(1-\|z\|_2^2)(1-\|w\|^2)}{4|1-\left<w,z\right>|^2}[I+G(z)^*][\Re G(z)]^{-1} [I+ G(z)]
\end{split}
\end{equation*}
Multiplying to the left by $[I+G(z)^*]^{-1}$ and to the right by
$[I+G(z)]^{-1}$,  and passing to inverses, as above, we obtain the
desired inequality. The proof is complete.
\end{proof}

 \bigskip

\section{ Lindel\" of inequality and  sharpened forms of the noncommutative von Neumann
inequality
   }

In this section, we provide  a noncommutative generalization of a
classical
 inequality due to Lindel\"of, which turns out to be
  sharper then the noncommutative von Neumann inequality.

\begin{theorem} \label{Lindelof}If  $F:[B(\cH)^n]_1\to [B(\cH)^m]_1^-$ is   a free
holomorphic function, then
\begin{equation*}
  \|F(X)\|\leq
\frac{\|X\|+\|F(0)\|}{1+\|X\|\|F(0)\|},\qquad X\in [B(\cH)^n]_1.
\end{equation*}
If, in addition, the boundary function of $F$ has its entries in the
noncommutative disc algebra $\cA_n$, then the inequality above  can
be extended to  any $ X\in [B(\cH)^n]_1^-$.
\end{theorem}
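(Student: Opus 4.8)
The plan is to reduce the statement to the vector Schwarz lemma via the involutive automorphism of the target ball, and then to undo that automorphism using the basic identity \eqref{E1}. First I would dispose of the boundary case $\|F(0)\|=1$: there the right-hand side equals $\frac{\|X\|+1}{1+\|X\|}=1$, while $F$ maps into $[B(\cH)^m]_1^-$ forces $\|F(X)\|\le 1$, so the inequality is automatic. Hence I may assume $\|F(0)\|<1$, in which case $F(0)\in\BB_m$ and Corollary \ref{not-strict} gives $\|F(X)\|<1$ for every $X\in[B(\cH)^n]_1$, so the compositions with the automorphisms of the $m$-ball are legitimate. Applying Theorem \ref{Schw3} with the choice $a=0$ (so that $\Phi_0(X)\Phi_0(X)^*=XX^*$ by \eqref{E1} with $\lambda=0$) yields
\begin{equation*}
\Phi_{F(0)}(F(X))\Phi_{F(0)}(F(X))^*\le \Phi_0(X)\Phi_0(X)^*=XX^*,\qquad X\in[B(\cH)^n]_1,
\end{equation*}
so that $W:=\Phi_{F(0)}(F(X))$ satisfies $\|W\|\le\|X\|=:r$.

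Next I would undo the transform. Since $\Phi_{F(0)}$ is an involution, $F(X)=\Phi_{F(0)}(W)$, and the first identity in \eqref{E1} with $\lambda=F(0)$ and $X=Y=W$ gives
\begin{equation*}
I-F(X)F(X)^*=\Delta_{F(0)}(I-WF(0)^*)^{-1}(I-WW^*)(I-F(0)W^*)^{-1}\Delta_{F(0)},
\end{equation*}
where $\Delta_{F(0)}=(1-t^2)^{1/2}I$ and $t:=\|F(0)\|=\|F(0)\|_2$. Writing $B:=(I-F(0)W^*)^{-1}$ and noting that $(I-WF(0)^*)=(I-F(0)W^*)^*$, the scalar nature of $\Delta_{F(0)}$ lets me rewrite the right-hand side as $(1-t^2)\,B^*(I-WW^*)B$. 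Since $WW^*\le r^2I$ I have $I-WW^*\ge(1-r^2)I$, and since $\|B^{-1}\|=\|I-F(0)W^*\|\le 1+tr$ I have $B^*B\ge(1+tr)^{-2}I$; conjugation preserving order, these combine to
\begin{equation*}
I-F(X)F(X)^*\ge\frac{(1-t^2)(1-r^2)}{(1+tr)^2}\,I=\left(1-\Big(\tfrac{t+r}{1+tr}\Big)^2\right)I,
\end{equation*}
using the algebraic identity $(1+tr)^2-(t+r)^2=(1-t^2)(1-r^2)$. Taking norms yields $\|F(X)\|\le\frac{\|F(0)\|+\|X\|}{1+\|F(0)\|\,\|X\|}$, the asserted inequality on the open ball.

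Finally, for the second assertion I would pass to the boundary by continuity. When every entry of $\widetilde F$ lies in $\cA_n$, each $F_j$ belongs to $A_{\bf ball}$ and so extends norm-continuously to $[B(\cH)^n]_1^-$, with $F(rX)\to F(X)$ as $r\to1^-$ for every $X$ with $\|X\|\le 1$. Applying the open-ball inequality to $rX\in[B(\cH)^n]_1$ and letting $r\to1^-$, the left side tends to $\|F(X)\|$ and the right side to $\frac{\|X\|+\|F(0)\|}{1+\|X\|\|F(0)\|}$ (both being continuous in $r$), which extends the estimate to the closed ball. The genuine content of the argument is the operator computation in the middle paragraph that converts the automorphism identity \eqref{E1} into the scalar Lindel\"of bound; everything else is bookkeeping. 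The two points requiring care are that $(I-F(0)W^*)$ is actually invertible (guaranteed by $\|F(0)W^*\|\le tr<1$), and that the boundary extension genuinely needs $\widetilde F\in\cA_n$ rather than merely membership in $F_n^\infty$, since it is precisely the disc-algebra hypothesis that supplies norm-continuity up to $\partial[B(\cH)^n]_1$.
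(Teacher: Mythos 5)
Your proof is correct and follows essentially the same route as the paper's: both rest on the Pick-type inequality of Theorem \ref{Schw3} (equivalently, Corollary \ref{4delta}) at $a=0$, the automorphism identity \eqref{E1} with $\lambda=F(0)$, and elementary norm estimates on the inverses $(I-F(0)W^*)^{-1}$, with the identical $F(rX)\to F(X)$ limit argument for the closed-ball extension. The only differences are organizational: by substituting $W=\Phi_{F(0)}(F(X))$ and undoing the involution you reach the explicit bound $\|F(X)\|\le \frac{t+r}{1+tr}$ directly, whereas the paper derives the implicit inequality $(1-\|F(X)\|^2)(1-\|F(0)\|^2)\ge (1-\|X\|^2)(1-\|F(X)\|\,\|F(0)\|)^2$ and factors it, and you dispose of the case $\|F(0)\|=1$ by observing the right-hand side equals $1$ instead of the paper's scaling of $F$ by $\epsilon$.
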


\begin{proof} First, we consider the case when $\|F(0)\|<1$.
Using the first inequality  of Corollary \ref{4delta}, in the
particular case when $a=0$, we obtain
\begin{equation*}
\begin{split}
\Delta_{F(0)}[I- F(X) F(0)^*]^{-1}[I-  F(X)   F(X)^*][I-  F(0)
F(X)^*]^{-1} \Delta_{F(0)} \geq I-XX^*.
\end{split}
\end{equation*}
Hence, we deduce that
\begin{equation} \label{F(0)}
\begin{split}
I-  F(X)   F(X)^*\geq \frac{1-\|X\|^2}{1-\|F(0)\|^2}[I- F(X)
F(0)^*][I-  F(0) F(X)^*].
\end{split}
\end{equation}
On the other hand,  since $\|F(0)\|<1$, the operator $I- F(X)
F(0)^*$ is invertible and
\begin{equation*}
\begin{split}
\|[I- F(X) F(0)^*]^{-1}\|&\leq
1+\|F(X)\|\|F(0)\|+\|F(X)\|^2\|F(0)\|^2 +\cdots\\
&=\frac{1}{1-\|F(X)\|\|F(0)\|}.
\end{split}
\end{equation*}
Similarly, we have $\|[I-  F(0) F(X)^*]^{-1}\|\leq
\frac{1}{1-\|F(X)\|\|F(0)\|}$ and   deduce that
\begin{equation*}
\begin{split}
[I-  F(0) F(X)^*]^{-1}[I- F(X) F(0)^*]^{-1}&\leq \|[I-  F(0)
F(X)^*]^{-1}\|\|[I- F(X) F(0)^*]^{-1}\| I\\
&\leq \frac{1}{(1-\|F(X)\|\|F(0)\|)^2}I.
\end{split}
\end{equation*}
Hence, we obtain
$$
[I- F(X) F(0)^*][I-  F(0) F(X)^*]\geq (1-\|F(X)\|\|F(0)\|)^2I,
$$
which combined with inequality \eqref{F(0)}, leads to
$$
F(X)F(X)^*\leq
\left(1-\frac{1-\|X\|^2}{1-\|F(0)\|^2}(1-\|F(X)\|\|F(0)\|)^2\right)I.
$$
This inequality implies
$$
\|F(X)\|^2\leq
1-\frac{1-\|X\|^2}{1-\|F(0)\|^2}(1-\|F(X)\|\|F(0)\|)^2,
$$
which  is equivalent to
$$
(1-\|F(X)\|^2)(1-\|F(0)\|^2)\geq (1-\|X\|^2)(1-\|F(X)\|\|F(0)\|)^2.
$$
Straightforward calculations  show that the latter inequality is
equivalent to
$$
\left( \|F(X)\|-\|F(0)\|\right)^2\leq
\|X\|^2\left(I-\|F(X)\|\|F(0)\|\right)^2.
$$
Hence, we obtain
$$ \|F(X)\|-\|F(0)\|\leq \|X\|-\|X\|\|F(X)\|\|F(0)\|,
$$
which is equivalent to
\begin{equation*}
 \|F(X)\|\leq \frac{\|X\|+\|F(0)\|}{1+\|X\|\|F(0)\|},\qquad X\in
[B(\cH)^n]_1.
\end{equation*}

Now, we consider the case when $\|F(0)\|=1$.  Applying  our result
above to  $\epsilon F$, where  $\epsilon\in (0,1)$, we get
\begin{equation*}
 \epsilon\|F(X)\|\leq \frac{\|X\|+\epsilon\|F(0)\|}{1+\epsilon\|X\|\|F(0)\|}.
\end{equation*}
Taking $\epsilon\to 0$, the result follows. Now, consider the case
when the boundary function of $F$ has its entries in the
noncommutative disc algebra $\cA_n$.   According to
\cite{Po-holomorphic}, we have
$$
F(X)=\lim_{r\to 1} F(rX_1,\ldots, rX_n),\qquad X=(X_1,\ldots,
X_n)\in [B(\cH)^n]_1^-,
$$
in the operator norm topology. Applying inequality  of this theorem
to the free holomorphic function $X\mapsto F(rX_1,\ldots, rX_n)$ and
taking $r\to 1$, we complete the proof.
\end{proof}

A few remarks are necessary.  First, notice that in the particular
case when $F(0)=0$, Theorem \ref{Lindelof} implies the
noncommutative Schwarz  type result. We also remark that if
$\|F(0)\|<1$, then
$$
\frac{\|X\|+\|F(0)\|}{1+\|X\|\|F(0)\|}<1,\qquad X\in [B(\cH)^n]_1.
$$
Therefore, the inequality   in Theorem \ref{Lindelof} is sharper
than the noncommutative von Neumann inequality, which gives only
$\|F(X)\|\leq 1$, when $\|F\|_\infty\leq 1$.

We recall that if $F:=(F_1,\ldots, F_m)$ is a contractive
($\|F\|_\infty\leq 1$) free holomorphic function, then the
evaluation map $\BB_n\ni z\mapsto F(z)\in  \BB_m$ is a contractive
 matrix-valued multiplier of the Drury-Arveson space and, moreover,
any such a contractive multiplier has  this kind of representation.
In particular, Theorem \ref{Lindelof} implies that
 $$
 \|F(z)\|\leq
\frac{\|z\|+\|F(0)\|}{1+\|z\|\|F(0)\|},\qquad z\in \BB_n.
$$
 for any contractive
   multiplier $F:\BB_n\to \BB_m$  of the Drury-Arveson space.

We consider now the particular case when $m=1$. Here is a sharpened
form of the noncommutative von Neumann inequality (see
\cite{Po-von})

\begin{corollary} If $f:[B(\cH)^n]_1\to B(\cH)$  is a nonconstant   free
holomorphic function with $\|f\|_\infty\leq 1$, then

$$
\|f(X)\|\leq \frac{\|X\|+|f(0)|}{1+\|X\||f(0)|}<1,\qquad X\in
[B(\cH)^n]_1.
$$
If, in addition,  $f$ is in the noncommutative disc algebra $\cA_n$,
then the left inequality holds for any $ X\in [B(\cH)^n]_1^-$.
\end{corollary}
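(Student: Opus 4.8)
The plan is to read off the left inequality directly from Theorem~\ref{Lindelof} and then upgrade it to the strict bound using the scalar maximum principle. First I would observe that a nonconstant free holomorphic function $f:[B(\cH)^n]_1\to B(\cH)$ with scalar coefficients and $\|f\|_\infty\leq 1$ is precisely the $m=1$ instance of Theorem~\ref{Lindelof}, the target $[B(\cH)^1]_1^-$ being the closed unit ball of $B(\cH)$, i.e. the condition $\|f(X)\|\leq 1$. Hence that theorem yields
$$
\|f(X)\|\leq \frac{\|X\|+|f(0)|}{1+\|X\|\,|f(0)|},\qquad X\in[B(\cH)^n]_1,
$$
which is the left inequality; moreover its second clause already gives the extension to $X\in[B(\cH)^n]_1^-$ once we know that the (scalar) function $f$ lies in the noncommutative disc algebra $\cA_n$.

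The only point requiring an argument beyond Theorem~\ref{Lindelof} is the strict inequality $<1$, and this is where the hypothesis that $f$ is nonconstant enters. Since $|f(0)|\leq\|f\|_\infty$ always holds, and since the scalar maximum principle recalled after Theorem~\ref{maxmod} asserts that $\|f\|_\infty=|f(0)|$ forces $f$ to be constant, a nonconstant $f$ must satisfy $|f(0)|<\|f\|_\infty\leq 1$, so $|f(0)|<1$; alternatively this is visible from Proposition~\ref{strict}(i). Thus I would fix $X\in[B(\cH)^n]_1$, set $a:=\|X\|$ and $b:=|f(0)|$, and record that $a<1$ and $b<1$. The elementary identity
$$
1-\frac{a+b}{1+ab}=\frac{(1-a)(1-b)}{1+ab}
$$
then shows the right-hand side of the Lindel\"of bound is $<1$, both factors in the numerator being strictly positive. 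Combining this with the left inequality gives $\|f(X)\|<1$ for every $X\in[B(\cH)^n]_1$.

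There is no genuine obstacle here: the substance is carried entirely by Theorem~\ref{Lindelof}, and the corollary merely specializes to $m=1$ with scalar coefficients and sharpens $\leq 1$ to $<1$ via nonconstancy. The one subtlety worth flagging is that the strict bound cannot survive passage to the closed ball, even when $f\in\cA_n$: for $X\in[B(\cH)^n]_1^-$ with $\|X\|=1$ one has $(1-\|X\|)(1-|f(0)|)=0$, so the Lindel\"of quotient can equal $1$. This is exactly why the statement claims only the left inequality on $[B(\cH)^n]_1^-$, and accordingly in the $\cA_n$ case I would invoke only the extension clause of Theorem~\ref{Lindelof} and not the strictness.
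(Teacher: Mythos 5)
Your proof is correct and follows exactly the route the paper intends: the corollary is the $m=1$ specialization of Theorem~\ref{Lindelof}, with the strictness $<1$ obtained from the remark that nonconstancy forces $|f(0)|<\|f\|_\infty\leq 1$ (the scalar maximum principle recalled after Theorem~\ref{maxmod}) together with the elementary identity $1-\frac{a+b}{1+ab}=\frac{(1-a)(1-b)}{1+ab}$, and the closed-ball clause taken verbatim from the second part of Theorem~\ref{Lindelof}. Your observation that strictness cannot survive on $[B(\cH)^n]_1^-$ is also accurate and matches why the statement restricts the claim there to the left inequality.
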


Another consequence  of Theorem \ref{Lindelof} is the following.

\begin{corollary} \label{sharp}
Let  $F:[B(\cH)^n]_1\to [B(\cH)^m]_1^-$  be  a free holomorphic
function and let $z\in \BB_n$, then
$$
\|F(X)\|\leq
\frac{\|\Phi_z(X)\|+\|F(z)\|}{1+\|\Phi_z(X)\|\|F(z)\|},\qquad X\in
[B(\cH)^n]_1,
$$
where $\Phi_z$ is  the free holomorphic automorphism of the
noncommutative unit ball $[B(\cH)^n]_1$ associated $z\in \BB_n$.
\end{corollary}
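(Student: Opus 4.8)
The plan is to reduce this to the Lindel\"of inequality of Theorem \ref{Lindelof} by precomposing $F$ with the involutive automorphism $\Phi_z$. First I would recall from the discussion preceding Theorem \ref{Schw3} that, for $z\in\BB_n$, the map $\Phi_z$ is a free holomorphic automorphism of $[B(\cH)^n]_1$ satisfying $\Phi_z(0)=z$, $\Phi_z(z)=0$, and the involution identity $\Phi_z\circ\Phi_z=\text{\rm id}$; in particular $\|\Phi_z(X)\|<1$ for every $X\in[B(\cH)^n]_1$. Consequently the composition theory of Section 1 applies: since $F$ is free holomorphic on $[B(\cH)^n]_1$ and $\|\Phi_z(X)\|<1$ there, the map $G:=F\circ\Phi_z$ is a well-defined free holomorphic function $G:[B(\cH)^n]_1\to[B(\cH)^m]_1^-$.

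Next I would compute the value of $G$ at the origin using $\Phi_z(0)=z$, namely $G(0)=F(\Phi_z(0))=F(z)$, so that $\|G(0)\|=\|F(z)\|$. Applying Theorem \ref{Lindelof} to the free holomorphic function $G:[B(\cH)^n]_1\to[B(\cH)^m]_1^-$ then yields
\begin{equation*}
\|G(Y)\|\leq \frac{\|Y\|+\|F(z)\|}{1+\|Y\|\,\|F(z)\|},\qquad Y\in[B(\cH)^n]_1 .
\end{equation*}

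Finally I would substitute $Y=\Phi_z(X)$, which lies in $[B(\cH)^n]_1$ whenever $X\in[B(\cH)^n]_1$ because $\Phi_z$ maps the open ball onto itself. Using the involution property, $G(\Phi_z(X))=F(\Phi_z(\Phi_z(X)))=F(X)$, so the displayed estimate becomes exactly
\begin{equation*}
\|F(X)\|\leq \frac{\|\Phi_z(X)\|+\|F(z)\|}{1+\|\Phi_z(X)\|\,\|F(z)\|},\qquad X\in[B(\cH)^n]_1 ,
\end{equation*}
which is the asserted inequality. The only points requiring care are the verification that $G=F\circ\Phi_z$ genuinely falls under the hypotheses of Theorem \ref{Lindelof} (that the composition is free holomorphic with range in $[B(\cH)^m]_1^-$, guaranteed by $\|\Phi_z(X)\|<1$ and the composition results of Section 1) and the bookkeeping with the involution $\Phi_z\circ\Phi_z=\text{\rm id}$; both are routine, so I expect no substantial obstacle beyond invoking the already-established machinery.
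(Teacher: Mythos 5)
Your proposal is correct and is essentially the same argument as the paper's: apply Theorem \ref{Lindelof} to the composition $F\circ\Phi_z$, note $(F\circ\Phi_z)(0)=F(z)$ since $\Phi_z(0)=z$, then substitute $Y=\Phi_z(X)$ and use the involution $\Phi_z\circ\Phi_z=\mathrm{id}$ to recover $F(X)$. Your added verification that $F\circ\Phi_z$ is a free holomorphic map of $[B(\cH)^n]_1$ into $[B(\cH)^m]_1^-$, via the composition results of Section 1, is exactly the (implicit) justification the paper relies on.
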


\begin{proof} Applying  Theorem \ref{Lindelof} to the free
holomorphic function $F\circ \Phi_z:[B(\cH)^n]_1\to [B(\cH)^m]_1^-$,
we obtain
\begin{equation*}
 \|(F\circ \Phi_z)(Y)\|\leq \frac{\|Y\|+\|(F\circ \Phi_z)(0)\|}{1+\|Y\|
 \|(F\circ\Phi_z)(0)\|},\qquad Y\in
[B(\cH)^n]_1.
\end{equation*}
Taking into account that $\Phi_z(0)=z$, $\Phi_z\circ
\Phi_z=\text{\rm id}$, and setting $Y=\Phi_z(X)$, $X\in
[B(\cH)^n]_1$,  in the inequality above, we obtain the desired
inequality.
\end{proof}

\bigskip

\section{ Pseudohyperbolic metric on the unit ball of $B(\cH)^n$ and an invariant Schwarz-Pick lemma }

  The
pseudohyperbolic  distance  on the open unit disc $\DD:=\{z\in \CC:
\ |z|<1\}$  of the complex plane is defined   by
$$
d_1(z,w):= \left| \frac{z-w}{1-\bar z w}\right|,\quad z,w\in \DD.
$$
Some of the basic  properties of the pseudohyperbolic distance are
the following:

\begin{enumerate}
\item[(i)]
the pseudohyperbolic  distance is invariant under the conformal
automorphisms of $\DD$, i.e.,
$$
d_1(\varphi(z), \varphi(w))=d_1(z,w),\quad z,w\in \DD,
$$
for all $\varphi\in Aut(\DD)$;
\item[(ii)] the $d_1$-topology induced on the open disc is the usual
planar topology;

\item[(iii)]  any analytic function $f:\DD\to \DD$ is distance-decreasing,
i.e., satisfies
$$
d_1(f(z), f(w))\leq d_1(z,w),\quad z,w\in \DD.
$$
\end{enumerate}

 The analogue of the pseudohyperbolic
distance for the open unit ball of $\CC^n$,
$$\BB_n:=\{z=(z_1,\ldots, z_n)\in \CC^n:\ \|z\|_2<1\},$$
     is defined by
 $$
d_n(z,w)= \|\psi_z(w)\|_2,\qquad z,w\in \BB_n,
$$
where $\psi_z$ is the involutive automorphism of $\BB_n$ that
interchanges $0$ and $z$. This    distance has
 properties similar to those of  $d_1$ (see \cite{Ru2}, \cite{Zhu}).

 In what follows, we introduce   a pseudohyperbolic
metric  on the noncommutative ball $ [B(\cH)^n]_1$,  which
  satisfies  properties similar to those of the pseudohyperbolic  metric
  $d_1$ on the unit disc $\DD$
  and which is a noncommutative extension  of  $d_n$ on the open unit ball of $\CC^n$.
In particular, we  obtain a Schwarz-Pick lemma for free holomorphic
functions on $[B(\cH)^n]_1$   with respect to this  pseudohyperbolic
metric.

We recall  (\cite{Po-hyperbolic}) that  $A,B\in [B(\cH)^n]_1^-$ are
called Harnack equivalent (and denote
$A\overset{H}{{\underset{c}\sim}}\, B$)  if and only if there exists
$c\geq 1$ such that
\begin{equation*}
 \frac{1}{c^2}\text{\rm Re}\,p(B_1,\ldots, B_n)\leq
\text{\rm Re}\,p(A_1,\ldots, A_n)\leq c^2 \text{\rm
Re}\,p(B_1,\ldots, B_n)
\end{equation*}
for any noncommutative polynomial with matrix-valued coefficients
$p\in \CC[X_1,\ldots, X_n]\otimes M_{m\times m}$, $m\in \NN$, such
that $\text{\rm Re}\,p\geq 0$.
 The equivalence classes with respect to $\overset{H}\sim$ are
called Harnarck parts of $[B(\cH)^n]_1^-$. We proved  that the
  open unit ball $[B(\cH)^n]_1$ is a distinguished Harnack part of
  $[ B(\cH)^n]_1^-$, namely, the Harnack part of $0$.

Now, let $\Delta$ be a Harnarck part of $[B(\cH)^n]_1^-$ and define
the map ${\bf d}:\Delta\times \Delta\to [0,\infty)$ by setting
\begin{equation}
\label{Om} {\bf
d}(A,B):=\frac{\omega(A,B)^2-1}{\omega(A,B)^2+1},\qquad A,B\in
\Delta.
\end{equation}
where
$$\omega(A,B):=\inf\left\{ c \geq  1: \
A\,\overset{H}{{\underset{c}\sim}}\, B   \right\}, \qquad  A,B\in
\Delta.
$$

The first result of this section is the following.
\begin{theorem}
\label{pseudometric} Let $\Delta$ be a Harnarck part of
$[B(\cH)^n]_1^-$. Then the map $\bf{d}$ defined by relation
\eqref{Om} has the following properties:
\begin{enumerate}
\item[(i)] ${\bf d}$ is a bounded metric on $\Delta$;
\item[(ii)]  for any free
holomorphic automorphism $\Phi$ of the  noncommutative unit ball
$[B(\cH)^n]_1$,
$${\bf d}(X,Y)={\bf d}(\Phi(X), \Phi(X)),\qquad X,Y\in \Delta.
$$
\end{enumerate}
\end{theorem}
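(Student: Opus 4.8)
The plan is to reduce everything to the already-studied hyperbolic metric $\delta(A,B):=\ln\omega(A,B)$ of \cite{Po-hyperbolic} and to the elementary analysis of the scalar function $t\mapsto \frac{t^2-1}{t^2+1}$. First I would record that on $[1,\infty)$ this function equals $\tanh(\ln t)$, is strictly increasing, vanishes exactly at $t=1$, and has supremum $1$. Since $\omega(A,B)\geq 1$ for all $A,B\in\Delta$, this immediately gives $0\leq {\bf d}(A,B)<1$, so ${\bf d}$ is nonnegative and bounded. Writing ${\bf d}=\tanh\circ\,\delta$, the remaining metric axioms are then transferred from the corresponding properties of $\omega$ (equivalently of $\delta$).

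For (i), symmetry of ${\bf d}$ follows from the symmetry of the Harnack relation $A\,\overset{H}{{\underset{c}\sim}}\,B$ in its very definition, which gives $\omega(A,B)=\omega(B,A)$. The identity of indiscernibles reduces to $\omega(A,B)=1\iff A=B$: one direction is clear, and the other uses that $\omega(A,B)=1$ forces $\text{\rm Re}\,p(A)=\text{\rm Re}\,p(B)$ for every matrix-valued polynomial $p$ with $\text{\rm Re}\,p\geq 0$, which separates the points of $\Delta$ (this is the separation property of $\delta$ from \cite{Po-hyperbolic}). The triangle inequality is the substantive step. I would first establish the multiplicative triangle inequality $\omega(A,C)\leq \omega(A,B)\,\omega(B,C)$ by composing Harnack inequalities: if $A\,\overset{H}{{\underset{c_1}\sim}}\,B$ and $B\,\overset{H}{{\underset{c_2}\sim}}\,C$, then chaining the two double inequalities yields $A\,\overset{H}{{\underset{c_1c_2}\sim}}\,C$, and taking infima gives the claim; equivalently, $\delta$ satisfies the ordinary triangle inequality. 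Then, using that $\tanh$ is increasing together with the addition formula
$$
\tanh(a+b)=\frac{\tanh a+\tanh b}{1+\tanh a\,\tanh b}\leq \tanh a+\tanh b,\qquad a,b\geq 0,
$$
I obtain
$$
{\bf d}(X,Z)=\tanh\delta(X,Z)\leq \tanh\big(\delta(X,Y)+\delta(Y,Z)\big)\leq {\bf d}(X,Y)+{\bf d}(Y,Z).
$$

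For (ii), invariance of ${\bf d}$ is equivalent to invariance of $\omega$, so it suffices to show that a free holomorphic automorphism $\Phi$ of $[B(\cH)^n]_1$ preserves Harnack equivalence with the same constant, i.e. $A\,\overset{H}{{\underset{c}\sim}}\,B$ if and only if $\Phi(A)\,\overset{H}{{\underset{c}\sim}}\,\Phi(B)$ (here the intended statement is ${\bf d}(X,Y)={\bf d}(\Phi(X),\Phi(Y))$). I would invoke the characterization of Harnack equivalence in terms of all free holomorphic functions with nonnegative real part from \cite{Po-hyperbolic}, rather than only polynomials: since $\Phi$ is a bijection of $[B(\cH)^n]_1^-$ onto itself and $f\mapsto f\circ\Phi$ sends functions of nonnegative real part to functions of nonnegative real part, the two Harnack relations coincide, whence $\omega(\Phi(X),\Phi(Y))=\omega(X,Y)$ and therefore ${\bf d}(\Phi(X),\Phi(Y))={\bf d}(X,Y)$.

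The main obstacle is the triangle inequality, and it splits cleanly into two manageable pieces: the multiplicative chaining property of $\omega$, which comes directly from the definition of Harnack equivalence, and the subadditivity of $\tanh$ on $[0,\infty)$, which is the elementary computation displayed above. The only points genuinely borrowed from \cite{Po-hyperbolic} are the separation property $\omega(A,B)=1\Rightarrow A=B$ and the automorphism-invariance of the underlying Harnack relation; both are established there, so the present argument is in essence a transfer of the metric structure of $\delta$ through the monotone bijection $\tanh:[0,\infty)\to[0,1)$.
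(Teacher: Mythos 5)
Your proof is correct, and its skeleton coincides with the paper's: both reduce the metric axioms to the three properties of $\omega$ (namely $\omega\geq 1$ with equality if and only if the arguments coincide, symmetry, and the multiplicative chaining $\omega(A,C)\leq\omega(A,B)\,\omega(B,C)$, which the paper cites as Lemma 2.1 of \cite{Po-hyperbolic}) and then push them through an increasing scalar transform, while (ii) rests in both cases on the automorphism-invariance of the Harnack relation (the paper cites Lemma 2.3 of \cite{Po-hyperbolic}). The differences are presentational but worth noting. Where the paper proves subadditivity of $f(x)=\frac{x^2-1}{x^2+1}$ over products by the direct algebraic reduction to $(x^2y^2-1)(x^2-1)(y^2-1)\geq 0$, you observe $f(t)=\tanh(\ln t)$, i.e.\ ${\bf d}=\tanh\circ\,\delta$, and invoke the hyperbolic-tangent addition formula; this is cleaner and in fact anticipates Theorem \ref{pseudo-t}(i), which the paper establishes separately afterwards. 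You also re-derive rather than cite the chaining property of $\omega$ (by composing the two Harnack double inequalities, which is correct after the routine $\epsilon$-adjustment since the infima need not be attained) and the invariance in (ii); for the latter, your appeal to the characterization of Harnack equivalence via \emph{all} free holomorphic functions with nonnegative real part, rather than only polynomials, is genuinely needed, because $p\circ\Phi$ is not a polynomial when $p$ is --- and since $\Delta$ is a Harnack part of the \emph{closed} ball, one must use that this characterization (via radial limits) and the extension of $\Phi$ to a homeomorphism of $[B(\cH)^n]_1^-$ apply at non-strict row contractions as well; this is exactly the content of the lemma the paper cites, so your sketch and the paper's citation carry the same mathematical weight. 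Finally, you correctly flag that the displayed identity in (ii) contains a typo and should read ${\bf d}(X,Y)={\bf d}(\Phi(X),\Phi(Y))$.
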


\begin{proof}

 According to Lemma 2.1 from \cite{Po-hyperbolic},
if  $\Delta$ is a Harnack part of $[B(\cH)^n]_1^-$ and  $A,B,C\in
\Delta$, then the following properties hold:
\begin{enumerate}
\item[(a)] $\omega(A,B)\geq 1$ and  $\omega(A,B)=1$ if and only if $A=B$;
\item[(b)] $\omega(A,B)=\omega(B,A)$;
\item[(c)] $\omega(A,C)\leq \omega(A,B) \omega(B,C)$.
\end{enumerate}
Part (c) can be used  to show that
 \begin{equation*}
{\bf d}(A,C)\leq {\bf d}(A,B)+ {\bf d}(B,C). \end{equation*}
 Indeed, define the
function $f:[1,\infty)\to [0,\infty)$ by
$f(x):=\frac{x^2-1}{x^2+1}$. Since $f'(x)=\frac{2x}{(x^2+1)^2}\geq
0$, we deduce that $f$ is increasing. Hence, and due to inequality
(c), we have
$$
f(\omega(A,C))\leq f\left(\omega(A,B) \omega(B,C)\right).
$$
Since $f(\omega(A,C))={\bf{d}}(A,C)$, it remains  to prove that
$$
f\left(\omega(A,B) \omega(B,C)\right)\leq f\left(\omega(A,B) \right)
+ f\left( \omega(B,C)\right).
$$
Setting $x:=\omega(A,B)$ and $y:=\omega(B,C)$, the  inequality above
is equivalent to
$$
\frac{x^2 y^2-1}{x^2 y^2+1}\leq \frac{x^2 -1}{x^2 +1}+ \frac{
y^2-1}{ y^2+1}.
$$
Straightforward  calculations reveal that the latter inequality is
equivalent to $$(x^2 y^2-1)(x^2 -1)(y^2-1)\geq 0, $$ which holds for
any $x,y\geq 1$. Using (a) and (b), one can deduce that ${\bf d}$ is
a metric.

Now, we prove part (ii). According to Lemma 2.3 from
\cite{Po-hyperbolic}, if $A$ and $B$ are in $[B(\cH)^n]_1^-$, $c\geq
1$,  and $\Psi\in Aut([B(\cH)^n]_1)$, then
$A\overset{H}{{\underset{c}\prec}}\, B$ if and only if
$\Phi(A)\overset{H}{{\underset{c}\prec}}\, \Phi(B)$. Consequently,
if $A,B\in \Delta$, then  we deduce that
$\omega(A,B)=\omega(\Phi(A), \Phi(B))$, which implies (ii). The
proof is complete.
\end{proof}

We introduced in \cite{Po-hyperbolic} a hyperbolic   ({\it
Poincar\'e-Bergman} \cite{Be}  type) metric $\delta$ on any Harnack
part $\Delta$ of $[B(\cH)^n]_1^-$  by setting
\begin{equation}\label{def-Hyper}
 \delta(A,B):=\ln \omega(A,B).
\end{equation}
We will use the properties of $\delta$  to deduce  the following
result concerning the pseudohyperbolic distance on the open
noncommutative ball $[B(\cH)^n]_1$.

\begin{theorem}\label{pseudo-t} The  pseudohyperbolic metric
 ${\bf d}:[B(\cH)^n]_1\times [B(\cH)^n]_1\to [0,\infty)$ has the following properties:

\begin{enumerate}
\item[(i)] for any $X,Y\in [B(\cH)^n]_1$, $${\bf d}(X,Y)=\tanh \delta(X,Y).$$
\item[(ii)]
${\bf d}|_{\BB_n\times \BB_n}$ coincides with the pseudohyperbolic
distance on $\BB_n$, i.e.,
$$
{\bf d}(z,w)= \|\psi_z(w)\|_2,\qquad z,w\in \BB_n,
$$
where $\psi_z$ is the involutive automorphism of $\BB_n$ that
interchanges $0$ and $z$.
\item[(iii)] the ${\bf d}$-topology
 coincides with the norm topology on  the open unit ball
$[B(\cH)^n]_1$;
\item[(iv)] for any $X, Y\in [B(\cH)^n]_1$,
 \begin{equation*} {\bf d}(X,Y):=\frac{\max\{\|\Gamma\|,\|\Gamma^{-1}\|\}-1}
{\max\{\|\Gamma\|,\|\Gamma^{-1}\|\}+1},
\end{equation*}
where $$\Gamma:=(C_XC_Y^{-1})^*(C_XC_Y^{-1}), \quad C_X:=(
\Delta_X\otimes I)(I-R_X)^{-1},$$
 and $R_X:=X_1^*\otimes
R_1+\cdots + X_n^*\otimes R_n$ is the reconstruction operator.
\end{enumerate}
\end{theorem}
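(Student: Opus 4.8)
The plan is to reduce all four assertions to two facts from \cite{Po-hyperbolic}: the description of the Harnack metric $\delta=\ln\omega$ on the open ball, and in particular the operator formula
$$\omega(X,Y)=\max\left\{\|C_XC_Y^{-1}\|,\ \|C_YC_X^{-1}\|\right\},\qquad X,Y\in[B(\cH)^n]_1,$$
together with the observation that ${\bf d}$ is obtained from $\omega$ by the scalar substitution $t\mapsto(t^2-1)/(t^2+1)$. First I would settle (i) by a bare computation. Writing $t:=\omega(X,Y)$, property (a) recalled in the proof of Theorem \ref{pseudometric} gives $t\geq1$, so $\delta(X,Y)=\ln t\geq 0$ and
$$\tanh\delta(X,Y)=\frac{e^{\delta(X,Y)}-e^{-\delta(X,Y)}}{e^{\delta(X,Y)}+e^{-\delta(X,Y)}}=\frac{t-t^{-1}}{t+t^{-1}}=\frac{t^2-1}{t^2+1}={\bf d}(X,Y),$$
by the definition \eqref{Om} of ${\bf d}$. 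This identity is the bridge to the remaining items, since $\phi(s):=\tanh s$ is a strictly increasing homeomorphism of $[0,\infty)$ onto $[0,1)$.

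For (ii) I would invoke the explicit value of $\delta$ on scalar points computed in \cite{Po-hyperbolic}, namely $\delta(z,w)=\tfrac12\ln\frac{1+\|\psi_z(w)\|_2}{1-\|\psi_z(w)\|_2}$ for $z,w\in\BB_n$, which is the Poincar\'e--Bergman distance on $\BB_n$. Then (i) yields ${\bf d}(z,w)=\tanh\delta(z,w)=\|\psi_z(w)\|_2$, using the one-line check $\tanh\big(\tfrac12\ln\frac{1+s}{1-s}\big)=s$. For (iii), since ${\bf d}=\phi\circ\delta$ with $\phi$ a homeomorphism onto its range, the ${\bf d}$-topology and the $\delta$-topology coincide on $[B(\cH)^n]_1$; as \cite{Po-hyperbolic} shows that the $\delta$-topology agrees with the norm topology there, so does the ${\bf d}$-topology.

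The substance of the statement is (iv), and here the plan is pure bookkeeping once the $\omega$-formula is in hand. For $X\in[B(\cH)^n]_1$ one has $\|R_X\|<1$, so $(I-R_X)^{-1}$ is bounded, while $\Delta_X$ is bounded below; hence $C_X=(\Delta_X\otimes I)(I-R_X)^{-1}$ is a bounded invertible operator, and likewise $C_Y$, so $\Gamma=(C_XC_Y^{-1})^*(C_XC_Y^{-1})$ is positive and invertible. Writing $S:=C_XC_Y^{-1}$, so that $S^{-1}=C_YC_X^{-1}$, I get $\|\Gamma\|=\|S^*S\|=\|S\|^2=\|C_XC_Y^{-1}\|^2$ and, since $\Gamma^{-1}=S^{-1}(S^{-1})^*$, also $\|\Gamma^{-1}\|=\|S^{-1}\|^2=\|C_YC_X^{-1}\|^2$. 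Combining these with the $\omega$-formula gives $\omega(X,Y)^2=\max\{\|\Gamma\|,\|\Gamma^{-1}\|\}$, and substituting into \eqref{Om} produces exactly the formula asserted in (iv).

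The main obstacle is not internal to this theorem but lies in the operator formula for $\omega$ imported from \cite{Po-hyperbolic}: the content there is that Harnack domination $X\,\overset{H}{{\underset{c}\prec}}\,Y$ is equivalent to the operator inequality $C_X^*C_X\leq c^2\,C_Y^*C_Y$, i.e. to $\|C_XC_Y^{-1}\|\leq c$, whence the infimal two-sided Harnack constant is precisely $\max\{\|C_XC_Y^{-1}\|,\|C_YC_X^{-1}\|\}$. Granting that characterization (together with the scalar evaluation of $\delta$ on $\BB_n$ and the $\delta$-topology result, all from \cite{Po-hyperbolic}), the four parts follow as outlined, the only genuinely new work in the present theorem being the elementary passage from $\omega$ to ${\bf d}=\tanh\delta$.
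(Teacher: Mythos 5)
Your proposal is correct and follows essentially the same route as the paper: part (i) via the elementary identity $\tanh(\ln t)=(t^2-1)/(t^2+1)$ applied to $\delta=\ln\omega$, parts (ii) and (iii) by importing from \cite{Po-hyperbolic} the scalar evaluation $\delta(z,w)=\tfrac12\ln\frac{1+\|\psi_z(w)\|_2}{1-\|\psi_z(w)\|_2}$ and the agreement of the $\delta$-topology with the norm topology, and part (iv) from the formula $\delta(A,B)=\ln\max\{\|C_AC_B^{-1}\|,\|C_BC_A^{-1}\|\}$ proved there. Your only addition is to spell out the bookkeeping $\|\Gamma\|=\|C_XC_Y^{-1}\|^2$ and $\|\Gamma^{-1}\|=\|C_YC_X^{-1}\|^2$ (via $\|S^*S\|=\|S\|^2$ and $\Gamma^{-1}=S^{-1}(S^{-1})^*$), which the paper leaves implicit; this is correct as written.
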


\begin{proof} Part (i) follows from relations \eqref{Om} and
\eqref{def-Hyper}. Part (ii) follows from part (i) and the fact
that, according to \cite{Po-hyperbolic},
 $\delta|_{\BB_n\times \BB_n}$ coincides with the
Poincar\'e-Bergman distance on $\BB_n$, i.e.,
$$
\delta(z,w)=\frac{1}{2}\ln
\frac{1+\|\psi_z(w)\|_2}{1-\|\psi_z(w)\|_2},\qquad z,w\in \BB_n,
$$
where $\psi_z$ is the involutive automorphism of $\BB_n$ that
interchanges $0$ and $z$.

  Since  the $\delta$-topology
 coincides with the norm topology on  the open unit ball
$[B(\cH)^n]_1$, part (i) implies (iii). In \cite{Po-hyperbolic} , we
proved that
\begin{equation*}
\delta(A,B)=\ln \max \left\{ \left\|C_{A} C_{B}^{-1} \right\|,
  \left\|C_{B} C_{A}^{-1} \right\|\right\},\quad A, B\in [B(\cH)^n]_1,
\end{equation*}
where $C_X:=( \Delta_X\otimes I)(I-R_X)^{-1}$ and $R_X:=X_1^*\otimes
R_1+\cdots + X_n^*\otimes R_n$ is the reconstruction operator. Hence
and due to (i), part (iv) follows.
\end{proof}

  We showed in \cite{Po-hyperbolic} that if  $A,B\in [B(\cH)^n]_1^-$,
  then   $A\overset{H}{\sim}\, B$ if and only if $rA\overset{H}{\sim}\,
rB$ for any $r\in [0,1)$ and  \  $\sup_{r\in [0,
1)}\omega(rA,rB)<\infty$. Moreover, in this case,  the function
$r\mapsto \omega(rA,rB)$ is ancreasing on [0,1) and
$\omega(A,B)=\sup_{r\in [0, 1)}\omega(rA,rB)$. As a consequence, one
can see that if  $A\overset{H}{\sim}\, B$, then  the function
$r\mapsto {\bf d}(rA,rB)$ are increasing on $[0,1)$ and   $ {\bf
d}(A,B)=\sup_{r\in [0, 1)}{\bf d}(rA,rB). $

This result together with Theorem \ref{pseudo-t} can be used to
obtain  an explicit formula for the pseudohyperbolic metric  on any
Harnack part of $[B(\cH)^n]_1^-$.

Now we provide a Schwarz-Pick lemma for free holomorphic functions
on $[B(\cH)^n]_1$ with operator-valued coefficients, with respect to
the pseudohyperbolic metric.

\begin{theorem} \label{S-P} Let $F_j:[B(\cH)^n]_1\to B(\cH) \bar \otimes_{min}
B(\cE)$, $j=1,\ldots, m$, be free holomorphic functions with
coefficients in $B(\cE)$, and assume that $F:=(F_1,\ldots, F_m)$ is
a contractive free holomorphic function. If $X,Y\in [B(\cH)^n]_1$,
then
 $F(X)\overset{H}{\sim}\, F(Y)$ and
$$
{\bf d}(F(X), F(Y))\leq {\bf d}(X,Y),
$$
where ${\bf d}$ is the pseudohyperbolic  metric  defined on the
Harnack parts of $[B(\cH)^n]_1^-$.
\end{theorem}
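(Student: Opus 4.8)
The plan is to reduce the whole statement to the monotonicity of the scalar function $f(t)=\frac{t^2-1}{t^2+1}$ together with the mapping property of contractive free holomorphic functions with respect to \emph{Harnack domination}. Since $f$ is increasing on $[1,\infty)$ (as $f'(t)=\frac{4t}{(t^2+1)^2}\ge 0$) and ${\bf d}(A,B)=f(\omega(A,B))$ by \eqref{Om}, it suffices to prove $\omega(F(X),F(Y))\le\omega(X,Y)$; this will simultaneously give $F(X)\overset{H}{\sim}\,F(Y)$, because the resulting finiteness of $\omega(F(X),F(Y))$ places $F(X),F(Y)$ in a common Harnack part of $[B(\cH\otimes\cE)^m]_1^-$. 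Note $\omega(X,Y)<\infty$ is automatic, since $[B(\cH)^n]_1$ is a single Harnack part, namely that of $0$. Concretely, I would fix any constant $c$ with $X\overset{H}{{\underset{c}\sim}}\,Y$ and establish the stronger assertion $F(X)\overset{H}{{\underset{c}\sim}}\,F(Y)$; taking the infimum over such $c$ then yields $\omega(F(X),F(Y))\le\omega(X,Y)$.

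The engine is composition. Fix a noncommutative polynomial $q$ with matrix-valued coefficients such that $\Re q\ge 0$. Because $\|F\|_\infty\le 1$, the tuple $F$ sends $[B(\cH)^n]_1$ into the closed ball, so by the composition results of Section~1 (Theorem~\ref{more-prop}) the map $G:=q\circ F$ is a free holomorphic function on $[B(\cH)^n]_1$. Moreover, for every $Z$ one has $\Re G(Z)=\tfrac12\big(q(F(Z))+q(F(Z))^*\big)\ge 0$: indeed $\Re q\ge 0$ on row contractions extends to the whole closed ball via the positivity of the noncommutative Poisson transform, and $F(Z)$ is a row contraction. Thus $G$ is a free holomorphic function with nonnegative real part. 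I would then invoke the fact (from \cite{Po-hyperbolic}) that the domination $X\overset{H}{{\underset{c}\sim}}\,Y$, although defined through polynomials, is inherited by all such functions: approximating $G(rS_1,\ldots,rS_n)$ in norm by its noncommutative Ces\`aro (Fej\'er) means $p_N$, which are polynomials whose real parts remain nonnegative, and applying $\frac{1}{c^2}\Re p_N(Y)\le\Re p_N(X)\le c^2\Re p_N(Y)$, one passes to the limit $N\to\infty$ and then $r\to1$ (using continuity of the Poisson transform and of $G$ on the open ball) to get $\frac{1}{c^2}\Re q(F(Y))\le\Re q(F(X))\le c^2\Re q(F(Y))$. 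As $q$ was an arbitrary matrix polynomial with $\Re q\ge 0$, this is exactly $F(X)\overset{H}{{\underset{c}\sim}}\,F(Y)$.

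With the mapping property in hand the proof closes at once. We obtain $\omega(F(X),F(Y))\le\omega(X,Y)<\infty$, so $F(X)$ and $F(Y)$ lie in the same Harnack part and ${\bf d}(F(X),F(Y))$ is defined. Applying the increasing function $f$ and the definition \eqref{Om} gives ${\bf d}(F(X),F(Y))=f(\omega(F(X),F(Y)))\le f(\omega(X,Y))={\bf d}(X,Y)$. Equivalently, in the language of the hyperbolic metric $\delta=\ln\omega$ of \cite{Po-hyperbolic}, the assertion is just that $F$ is $\delta$-nonexpansive, and the identity ${\bf d}=\tanh\delta$ of Theorem~\ref{pseudo-t} transports this to the pseudohyperbolic metric.

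I expect the main obstacle to be precisely the extension step in the second paragraph: justifying that Harnack domination with constant $c$, defined via polynomials with nonnegative real part, is inherited by the composed function $q\circ F$, which is a genuine power series rather than a polynomial. The crux is that the approximating noncommutative Ces\`aro means preserve the sign condition $\Re p_N\ge 0$ while converging in norm, so that the polynomial-level inequalities survive the double limit. If instead one quotes from \cite{Po-hyperbolic} the characterization of Harnack domination directly in terms of free holomorphic (or free pluriharmonic) functions with nonnegative real part, this step becomes immediate and the remainder of the argument is essentially formal.
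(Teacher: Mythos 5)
Your proposal is correct and follows essentially the paper's route: the paper obtains $F(X)\overset{H}{\sim}\,F(Y)$ and $\omega(F(X),F(Y))\le\omega(X,Y)$ by citing the proof of Theorem 4.2 of \cite{Po-hyperbolic} --- precisely the contractivity statement you re-derive by composing with matrix polynomials of nonnegative real part and invoking the characterization of Harnack domination via positive free pluriharmonic functions, with the Ces\`aro-mean approximation you sketch being the standard and sound justification of that step --- and then concludes, exactly as you do, from \eqref{Om} and the monotonicity of $f(x)=\frac{x^2-1}{x^2+1}$ on $[1,\infty)$. The only substantive differences are that you unfold the cited lemma rather than quote it, and that your derivative $f'(x)=\frac{4x}{(x^2+1)^2}$ is the correct one, the paper's $\frac{2x}{(x^2+1)^2}$ being a harmless slip.
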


\begin{proof} According to  the proof of Theorem 4.2 from \cite{Po-hyperbolic},
we have   $F(X)\overset{H}{\sim}\, F(Y)$ and $$\omega(F(X),
F(Y))\leq \omega(X,Y).$$ Using  the definition \eqref{Om} and the
fact that the function $f(x):=\frac{x^2-1}{x^2+1}$ is increasing on
the interval $[1, \infty)$, the result follows.
\end{proof}

  If $F:=(F_1,\ldots, F_m)$ is a
contractive  ($\|F\|_\infty\leq 1$) free holomorphic function  with
coefficients in $B(\cE)$, then   the evaluation map $\BB_n\ni
z\mapsto F(z)\in B(\cE)^{(m)}$ is a contractive  operator-valued
multiplier of the  Drury-Arveson space, and  any such a contractive
multiplier has  this type of representation.

\begin{corollary} \label{Sch-part}Let $F:=(F_1,\ldots, F_m)$ be
a contractive free holomorphic function  with coefficients in
$B(\cE)$.
 If $z,w\in \BB_n$, then $F(z)\overset{H}{\sim}\, F(w)$ and
$$
{\bf d}(F(z), F(w))\leq {\bf d}(z,w).
$$
\end{corollary}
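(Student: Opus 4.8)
The plan is to derive Corollary \ref{Sch-part} directly from Theorem \ref{S-P} by specialization. The statement of Theorem \ref{S-P} asserts that for a contractive free holomorphic function $F=(F_1,\ldots,F_m)$ with coefficients in $B(\cE)$ and any $X,Y\in[B(\cH)^n]_1$, we have the Harnack equivalence $F(X)\overset{H}{\sim}\,F(Y)$ and the pseudohyperbolic contraction ${\bf d}(F(X),F(Y))\leq{\bf d}(X,Y)$. The corollary is simply the instance where the operator arguments are taken to be scalar points of the commutative ball $\BB_n$.

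First I would recall that $\BB_n\subset[B(\cH)^n]_1$ in the sense that each $z=(z_1,\ldots,z_n)\in\BB_n$ with $\|z\|_2<1$ determines an $n$-tuple $(z_1 I_\cH,\ldots,z_n I_\cH)$ of scalar multiples of the identity, which is a strict row contraction since $\|z_1z_1^*+\cdots+z_nz_n^*\|^{1/2}=\|z\|_2<1$. Thus both $z$ and $w$ lie in $[B(\cH)^n]_1$, and the hypotheses of Theorem \ref{S-P} are satisfied with $X:=z$ and $Y:=w$. Applying Theorem \ref{S-P} verbatim to this pair yields $F(z)\overset{H}{\sim}\,F(w)$ and ${\bf d}(F(z),F(w))\leq{\bf d}(z,w)$, which is exactly the assertion of the corollary. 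The evaluations $F(z)$ and $F(w)$ again land in $[B(\cH)^m]_1^-$ (in fact in the scalar-point picture via the Poisson transform, giving the Drury-Arveson multiplier values mentioned in the preceding paragraph), so the Harnack relation and the metric inequality make sense on the appropriate Harnack part.

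There is essentially no obstacle here; the corollary is a pure restriction of the theorem to the commutative diagonal $\BB_n\hookrightarrow[B(\cH)^n]_1$. The only point requiring a word of care is to observe that the pseudohyperbolic metric ${\bf d}$ restricted to $\BB_n\times\BB_n$ coincides, by Theorem \ref{pseudo-t}(ii), with the classical pseudohyperbolic distance $d_n(z,w)=\|\psi_z(w)\|_2$; this is what justifies reading the conclusion as a Schwarz-Pick statement in the genuinely commutative sense for multipliers of the Drury-Arveson space. Consequently I would write:

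\begin{proof}
Since any $z=(z_1,\ldots,z_n)\in\BB_n$ gives rise to the $n$-tuple $(z_1 I_\cH,\ldots,z_n I_\cH)\in[B(\cH)^n]_1$, the points $z,w$ may be regarded as elements of the open noncommutative ball $[B(\cH)^n]_1$. Applying Theorem \ref{S-P} with $X:=z$ and $Y:=w$, we obtain $F(z)\overset{H}{\sim}\,F(w)$ and
$$
{\bf d}(F(z),F(w))\leq{\bf d}(z,w),
$$
which completes the proof.
\end{proof}
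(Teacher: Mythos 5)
Your proposal is correct and matches the paper's intent exactly: the corollary is stated as an immediate specialization of Theorem \ref{S-P} to scalar points $z,w\in\BB_n$ regarded as tuples $(z_1I_\cH,\ldots,z_nI_\cH)\in[B(\cH)^n]_1$, which is precisely your argument. The remark identifying ${\bf d}|_{\BB_n\times\BB_n}$ with the classical pseudohyperbolic distance via Theorem \ref{pseudo-t}(ii) is a sensible clarification consistent with the paper.
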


\bigskip

      %\Refs
      %\widestnumber\key{BFPQR}
      %\def\n{\key}
       %


\begin{thebibliography}{99}

\bibitem{AK}  {\sc D.~Alpay and  D.S.~Kalyuzhnyi-Verbovetzkii},  Matrix-$J$-unitary
non-commutative rational formal power series, in: {\it The state
space method generalizations and applications}, pp. 49--113, Oper.
Theory Adv. Appl., {\bf 161}, Birkhäuser, Basel, 2006.

\bibitem{AF} {\sc T.~Ando and  K.~Fan},
Pick-Julia theorems for operators, {\it Math. Z.} {\bf 168} (1979),
23--34.



\bibitem{Arv}    {\sc W.B.~Arveson},
      {Subalgebras of $C^*$-algebras III: Multivariable operator theory,}
{\it Acta Math.} {\bf 181} (1998), 159--228.




%\bibitem{BeTi}
%{\sc C.~Benhida and D.~Timotin}, Characteristic functions for
%multicontractions and automorphisms of the unit ball, {\it Integral
%Equations Operator Theory} {\bf 57} (2007), no. 2, 153--166.

%\bibitem{BeTi2}
%{\sc C.~Benhida and D.~Timotin},
% Some automorphism invariance
%properties for multicontractions, {\it  Indiana Univ. Math. J.} {\bf
%56} (2007), no. 1, 481--499.



 \bibitem{BB} {\sc J.A.~Ball and V.~Bolotnikov}, On bitangential interpolation
 problem for contractive-valued
 functions on the unit ball,
 {\it  Linear Algebra Appl.}
  {\bf 353} (2002), 107--147.



\bibitem{BGM1} {\sc J.~A.~Ball, G.~Groenewald, and T.~Malakorn},
 Conservative structured noncommutative multidimensional linear systems,
 {\it  The state space method generalizations and applications}, 179--223,
Oper. Theory Adv. Appl., {\bf 161}, Birkhäuser, Basel, 2006.



\bibitem{BGM2} {\sc J.~A.~Ball, G.~Groenewald, and T.~Malakorn},
  Bounded real lemma for sructured noncommutative multidimensional
  linear systems and robust control,
 {\it
Multilinear Systems and Signal Processing}  {\bf 17} (2006),
119--150.

%\bibitem{Bu} {\sc J.~W.~Bunce},
% Models for n-tuples of noncommuting operators, {\it J. Funct. Anal.}
%  {\bf 57}(1984), 21--30.




\bibitem{Be} {\sc S.~ Bergman},
 {\it The kernel function and conformal mapping},
 Mathematical Surveys, No. V. American Mathematical Society,
Providence, R.I., 1970. x+257 pp.


\bibitem{Cara1} {\sc C.~Carath\'eodory}, \"Uber die Winkelderivierten
von beschr\"ankten analytischen Funktionen, {\it Sitzungsberg
Preuss. Acad.Phys.-Math.} {\bf 4}, 1--18.


  \bibitem{Cara2} {\sc C.~Carath\'eodory},{\em  Theory of Functions
  of a Complex Variable}, Chelsea Publishing Co., 1954.




\bibitem{Co} {\sc J.B.~Conway},
{\em Functions of one complex variable. I.} Second Edition. Graduate
Texts in Mathematics  {\bf 159}. { Springer-Verlag, New York}, 1995.



\bibitem{Cu} {\sc J.~Cuntz},
 Simple $C^*$--algebras generated by isometries,
 {\it  Commun. Math. Phys.}
 {\bf 57} (1977), 173--185.






\bibitem{D} {\sc K.~R.~Davidson}
Free semigroup algebras. A survey. {\it Systems, approximation,
singular integral operators, and related topics} (Bordeaux, 2000),
209--240, Oper. Theory Adv. Appl., 129, Birkhäuser, Basel, 2001.




 \bibitem{Dr} {\sc S.~Drury}, A generalization of von Neumann's inequality
 to the complex
 ball,
 {\it Proc. Amer. Math. Soc.} {\bf 68} (1978), 300--304.

\bibitem{KF1} {\sc K.~Fan}, Analytic functions of a proper
contraction, {\it Math. Z} {\bf 160} (1978), 275--290.

\bibitem{KF2} {\sc K.~Fan}, Julia's lemma for operators,
{\it Math. Ann.} {\bf 239} (1979), 241--245.



% \bibitem{F} {\sc  A.~E.~Frazho},
% Models for noncommuting operators, {\it J. Funct. Anal.}
 % {\bf 48} (1982), 1--11.


\bibitem{Ga} {\sc J.~Garnett},
 {\it  Bounded analytic functions},
 Academic Press, New York, 1981.

\bibitem{Gl} {\sc J.~Glicksberg}, Julia's lemma for function
algebras,  {\it Duke Math. J.} {\bf 43}(1976), 277--284.








\bibitem{Ha} {\sc L.A.~Harris}, Schwarz's lemma in normed linear
spaces, {\it Proc. Nat. Acad. Sci. USA} {\bf 62} (1969), 1014--1017.

\bibitem{Ha1} {\sc L.A.~Harris},
 Bounded symmetric homogeneous domains in infinite dimensional spaces,
 {\it Proceedings on Infinite Dimensional Holomorphy (Internat. Conf.,
 Univ. Kentucky, Lexington, Ky.}, 1973),
 pp. 13--40. {\it Lecture Notes in Math.}, Vol. 364, Springer, Berlin, 1974.




%\bibitem{Ho} {\sc K.~Hoffman},  {\em Banach Spaces of Analytic Functions},
%Englewood Cliffs: Prentice-Hall, 1962.


\bibitem{HKMS} {\sc J.W.~Helton, I.~Klep, S.~McCullough, and
N.~Slingled}, Noncommutative ball maps, {\it J. Funct. Anal.} {\bf
257} (2009),  47-87



\bibitem{Hi} {\sc E.~Hille}, {\em Analitic function theory II},
Boston: Ginn 1962.

\bibitem{K}
 {\sc D. S.~Kaliuzhnyi-Verbovetskyi}, Carath\' eodory interpolation on the non-commutative
 polydisk,
  {\it J. Funct. Anal.} {\bf  229} (2005), no. 2, 241--276.


\bibitem{KV} {\sc
 D. S.~Kaliuzhnyi-Verbovetskyi and V.~ Vinnikov},  Singularities of
rational functions and minimal factorizations: the noncommutative
and the commutative setting, {\it  Linear Algebra Appl.} {\bf 430}
(2009), no. 4, 869--889.


\bibitem{Kr} {\sc S.G.~Krantz}, {\it  Geometric function theory},
 Explorations in complex analysis.
 Cornerstones. Birkhäuser Boston, Inc., Boston, MA, 2006. xiv+314 pp.

\bibitem{J} {\sc G.~Julia}, Extension d'un lemma de Schwarz,
{\it Acta Math.} {\bf 42} (1920), 349--355.

%\bibitem{J} {\sc G.~Julia}, {\it Principes g\'eom\'etriques
%d'analyse.} Premier partie. Paris: Gauthier-Villars, 1930.

\bibitem{MuSo1}  {\sc P.S.~Muhly and  B.~Solel},
 Tensor algebras over $C^*$-correspondences: representations,
dilations, and $C^*$-envelopes, {\it J. Funct. Anal.}
 {\bf 158} (1998),  389--457.




  \bibitem{MuSo2}  {\sc P.S.~Muhly and  B.~Solel},
Hardy algebras, $W^*$-correspondences and interpolation theory, {\it
Math. Ann.} {\bf 330} (2004),  353--415.

\bibitem{MuSo3}  {\sc P.S.~Muhly and  B.~Solel},
Schur Class Operator Functions and Automorphisms of Hardy Algebras,
{\it Documenta Math.} {\bf 13} (2008),  365--411.




      \bibitem{PPoS}  {\sc V.I.~Paulsen, G.~Popescu,  and D.~Singh},
         On Bohr's inequality,
        {\it Proc. London Math. Soc.}
        {\bf 85} (2002), 493--512.


\bibitem{Ph} {\sc R.S.~Phillips}, On symplectic mappings of
contraction operators, {\it Studia Math.} {\bf 31}(1968), 15--27.


\bibitem{Pic} {\sc G.~Pick},
 \"Uber die
 Beschr\"ankungen analytische Funktionen, welche durch vorgegebene
 Funktionswerte bewirkt werden,
 {\it Math. Ann.} (1916) {\bf 77}, 7--23.

\bibitem{Po-isometric} {\sc G.~Popescu}, Isometric dilations for infinite
sequences of noncommuting operators, {\it Trans. Amer. Math. Soc.}
{\bf 316} (1989), 523--536.

\bibitem{Po-charact} {\sc G.~Popescu}, Characteristic functions for infinite
sequences of noncommuting operators, {\it J. Operator Theory}
{\bf 22} (1989), 51--71.



      \bibitem{Po-von} {\sc G.~Popescu},
{Von Neumann inequality for $(B(H)^n)_1$,}
      {\it Math.  Scand.} {\bf 68} (1991), 292--304.
      %%\bigskip




      \bibitem{Po-funct} {\sc G.~Popescu},
      {Functional calculus for noncommuting operators,}
       {\it Michigan Math. J.} {\bf 42} (1995), 345--356.



      \bibitem{Po-analytic} {\sc G.~Popescu},
      {Multi-analytic operators on Fock spaces,}
      {\it Math. Ann.} {\bf 303} (1995), 31--46.





      \bibitem{Po-poisson} {\sc G.~Popescu},
     {Poisson transforms on some $C^*$-algebras generated by isometries,}
       {\it J. Funct. Anal.} {\bf 161} (1999),  27--61.



\bibitem{Po-curvature} {\sc  G.~Popescu},
  Curvature invariant for Hilbert modules over free semigroup algebras,
   {\it Adv. Math.}
 {\bf 158} (2001), 264--309.



      \bibitem{Po-holomorphic} {\sc G.~Popescu},
      {Free holomorphic functions on the unit ball of $B(\cH)^n$},
      {\it J. Funct. Anal.}  {\bf 241} (2006), 268--333.



   \bibitem{Po-Bohr} {\sc G.~Popescu},
     Multivariable Bohr inequalities,
   {\it Trans. Amer.  Math. Soc.},
   {\bf 359} (2007), 5283--5317.




\bibitem{Po-free-hol-interp} {\sc G.~Popescu},
{ Free holomorphic functions and interpolation}, {\it Math. Ann.},
{\bf 342} (2008), 1-30.


      \bibitem{Po-pluri-maj} {\sc G.~Popescu},
      {Free pluriharmonic majorants and commutant lifting},
      {\it J. Funct. Anal.}  {\bf  255} (2008), 891-939.


 \bibitem{Po-hyperbolic2} {\sc G.~Popescu},
 {Hyperbolic geometry on the unit ball of $B(\cH)^n$ and dilation theory},
{\it Indiana Univ. Math. J.} {\bf  57} (2008), No.6, 2891-2930.

\bibitem{Po-pluriharmonic} {\sc G.~Popescu},
{Noncommutative transforms and free pluriharmonic functions},
 {\it Adv. Math.} {\bf 220} (2009), 831-893.


\bibitem{Po-automorphism} {\sc G.~Popescu},
{ Free holomorphic automorphisms of the unit ball of $B(\cH)^n$},
{\it J. Reine Angew. Math.}, to appear.




    \bibitem{Po-unitary} {\sc G.~Popescu},
      {Unitary invariants in multivariable operator theory},
      {\it Mem. Amer. Math. Soc.},  {\bf 200} (2009), No.941, 91
      pages.



\bibitem{Po-hyperbolic} {\sc G.~Popescu},
Noncommutative hyperbolic geometry on the unit ball of $B(H)^n$,
{\it J. Funct. Anal.}  {\bf 256} (2009), 4030-4070.




\bibitem{Po-domains} {\sc G.~Popescu},
Operator theory on noncommutative domains, {\it Mem.  Amer. Math.
Soc.}, to appear, 124 pages.

 \bibitem{Pota} {\sc V.P.~Potapov}, The multiplicative structure of
 $J$-contractive matrix functions,
 {\it Amer.Math. Soc. Transl.}(2) {\bf 15} (1960), 131--243.



 \bibitem{Ru1} {\sc W.~Rudin},
{\em Real and Complex Analysis}, { McGraw-Hill Book Co.} (1966)


\bibitem{Ru2} {\sc W.~Rudin},
{\em Function theory in the unit ball of \,$\CC^n$}, {
Springer-verlag, New-York/Berlin}, 1980.

\bibitem{Si} {\sc C.L.~Siegel}, Symplectic geometry,
{\it Amer. J. Math.} {\bf 65} (1943), 1--86.

%\bibitem{SzF-book} {\sc B.~Sz.-Nagy and C.~Foia\c{s}}, {\em Harmonic
%Analysis of Operators on Hilbert Space}, North Holland, New York
%1970.

\bibitem{Tit} {\sc E.C.~Titchmarsh}, {\em The Theory of Functions},
Oxford University Press, Oxford, 1939.



\bibitem{SzF-book} {\sc B.~Sz.-Nagy and C.~Foia\c{s}}, {\em Harmonic
Analysis of Operators on Hilbert Space}, North Holland, New York
1970.

\bibitem{V} {\sc D.V.~Voiculescu} Free analysis questions II: The
Grassmannian completion and the series expansions at the origin,
preprint.

 \bibitem{von}  {\sc J.~von Neumann},
      {Eine Spectraltheorie f\"ur allgemeine Operatoren eines unit\"aren
      Raumes,}
      {\it Math. Nachr.} {\bf 4} (1951), 258--281.
      %%\bigskip


\bibitem{Y} {\sc N.J.~Young}, Orbits of the unit sphere of
$\cL(\cH,\cK)$ under symplectic transformations, {\it J. Operator
Theory} {\bf 11} (1984), 171--191.

\bibitem{Zhu} {\sc K.~Zhu}, {\it Spaces of holomorphic functions in the unit
ball},  Graduate Texts in Mathematics, 226. Springer-Verlag, New
York, 2005. x+271 pp.




       \end{thebibliography}
      \end{document}